\definecolor{coolblack}{rgb}{0.0, 0.0, 0.230}
\newtheorem*{rep@theorem}{\rep@title}
\newcommand{\newreptheorem}[2]{%
\newenvironment{rep#1}[1]{%
 \def\rep@title{#2 \ref{##1}}%
 \begin{rep@theorem}}%
 {\end{rep@theorem}}}
\newtheorem{remark}{Remark}
\newtheorem{proposition}{Proposition}
\newtheorem{theorem}{Theorem}
\newtheorem*{theorem*}{Theorem}
\newtheorem{lemma}{Lemma}
\newtheorem{corollary}{Corollary}
\newtheorem{example}{Example}
\newcommand{\R}{\mathbb{R}} 
\newcommand{\N}{\mathcal{N}}
\newcommand{\E}{\mathbb{E}}
\newcommand{\cE}{\mathcal{E}}
\renewcommand{\part}[2]{\frac{\partial #1}{\partial #2}}
\newcommand{\KL}{\mathsf{KL}}
\newcommand{\llangle}{\left\langle}
\newcommand{\rrangle}{\right\rangle}
\newcommand{\lmp}{\left[}
\newcommand{\rmp}{\right]}
\newcommand{\lp}{\left(}
\newcommand{\rp}{\right)}
\newcommand{\lno}{\left\|}
\newcommand{\rno}{\right\|}
\newcommand{\I}{\mathsf{I}}
\newcommand{\II}{\mathsf{II}}
\newcommand{\III}{\mathsf{III}}
\newcommand{\IV}{\mathsf{IV}}
\newcommand{\V}{\mathsf{V}}
\newcommand{\ty}{\tilde{y}_{k+1}}
\newcommand{\hx}{\hat{x}}
\newcommand{\hy}{\hat{y}}
\newcommand{\tly}{\Tilde{y}}
\newcommand{\NAG}{\textsf{AGD}}
\newcommand{\GD}{\textsf{GD}}
\newcommand{\RPHD}{\textsf{RPHD}}
\newcommand{\mHF}{\mathsf{HF}}
\newcommand{\rmd}{\mathrm{d}}
\newcommand{\RHF}{\textsf{RHF}}
\newcommand{\bfL}{\mathbf{L}}
\newcommand{\bfC}{\mathbf{C}}
\newcommand{\HMC}{\textsf{HMC}}
\newcommand{\bfP}{\mathbf{P}}
\newcommand{\err}{\mathcal{G}_{k+1}^h}
\newcommand{\LCP}{\textsf{LCP}}
\newcommand{\calL}{\mathcal{L}}
\newcommand{\tE}{\tilde{E}}
\newcommand{\RHGD}{\textsf{RHGD}}
\newcommand{\RHMC}{\textsf{RHMC}}
\newcommand{\HF}{\textsf{HF}}
\newcommand{\GF}{\textsf{GF}}
\newcommand{\LD}{\textsf{LD}}
\newcommand{\Prox}{\textsf{Prox-S}}
\newcommand{\AIG}{\textsf{AIG}}
\newcommand{\AGD}{\textsf{AGD}}
\newcommand{\step}{h}
\newcommand{\CAGD}{\textsf{CAGD}}
\newcommand{\AGF}{\textsf{AGF}}
\def\@maketitle{%
  \newpage
  \begin{center}%
  \let \footnote \thanks
    {\LARGE  \@title \par}%
  \end{center}%
  \par
  \vskip 1em}
\title{Hamiltonian Descent Algorithms for Optimization: \\Accelerated Rates via Randomized Integration Time}
\begin{document}

\maketitle
\begin{center}
{
\begin{tabular}{ccc}
    \makecell{Qiang Fu\(^\star\) \\{\normalsize\href{mailto:qiang.fu@yale.edu}{\texttt{qiang.fu@yale.edu}}}} & &
    \makecell{Andre Wibisono\(^\star\)\\{\normalsize\href{mailto:andre.wibisono@yale.edu}{\texttt{andre.wibisono@yale.edu}}}}
\end{tabular}
\vskip 1em
}
\begin{tabular}{c}
   \(^\star\)Department of Computer Science, Yale University
\end{tabular}
\end{center}
\begin{abstract}
\noindent We study the \textit{Hamiltonian flow for optimization} (\textsf{HF-opt}), 
  which simulates the Hamiltonian dynamics for some integration time and resets the velocity to $0$ to decrease the objective function; this is the optimization analogue of the Hamiltonian Monte Carlo algorithm for sampling.
  For short integration time, \textsf{HF-opt} has the same convergence rates as gradient descent for minimizing strongly and weakly convex functions.
  We show that by randomizing the integration time in \textsf{HF-opt}, the resulting \textit{randomized Hamiltonian flow} (\textsf{RHF}) achieves accelerated convergence rates in continuous time, similar to the rates for the accelerated gradient flow.
  We study a discrete-time implementation of \textsf{RHF} as the \textit{randomized Hamiltonian gradient descent} (\textsf{RHGD}) algorithm. 
  We prove that \textsf{RHGD} achieves the same accelerated convergence rates as Nesterov's accelerated gradient descent (\textsf{AGD}) for minimizing smooth strongly and weakly convex functions.
  We provide numerical experiments to  demonstrate that \textsf{RHGD} is competitive with classical accelerated methods such as \textsf{AGD} across all settings and outperforms them in certain regimes.
\end{abstract}

\tableofcontents

\section{Introduction}\label{sec:Introduction}
Optimization plays a central role in machine learning, with algorithms such as gradient descent (\GD) and accelerated gradient descent (\AGD)~\citep{nesterov1983method} serving as essential tools for optimizing objective functions. 
A growing body of work has explored optimization algorithms in the framework of continuous-time dynamical systems, which provides insights into algorithmic behaviors and convergence properties. 
In this paper, we develop a novel family of accelerated optimization algorithms that are designed based on the Hamiltonian flow.

Hamiltonian flow (\HF) originates from classical mechanics, describing the continuous-time evolution of physical systems. At a fundamental level, Hamiltonian flow governs how the positions and momenta of moving bodies evolve while conserving energy. Beyond its roots in physics, Hamiltonian flows have inspired computational algorithms such as Hamiltonian Monte Carlo (\textsf{HMC})~\citep{duane1987hybrid}, a classical method widely employed for sampling from complex, high-dimensional probability distributions~\citep{neal2011mcmc,betancourt2017conceptual,hoffman2014no}. Due to its effectiveness, \textsf{HMC} has found extensive applications in Bayesian inference, statistical physics, and machine learning~\citep{gelman1995bayesian,kruschke2014doing,lelievre2016partial}.

There has been growing interest in exploring the connections between optimization and sampling, as they share deep theoretical foundations and many algorithmic similarities. 
Notably, the Langevin dynamics for sampling can be viewed as the gradient flow for minimizing the relative entropy or Kullback–Leibler (KL) divergence in the space of probability distributions~\citep{jordan1998variational}. 
Many works build on this perspective to use techniques from optimization to analyze Langevin-based algorithms~\citep{wibisono2018sampling,bernton2018,durmus2019analysis,ma2021there} or develop novel sampling algorithms inspired by optimization~\citep{salim2020wasserstein,lee2021structured,lambert2022variational,chen2022improved,diao2023forward,das2023provably,suzuki2023convergence,pmlr-v235-fu24g,chen2025accelerating}. 
In this paper, we strengthen the links between optimization and sampling in the opposite direction, by studying how to translate the Hamiltonian Monte Carlo (\textsf{HMC}), a classical sampling algorithm, to design new optimization algorithms, particularly for accelerated methods.

In the links between optimization and sampling, there is a significant theoretical gap regarding \textit{acceleration}.
In optimization, it is well known that 
greedy methods such as \GD\ can be outperformed by accelerated methods such as \AGD~\citep{nesterov1983method} which have faster and optimal convergence rates with square-root dependence on the condition number for minimizing smooth and strongly convex functions under the standard first-order oracle model; see Appendix~\ref{app:opt review} for a review.
A similar acceleration phenomenon in sampling is still elusive, but there are some promising candidates.
The underdamped (or kinetic) Langevin dynamics has an accelerated convergence rate in continuous time~\citep{cao2023explicit}, but in discrete time, algorithms based on its discretization still do not have the desired accelerated rates~\citep{ma2021there,zhang2023improved}. Another candidate is the \textit{randomized} Hamiltonian Monte Carlo (\textsf{RHMC})~\citep{bou2017randomized}, which is obtained by randomizing the integration time in \HMC, and has been conjectured to have an accelerated convergence rate for sampling~\citep{jiang2023dissipation}. In continuous time, the idealized \RHMC\ indeed has an accelerated convergence rate in $\chi^2$-divergence for log-concave target distributions~\citep{lu2022explicit}. On the algorithmic side, recent works have shown that for a Gaussian target distribution,
\textsf{HMC} with carefully chosen integration time, either determined by the roots of Chebyshev polynomials or randomly drawn from exponential distributions, indeed achieves an accelerated mixing time with square-root dependence on the condition number~\citep{wang2022accelerating,jiang2023dissipation,apers2024hamiltonian}.
However, the proof that \RHMC\ achieves acceleration in discrete time for a general target distribution remains missing.
In this work, we study the optimization analogue of this question, by designing a new accelerated optimization algorithm based on the randomized Hamiltonian flow.

While Hamiltonian flows have found great success in sampling, their direct use in designing optimization algorithms is still relatively limited. 
Most existing Hamiltonian-based optimization methods can be seen as the discretization of the accelerated gradient flow (\AGF)~\citep{su2016differential,wibisono2016variational}, which is the combination of \HF\ with a damping term for dissipating energy; this is different from the \HF\ we study in this work which does not have damping. We provide additional discussion of related work in Appendix~\ref{app:related work}. 
Pure \HF\ without damping terms have rarely been studied for optimization. In fact, it obeys the law of energy conservation rather than dissipation, which is opposite to optimization tasks. Notable prior works include~\citet{teel2019first,diakonikolas2021generalized,de2023improving,wang2024hamiltonian}. Particularly, \citet{wang2024hamiltonian} show that Hamiltonian flows with velocity refreshment and Chebyshev-based integration times can achieve accelerated convergence for strongly convex quadratic functions, which is comparable to \AGF\ with refreshment. Beyond quadratic functions, we demonstrate that \HF\ with short-time integration and periodic velocity refreshment achieves the same non-accelerated convergence rates as \GD\ up to constants (see Theorem~\ref{thm:short-time}).
This naturally prompts a question: 
\begin{center}
   \textit{Can we develop accelerated optimization methods based on the Hamiltonian flow?}
\end{center}
In this work, we answer this question affirmatively and demonstrate that \HF\ with randomized integration time and its discretization yield accelerated convergence rates for minimizing strongly and weakly convex functions. Our principal contributions are:
\begin{itemize}
    \item We propose the \emph{randomized Hamiltonian flow} (\RHF) for optimization as an analogue of \RHMC. We establish its accelerated convergence rates of $O(\exp(-\sqrt{\alpha/5}\,t))$ 
    for $\alpha$-strongly convex functions and $O({1}/{t^2})$ for weakly convex functions. These rates match the optimal accelerated convergence rates of \AGF~\citep{su2016differential,wibisono2016variational} up to constants.
    
    \item We study the \textit{randomized Hamiltonian gradient descent} (\RHGD) which is a discretization of \RHF. Under $L$-smoothness assumption, \RHGD\ achieves the overall iteration complexity of $\tilde{O}(\sqrt{L/\alpha})$ for $\alpha$-strongly convex functions and $O(\sqrt{L/\varepsilon})$ for weakly convex functions to generate an $\varepsilon$-accurate solution in expectation, matching the optimal accelerated rates of \textsf{AGD}~\citep{nesterov1983method,nesterov2013introductory} and its randomized variant~\citep{even2021continuized}.
\end{itemize}

\paragraph{Organization} The remainder of this work is organized as follows. Section~\ref{sec:Preliminaries} presents notations, definitions and a review of the Hamiltonian flow (\HF) for designing optimization algorithms. Section \ref{RHF} proposes the definition of the randomized Hamiltonian flow (\RHF) and its accelerated convergence rates. Section \ref{sec:Dis-analysis} describes how to discretize continuous-time \RHF\ into an implementable optimization algorithm \RHGD\ and establishes its accelerated convergence rates. Section \ref{sec:NE} presents numerical experiments validating the effectiveness of our proposed methods. Section~\ref{sec:conclusion} concludes the paper and discusses its limitations.

\section{Preliminaries and reviews}\label{sec:Preliminaries}
We begin by introducing some general notations that will be used throughout this work.
\subsection{Notations}
Let $\|\cdot\|:=\|\cdot\|_2$ denote the Euclidean norm on the $d$-dimensional Euclidean space $\R^d$. 
A differentiable function $f:\R^d\rightarrow\R$ is $\alpha$-strongly convex if $f(y)\geq f(x)+\langle\nabla f(x),y-x\rangle+\frac{\alpha}{2}\|y-x\|^2$ for any $x,y\in\R^d$, where $f$ is (weakly) convex if $\alpha=0$. We say $f$ is $\alpha$-gradient-dominated if $\|\nabla f(x)\|^2\geq 2\alpha (f(x)-f(x^*))$ for any $x\in\R^d$ where $x^* = \arg\min_{x\in\R^d}{f(x)}$ is a minimizer of $f$.  We say $f$ is $L$-smooth if $f(y)\leq f(x)+\langle\nabla f(x),y-x\rangle+\frac{L}{2}\|y-x\|^2$ or equivalently $\|\nabla f(x)-\nabla f(y)\|\leq L\|x-y\|$ for any $x,y\in\R^d$. We assume $\alpha\leq 1 \leq L$. We say $\hat{x}$ is an $\varepsilon$-accurate solution in expectation if $\E[f(\hx)-f(x^*)]\leq\varepsilon$. Let $\kappa:=L/\alpha$ denote the condition number. 
We use $\dot{g}_t := \frac{\rmd}{\rmd t} g_t$ to denote the time derivative of a time-dependent quantity \( g_t \). We define the flow map $\mHF_{\eta}:\R^d\times\R^d\rightarrow \R^d\times\R^d$ as $\mHF_{\eta}(X_0,Y_0) = (X_{\eta}, Y_{\eta})$, which is the solution to Hamiltonian flow \eqref{HF} at time $\eta$ starting from $(X_0,Y_0)$. Given a time-dependent random variable $Z_t$, we use $\rho_t^Z$ to denote its probability distribution. We identify probability distributions with their density functions. Let $\mathrm{Exp}(\gamma)$ denote the exponential distribution with mean $1/\gamma$ for $\gamma>0$. Let $[n] := \{1,2,...,n\}$. We use $a=O(b)$ to denote $a\leq Cb$ for constant $C>0$ and use $a=\tilde{O}(b)$ to denote $a=O(b)$ up to logarithmic factors. We use $a = \Theta(b)$  to denote $a = Cb$ for constants $C>0$.
\paragraph{Problem setting.}
Our goal is to solve the following optimization problem:
\begin{align}
\label{opt-prob}
    \min_{x\in\R^d} f(x),
\end{align}
where $f:\R^d\rightarrow \R$ is a differentiable function, and $x^* = \arg\min_{x\in\R^d}{f(x)}$ is a minimizer of $f$.
\subsection{Hamiltonian flow for optimization}
The \textit{Hamiltonian flow} \eqref{HF} is a system of ordinary differential equations for $(X_t, Y_t) \in \R^d \times \R^d$:
\begin{equation}
\label{HF}
     \Dot{X}_t  = Y_t,\quad
    \Dot{Y}_t  = -\nabla f(X_t).
    \tag{\textsf{HF}}
\end{equation}
Define the \textit{energy} (or Hamiltonian) function $H(x,y):=f(x)+\frac{1}{2}\|y\|^2$. A fundamental property of the Hamiltonian flow \eqref{HF} is that it conserves energy; see Appendix~\ref{app:LCP for opt} for the proof.

\begin{lemma}[Energy Conservation]
\label{Lem:EnergyConservation}
    Along~\eqref{HF}, $H(X_t,Y_t) = H(X_0,Y_0)$ for all $t \ge 0$.
\end{lemma}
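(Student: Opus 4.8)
The plan is to show that $t \mapsto H(X_t,Y_t)$ is constant by verifying that its time derivative vanishes identically along any solution of~\eqref{HF}. First I would apply the chain rule to the composite function $H(X_t,Y_t) = f(X_t) + \frac{1}{2}\|Y_t\|^2$, which gives
\begin{align}
\frac{\rmd}{\rmd t} H(X_t,Y_t) = \langle \nabla f(X_t), \dot{X}_t \rangle + \langle Y_t, \dot{Y}_t \rangle.
\end{align}
Then I would substitute the two defining equations of the flow, namely $\dot{X}_t = Y_t$ and $\dot{Y}_t = -\nabla f(X_t)$, into this expression.

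After substitution the right-hand side becomes $\langle \nabla f(X_t), Y_t \rangle + \langle Y_t, -\nabla f(X_t) \rangle = \langle \nabla f(X_t), Y_t \rangle - \langle \nabla f(X_t), Y_t \rangle = 0$, using symmetry of the Euclidean inner product. Hence $\frac{\rmd}{\rmd t} H(X_t,Y_t) = 0$ for all $t \ge 0$. Since the derivative of the scalar function $t \mapsto H(X_t,Y_t)$ is identically zero on $[0,\infty)$, the function is constant, so $H(X_t,Y_t) = H(X_0,Y_0)$ for all $t \ge 0$, which is the claim.

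There is essentially no obstacle here: the result is a one-line consequence of the chain rule together with the antisymmetric coupling between the position and momentum equations. The only mild point worth noting is that differentiability of $f$ (assumed throughout) is what makes the chain rule applicable and guarantees $\nabla f$ is well-defined along the trajectory; no smoothness or convexity of $f$ is needed for this particular lemma. If one wanted to be fully rigorous about existence of the trajectory one could additionally invoke local Lipschitz continuity of the vector field, but for the conservation identity itself it suffices to work on the interval where a (differentiable) solution exists.
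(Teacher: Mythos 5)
Your proof is correct and follows the same argument as the paper: apply the chain rule to $H(X_t,Y_t)=f(X_t)+\tfrac{1}{2}\|Y_t\|^2$, substitute $\dot X_t = Y_t$ and $\dot Y_t = -\nabla f(X_t)$, and observe the two inner products cancel, so the derivative vanishes and $H$ is constant along the flow. The additional remarks on differentiability and existence of the trajectory are fine but not needed beyond what the paper assumes.
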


We can exploit the conservation property of the Hamiltonian flow to design an optimization algorithm by periodically refreshing the velocity to $0$.
This idea results in the following \textbf{Hamiltonian flow for optimization} (\textsf{HF-opt}) algorithm, which was also proposed and studied by~\citet{teel2019first,wang2024hamiltonian}.
Below, $\Pi_1(x,y) = x$ is the projection operator to the first component.
\begin{algorithm}[H]
    \caption{\textbf{Hamiltonian Flow for Optimization} ({\textsf{HF-opt}})}
    \label{alg:HF-opt}
    \begin{algorithmic}[1]
        \State Initialize $x_0\in\R^d$. Choose integration time $\eta_k > 0$ for $k\geq 0$.
        \For{$k = 0,1,\dots, K-1$}
            \State $ x_{k+1} = \Pi_1\circ \mHF_{\eta_k}(x_k,0)$ 
            \Comment{(evolve~\eqref{HF} for time $\eta_k$ and project to first component)}
        \EndFor
        \State \Return $x_K$
    \end{algorithmic}
\end{algorithm}
Lemma~\ref{Lem:EnergyConservation} implies the following descent lemma of \textsf{HF-opt}; see Appendix~\ref{app:LCP for opt} for the proof.
\begin{lemma}\label{lem:descent lemma}
    For any $k$ and $\eta_k>0$, \textsf{\em HF-opt} (Algorithm~\ref{alg:HF-opt}) satisfies $f(x_{k+1})\leq f(x_k).$
\end{lemma}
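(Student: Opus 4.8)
The plan is to apply the energy conservation property (Lemma~\ref{Lem:EnergyConservation}) directly, exploiting the fact that \textsf{HF-opt} refreshes the velocity to $0$ at the start of each step, so that the initial kinetic energy vanishes. Recall that the step computes $(X_{\eta_k}, Y_{\eta_k}) = \mHF_{\eta_k}(x_k, 0)$ and sets $x_{k+1} = X_{\eta_k}$.

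First I would invoke Lemma~\ref{Lem:EnergyConservation} along the Hamiltonian flow \eqref{HF} initialized at $(X_0, Y_0) = (x_k, 0)$, which gives $H(X_{\eta_k}, Y_{\eta_k}) = H(x_k, 0)$. Next I would expand both sides using the definition $H(x,y) = f(x) + \frac12 \|y\|^2$, obtaining
\begin{equation*}
f(x_{k+1}) + \tfrac12 \|Y_{\eta_k}\|^2 = f(x_k) + \tfrac12 \|0\|^2 = f(x_k).
\end{equation*}
Finally, since $\tfrac12 \|Y_{\eta_k}\|^2 \ge 0$, rearranging yields $f(x_{k+1}) \le f(x_k)$, which is the claim.

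There is no real obstacle here: the entire argument is a one-line consequence of energy conservation, and the only point worth noting is the asymmetry introduced by the velocity reset, which ensures the kinetic term appears only on the side of $x_{k+1}$ and hence forces the inequality in the desired direction. The same computation also shows that the decrease is exactly $\tfrac12\|Y_{\eta_k}\|^2$, which quantifies how much progress each step makes and foreshadows the role of the integration time $\eta_k$ in later analysis.
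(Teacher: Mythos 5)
Your proof is correct and follows exactly the paper's own argument: apply Lemma~\ref{Lem:EnergyConservation} to the flow started at $(x_k,0)$ and drop the nonnegative kinetic term $\tfrac12\|Y_{\eta_k}\|^2$. Nothing further is needed.
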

\textsf{HF-opt} is an instance of a new optimization principle, the \textit{``Lift-Conserve-Project'' (LCP) scheme}, which is the same principle that underlies \HMC\ for sampling; see Appendix~\ref{sec:LCP} for more details.
\textsf{HF-opt} is an idealized algorithm since it assumes we can solve the Hamiltonian flow \eqref{HF} exactly. We study its convergence properties in this and the next sections, and we study how to implement it as a concrete discrete-time algorithm in Section~\ref{sec:Dis-analysis}. We first illustrate the behavior of \textsf{HF-opt} with short integration time on a simple quadratic function. See Appendix~\ref{app:pf of examples} for the proof.

\begin{example}[\textsf{HF-opt} for quadratic functions]\label{ex:HF-quad}
    Consider the quadratic function $f(x)=\frac{1}{2}x^{\top}Ax$, where $A\in\R^{d\times d}$ is symmetric and $\alpha I\preceq A\preceq LI$. Then \textsf{\em HF-opt} with $\eta_k=h\leq \frac{1}{2\sqrt{L}}$ satisfies
    \begin{align*}
        \|x_k-x^*\|^2\leq  \lp 1 - \frac{\alpha h^2}{2}\rp^k\|x_0-x^*\|^2.
    \end{align*}
If $h = \frac{1}{2\sqrt{L}}$, the total integration time to achieve $\|x_K-x^*\|^2\leq \varepsilon$ satisfies $K\cdot h\geq\frac{4\sqrt{L}}{\alpha}\log\frac{\|x_0-x^*\|^2}{\varepsilon}$.
\end{example}

For general smooth function $f$, 
we show that \textsf{HF-opt} with short integration time $\eta_k$ has the following convergence rates under gradient domination and weak convexity in Theorem~\ref{thm:short-time}. Note that the first conclusion in Theorem~\ref{thm:short-time} also holds under $\alpha$-strong convexity since it implies $\alpha$-gradient domination. We provide the proof in Appendix~\ref{app:proof of HF-short}.

\begin{theorem}\label{thm:short-time}
Assume $f$ is $L$-smooth.
Along Algorithm~\ref{alg:HF-opt} with $\eta_k = h \leq \frac{1}{\sqrt{L}}$, from any $x_0 \in \mathbb{R}^d$:
\begin{enumerate}
    \item If $f$ is $\alpha$-gradient-dominated, then
    $\displaystyle f(x_k) - f(x^*) \leq \left(1 - \tfrac{1}{2} \alpha h^2\right)^k (f(x_0) - f(x^*))$.
    \item If $f$ is weakly convex, then
    $\displaystyle f(x_k) - f(x^*) \leq \frac{34 \|x_0 - x^*\|^2}{h^2 k}$.
\end{enumerate}
\end{theorem}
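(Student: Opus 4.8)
The plan is to reduce both parts to a single \emph{effective descent inequality}, $f(x_k) - f(x_{k+1}) \ge \tfrac12 c_0^2 h^2 \|\nabla f(x_k)\|^2$ with an explicit $c_0$, and then run the textbook ``one step to a rate'' arguments. The entry point is energy conservation (Lemma~\ref{Lem:EnergyConservation}): writing $(x_{k+1}, y_{k+1}) := \mHF_{h}(x_k, 0)$, we get $f(x_k) = H(x_k, 0) = H(x_{k+1}, y_{k+1}) = f(x_{k+1}) + \tfrac12\|y_{k+1}\|^2$, so the exact per-step decrease is $\tfrac12\|y_{k+1}\|^2$ and everything reduces to lower-bounding the terminal speed $\|y_{k+1}\| = \|Y_h\|$ of the flow \eqref{HF} started from rest at $x_k$.

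For that I would prove a \textbf{key lemma}: if $(X_t, Y_t)$ solves \eqref{HF} with $X_0 = x$, $Y_0 = 0$ and $0 < h \le 1/\sqrt L$, then $\|X_h - x\| \le (\cosh 1 - 1)\, h^2 \|\nabla f(x)\|$, $\|\nabla f(X_t)\| \le \cosh(\sqrt L t)\|\nabla f(x)\|$, and $\|Y_h\| \ge c_0 h \|\nabla f(x)\|$ with $c_0 := 2 - \sinh 1$, so that $c_0^2 > \tfrac12$. The argument is a Gr\"onwall/Picard estimate on the scalar quantities $\phi(t) := \|X_t - x\|$ and $\psi(t) := \|Y_t\|$: from $Y_t = -\int_0^t \nabla f(X_s)\,ds$, $X_t - x = \int_0^t Y_s\,ds$, and $\|\nabla f(X_s)\| \le \|\nabla f(x)\| + L\phi(s)$ ($L$-smoothness), $\phi$ satisfies a nonnegative-kernel Volterra inequality dominated by the solution of $\ddot\Phi = \|\nabla f(x)\| + L\Phi$, $\Phi(0) = \dot\Phi(0) = 0$, namely $\Phi(t) = \tfrac{\|\nabla f(x)\|}{L}(\cosh(\sqrt L t) - 1)$; feeding this back gives $\|Y_t + t\,\nabla f(x)\| \le \tfrac{\|\nabla f(x)\|}{\sqrt L}(\sinh(\sqrt L t) - \sqrt L t)$. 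Writing $\tau := \sqrt L h \in (0,1]$ and using that $u \mapsto (\cosh u - 1)/u^2$ and $u \mapsto (\sinh u - u)/u$ are increasing on $(0,1]$ (both reduce to $u\cosh u \ge \sinh u$) pins the worst case at $\tau = 1$, yielding $\|Y_h\| \ge h\|\nabla f(x)\| - (\sinh 1 - 1) h\|\nabla f(x)\| = (2 - \sinh 1)\, h\|\nabla f(x)\|$ and the other two bounds. Combined with the reduction above, this is exactly the effective descent inequality.

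Part 1 then follows in one line: $\alpha$-gradient domination gives $\|\nabla f(x_k)\|^2 \ge 2\alpha (f(x_k) - f(x^*))$, so $f(x_k) - f(x_{k+1}) \ge c_0^2 \alpha h^2 (f(x_k) - f(x^*)) \ge \tfrac12 \alpha h^2 (f(x_k) - f(x^*))$, i.e.\ $f(x_{k+1}) - f(x^*) \le (1 - \tfrac12\alpha h^2)(f(x_k) - f(x^*))$, and iterating gives the claim (the version under $\alpha$-strong convexity follows since strong convexity implies gradient domination). For Part 2 I would first establish a uniform bound $\|x_k - x^*\| \le C\|x_0 - x^*\|$ with an absolute $C$; the \textsf{HF-opt} map is not visibly non-expansive toward $x^*$, so this substitutes for the non-expansiveness one uses for gradient descent. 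To get it, expand $\|x_{k+1} - x^*\|^2 - \|x_k - x^*\|^2 = 2\langle x_{k+1} - x_k, x_k - x^*\rangle + \|x_{k+1} - x_k\|^2$; using $x_{k+1} - x_k = \int_0^h Y_t\,dt$, convexity $\langle \nabla f(X_s), X_s - x^*\rangle \ge 0$, and the key-lemma bounds $\|X_s - x_k\| = O(s^2\|\nabla f(x_k)\|)$, $\|\nabla f(X_s)\| = O(\|\nabla f(x_k)\|)$, both terms on the right are $O(h^4\|\nabla f(x_k)\|^2) = O(h^2(f(x_k) - f(x_{k+1})))$ by the effective descent inequality; summing in $k$ and using $f(x_0) - f(x^*) \le \tfrac L2\|x_0 - x^*\|^2$ ($L$-smoothness) with $Lh^2 \le 1$ yields $\|x_k - x^*\|^2 \le C^2\|x_0 - x^*\|^2$. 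Then convexity gives $f(x_k) - f(x^*) \le \|\nabla f(x_k)\|\,\|x_k - x^*\| \le C\|\nabla f(x_k)\|\,\|x_0 - x^*\|$, so with $a_k := f(x_k) - f(x^*)$ the effective descent inequality becomes $a_{k+1} \le a_k - c\, a_k^2/\|x_0 - x^*\|^2$; combined with monotonicity $a_{k+1} \le a_k$ (Lemma~\ref{lem:descent lemma}) this gives $1/a_{k+1} \ge 1/a_k + c/\|x_0 - x^*\|^2$, hence $a_k \le \|x_0 - x^*\|^2/(ck)$, and tracking the constants through the estimates above yields the stated $f(x_k) - f(x^*) \le 34\|x_0 - x^*\|^2/(h^2 k)$.

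The main obstacle is the key lemma: making the Gr\"onwall comparison rigorous and, more delicately, extracting a constant sharp enough that $c_0^2 > \tfrac12$ — this threshold is exactly what the clean rate $(1 - \tfrac12\alpha h^2)$ in Part 1 demands, so there is essentially no slack to lose in that estimate. A secondary technical point is the uniform distance bound $\|x_k - x^*\| \le C\|x_0 - x^*\|$ needed in Part 2; everything downstream of these two ingredients is routine bookkeeping.
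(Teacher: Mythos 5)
Your proposal is correct, and it overlaps with the paper's proof only in its first half. The paper also reduces everything to a per-step descent inequality via energy conservation: it proves $\|Y_t\|\ge \tfrac{t}{\sqrt 2}\|\nabla f(x_k)\|$ for $t\le 1/\sqrt L$ (Lemma~\ref{Lem:BoundV}, via a Taylor expansion $Y_t=-t\nabla f(X_0)+R_t$ with a Hessian-based remainder bounded through the cosh estimate of Lemma~\ref{Lem:BoundV1}), giving $f(x_{k+1})\le f(x_k)-\tfrac{h^2}{4}\|\nabla f(x_k)\|^2$ (Lemma~\ref{lem:short-time HF-opt}); Part~1 then follows exactly as you argue. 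Your version of the velocity lower bound is a mild variant: you avoid the Hessian by writing $Y_t+t\nabla f(x)=-\int_0^t(\nabla f(X_s)-\nabla f(x))\,\rmd s$ and using only Lipschitzness of $\nabla f$, and the Volterra comparison you worry about is not really an obstacle — applying the paper's own Lemma~\ref{Lem:Cosh} to $y(t)=\|\nabla f(x)\|+L\|X_t-x\|$ gives $\|\nabla f(X_t)\|\le\cosh(\sqrt L t)\|\nabla f(x)\|$ and hence your $\sinh$ bound directly, yielding $c_0=2-\sinh 1\approx 0.82$, which is in fact sharper than the paper's $1/\sqrt2$ and comfortably clears the $c_0^2\ge\tfrac12$ threshold. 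The genuine divergence is Part~2: the paper does not prove it internally but invokes Theorem~2 of \citet{wilson2019accelerating}, which converts a descent condition of this type into the $O(1/(h^2k))$ rate and produces the constant $34$. You instead give a self-contained argument: control the iterate drift by expanding $\|x_{k+1}-x^*\|^2-\|x_k-x^*\|^2$, using convexity along the flow and the $O(s^2\|\nabla f(x_k)\|)$ displacement bound to show the per-step growth is $O(h^2(f(x_k)-f(x_{k+1})))$, telescope to get $\|x_k-x^*\|\le C\|x_0-x^*\|$ with $Lh^2\le 1$, and then run the standard $1/a_k$ recursion with monotonicity from Lemma~\ref{lem:descent lemma}. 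This route is sound; a rough constant count (cross term $\lesssim 0.14\,h^4\|\nabla f(x_k)\|^2$, step term $\lesssim 0.3\,h^4\|\nabla f(x_k)\|^2$, $C^2\lesssim 1.7$) lands near $5\|x_0-x^*\|^2/(h^2k)$, well inside the stated $34$, so the bookkeeping you defer does close. What your approach buys is a fully self-contained proof of Part~2 (and a cleaner, Hessian-free key lemma); what the paper's buys is brevity by outsourcing exactly that step to an existing black-box theorem.
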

Up to constants, the results in Theorems~\ref{thm:short-time}  match the convergence rates of \GD\ under the same assumption (see Theorem~\ref{thm:GD-sc} in Appendix~\ref{app:convergence under sc and pl} and Theorem~\ref{thm:GD-c} in Appendix~\ref{app:convergence under c}) and are derived based on the exact simulation of \eqref{HF}. If we replace \(\HF_{\eta_k}\) with a one-step leapfrog integrator~\citep{sanz1992symplectic} for implementation, then \textsf{HF-opt} recovers exactly the \GD\ algorithm (see Appendix~\ref{app:LF recovers GD}), and thus inherits the same convergence guarantees as \GD.

In this paper, we aim to achieve accelerated convergence rates akin to Nesterov's accelerated gradient descent (\textsf{AGD}). \citet{wang2024hamiltonian} show that \textsf{HF-opt} achieves the accelerated convergence rate for minimizing strongly convex quadratic functions when the integration time \(\eta_k\) is selected based on the roots of Chebyshev polynomials. Inspired by the randomized Hamiltonian Monte Carlo (\textsf{RHMC}) algorithm for sampling~\citep{bou2017randomized} where the integration time is independently drawn from an exponential distribution, we study its optimization counterpart to explore accelerated convergence rates for a broader class of objectives beyond quadratic functions.

\section{Randomized Hamiltonian flow for optimization}
\label{RHF}
We propose a new optimization counterpart of \RHMC, called the \textbf{randomized Hamiltonian flow for optimization} (\textsf{RHF-opt}), where the integration time is drawn from an exponential distribution.

\begin{algorithm}[H]
    \caption{\textbf{Randomized Hamiltonian Flow for Optimization} ({\textsf{RHF-opt}})}
    \label{alg:RHF-opt}
    \begin{algorithmic}[1]
        \State Initialize $x_0\in\R^d$. Specify $\gamma(t) > 0$ for all $t \ge 0$.
        \For{$k = 0,1,\dots, K-1$}
        \State Set the current time $T_{k}=\sum_{i=0}^{k-1}\tau_i$ ~ (set $T_0 = 0$)
        \State Independently sample $\tau_{k} \sim \mathrm{Exp}\lp\gamma( T_{k})\rp$
        \State Set $x_{k+1} = \Pi_1\circ \mHF_{\tau_k}(x_k,0)$ 
        \Comment{(evolve~\eqref{HF} for time $\tau_k$ and project to first component)}
        \EndFor
        \State \Return $x_K$
    \end{algorithmic}
\end{algorithm}

In Algorithm~\ref{alg:RHF-opt}, the $k$-th integration time $\tau_k$ is a random variable drawn from an exponential distribution with mean $1/\gamma(T_k)$, where $T_k$ is the current time. 
Note that $\gamma(t)$ can depend on time. Later in Section~\ref{subsec:acc rate of RHF}, we choose $\gamma(t)$ to be a constant when $f$ is strongly convex, and $\gamma(t)\, \propto\, 1/t$ when $f$ is weakly convex. We first illustrate the behavior of \textsf{RHF-opt} on a simple quadratic function. See Appendix~\ref{app:pf of examples} for the proof.

\begin{example}[\textsf{RHF-opt} for quadratic functions]\label{ex:quad}
    Consider the quadratic function $f(x)=\frac{1}{2}x^{\top}Ax$, where $A\in\R^{d\times d}$ is symmetric and $\alpha I\preceq A\preceq LI$. Then \textsf{\em RHF-opt} with $\gamma(t)=\gamma>0$ satisfies
    \begin{align*}
        \E\lmp\|x_k-x^*\|^2\rmp\leq  \lp 1 - \frac{2\alpha}{\gamma^2 + 4\alpha}\rp^k\E\lmp\|x_0-x^*\|^2\rmp.
    \end{align*}
If $\gamma = 2\sqrt{\alpha}$, the expected total integration time to achieve $\E\lmp\|x_K-x^*\|^2\rmp\leq \varepsilon$ satisfies $\sum_{k=0}^{K-1}\E[\tau_k] = K\cdot \E[\tau_k]\geq \frac{2}{\sqrt{\alpha}}\log\frac{\E\lmp\|x_0-x^*\|^2\rmp}{\varepsilon}$, matching that of \textsf{\em AGF} with refreshment and \textsf{\em HF-opt} with Chebyshev-based integration time~\citep{wang2024hamiltonian}. Note that the expected total integration time of \textsf{\em RHF-opt} is less than the total integration time of \textsf{\em HF-opt} shown in Example~\ref{ex:HF-quad}.
\end{example}

\subsection{Reformulation of \textsf{RHF-opt} as a continuous-time process}\label{sec:RHF}

To rigorously state convergence rates of Algorithm~\ref{alg:RHF-opt} (\textsf{RHF-opt}), we first describe an equivalent formulation of \textsf{RHF-opt} as the following piecewise deterministic continuous-time process that we refer to as the \textbf{randomized Hamiltonian flow} (\RHF):
\begin{enumerate}
    \item Evolve \eqref{HF} between velocity refreshment events.
    \item At random jump times governed by an inhomogeneous Poisson process with rate $\gamma(t)$, we refresh the velocity to $0$, and continue evolving \eqref{HF}.
\end{enumerate}
In the continuous-time perspective, \(t\geq 0\) denotes the actual time variable. The sequence \(\{T_k\}_{k\geq 0}\) in Algorithm~\ref{alg:RHF-opt} (\textsf{RHF-opt}) represents the random refreshment times generated by cumulative sums of independent exponential random variables with rates \(\gamma(T_k)\).
Equivalently, the continuous-time process described above can be modeled as the following stochastic process:
\begin{equation}\label{eq:RHF}
    \begin{cases}
\rmd X_t = Y_t\, \rmd t, \\
\rmd Y_t = -\nabla f(X_t)\, \rmd t - Y_{t} \, \rmd N_t,
\end{cases}
\tag{\textsf{RHF}}
\end{equation}
where $\rmd N_t:=\sum_{k\geq 1}\delta_{T_k}(\rmd t)$ is the Poisson point process with rate $\gamma(t)$, and $T_k$ is the $k$-th time an event happens. 
Let $Y_{t^{-}}$ be the left limit of $Y_t$. At each random time $T_k$, the second line in \eqref{eq:RHF} updates $Y_{T_k}-Y_{T_k^{-}}=-Y_{T_k^{-}}$, which refreshes the velocity to $Y_{T_k}=0$.  See also \citet[Appendix C]{even2021continuized} for a review of the Poisson point measure and the left limit update described above. 

\subsection{Accelerated convergence rates of the randomized Hamiltonian flow}\label{subsec:acc rate of RHF}

We establish the accelerated convergence rates of \eqref{eq:RHF} for minimizing strongly and weakly convex functions.
Our proofs use the continuity equation along~\eqref{eq:RHF}; see Lemma~\ref{Lem:Formula} in Appendix~\ref{app:proof of c-t}.

\subsubsection{For strongly convex functions}\label{subsubsec:RHF-sc}
We show the following convergence rate of \eqref{eq:RHF} under strong convexity; see Appendix~\ref{app:proof of RHF-opt-sc} for the proof.
In the result below, the expectation is over the randomness in $(X_t,Y_t) \in \R^{2d}$, which comes from the random integration times in~\eqref{eq:RHF}.

\begin{tcolorbox}[left=2mm, right=2mm, top=1mm, bottom=1mm]
\begin{theorem}\label{Thm:AccRateCtsTime}
    Assume $f$ is $\alpha$-strongly convex.
    Let $(X_t, Y_t)$ evolve following \eqref{eq:RHF} with the choice $\gamma(t) =\sqrt{\frac{16\alpha}{5}}$, from any $X_0\in\R^d$ with $Y_0 = 0$.
    Then for any $t\geq 0$, we have 
    \begin{align*}
        \E\left[ f(X_t)-f(x^*) \right]
        \le\exp\lp-\sqrt{\frac{\alpha}{5}}t \rp \E\left[ f(X_0)-f(x^*) + \frac{\alpha}{10} \left\|X_0-x^* \right\|^2 \right].
    \end{align*}
\end{theorem}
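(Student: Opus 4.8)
The plan is to design a Lyapunov function $\mathcal{E}_t$ of the form
\[
\mathcal{E}_t = a(t)\,\big(f(X_t) - f(x^*)\big) + \frac{1}{2}\big\| b(t)(X_t - x^*) + c(t) Y_t \big\|^2 + d(t)\,\|Y_t\|^2
\]
for time-dependent scalars $a,b,c,d$ to be chosen, and show that $\E[\mathcal{E}_t]$ decays like $e^{-\sqrt{\alpha/5}\,t}$. This mirrors the Lyapunov analysis of the accelerated gradient flow \AGF, but here the dissipation mechanism is not a damping term — it is the velocity refreshment, which only acts at the Poisson jump times. So the key structural point is that the generator of \eqref{eq:RHF} splits into a \emph{transport part} (the deterministic \eqref{HF} flow) and a \emph{jump part} (rate $\gamma(t)$, effect $Y \mapsto 0$). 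Concretely, using the continuity equation (Lemma~\ref{Lem:Formula}), for a smooth test function $\phi(x,y,t)$,
\[
\frac{\rmd}{\rmd t}\,\E[\phi(X_t, Y_t, t)] = \E\Big[ \partial_t \phi + \langle \nabla_x \phi, Y_t\rangle - \langle \nabla_y \phi, \nabla f(X_t)\rangle + \gamma(t)\big(\phi(X_t, 0, t) - \phi(X_t, Y_t, t)\big)\Big].
\]

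First I would compute the transport contribution to $\frac{\rmd}{\rmd t}\E[\mathcal{E}_t]$ exactly as in the continuous-time accelerated analysis, using $\alpha$-strong convexity in the form $\langle \nabla f(X_t), X_t - x^*\rangle \geq f(X_t) - f(x^*) + \frac{\alpha}{2}\|X_t - x^*\|^2$. Then I would add the jump contribution $-\gamma(t)\big[\,\mathcal{E}_t - \mathcal{E}_t\big|_{Y_t = 0}\,\big]$, which is where the genuine decrease comes from: setting $Y_t = 0$ removes exactly the $Y_t$-dependent pieces. The idea is that the cross term $c(t)\langle b(t)(X_t-x^*), Y_t\rangle$ and the quadratic $Y_t$ terms together generate, via the jump, a negative contribution proportional to $\|Y_t\|^2$ (and a sign-indefinite cross term that must be absorbed), while the transport of the $\|X_t - x^*\|^2$ term feeds energy \emph{into} $\langle X_t - x^*, Y_t\rangle$. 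Matching these requires choosing $a(t), b(t), c(t), d(t)$ as exponentials $e^{\sqrt{\alpha/5}\,t}$ times constants, and the constant $\gamma = \sqrt{16\alpha/5}$ together with the decay rate $\sqrt{\alpha/5}$ should emerge from forcing the resulting quadratic form in $(\|X_t - x^*\|, \|Y_t\|)$ — or more precisely in $(X_t - x^*, Y_t) \in \R^{2d}$ — to be negative semidefinite. I would verify at $t = 0$ that $Y_0 = 0$ makes the cross and $\|Y\|^2$ terms vanish, so $\mathcal{E}_0 = a(0)(f(X_0)-f(x^*)) + \frac{1}{2}b(0)^2\|X_0-x^*\|^2$, and I would normalize the constants so that $a(0) = e^{0}$-scaled version matches $f(X_0)-f(x^*) + \frac{\alpha}{10}\|X_0-x^*\|^2$ and so that $a(t) \geq e^{\sqrt{\alpha/5}\,t}$ for all $t$, giving the stated bound after dividing through.

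The main obstacle I anticipate is handling the cross term $\langle X_t - x^*, Y_t\rangle$ robustly. Unlike \AGF\ with continuous damping, here there is no $-\gamma Y_t\,\rmd t$ drift to directly kill $\|Y_t\|^2$ along the flow; the $\|Y_t\|^2$ that accumulates between jumps is only discharged \emph{at} jumps, at rate $\gamma(t)$. So the Lyapunov function must be engineered so that the transport-generated growth of the cross term and of $\|Y_t\|^2$ is dominated by $\gamma(t)$ times the jump decrease. This is a delicate balance: too small a $\gamma$ and the energy $\|Y_t\|^2$ runs away; too large a $\gamma$ and refreshment happens so often that no acceleration occurs (one recovers the \GD-type rate of Theorem~\ref{thm:short-time}). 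Finding the exact coefficients is a finite linear-algebra problem — requiring a $2\times 2$ (block) matrix inequality to hold pointwise in $t$ — and I expect the specific numbers $\sqrt{16\alpha/5}$ and $\sqrt{\alpha/5}$ to be exactly the values that make this matrix marginally negative semidefinite. A secondary technical point is justifying the differentiation under the expectation and the validity of the continuity/Dynkin-type identity for this piecewise-deterministic process, but that is routine given Lemma~\ref{Lem:Formula} and standard PDMP theory, so I would cite it rather than reprove it.
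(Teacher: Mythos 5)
Your proposal matches the paper's proof: the paper uses exactly the Lyapunov function $\mathcal{E}_t = e^{\sqrt{\alpha/5}\,t}\,\E\bigl[f(X_t)-f(x^*) + \tfrac{\alpha}{10}\|X_t-x^*+\sqrt{5/\alpha}\,Y_t\|^2\bigr]$ (your ansatz with $d\equiv 0$), applies Lemma~\ref{Lem:Formula} to split the transport and jump contributions, uses $\alpha$-strong convexity on the transport part, and closes the argument by completing a square so that the leftover terms equal $-2\sqrt{\alpha/5}\,\E\bigl\|\sqrt{\alpha/5}\,(X_t-x^*)+\tfrac12 Y_t\bigr\|^2 \le 0$, which is precisely the marginally negative semidefinite quadratic form you anticipate determining $\gamma=\sqrt{16\alpha/5}$ and the rate $\sqrt{\alpha/5}$. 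The approach is correct and essentially identical to the paper's.
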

\end{tcolorbox}
Compared with \textsf{HF-opt} with short integration time (Theorem~\ref{thm:short-time}), \RHF\ achieves faster convergence for strongly convex functions without smoothness assumption. Recall that the convergence rates for minimizing \(\alpha\)-strongly convex functions are \(O(\exp(-2\alpha t))\) for the gradient flow (\GF)  and \(O(\exp(-\sqrt{\alpha}t))\) for the accelerated gradient flow (\textsf{AGF})~\citep{wibisono2016variational} (see Theorems~\ref{thm:GF-sc} and \ref{thm:AGF-sc} in Appendix~\ref{app:convergence under sc and pl}). In comparison, \RHF\ achieves a faster convergence rate than \GF\ when $\alpha$ is small, and it matches the accelerated rate of \textsf{AGF} up to constants, albeit in expectation.

\subsubsection{For weakly convex functions}\label{subsubsec:RHF-c}
We show the convergence rate of \eqref{eq:RHF} under weak convexity; see Appendix~\ref{app:proof of RHF-opt-c} for the proof. Similar to the result above, the expectation below is taken over the randomness in $(X_t,Y_t) \in \R^{2d}$, which comes from the random integration times in~\eqref{eq:RHF}.
\begin{tcolorbox}[left=2mm, right=2mm, top=1mm, bottom=1mm]
\begin{theorem}\label{Thm:AccRateCtsTime-wc}
    Assume $f$ is weakly convex.
    Let $(X_t, Y_t)$ evolve following \eqref{eq:RHF} with the choice $\gamma(t) =\frac{6}{t+1}$, from any $X_0\in\R^d$ with $Y_0 = 0$.
    Then for any $t\geq 0$, we have 
    \begin{align*}
        \E\left[ f(X_t)-f(x^*) \right]
        \le \frac{5\cdot\E\left[f(X_0)-f(x^*)+  \left\|X_0-x^* \right\|^2 \right]}{(t+1)^2}.
    \end{align*}
\end{theorem}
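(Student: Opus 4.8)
The plan is to build a Lyapunov function adapted to the weakly convex setting and track its expected evolution along \eqref{eq:RHF}, mirroring the strongly convex argument of Theorem~\ref{Thm:AccRateCtsTime} but with time-dependent coefficients reflecting the $O(1/t^2)$ target rate. Motivated by the standard Lyapunov function for the accelerated gradient flow in the weakly convex case (of the form $t^2(f(X_t)-f(x^*)) + \tfrac12\|X_t + \tfrac{t}{2}\dot X_t - x^*\|^2$), and by the fact that here $\dot X_t = Y_t$ refreshes to $0$ at jump times, I would try a candidate of the form
\begin{align*}
\mathcal{E}_t = a(t)\,\E[f(X_t)-f(x^*)] + b(t)\,\E[\langle X_t - x^*, Y_t\rangle] + c(t)\,\E[\|Y_t\|^2] + e(t)\,\E[\|X_t - x^*\|^2],
\end{align*}
with $a(t)$ growing like $(t+1)^2$, and the other coefficients to be pinned down by the requirement that $\frac{\rmd}{\rmd t}\mathcal{E}_t \le 0$. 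The first step is to compute, using the continuity equation (Lemma~\ref{Lem:Formula} in Appendix~\ref{app:proof of c-t}), the time derivatives of each of the four expectations along \eqref{eq:RHF}. Between jumps these follow the deterministic \eqref{HF} dynamics, contributing terms like $\E[\langle Y_t, \nabla f(X_t)\rangle]$, $\E[\|Y_t\|^2]$, $-\E[\langle X_t-x^*, \nabla f(X_t)\rangle]$, etc.; the jump part of the generator contributes, at rate $\gamma(t)$, the instantaneous changes caused by setting $Y_t \mapsto 0$, which kills the $c(t)\E[\|Y_t\|^2]$ and $b(t)\E[\langle X_t-x^*,Y_t\rangle]$ terms and leaves the $X$-only terms untouched.

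Next I would assemble $\frac{\rmd}{\rmd t}\mathcal{E}_t$ and group terms by the "geometric" quantities that appear: $\E[f(X_t)-f(x^*)]$, $\E[\langle X_t-x^*,\nabla f(X_t)\rangle]$, $\E[\|Y_t\|^2]$, $\E[\langle X_t-x^*,Y_t\rangle]$, and $\E[\|X_t-x^*\|^2]$. Weak convexity gives the single key inequality $f(x^*) \ge f(X_t) + \langle \nabla f(X_t), x^* - X_t\rangle$, i.e. $\langle X_t - x^*, \nabla f(X_t)\rangle \ge f(X_t)-f(x^*)$, which is used to convert the inner-product-with-gradient term into something controlling $\E[f(X_t)-f(x^*)]$. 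After substituting this and the choice $\gamma(t) = 6/(t+1)$, the goal is to choose $a(t),b(t),c(t),e(t)$ — plausibly monomials in $(t+1)$, e.g. $a(t) \asymp (t+1)^2$, $b(t)\asymp (t+1)$, $c(t)\asymp \text{const}$, $e(t)\asymp\text{const}$ — so that every residual term is nonpositive. This reduces to a small system of ODE/algebraic inequalities among the coefficient functions; solving it fixes the constants and, in particular, should explain where the factor $6$ in $\gamma(t)$ and the constant $5$ in the bound come from. Finally, conclude by noting $\mathcal{E}_t \le \mathcal{E}_0$, lower-bounding $\mathcal{E}_t \ge a(t)\,\E[f(X_t)-f(x^*)]$ (discarding the nonnegative squared-norm terms, after checking the cross term $b(t)\langle X_t-x^*,Y_t\rangle + c(t)\|Y_t\|^2 + e(t)\|X_t-x^*\|^2$ is a nonnegative quadratic form in $(X_t-x^*,Y_t)$), and upper-bounding $\mathcal{E}_0$ using $Y_0 = 0$, which leaves only $a(0)\E[f(X_0)-f(x^*)] + e(0)\E[\|X_0-x^*\|^2]$; dividing by $a(t) \ge (t+1)^2$ yields the claimed bound with the displayed constant.

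The main obstacle I anticipate is the coefficient bookkeeping: unlike the strongly convex case where constant $\gamma$ lets one use constant coefficients, here the time-dependence forces $a,b,c,e$ to be functions, and the jump term only damps the velocity-dependent part of the Lyapunov function, so the deterministic-flow terms involving $\E[\|Y_t\|^2]$ must be absorbed by the decrease in $b(t)$ or by the $\gamma(t)$-rate damping — getting these to cancel simultaneously with the $\E[\langle X_t-x^*,Y_t\rangle]$ and $\E[\|X_t-x^*\|^2]$ terms is where the delicate choice of constants lives. A secondary point requiring care is justifying the exchange of $\frac{\rmd}{\rmd t}$ and $\E$, and the use of the continuity equation for the piecewise-deterministic Markov process, but this is handled by the cited Lemma~\ref{Lem:Formula}. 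I would also sanity-check the final constants against Example~\ref{ex:quad} (with $\gamma(t) \propto 1/t$) and against the known \textsf{AGF} rate to make sure no factor is off.
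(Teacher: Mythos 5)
Your plan is essentially the paper's proof: the paper takes the Lyapunov function $\mathcal{E}_t = \E\bigl[\tfrac{(t+1)^2}{4}(f(X_t)-f(x^*)) + \tfrac12\|X_t-x^*+\tfrac{t+1}{2}Y_t\|^2 + \tfrac34\|X_t-x^*\|^2\bigr]$ — exactly your ansatz after expanding the square, with $a(t)=(t+1)^2/4$, $b(t)=(t+1)/2$, $c(t)=(t+1)^2/8$, $e(t)=5/4$ — differentiates it via Lemma~\ref{Lem:Formula}, uses weak convexity and $\gamma(t)=6/(t+1)$ to get $\dot{\mathcal{E}}_t\le 0$, and concludes by $\mathcal{E}_t\le\mathcal{E}_0$ with $Y_0=0$, just as you outline. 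One correction to your tentative scalings: the coefficient of $\E[\|Y_t\|^2]$ cannot be a constant, because without smoothness the flow term $(a(t)-2c(t))\,\E[\langle\nabla f(X_t),Y_t\rangle]$ cannot be absorbed and must vanish identically, which forces $c(t)=a(t)/2\asymp(t+1)^2$; your undetermined-coefficients step would reveal this, so it is a bookkeeping fix rather than a gap in the method.
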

\end{tcolorbox}

Compared with \textsf{HF-opt} with short integration time (Theorem~\ref{thm:short-time}), \RHF\ achieves faster convergence for weakly convex functions without smoothness assumption. Recall the convergence rates for minimizing weakly convex functions are \(O(1 / t)\) for \GF\ and \(O(1 / t^2)\) for \textsf{AGF}~\citep{su2016differential,wibisono2016variational} (see Theorems~\ref{thm:GF-c} and \ref{thm:AGF-c} in Appendix~\ref{app:convergence under c}). In this case as well, \RHF\ improves upon the rate of \GF\ and matches the accelerated rate of \textsf{AGF}, albeit in expectation.

The convergence guarantees in Theorems~\ref{Thm:AccRateCtsTime} and~\ref{Thm:AccRateCtsTime-wc}  are still idealized because they assume we can exactly simulate \eqref{HF}. In Section~\ref{sec:Dis-analysis}, we discuss a practical implementation of \textsf{RHF}.

\section{Randomized Hamiltonian gradient descent}\label{sec:Dis-analysis}
We study the discretization and implementation of the randomized Hamiltonian flow (\textsf{RHF}) as a discrete-time algorithm. We consider two sources of approximation in the discretization process.

\paragraph{Approximate Poisson process.} In \textsf{RHF}, velocity is refreshed at random times governed by a Poisson process with rate \(\gamma(t)\). For a small time increment \(h > 0\), the probability of a refreshment event occurring in \([t, t + h)\) is approximately \(\gamma(t)\cdot h\), with the probability of multiple events occurring in the same interval being negligible (order \(o(h)\)). Thus, given $x_0\in\R^d$ and $y_0=0$, \textsf{RHF} can be approximated by alternating between a deterministic integration step of \eqref{HF} over time \(h\) to generate a proposal and a probabilistic accept-refresh step for $k\geq 0$:
\begin{enumerate}
    \item \textbf{Generate proposal}: $(x_{k+1},\Tilde{y}_{k+1})=\mathsf{HF}_h(x_k,y_k)$
    \item \textbf{Accept-refresh}: $y_{k+1}=\begin{cases}
        \Tilde{y}_{k+1}\quad &\text{with probability } 1-\min(\gamma(kh)\cdot h,1)\\
        0\quad &\text{with probability } \min(\gamma(kh)\cdot h,1)
    \end{cases}$
\end{enumerate}
As $h\rightarrow 0$, the process above recovers \textsf{RHF}. 
\paragraph{Approximate Hamiltonian flow.}
In practice, we need to simulate the Hamiltonian flow \eqref{HF} using a numerical integrator, such as the leapfrog  integrator~\citep{leimkuhler2004simulating,sanz1992symplectic,bou2018geometric}. 
Accordingly, we replace the exact flow map $\mathsf{HF}_{h}$ with a discrete-time integrator $\mathsf{T}_h:\R^d\times \R^d\rightarrow \R^d\times \R^d$ given stepsize $h$. As a first step, we consider 
$\mathsf{T}_h$ to be the implicit (backward Euler) integrator. The update for $(x_{k+1},\tilde{y}_{k+1})=\mathsf{T}_h(x_k,y_k)$ satisfies the following system of implicit equations:
\begin{subequations}
\label{eq:update-xy}
    \begin{align}
        x_{k+1} - x_k &= h\tilde{y}_{k+1},\label{updatex}\\
    \tilde{y}_{k+1} - y_k &= -h\nabla f(x_{k+1}).\label{updatey}
    \end{align}
\end{subequations}
By substituting $\tilde{y}_{k+1}$ in \eqref{updatex} with $y_k -h\nabla f(x_{k+1})$ from \eqref{updatey}, updates \eqref{eq:update-xy} can be reformulated as
\begin{subequations}
\label{eq:update-xy-re}
    \begin{align}
    x_{k+1} &= \mathrm{Prox}_{h^2f}(x_k+h y_k),\label{prox-step}\\
    \tilde{y}_{k+1}  &= y_k -h\nabla f(x_{k+1}).\label{updatey-re}
\end{align}
\end{subequations}
where $\mathrm{Prox}_{h^2f}(x)=\arg\min_{y\in\R^d}\left\{f(y)+\frac{1}{2h^2}\|y-x\|^2\right\}$ is the proximal operator. If we can implement the proximal operator for $f$, then the updates~\eqref{eq:update-xy-re} above yield a concrete algorithm that we call the \textbf{randomized proximal Hamiltonian descent} (\RPHD); see Appendix~\ref{app:RPHD} for more details on \RPHD\ and its convergence analysis.
However, the proximal step \eqref{prox-step} is not explicit for general $f$, and thus we make one further approximation to turn it into a concrete algorithm.

\paragraph{Algorithm.}
Let $x_{k+\frac{1}{2}}:=x_k+hy_k$. 
We approximate the proximal step \eqref{prox-step} by gradient descent:
\begin{align}
\label{eq:gd}
    x_{k+1} = x_{k+\frac{1}{2}} - h^2\nabla f(x_{k+\frac{1}{2}}).
\end{align} 
This modification leads to a practical algorithm that we call the \textbf{randomized Hamiltonian gradient descent} (\textsf{RHGD}), summarized in Algorithm~\ref{alg:RHGD}.
Note that as $h \to 0$, \textsf{RHGD} recovers \RHF.
\begin{algorithm}[H]
    \caption{\textbf{Randomized Hamiltonian Gradient Descent} (\textsf{RHGD})}
    \label{alg:RHGD}
    \begin{algorithmic}[1]
        \State Initialize $x_0 \in \R^d$ and $y_0 = 0$. Choose stepsize $h>0$ and refreshment rate $\gamma_k>0$.
        \For{$k = 0,1,\dots, K-1$}
            \State $x_{k+\frac{1}{2}} = x_k+h y_k$
            \State $x_{k+1} = x_{k+\frac{1}{2}} - h^2\nabla f(x_{k+\frac{1}{2}})$
            \State $\tilde{y}_{k+1} = y_k - h\nabla f(x_{k+1})$
            \State $y_{k+1}=\begin{cases}
        \tilde{y}_{k+1}\quad &\text{with probability } 1-\min\lp {\gamma_k \cdot h}, 1\rp\\
        0\quad &\text{with probability } \min\lp{\gamma_k \cdot h},1\rp
        \end{cases}$
        \EndFor
        \State \Return $x_K$
    \end{algorithmic}
\end{algorithm}

\subsection{Accelerated convergence rates of \RHGD}
\RHGD\ serves as a practical implementation of the randomized Hamiltonian flow (\RHF). In the following, we analyze the convergence rates of \RHGD\ under both strong and weak convexity.

\subsubsection{For strongly convex functions}\label{subsubsec:RHGD-sc}

We show the following accelerated convergence rate of \RHGD\ for minimizing smooth and strongly convex functions. The proof is deferred to Appendix~\ref{app:RHGD-sc}.
\begin{tcolorbox}[left = 2mm, right = 2mm, top = 1mm, bottom = 1mm]
\begin{theorem}
\label{thm:RHGD-sc}
     Assume $f$ is $\alpha$-strongly convex and $L$-smooth. Then for all $k \ge 0$, \textsf{\em RHGD} (Algorithm~\ref{alg:RHGD}) with $h \leq \frac{1}{4\sqrt{L}}$, $\gamma_k = \sqrt{\alpha}$, and from any $x_0\in\R^d$ satisfies
    \begin{align*}
    \E[f(x_k)-f(x^*)]\leq \lp 1+\frac{\sqrt{\alpha}h}{6}\rp^{-k}\E\lmp f(x_0)-f(x^*) + \frac{\alpha}{72}\|x_0-x^*\|^2 \rmp.
\end{align*}
\end{theorem}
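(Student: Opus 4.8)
The plan is to reduce the theorem to a one-step Lyapunov contraction, exactly paralleling the continuous-time argument behind Theorem~\ref{Thm:AccRateCtsTime} but now tracking the two discretization errors in \RHGD: the gradient-descent surrogate for the exact (proximal/Hamiltonian) step, and the approximate Poisson refresh. I would work with a Lyapunov functional of the form
\[
\mathcal{E}_k \;=\; \E\!\left[\, f(x_k) - f(x^*) + \tfrac{\alpha}{72}\,\bigl\| x_k - x^* + c\,h\, y_k \bigr\|^2 + b\, h^2 \|y_k\|^2 \,\right],
\]
with absolute constants $b,c>0$ to be fixed during the computation. Two boundary facts make this the right object: since $y_0=0$, we get $\mathcal{E}_0 = \E[f(x_0)-f(x^*)+\tfrac{\alpha}{72}\|x_0-x^*\|^2]$, which is exactly the right-hand side of the theorem; and $\mathcal{E}_k \ge \E[f(x_k)-f(x^*)]$ for every $k$ because the remaining terms are nonnegative. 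Hence it suffices to prove the single-step inequality $\bigl(1+\tfrac{\sqrt{\alpha}h}{6}\bigr)\mathcal{E}_{k+1}\le\mathcal{E}_k$, after which unrolling yields $\E[f(x_k)-f(x^*)]\le\mathcal{E}_k\le\bigl(1+\tfrac{\sqrt{\alpha}h}{6}\bigr)^{-k}\mathcal{E}_0$.

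For the one-step bound I would condition on $(x_k,y_k)$ and use that, since $\gamma_k h = \sqrt{\alpha}\,h \le \tfrac{1}{4\sqrt{\kappa}}\le\tfrac14<1$, the refresh probability is exactly $p=\sqrt{\alpha}\,h$, so $\E[g(y_{k+1})\mid x_k,y_k,\tilde y_{k+1}] = (1-p)\,g(\tilde y_{k+1})+p\,g(0)$ for any $g$. The deterministic part of the step decomposes as a transport move $x_{k+\frac12}=x_k+hy_k$, a gradient-descent move $x_{k+1}=x_{k+\frac12}-h^2\nabla f(x_{k+\frac12})$, and a velocity move $\tilde y_{k+1}=y_k-h\nabla f(x_{k+1})$. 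The ingredients are: (i) the standard descent inequality $f(x_{k+1})\le f(x_{k+\frac12})-\tfrac{h^2}{2}\|\nabla f(x_{k+\frac12})\|^2$, valid since $Lh^2\le\tfrac1{16}\le1$; (ii) convexity/strong convexity, used both as $f(x_{k+\frac12})-f(x_k)\le h\langle\nabla f(x_{k+\frac12}),y_k\rangle$ for the transport move and as $\langle\nabla f(x_{k+1}),x_{k+1}-x^*\rangle\ge f(x_{k+1})-f(x^*)+\tfrac{\alpha}{2}\|x_{k+1}-x^*\|^2$ to convert into decrease of $f$ and of $\|x-x^*\|^2$ the inner products coming from expanding $\|x_{k+1}-x^*+chy_{k+1}\|^2$ and $\|x_{k+\frac12}-x^*\|^2$; and (iii) $L$-smoothness to bound the discretization discrepancies $\|x_{k+1}-x_{k+\frac12}\| = h^2\|\nabla f(x_{k+\frac12})\|$ and $\|\nabla f(x_{k+1})-\nabla f(x_{k+\frac12})\|\le Lh^2\|\nabla f(x_{k+\frac12})\|$, so that all error terms enter at order $h^2$ or higher relative to $\|\nabla f(x_{k+\frac12})\|^2$ and $\|y_k\|^2$. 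Expanding the three pieces of $\mathcal{E}_{k+1}$ and taking the coin average \emph{before} applying convexity — this order matters, since it is the refresh, which zeroes both $chy_{k+1}$ and $h^2\|y_{k+1}\|^2$ with probability $\sqrt{\alpha}h$, that supplies the extra $\Theta(\sqrt{\alpha}h)$-order dissipation ($-p\|y_k\|^2$ and the vanishing cross term) responsible for acceleration — and collecting terms, I would obtain exactly $\bigl(1+\tfrac{\sqrt{\alpha}h}{6}\bigr)\mathcal{E}_{k+1}\le\mathcal{E}_k$ once $b,c$ are chosen and the error terms are checked to be nonpositive under $h\le\tfrac1{4\sqrt L}$ and $\gamma_k=\sqrt\alpha$.

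The main obstacle is precisely this last constant-tracking. Because \RHGD\ replaces the exact energy-conserving flow by a gradient step, the one-step expansion produces several $O(h^2)$ and $O(h^3)$ error terms (from $\nabla f(x_{k+\frac12})$ versus $\nabla f(x_{k+1})$, and from the $-h^2\nabla f(x_{k+\frac12})$ correction to the position), and these must be absorbed by the genuinely negative terms $-\tfrac{h^2}{2}\|\nabla f(x_{k+\frac12})\|^2$ and $-\Theta(\sqrt{\alpha}h)\,h^2\|y_k\|^2$; making all of these fit simultaneously while keeping the contraction rate at $1+\sqrt{\alpha}h/6$ is what pins down the specific numerical constants $\tfrac1{4\sqrt L}$, $\sqrt\alpha$, $\tfrac1{72}$, $\tfrac16$ in the statement. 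A secondary subtlety is that the cross term $\langle y_{k+1},x_{k+1}-x^*\rangle$ arising from expanding $\|x_{k+1}-x^*+chy_{k+1}\|^2$ has indefinite sign, so it must be split by Young's inequality against the $\|y\|^2$ and $\|x-x^*\|^2$ terms with carefully balanced weights — the discrete analogue of choosing the coupling coefficient in the continuous-time Lyapunov function.
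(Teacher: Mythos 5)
Your high-level plan (one-step Lyapunov contraction, conditioning on $(x_k,y_k)$, exact refresh probability $\sqrt{\alpha}h<1$, descent lemma plus strong convexity plus smoothness, Young's inequality for the cross terms) is the right shape, but the specific Lyapunov function you propose has the wrong scaling in the velocity, and this is a genuine gap rather than a constant-tracking nuisance. You take $\mathcal{E}_k=\E\bigl[f(x_k)-f(x^*)+\tfrac{\alpha}{72}\|x_k-x^*+c\,h\,y_k\|^2+b\,h^2\|y_k\|^2\bigr]$ with \emph{absolute} constants $b,c$, so the coupling coefficient of $y_k$ is $\Theta(h)$ and the kinetic weight is $\Theta(h^2)$. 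The paper's function (and the continuous-time one in Theorem~\ref{Thm:AccRateCtsTime}) uses coupling $\tfrac{6}{\sqrt{\alpha}}y_k$, which after expansion carries the kinetic energy at weight $\tfrac12\|y\|^2$ and a cross term at weight $\tfrac{\sqrt{\alpha}}{6}\langle x-x^*,y\rangle$; it is precisely this $\Theta(1/\sqrt{\alpha})$ coupling that, through $\tilde y_{k+1}-y_k=-h\nabla f(x_{k+1})$ and strong convexity, produces the term $-\tfrac{\sqrt{\alpha}h}{6}\bigl(f(x_{k+1})-f(x^*)\bigr)$ needed to pay for $\tfrac{\sqrt{\alpha}h}{6}\mathcal{E}_{k+1}$. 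With your $\Theta(h)$ coupling the same mechanism only yields a dissipation of order $\alpha h^2\,(f(x_{k+1})-f(x^*))$ (and the descent step likewise only gives $-\tfrac{h^2}{2}\|\nabla f\|^2\le-\alpha h^2(f-f^*)$), i.e.\ the \textsf{GD} rate, not $\sqrt{\alpha}h$. Your attribution of the $\sqrt{\alpha}h$-order dissipation to the refresh is also off: the refresh contributes $-\tfrac{\sqrt{\alpha}h}{2}\|\tilde y_{k+1}\|^2$ only when the Lyapunov function carries $\|y\|^2$ at $\Theta(1)$ weight, which yours does not.

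Concretely, the conditional one-step contraction you intend to prove is false for your ansatz. Take $f(x)=\tfrac{\alpha}{2}x^2$ and the state $x_k=x^*$, $y_k=v$ (reachable with constant probability after roughly a quarter period $\approx\tfrac{\pi}{2\sqrt{\alpha}h}$ steps without refresh). One step gives $f(x_{k+1})-f(x^*)\approx\tfrac{\alpha h^2v^2}{2}$ while $\mathcal{E}_k=\bigl(\tfrac{\alpha c^2}{72}+b\bigr)h^2v^2$; the only negative contributions are the refresh, of expected size $\sqrt{\alpha}h\cdot O(h^2v^2)$, and an $O(\alpha h^4 v^2)$ damping. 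Hence $\E[\mathcal{E}_{k+1}\mid x_k,y_k]-\mathcal{E}_k=+\Theta(\alpha h^2v^2)-O(\sqrt{\alpha}h\cdot h^2v^2)$, which is strictly positive once $h$ is small (and the theorem must hold for every $h\le\tfrac{1}{4\sqrt{L}}$, with $b,c$ not allowed to depend on $h,\alpha$); so $\bigl(1+\tfrac{\sqrt{\alpha}h}{6}\bigr)\mathcal{E}_{k+1}\le\mathcal{E}_k$ cannot hold. The fix is to use the paper's scaling $\mathcal{L}_k=\E\bigl[f(x_k)-f(x^*)+\tfrac{\alpha}{72}\|x_k-x^*+\tfrac{6}{\sqrt{\alpha}}y_k\|^2\bigr]$ (no extra $bh^2\|y\|^2$ term is needed). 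Separately, your route through the intermediate point $x_{k+\frac12}$ and the descent lemma there differs from the paper, which deliberately avoids tracking $x_{k+\frac12}$: it compares the gradient step to the proximal step of \RPHD\ via dummy iterates and the error term $\mathcal{G}^h_{k+1}$ (Propositions~\ref{prop:error bound-GD} and~\ref{lem:control error}), bounding $\|\mathcal{G}^h_{k+1}\|^2=O(L^2h^8)\|\nabla f(x_{k+1})\|^2$ and absorbing it into $-\Theta(h^2)\|\nabla f(x_{k+1})\|^2$; your descent-lemma route could plausibly be made to work, but only after correcting the Lyapunov scaling.
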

\end{tcolorbox}
\begin{corollary}\label{coro:itercomp-sc}
     Assume $f$ is $\alpha$-strongly convex and $L$-smooth. To generate $x_K$ satisfying $\E[f(x_K)-f(x^*)]\leq\varepsilon$, it suffices to run Algorithm~\ref{alg:RHGD} with $h = \frac{1}{4\sqrt{L}}$, $\gamma_k = \sqrt{\alpha}$, and from any $x_0\in\R^d$ for 
     \begin{align*}
         K\geq (24\sqrt{\kappa} +1)\cdot\log\lp\frac{\E\lmp f(x_0)-f(x^*) + \frac{\alpha}{72}\|x_0-x^*\|^2 \rmp}{\varepsilon}\rp.
     \end{align*}
\end{corollary}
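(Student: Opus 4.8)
The plan is to derive this directly from Theorem~\ref{thm:RHGD-sc} by choosing the stepsize at its upper limit and solving the resulting exponential bound for $K$. First I would substitute $h = \frac{1}{4\sqrt{L}}$ into the contraction factor of Theorem~\ref{thm:RHGD-sc}, which gives $\frac{\sqrt{\alpha}h}{6} = \frac{\sqrt{\alpha}}{24\sqrt{L}} = \frac{1}{24\sqrt{\kappa}}$, so that
\[
\E[f(x_K)-f(x^*)]\leq \lp 1+\tfrac{1}{24\sqrt{\kappa}}\rp^{-K}\,\Delta_0,
\qquad \Delta_0 := \E\lmp f(x_0)-f(x^*) + \tfrac{\alpha}{72}\|x_0-x^*\|^2 \rmp.
\]
It then suffices to require $\lp 1+\frac{1}{24\sqrt{\kappa}}\rp^{K}\geq \Delta_0/\varepsilon$, i.e.\ $K\log\lp 1+\frac{1}{24\sqrt{\kappa}}\rp \geq \log(\Delta_0/\varepsilon)$.

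The only non-cosmetic step is to lower bound $\log(1+x)$. I would use the elementary inequality $\log(1+x)\geq \frac{x}{1+x}$ valid for all $x>-1$, applied with $x = \frac{1}{24\sqrt{\kappa}}$, which yields
\[
\log\lp 1+\tfrac{1}{24\sqrt{\kappa}}\rp \;\geq\; \frac{1/(24\sqrt{\kappa})}{1+1/(24\sqrt{\kappa})} \;=\; \frac{1}{24\sqrt{\kappa}+1}.
\]
Hence any $K$ with $\frac{K}{24\sqrt{\kappa}+1}\geq \log(\Delta_0/\varepsilon)$, i.e.\ $K\geq (24\sqrt{\kappa}+1)\log(\Delta_0/\varepsilon)$, forces $\E[f(x_K)-f(x^*)]\leq\varepsilon$, which is exactly the claimed bound.

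There is no real obstacle here; the statement is a routine corollary of the theorem. The only point requiring mild care is to make sure the direction of the logarithm inequality is the one that makes the sufficient condition explicit (we need a lower bound on $\log(1+x)$ to get an upper bound on the required $K$), and that $\alpha\leq 1\leq L$ guarantees $\sqrt{\kappa}\geq 1$ so the constant $24\sqrt{\kappa}+1$ is meaningful and the stepsize constraint $h\leq\frac{1}{4\sqrt{L}}$ from Theorem~\ref{thm:RHGD-sc} is met with equality.
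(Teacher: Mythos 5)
Your proposal is correct and matches the paper's argument: both substitute $h=\frac{1}{4\sqrt{L}}$ into Theorem~\ref{thm:RHGD-sc} and then bound the geometric factor, the paper via $\bigl(1+\tfrac{1}{24\sqrt{\kappa}}\bigr)^{-1}=1-\tfrac{1}{24\sqrt{\kappa}+1}\le \exp\bigl(-\tfrac{1}{24\sqrt{\kappa}+1}\bigr)$ and you via the equivalent inequality $\log(1+x)\ge \tfrac{x}{1+x}$. These are the same estimate in exponential versus logarithmic form, so there is no substantive difference.
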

Corollary~\ref{coro:itercomp-sc} shows that \RHGD\ requires $O(\sqrt{\kappa} \log(1/\varepsilon))$ iterations to generate an $\varepsilon$-accurate solution in expectation under smoothness and strong convexity. 
Recall under the same assumptions, \GD\ achieves the iteration complexity of $O(\kappa \log(1/\varepsilon))$, whereas \textsf{AGD} achieves the improved iteration complexity of $O(\sqrt{\kappa} \log(1/\varepsilon))$ (see Corollaries~\ref{cor:GD-sc} and~\ref{cor:AGD-sc} in Appendix~\ref{app:convergence under sc and pl}). In comparison, \RHGD\ is faster than \GD, and matches the accelerated rate of \textsf{AGD}, albeit in expectation.

\subsubsection{For weakly convex functions}\label{subsubsec:RHGD-c}

We show the following convergence rate of \textsf{RHGD} for minimizing smooth and weakly convex functions. The proof is deferred to Appendix~\ref{app:RHGD-c}.
\begin{tcolorbox}[left = 2mm, right = 2mm, top = 1mm, bottom = 1mm]
\begin{theorem}
\label{thm:RHGD-c}
    Assume $f$ is weakly convex and $L$-smooth. Then for all $k \ge 0$, \textsf{\em RHGD} (Algorithm~\ref{alg:RHGD}) with $h \leq \frac{1}{7\sqrt{L}}$, $\gamma_k = \frac{17}{2(k+9)h}$, and from any $x_0\in\R^d$ satisfies
    \begin{align*}
    \E[f(x_{k})-f(x^*)]\leq\frac{14\cdot\E\lmp \|x_0-x^*\|^2\rmp}{h^2(k+8)^2}.
\end{align*}
\end{theorem}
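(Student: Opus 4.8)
The plan is to mirror the structure of the continuous-time result (Theorem~\ref{Thm:AccRateCtsTime-wc}) at the discrete level, i.e.\ to construct a discrete Lyapunov function and show it is (approximately) non-increasing along \RHGD. Guided by the weakly convex analogue, I would look for a quantity of the form
\begin{align*}
    \mathcal{E}_k = A_k\bigl(f(x_k)-f(x^*)\bigr) + B_k\|x_k + c_k h y_k - x^*\|^2 + (\text{a term in }\|y_k\|^2),
\end{align*}
with $A_k \asymp h^2 (k+8)^2$ increasing quadratically, and appropriately tuned coefficients $B_k, c_k$. The velocity-refreshment step is random, so the argument must be done in conditional expectation: conditioned on $(x_k, y_k)$, with probability $1-\min(\gamma_k h,1)$ we keep $y_{k+1}=\tilde y_{k+1}$ and with the complementary probability we reset $y_{k+1}=0$. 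I would first analyze the \emph{deterministic} part of the step — the two gradient updates producing $x_{k+1}$ and $\tilde y_{k+1}$ from $x_{k+\frac12}=x_k+hy_k$ — using $L$-smoothness (to get a descent-type inequality for $f(x_{k+1})$ versus $f(x_{k+\frac12})$, exploiting $h\le \tfrac{1}{8\sqrt L}$ so that $h^2 L \le \tfrac{1}{64}$) and weak convexity of $f$ (to relate $f(x_{k+\frac12})-f(x^*)$ and $\langle \nabla f(x_{k+\frac12}), x_{k+\frac12}-x^*\rangle$). Then I would take the expectation over the Bernoulli refreshment, which produces a convex combination of the "kept-velocity" and "zeroed-velocity" Lyapunov values; the point of choosing $\gamma_k = \tfrac{17}{2(k+9)h}$ is to make this convex combination telescope, $\E[\mathcal{E}_{k+1}\mid x_k,y_k] \le \mathcal{E}_k$, after the deterministic bounds are plugged in.

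Concretely, the key steps in order would be: (i) expand $\|x_{k+1}-x^*\|^2$ or $\|x_{k+1}+c_{k+1}h\tilde y_{k+1}-x^*\|^2$ in terms of $x_k, y_k$ using the update equations, keeping careful track of the cross terms $\langle \nabla f(x_{k+\frac12}), \cdot\rangle$; (ii) use $L$-smoothness on the $x_{k+\frac12}\to x_{k+1}$ gradient step to bound $f(x_{k+1}) \le f(x_{k+\frac12}) - \tfrac{h^2}{2}(1-h^2L)\|\nabla f(x_{k+\frac12})\|^2$, and similarly control $f(x_{k+\frac12})$ in terms of $f(x_k)$ and $\|y_k\|^2$ via $f(x_{k+\frac12}) \le f(x_k) + h\langle\nabla f(x_k), y_k\rangle + \tfrac{Lh^2}{2}\|y_k\|^2$ (or its convexity counterpart); (iii) use weak convexity $f(x^*) \ge f(x_{k+\frac12}) + \langle\nabla f(x_{k+\frac12}), x^*-x_{k+\frac12}\rangle$ to convert the linear gradient terms into $f(x_{k+\frac12})-f(x^*)$; (iv) assemble these into $\E[\mathcal{E}_{k+1}\mid x_k,y_k]$, collect the coefficients of $f(x_k)-f(x^*)$, of $\|x_k-x^*\|^2$, of $\|y_k\|^2$, and of the cross term $\langle y_k, x_k-x^*\rangle$, and verify each is dominated by the corresponding coefficient in $\mathcal{E}_k$; (v) iterate $\mathcal{E}_k \le \mathcal{E}_0$, use $y_0=0$ so $\mathcal{E}_0 = A_0(f(x_0)-f(x^*)) + B_0\|x_0-x^*\|^2$, note that for weakly convex $f$ we cannot directly bound $f(x_0)-f(x^*)$ by $\|x_0-x^*\|^2$ without smoothness-at-the-optimum, so instead I would arrange $A_0$ to be a small constant (it is $\asymp h^2\cdot 8^2$) and $L$-smoothness gives $f(x_0)-f(x^*) \le \tfrac{L}{2}\|x_0-x^*\|^2$, absorbing everything into $\tfrac{14\|x_0-x^*\|^2}{h^2(k+8)^2}$ after dividing by $A_k \asymp h^2(k+8)^2$ and using $h^2 L \le \tfrac{1}{64}$ to make the numerical constant work out to $14$.

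The main obstacle I anticipate is step (iv): getting the cross terms and the $\|y_k\|^2$ terms to close. In the continuous-time proof the $\|y\|^2$ energy is handled cleanly because the Poisson refreshment kills it at rate $\gamma(t)$ exactly; in discrete time the refreshment only kills it with probability $\min(\gamma_k h,1)$ and the deterministic step meanwhile \emph{increases} $\|y\|^2$ through the gradient kick $\tilde y_{k+1}=y_k - h\nabla f(x_{k+1})$, so the $\|y_k\|^2$ coefficient bookkeeping is delicate and is exactly where the precise value $\gamma_k = \tfrac{17}{2(k+9)h}$ and the stepsize restriction $h\le\tfrac{1}{8\sqrt L}$ are forced. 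A secondary subtlety is that $x_{k+1}$ is built from $x_{k+\frac12}$ rather than $x_k$, so the natural "position" variable appearing in the squared-distance term of the Lyapunov function should be $x_k + (\text{something})\cdot h y_k$ rather than $x_k$ itself, and one must check the index shift $k\to k+1$ is consistent — i.e.\ that the post-update momentum variable $\tilde y_{k+1}$ (before refreshment) slots correctly into $\mathcal{E}_{k+1}$ before the Bernoulli expectation is taken. Once the correct shifted variables and coefficient sequences $A_k, B_k, c_k$ are identified, the remaining work is a (lengthy but routine) verification of scalar inequalities in $h$, $L$, $k$.
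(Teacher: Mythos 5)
Your overall skeleton --- a Lyapunov function with weight $A_k\asymp h^2(k+8)^2$ on $f(x_k)-f(x^*)$ plus shifted squared distances, a conditional expectation over the Bernoulli accept--refresh step with $\gamma_k=\frac{17}{2(k+9)h}$, and a final use of $L$-smoothness with $h^2L\le\frac{1}{64}$ to convert $f(x_0)-f(x^*)$ into $\|x_0-x^*\|^2$ --- matches the shape of the paper's argument. But the step you defer as ``lengthy but routine'' is the entire content of the proof, and the route you sketch for it runs into concrete obstacles that the paper avoids by a different device. You center the deterministic analysis at $x_{k+\frac12}$ (smoothness for the step $x_{k+\frac12}\to x_{k+1}$, weak convexity at $x_{k+\frac12}$), yet the velocity update is $\tilde y_{k+1}=y_k-h\nabla f(x_{k+1})$, a gradient taken at $x_{k+1}$; so your bookkeeping involves two different gradients, $\nabla f(x_{k+\frac12})$ and $\nabla f(x_{k+1})$, and you never say how their mismatch is controlled. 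The paper deliberately does not track $x_{k+\frac12}$ at all: it applies weak convexity at $x_{k+1}$, views the explicit gradient step as a perturbation of the implicit (proximal) update of \RPHD\ via dummy iterates $\hat x_{k+1}=\mathrm{Prox}_{h^2f}(x_{k+\frac12})$ and $\hat y_{k+1}$, writes $x_{k+1}-x_k=h\tilde y_{k+1}+\mathcal{G}^h_{k+1}$, and bounds the error term by a quantity of order $L^2h^8\|\nabla f(x_{k+1})\|^2$ (Propositions~\ref{prop:error bound-GD} and~\ref{lem:control error}), which is then absorbed into the negative term of order $(k+8)^2h^4\|\nabla f(x_{k+1})\|^2$; this is precisely where $h\le\frac{1}{8\sqrt L}$ is used. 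Without this (or an equivalent) mechanism, step (iv) of your plan has no way to pay for the proximal-to-gradient approximation.

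A second genuine gap is your assumption of exact per-step monotonicity $\E[\mathcal E_{k+1}\mid x_k,y_k]\le\mathcal E_k$. Even with the paper's carefully chosen Lyapunov function --- which contains an extra standalone $\frac34\|x_k-x^*\|^2$ term (already needed in the continuous-time weakly convex proof) rather than the $\|y_k\|^2$ term you propose --- the inequality does not close exactly for \RHGD: one only obtains $\tilde E_{k+1}-\tilde E_k\le\frac{1}{(k+9)^2}\tilde E_{k+1}$, because Young's inequality on the cross term between the error $\mathcal{G}^h_{k+1}$ and $x_{k+1}-x^*$ leaves a residual proportional to $\frac{1}{(k+9)^2}\|x_{k+1}-x^*\|^2$. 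The theorem then follows from the telescoping product $\prod_j\frac{(j+9)^2}{(j+8)(j+10)}\le\frac98$, and this $\frac98$ loss is part of how the final constant $14$ arises. So as written, your plan both omits the key error-comparison idea and presumes a monotonicity that the actual constants do not deliver; filling these in is not a routine verification.
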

\end{tcolorbox}
\begin{corollary}\label{coro:itercomp-c}
 Assume $f$ is weakly convex and $L$-smooth. To generate $x_K$ satisfying $\E[f(x_K)-f(x^*)]\leq\varepsilon$, it suffices to run Algorithm~\ref{alg:RHGD} with $h = \frac{1}{7\sqrt{L}}$,  $\gamma_k = \frac{17}{2(k+9)h}$ and any $x_0\in\R^d$ for 
 \begin{align*}
     K\geq \sqrt{\frac{686L\cdot \E\lmp \|x_0-x^*\|^2\rmp}{\varepsilon}}.
 \end{align*}
\end{corollary}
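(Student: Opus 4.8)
The plan is to obtain Corollary~\ref{coro:itercomp-c} as an immediate consequence of the convergence rate in Theorem~\ref{thm:RHGD-c}, by substituting the prescribed parameters and inverting the resulting bound for $K$. There is no new mathematical content beyond Theorem~\ref{thm:RHGD-c}; the work is purely in checking consistency of the parameter choices and in the algebraic rearrangement.

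First I would verify that the choices $h = \frac{1}{8\sqrt{L}}$ and $\gamma_k = \frac{17}{2(k+9)h}$ are admissible for Theorem~\ref{thm:RHGD-c}: the step-size condition $h \le \frac{1}{8\sqrt{L}}$ holds with equality, and the refreshment rate is exactly the one the theorem requires. Hence Theorem~\ref{thm:RHGD-c} applies and yields, for every $k \ge 0$,
\begin{align*}
\E[f(x_k) - f(x^*)] \le \frac{14 \, \E[\|x_0 - x^*\|^2]}{h^2 (k+8)^2}.
\end{align*}

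Next I would plug in $h^2 = \frac{1}{64L}$ and evaluate at $k = K$, which gives
\begin{align*}
\E[f(x_K) - f(x^*)] \le \frac{896 L \, \E[\|x_0 - x^*\|^2]}{(K+8)^2} \le \frac{896 L \, \E[\|x_0 - x^*\|^2]}{K^2},
\end{align*}
using the trivial bound $(K+8)^2 \ge K^2$. Requiring the right-hand side to be at most $\varepsilon$ is then equivalent to $K^2 \ge \frac{896 L \, \E[\|x_0 - x^*\|^2]}{\varepsilon}$, i.e.\ $K \ge \sqrt{\frac{896 L \, \E[\|x_0 - x^*\|^2]}{\varepsilon}}$, which is precisely the stated condition, completing the proof.

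The only point requiring any care is the bookkeeping of parameters and the final rearrangement; there is no genuine obstacle. Discarding the $+8$ shift, i.e.\ replacing $(K+8)^2$ by $K^2$, is lossless for the purpose of stating a sufficient condition, since it only makes the requirement on $K$ marginally stronger, so I would use this simplification to present the cleanest bound. If one wished, the slightly sharper condition $K + 8 \ge \sqrt{896 L \, \E[\|x_0 - x^*\|^2]/\varepsilon}$ could be recorded instead.
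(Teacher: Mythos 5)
Your proposal is correct and follows essentially the same route as the paper: apply Theorem~\ref{thm:RHGD-c} with $h=\frac{1}{8\sqrt{L}}$ so that $\frac{14}{h^2}=896L$, weaken $(K+8)^2$ to $K^2$, and solve the resulting inequality for $K$. No gaps; the bookkeeping matches the paper's proof exactly.
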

Corollary~\ref{coro:itercomp-c} shows that \RHGD\ requires $O(\sqrt{L/\varepsilon})$ iterations to generate an $\varepsilon$-accurate solution in expectation under $L$-smoothness and weak convexity. 
Recall under the same assumptions, \GD\ achieves the iteration complexity of $O(L/\varepsilon)$, whereas \textsf{AGD} achieves the improved iteration complexity of $O(\sqrt{L/\varepsilon})$ (see Corollaries~\ref{cor:GD-c} and~\ref{cor:AGD-c} in Appendix~\ref{app:convergence under c}). In comparison, \RHGD\ is faster than \GD, and matches the accelerated rate of \textsf{AGD}, albeit in expectation.
\subsection{Discussion}\label{sec:discussion}
Unlike the convergence rates of \GD\ and \textsf{AGD}, which hold deterministically for $f(x_k)-f(x^*)$, the convergence rate of \RHGD\ holds in expectation, i.e., $\E[f(x_k)-f(x^*)]$ due to the random refreshment. Nevertheless, convergence in expectation can still imply high-probability bounds for $f(x_k)-f(x^*)$ via Markov's inequality. We also remark that the continuized version of \textsf{AGD} (\CAGD) proposed by~\citet{even2021continuized} and studied by~\citet{wang2023continuized}, where the two variables continuously mix following a linear ordinary differential equation
and take gradient steps at random times, similarly achieves an accelerated convergence rate in expectation.

\paragraph{Proof Sketch.} We first prove the convergence of \RPHD, which is an ideal algorithm combining updates~\ref{eq:update-xy-re} with the accept-refresh step. By adopting the classical Lyapunov analysis, we construct energy functions \( E_k \) and show that this ideal algorithm preserves the accelerated convergence rates of the continuous-time \RHF\ via  \( E_{k+1} \leq E_k \) (see Theorems~\ref{Thm:AccRateDsctime-sc} and~\ref{Thm:AccRateDsctime-c} in Appendix~\ref{app:RPHD}). The proofs of Theorems~\ref{thm:RHGD-sc} and~\ref{thm:RHGD-c} for the practical \textsf{RHGD} algorithm follow a similar structure. However, since the proximal step~\eqref{prox-step} is approximated by gradient descent~\eqref{eq:gd}, the exact relation~\eqref{updatex} no longer holds. To account for this, our analysis jointly considers the ideal algorithm \RPHD\ and \RHGD: starting from the same point \((x_k, y_k)\), we run both for one iteration and bound the resulting approximation error. When \(f\) is smooth, this error can be upper bounded by the squared norm of the gradient; see Proposition~\ref{prop:error bound-GD} in Appendix~\ref{app:error bound-GD} for details. We then absorb this error into the negative terms of the Lyapunov difference \( E_{k+1} - E_k \). Notably, our Lyapunov analysis does not explicitly track the intermediate iterates \( x_{k+\frac{1}{2}} \), in contrast to analyses of \textsf{AGD} and its variants~\citep{wilson2021lyapunov}, where such intermediate updates are central to bounding Lyapunov differences. Thus we can also consider some other approximation scheme to \eqref{prox-step}.

\section{Numerical experiments}\label{sec:NE}
In this section, we validate the empirical effectiveness of \RHGD\ (Algorithm~\ref{alg:RHGD}) through numerical experiments on two canonical convex optimization problems: (1) quadratic minimization and (2) logistic regression.  
We compare our proposed algorithm \RHGD\ with \GD, \AGD, and its continuized version \textsf{CAGD}~\citep{even2021continuized}, whose pseudocodes are listed in Appendix~\ref{app:exp-algdetails} as Algorithms~\ref{alg:GD}, \ref{alg:AGD}, and \ref{alg:CAGD}, respectively. We evaluate their performance under various condition numbers with fine-tuned or adaptive stepsizes.
For the quadratic problem, we also evaluate the robustness of \RHGD\ to the misspecification of strong convexity constant $\alpha$. 
We denote the stepsize of \textsf{GD}, \textsf{AGD} and \textsf{CAGD} by $\eta$ and the stepsize of \RHGD\ by $h$ to reflect their different scales in smoothness constant $L$. See Appendix~\ref{sec:experiment details} for full details on our experiments. Code is available at \url{https://github.com/QiangFu09/RHGD}.

\subsection{Minimizing quadratic functions}
Consider the quadratic optimization problem: $$\min_{x\in\mathbb{R}^d}\ \left\{f(x)=\tfrac{1}{2}x^{\top}Ax\right\},$$
where \(A \in \mathbb{R}^{d\times d}\) is a positive semi-definite matrix with the smallest eigenvalue $0\leq \alpha\leq 1$ and the largest eigenvalue $L\geq 1$. In our experiments, we set \(d = 100\), select the stepsize for each algorithm via a grid search and choose the largest value that ensures convergence and stability.
Larger stepsizes result in numerical instability or divergence. 
Specifically, we optimize $\eta$ over geometric sequence $\mathcal{S}_{\eta}^L:=\{c/L: c=2^n,\,n\in\mathbb{Z}\}$ with ratio $2$, and optimize $h$ over geometric sequence $\mathcal{S}_{h}^L:=\{\sqrt{c/L}: c=2^n,\,n\in\mathbb{Z}\}$ with ratio $\sqrt{2}$. See Tables~\ref{tab:stepsize-sc} and~\ref{tab:stepsize-c} in Appendix~\ref{app:stepsize} for the optimal stepsizes via grid search of each setting.

We consider both strongly convex (\(\alpha > 0\)) and weakly convex (\(\alpha = 0\)) quadratic minimization. For the strongly convex case, we fix \(L = 500\) and test three condition numbers \(\kappa \in \{10^3, 10^5, 10^7\}\) with \(\alpha = L/\kappa\). Since \AGD, \CAGD, and \RHGD\ require \(\alpha\) to compute momentum parameters and refreshment rates, we first compare all algorithms using the exact value of \(\alpha\). Notably, when the condition number $\kappa$ is large, the strong convexity constant $\alpha$ can be extremely small (e.g., \(\kappa = 10^7\), \(\alpha = 5 \times 10^{-5}\)), which makes an accurate estimation of $\alpha$ challenging in practice. To evaluate robustness, we also test with misspecified values \(\hat{\alpha} \in \{0.01, 0.1, 1\} > \alpha\). In this section, we report results for \(\kappa = 10^7\) and \(\hat{\alpha} = 0.01\); additional results are deferred to Appendix~\ref{app:comparison with baseline}.
For the weakly convex case, we evaluate the same algorithms with appropriately configured parameters (see Appendix~\ref{app:exp-algdetails}). In particular, \RHGD\ uses decaying \(\gamma_k = \frac{17}{2(k+9)h}\) instead of a constant.
\begin{figure}[htbp]
\centering
\subfigure{
\includegraphics[width=5.4cm,height =3.4cm]{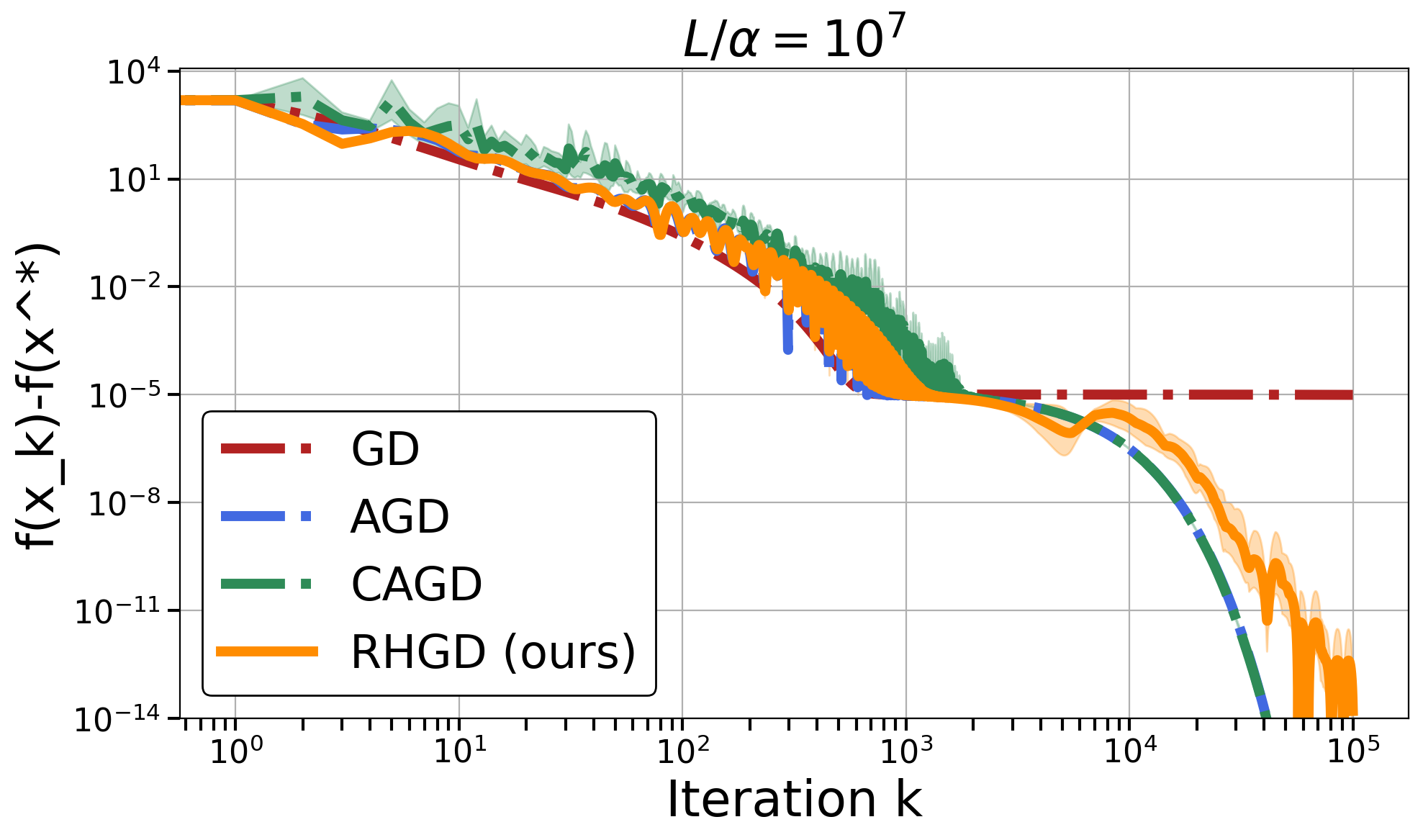}}\hspace{-0.5em}
\subfigure{
\includegraphics[width=5.4cm,height =3.4cm]{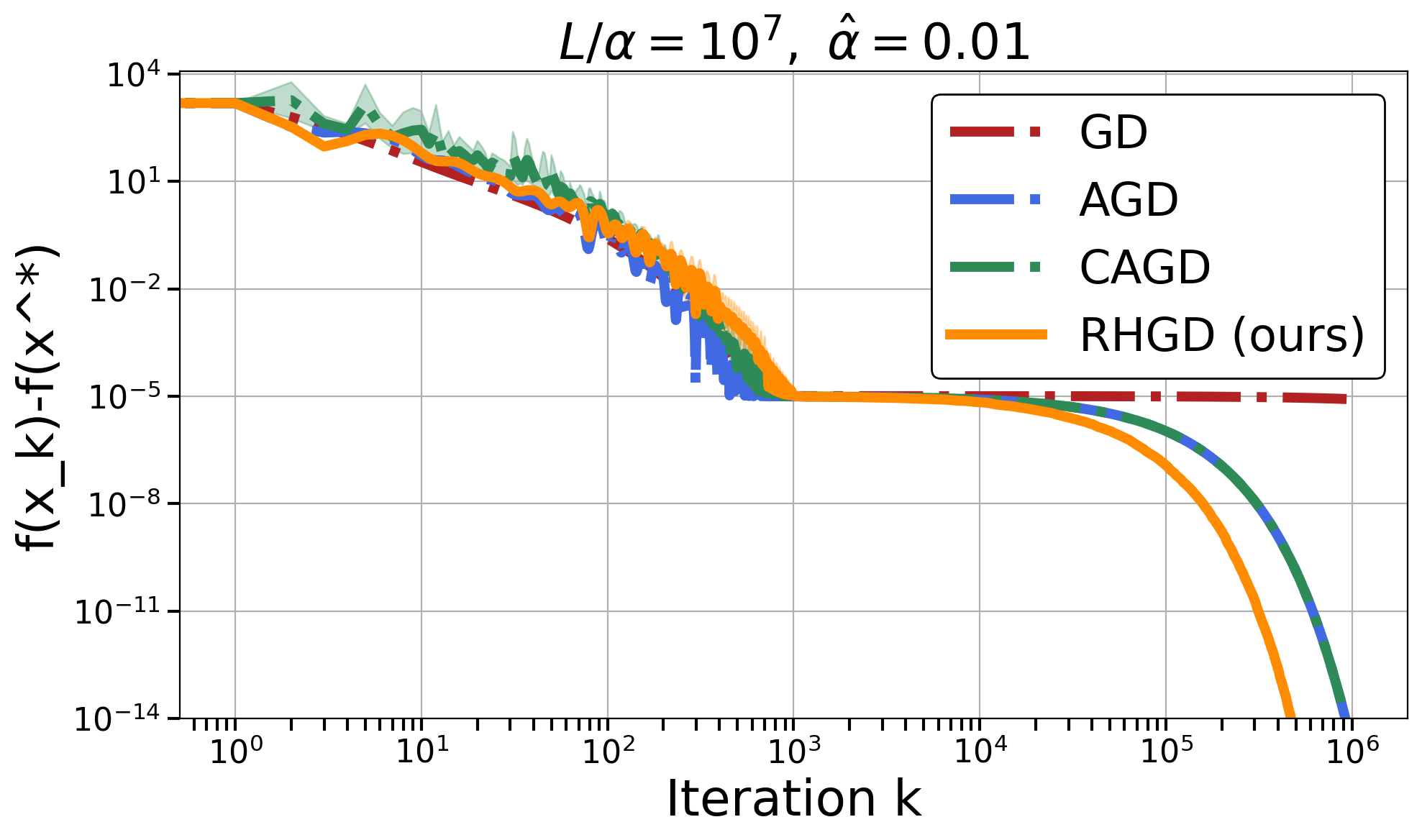}}\hspace{-0.5em}
\subfigure{
\includegraphics[width=5.4cm,height =3.4cm]{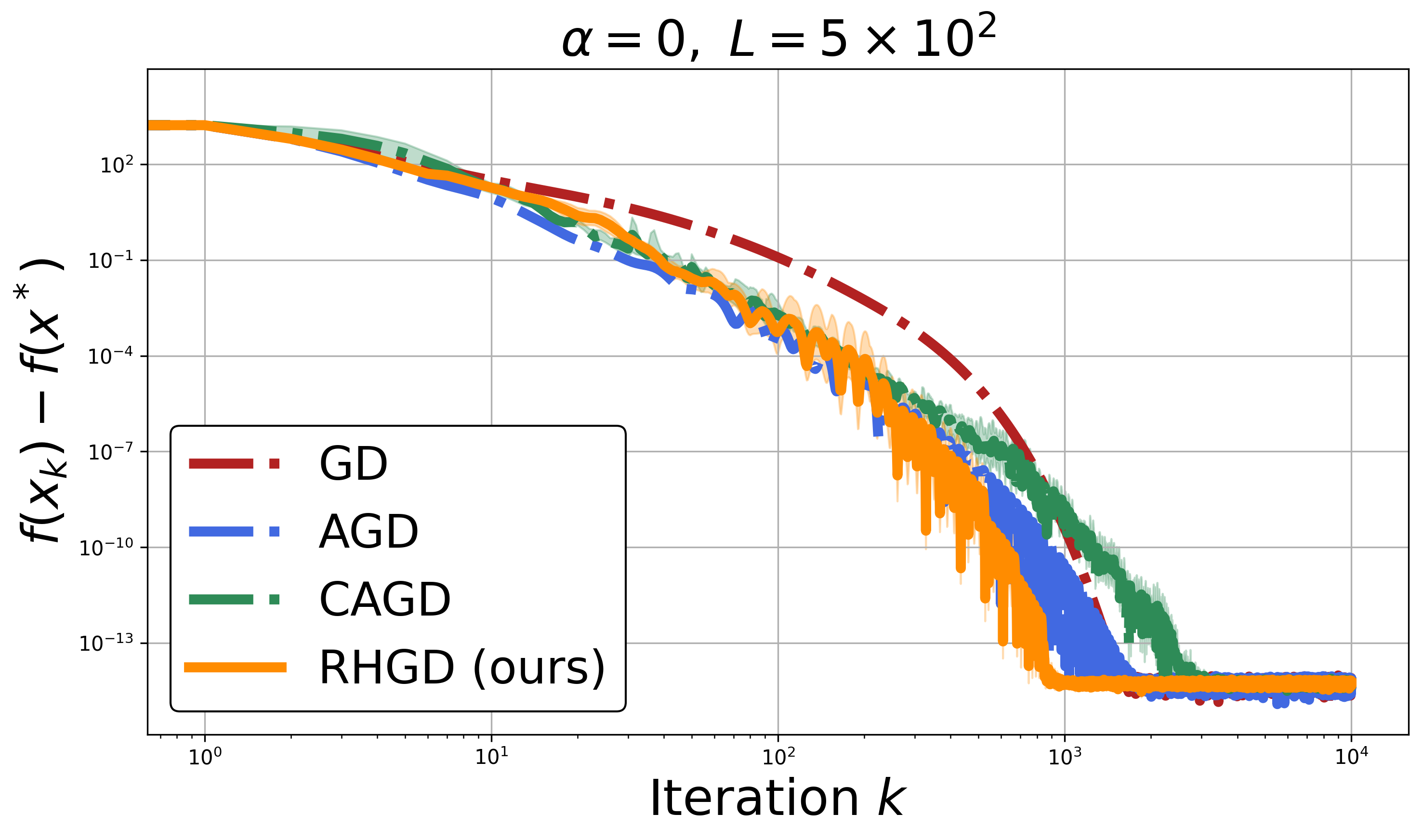}}
\caption{Comparison between \GD, \NAG, \textsf{CAGD}, and \RHGD\ (ours) on minimizing quadratic functions with $L=500$ in three settings: (1) $\kappa=10^7$ with exact $\alpha$ (left); (2) $\kappa=10^7$ with misspecified $\hat{\alpha}=0.01$ (middle); (3) $\alpha=0$ (right). We use optimal stepsizes via grid search for each setting. Each plot shows results averaged over 5 runs.}
\label{fig:quadcomp}
\end{figure}

\noindent
\textbf{Summary of experimental results.}
Figure~\ref{fig:quadcomp} presents results for the strongly convex setting with exact $\alpha$ (left), misspecified $\hat{\alpha}=0.01$ (middle), and the weakly convex setting (right). In the left plot, all accelerated methods clearly outperform \GD. While slower than \AGD\ and \CAGD, which are known to be optimal under exact parameter knowledge, \textsf{RHGD} remains highly competitive during the early iterations.  The middle plot highlights the robustness of \RHGD\ to misspecification. The momentum parameters of \AGD\ and \CAGD, and the refreshment rate of \RHGD\ are computed using overestimated values \(\hat{\alpha}=0.01\), which are significantly larger than the true value \(\alpha = 5 \times 10^{-5}\). While \AGD\ and \CAGD\ degrade significantly when using overestimated $\hat{\alpha}$, \RHGD\ maintains stable performance and outperforms them in later stages.
The right plot shows that \RHGD\ achieves the fastest convergence among all algorithms, consistently improving over \AGD\ and \CAGD\ throughout the iterations, demonstrating its advantage in the weakly convex setting.

\subsection{Minimizing logistic regression loss}
We construct a synthetic binary classification task using logistic regression with \(\ell_2\)-regularization:
\begin{equation*}
    \min_{x\in\mathbb{R}^d}\ \left\{ f(x)=\frac{1}{n}\sum_{i=1}^n\log\left(1 + \exp(-b_ia_i^{\top}x)\right) + \frac{\alpha}{2}\|x\|^2\right\},
\end{equation*}
where we set \(d = 100\) and \(n = 500\). Details on objective generation is provided in Appendix~\ref{app:expdetails-logi}. We consider the strongly convex (\(\alpha \in\{10^{-3},10^{-4},10^{-5}\}\)) and weakly convex (\(\alpha = 0\)) settings. We report results for $\alpha\in\{10^{-4},0\}$ and defer the others to Appendix~\ref{app:expdetails-logi}. Since the smoothness constant $L$ is unknown, we evaluate \GD, \AGD\ and \CAGD\ using adaptive stepsizes the same way as in \cite{hinder2020near} through line search (see Algorithms~\ref{alg:ada-GD}, \ref{alg:ada-AGD} and \ref{alg:ada-CAGD} in Appendix~\ref{app:exp-algdetails}). For implementation of \textsf{RHGD}, we also adopt a similar adaptive stepsize strategy inspired by the line search in \citet{hinder2020near} (see Algorithm~\ref{alg:ada-RHGD} in Appendix~\ref{app:exp-algdetails}). We initialize the stepsize $h=1$. At each iteration, \( h \) is updated based on checking the condition:
$f(\tilde{x}_{k+1}) \leq f(x_{k+\frac{1}{2}}) - \frac{h^2}{2} \|\nabla f(x_{k+\frac{1}{2}})\|^2.$
If the condition is satisfied, then we increase the stepsize by setting \( h \leftarrow \sqrt{1.1} \cdot h \) and accept $\tilde{x}_{k+1}$ as $x_{k+1}$; otherwise, we decrease the stepsize by setting \( h \leftarrow \sqrt{0.6} \cdot h \) and reject $\tilde{x}_{k+1}$ by setting $x_{k+1}\leftarrow x_{k}$. 

\begin{figure}[htbp]
\centering
\subfigure{
\includegraphics[width=7.0cm,height =4.6cm]{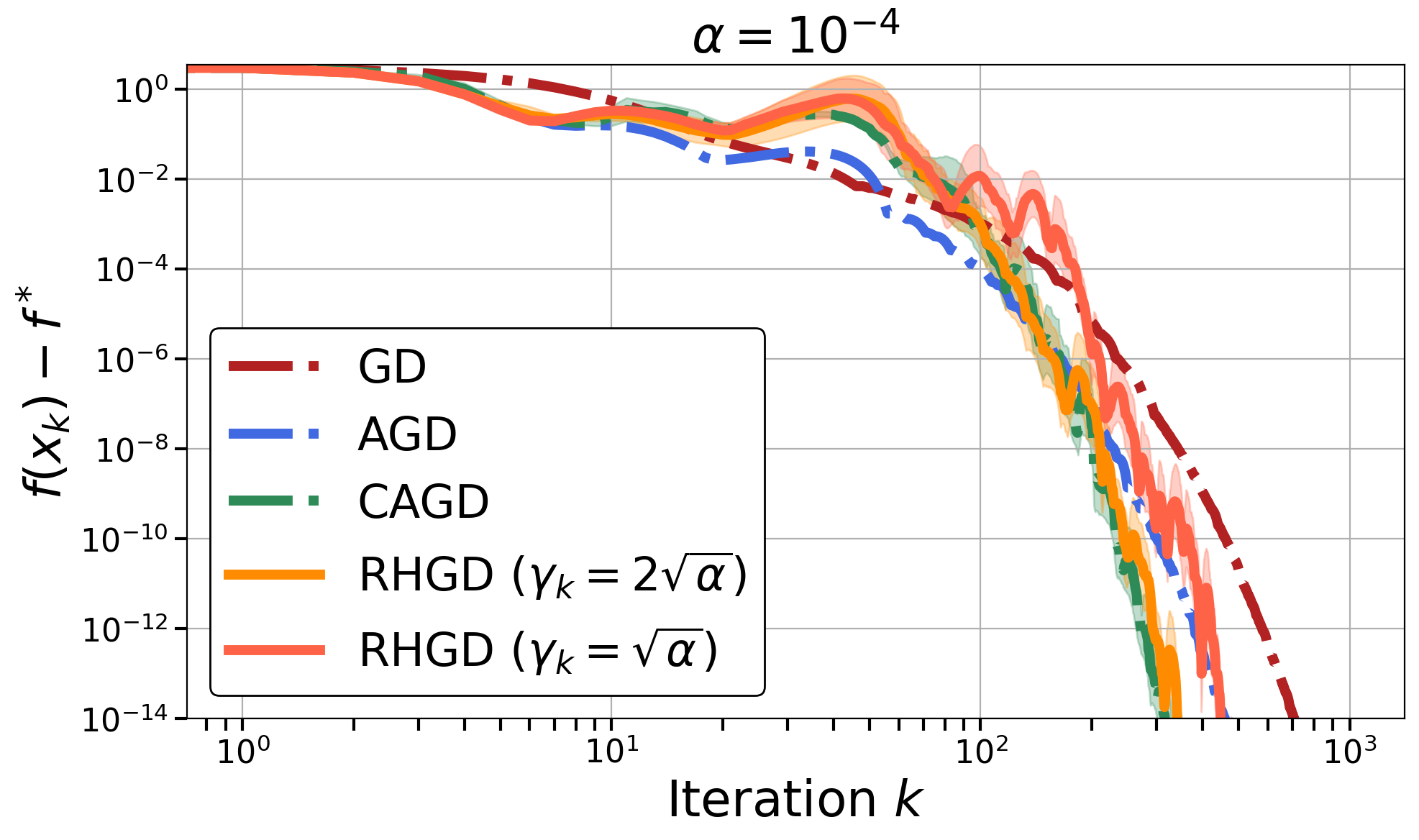}}
\subfigure{
\includegraphics[width=7.0cm,height =4.6cm]{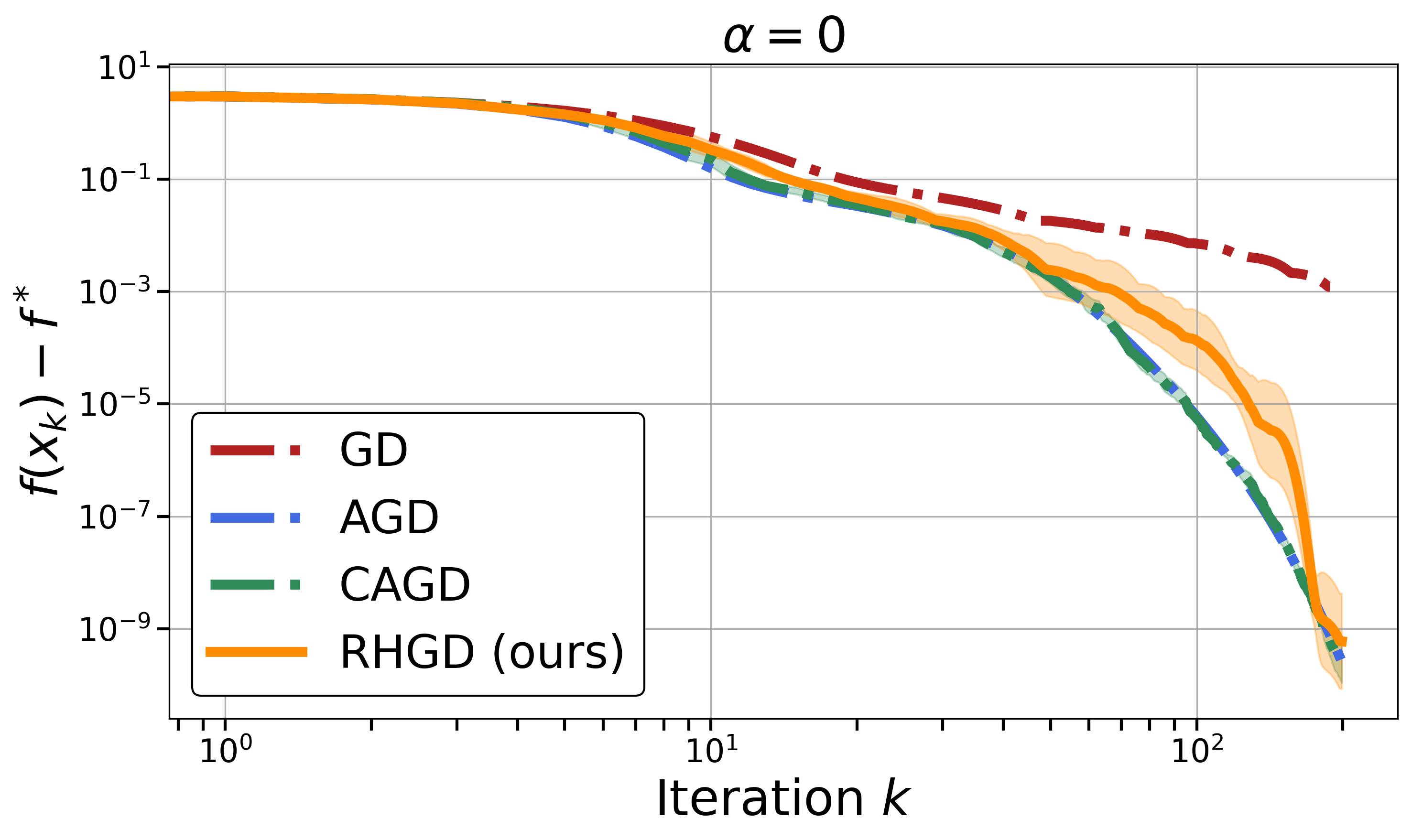}}
\caption{Comparison of \textsf{GD}, \textsf{AGD}, \textsf{CAGD} and \textsf{RHGD} on logistic regression with \(\alpha \in\{10^{-4},0\}\). We run each algorithm using adaptive stepsizes. For \RHGD, we evaluate $\gamma_k=\sqrt{\alpha}$ and $\gamma_k=2\sqrt{\alpha}$ for $\alpha>0$, and evaluate $\gamma_k=17/(2k+18)h$ for $\alpha=0$.}
\label{fig:logisc}
\end{figure}

\noindent
\textbf{Summary of experimental results.}
Figure~\ref{fig:logisc} presents the convergence behavior of all tested algorithms on a logistic regression task with \(\alpha \in\{10^{-4},\,0\}\) and adaptive stepsizes. We observe from the left plot that using a slightly larger refreshment rate, \(\gamma_k = 2\sqrt{\alpha}\), results in faster and more stable convergence, while Theorem~\ref{thm:RHGD-sc} suggests setting the refreshment rate to \(\gamma_k = \sqrt{\alpha}\). Our proposed algorithm \textsf{RHGD} with $\gamma_k=2\sqrt{\alpha}$ and $\gamma_k=\sqrt{\alpha}$ are faster than \GD\ in the late iterations and comparable to \textsf{AGD} and \textsf{CAGD}. The right plot presents the results in the weakly convex setting with adaptive stepsizes. For \RHGD, we choose the decaying refreshment rate \(\gamma_k = \frac{17}{2(k+9)h}\). All accelerated methods consistently outperform \textsf{GD}, and \RHGD\ exhibits comparable performance to baseline accelerated algorithms \AGD\ and \CAGD.

\section{Conclusion}\label{sec:conclusion}

In this work, we propose the randomized Hamiltonian flow (\RHF) and its discretization, the randomized Hamiltonian gradient descent (\RHGD), and establish their accelerated convergence guarantees for both strongly and weakly convex functions. Numerical experiments on quadratic minimization and logistic regression tasks demonstrate that \RHGD\ is faster than \GD, and consistently matches or outperforms baseline accelerated algorithms such as \AGD\ and \CAGD.

Similar to \CAGD~\citep{even2021continuized}, our convergence guarantees for \RHF\ and \RHGD\ hold in expectation due to their inherent randomness. While this differs from deterministic algorithms like \GD\ and \AGD, we can extend our results to high-probability bounds via concentration inequality. Additionally, while our rates $O(\exp(-\sqrt{\alpha/5}\, t))$ for \RHF\ and $O((1 + \sqrt{\alpha} h / 6)^{-k})$ for \RHGD\ exhibit the same $\sqrt{\alpha}$ dependence as optimal accelerated methods (\AGF\ and \AGD, see Appendix~\ref{app:convergence under sc and pl}), the constants are somewhat larger. Nevertheless, our algorithm \RHGD\ is still faster than \GD\ and competitive with \AGD\ in both theory and practice.

We outline several promising directions for future research that build upon and extend this work. First, it would be valuable to explore other integrators for discretization, such as the leapfrog integrator, which is commonly used in Hamiltonian Monte Carlo (\HMC) and may improve numerical stability or efficiency in the optimization context. Second, a natural extension is to study whether our framework and convergence analysis can be generalized to broader classes of non-convex functions, such as quasar-convex functions~\citep{hardt2018gradient}, for which recent works have shown potential for accelerated convergence~\citep{hinder2020near,fu2023accelerated,wang2023continuized}.
Another important direction is the development of a stochastic variant of \RHGD, where gradients are estimated using mini-batches or variance reduction. This would make the method more suitable for large-scale optimization tasks where computing full gradients is computationally prohibitive. Finally, we hope that our analysis of randomized Hamiltonian flows for optimization can provide theoretical tools and insights toward validating a long-standing conjecture in sampling: randomized Hamiltonian Monte Carlo can achieve accelerated mixing rates in KL divergence for sampling from strongly log-concave distributions.

\section*{Acknowledgments}

The authors thank Jun-Kun Wang and Molei Tao for insightful discussions.


\addcontentsline{toc}{section}{References}
\bibliographystyle{apalike}
\bibliography{refs}
\newpage
\appendix
\begin{center}
    \LARGE\bfseries Appendix
\end{center}

\section{Related work}\label{app:related work}
\paragraph{Hamiltonian Monte Carlo (\textsf{HMC}).} \textsf{HMC} and its variants are widely-used sampling algorithms implemented as the default samplers in popular Bayesian inference packages such as Stan~\citep{carpenter2017stan} and PyMC3~\citep{salvatier2016probabilistic}. Notably, the No-U-Turn Sampler (NUTS)~\citep{hoffman2014no}, a celebrated \textsf{HMC}-based algorithm that automatically tunes its parameters, serves as the default sampling method in Stan. However, establishing rigorous theoretical guarantees, such as non-asymptotic convergence rates and developing principled acceleration methods, remain active areas of research~\citep{talay2002stochastic,pakman2013auxiliary,beskos2013optimal,betancourt2014optimizing,seiler2014positive,durmus2017convergence,mangoubi2017rapid,lee2018algorithmic,bou2018geometric,lee2018convergence,mangoubi2018dimensionally,mangoubi2019mixing,bou2020coupling,chen2020fast,bou2021two,mangoubi2021mixing,lu2022explicit,wang2022accelerating,monmarche2022HMC,jiang2023dissipation,chen2023does,camrud2023second,bou2023nonlinear,bou2023mixing,monmarche2024entropic,bou2025unadjusted}. Many recent works focus on understanding the non-asymptotic behavior of \textsf{HMC}~\citep{wang2022accelerating,jiang2023dissipation,monmarche2024entropic,bou2025unadjusted}. These analyses not only quantify convergence rates to the target distribution but also provide guidance on selecting optimal hyperparameters and developing robust stopping criteria, thereby offering further insights for algorithmic improvement. A variant of \textsf{HMC}, known as Randomized Hamiltonian Monte Carlo (\textsf{RHMC}), has been studied in~\citep{bou2017randomized}. An intriguing connection between \textsf{RHMC} and underdamped Langevin dynamics is explored in~\citep{riou2022metropolis,
cao2023explicit,jiang2023dissipation,leimkuhler2024contraction}: specifically, the stochastic differential equation (SDE) characterizing \textsf{RHMC} reduces to the underdamped Langevin dynamics in the limit~\citep{cheng2018underdamped,cao2020complexity,zhang2023improved}. Additionally, several variants of \textsf{HMC} have been proposed to address different challenges. Some works introduce {stochastic gradients} to scale \HMC\ to large datasets~\citep{chen2014stochastic,zou2021convergence}. Others explore non-Euclidean geometry~\citep{brofos2021evaluating} or altered dynamics, such as magnetic~\citep{tripuraneni2017magnetic}, reflection-based~\citep{mohasel2015reflection}, or discontinuous dynamics~\citep{nishimura2017discontinuous}. There are also efforts on {adaptation and tuning}~\citep{hoffman2019neutra,hoffman2022tuning}, as well as {decentralized or parallel implementations}~\citep{gurbuzbalaban2021decentralized}. Recent work has further proposed {application-specific improvements}~\citep{monnahan2017faster,robnik2023microcanonical}. Additional developments also include continuous-time formulations~\citep{zhang2012continuous}, probabilistic integrators~\citep{dinh2017probabilistic}, nonparametric extensions~\citep{mak2021nonparametric}, and entropy-aware methods~\citep{hirt2021entropy}.
\paragraph{Accelerated gradient flow for optimization.} Classical accelerated optimization algorithms are the heavy-ball method proposed by \citet{polyak1964some} and accelerated gradient descent~\eqref{eq:AGD} proposed by \citet{nesterov1983method}. The continuous-time limit of these two methods corresponds to the accelerated gradient flow~\eqref{eq:AGF}, which is the combination of \eqref{HF} with a damping term for dissipating energy (see Appendix~\ref{app:opt review} for a review), and is different from the pure \HF\ we study in this work. Most existing Hamiltonian-based optimization methods can be seen as the discretization of \eqref{eq:AGF}. 
Moreover, \citet{su2016differential,krichene2015accelerated,wibisono2016variational,wilson2021lyapunov,suh2022continuous} study continuous-time limits of \eqref{eq:AGD}, and \citet{hu2017dissipativity,maddison2018hamiltonian,muehlebach2019dynamical,o2019hamiltonian,hinder2020near,wilson2021lyapunov,even2021continuized,attouch2022first,shi2022understanding,fu2023accelerated,wang2023continuized} design and analyze the accelerated methods as the discretization of \eqref{eq:AGF} even beyond convexity.
\paragraph{Hamiltonian flow for optimization.} To the best of our knowledge, only a few previous works have explored optimization algorithms explicitly based on energy-conserving Hamiltonian flow of the form \eqref{HF}. \citet{teel2019first} studied optimization methods derived from Hamiltonian flow with a velocity refreshment strategy. Specifically, they refresh the velocity to zero whenever the iterate approaches the boundary of a region defined as $\{(x,y)\in \mathbb{R}^d\times\mathbb{R}^d: \langle\nabla f(x), y\rangle \le 0, \|y\|^2 \ge \|\nabla f(x)\|^2/L\}$ where $L$ is the smoothness constant, or after a fixed timer expires. They established uniform global stability and convergence guarantees for minimizing smooth and strongly convex functions. \citet{diakonikolas2021generalized} analyzed generalized Hamiltonian dynamics characterized by a time-dependent Hamiltonian $H(x,y,\tau) = h(\tau) f(x/\tau) + \psi^*(y),$
where $h(\tau)$ is a positive function of scaled time $\tau$ and $\psi^*(\cdot)$ is strongly convex and differentiable. They showed that along these Hamiltonian flows, the average gradient norm $\|\frac{1}{t}\int_0^t \nabla f(x_\tau)d\tau\|$ decreases at an $O({1}/{t})$ rate. Moreover, they demonstrated that a broad family of momentum methods, applicable in both Euclidean and non-Euclidean spaces, can be derived from these generalized dynamics. This family includes classical methods such as Nesterov's accelerated gradient method~\citep{nesterov1983method,nesterov2013introductory}, Heavy-Ball method~\citep{polyak1964some}, and other well-known accelerated methods~\citep{wibisono2016variational,wilson2021lyapunov} as special cases, which are different from what we consider in this paper. \citet{de2023improving} propose a Hamiltonian-based method with the Hamiltonian defined as 
$H(x,y)=\lambda\log (F(x)-F_0)+\log(\|y\|^2)$ where $\lambda>0$ and $F_0$ are user-specified parameters. They show that the proposed method together with some heuristics is competitive with Adam~\citep{kingma2014adam} in some deep learning experiments, but no convergence-rate analysis was performed in the work. \citet{wang2024hamiltonian} studies the Hamiltonian flow with velocity refreshment for minimizing strongly convex quadratic functions $f(x)=\frac{1}{2}x^{\top}Ax-b^{\top}x$ with $A\in\R^{d\times d}$ and $\alpha I\preceq A \preceq LI$; importantly, \eqref{HF} can be solved exactly when $f$ is quadratic, which facilitates the analysis. By choosing the integration time $\eta_k$ related to the roots of Chebyshev polynomials, \citet{wang2024hamiltonian} shows that \textsf{HF-opt} achieves the convergence rate of $O((1-\Theta(1/\sqrt{\kappa}))^k)$ and the total integration time ${\Theta}({1}/{\sqrt{\alpha}})$, which matches the total integration of  accelerated gradient flow with refreshment~\citep{suh2022continuous}.

\paragraph{Randomization in optimization.} The technique of randomly selecting integration times has been explored in optimization contexts. \citet{even2021continuized} study the ``continuized'' version of Nesterov acceleration, which also considers random update times to combine continuous-time and discrete-time perspectives. In their framework, the algorithmic variables follow a linear ODE and take gradient steps at random intervals, maintaining the simplicity of continuous-time methods and allowing for efficient discrete-time implementations. While based on a linear ODE rather than Hamiltonian flow, this continuized approach shares a similar spirit with our randomized integration time perspective: both demonstrate how randomness in timing can yield rigorous theoretical guarantees and practical acceleration in optimization.

\section{Lift-Conserve-Project scheme}\label{sec:LCP}

We describe a new methodology to reason about optimization tasks based on conservation rather than dissipation properties, which we call the \textit{Lift-Conserve-Project} (\LCP) scheme, and which serves as a unifying principle across optimization and sampling. 

Suppose we want to solve the following optimization problem:
\begin{equation}
    \min_{x\in\mathcal{X}}\, f(x),
\end{equation}
where $\mathcal{X}$ is the domain of optimization; for example, $\mathcal{X}$ can be $\R^d$, a constraint set, or the space of probability distributions $\mathcal{P}(\R^d)$.
Suppose we are currently at a state $x_0\in\mathcal{X}$. We describe a procedure to update $x_0$ to a new state with smaller function value.
We need the following ingredients:
\begin{itemize}
    \item A space $\mathcal{Z}$ and an energy function $\mathcal{H}\colon\mathcal{Z}\rightarrow \R.$
    \item A lifting map $\mathbf{L}\colon\mathcal{X}	\rightarrow \mathcal{Z}$ which preserves $f$, i.e., $f(x) = \mathcal{H}(\mathbf{L}(x)).$ 
    \item A conservation map $\bf{C}\colon \mathcal{Z}\rightarrow \cal{Z}$ which preserves $\cal{H}$, i.e., $\mathcal{H}(z) = \mathcal{H}(\mathbf{C}(z)).$ 
    \item A projection map $\mathbf{P}:\mathcal{Z}\rightarrow\mathcal{X}$ which reduces $f$, i.e., $\mathcal{H}(z)\geq f(\mathbf{P}(z))$.
\end{itemize}
The {\LCP\ scheme} is an algorithm $\mathbf{LCP}\colon\mathcal{X}\rightarrow\mathcal{X}$ that applies the lifting $\bf{L}$, conservation $\bf{C}$ and projection $\bf{P}$ maps sequentially: 
$$\mathbf{LCP} := \mathbf{P}\circ\mathbf{C}\circ\mathbf{L}.$$ 
\begin{lemma}
\label{LCP-descent}
    The \textsf{\em LCP} scheme is a \emph{descent method}, i.e., $f(\bfL\bfC\bfP(x))\leq f(x).$
\end{lemma}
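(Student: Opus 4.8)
The plan is to verify the descent property directly from the three defining properties of the maps $\mathbf{L}$, $\mathbf{C}$, and $\mathbf{P}$, chaining the equalities and the inequality in the right order. This is a one-line computation once the hypotheses are written down carefully, so the ``proof'' is really just a bookkeeping exercise: the main (minor) obstacle is making sure the composition is applied in the correct order and that the hypothesis on each map is invoked at the point in $\mathcal{Z}$ where it actually applies.

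Concretely, fix $x\in\mathcal{X}$ and set $z := \mathbf{C}(\mathbf{L}(x))\in\mathcal{Z}$, so that $\mathbf{LCP}(x) = \mathbf{P}(z)$. First I would use the projection property of $\mathbf{P}$, applied at the point $z$, to get
\begin{align*}
f(\mathbf{LCP}(x)) = f(\mathbf{P}(z)) \le \mathcal{H}(z) = \mathcal{H}(\mathbf{C}(\mathbf{L}(x))).
\end{align*}
Next I would apply the conservation property of $\mathbf{C}$, at the point $\mathbf{L}(x)\in\mathcal{Z}$, which gives $\mathcal{H}(\mathbf{C}(\mathbf{L}(x))) = \mathcal{H}(\mathbf{L}(x))$. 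Finally I would apply the lifting property of $\mathbf{L}$, which says precisely $\mathcal{H}(\mathbf{L}(x)) = f(x)$. Combining these three relations yields $f(\mathbf{LCP}(x)) \le f(x)$, which is the claim.

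There is essentially nothing hard here — the lemma is a structural observation rather than a theorem requiring technique — so I do not anticipate any real obstacle. The only thing to be careful about is not to conflate the two occurrences of $\mathcal{H}$ evaluated at different points (one at $\mathbf{C}(\mathbf{L}(x))$, one at $\mathbf{L}(x)$); writing the intermediate point $z$ explicitly avoids this. For completeness one could also remark that iterating $\mathbf{LCP}$ gives a monotonically non-increasing sequence of function values, which is the sense in which it is a ``descent method,'' though the stated lemma only asks for the single-step inequality. This argument specializes to Lemma~\ref{lem:descent lemma} for \textsf{HF-opt} by taking $\mathcal{Z} = \R^d\times\R^d$, $\mathcal{H} = H$, $\mathbf{L}(x) = (x,0)$, $\mathbf{C} = \mathsf{HF}_{\eta}$ (conservation is Lemma~\ref{Lem:EnergyConservation}), and $\mathbf{P} = \Pi_1$ (since $H(x,y) = f(x) + \tfrac12\|y\|^2 \ge f(x) = f(\Pi_1(x,y))$).
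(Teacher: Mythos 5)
Your proposal is correct and follows exactly the paper's own one-line argument: apply the projection inequality at $z=\mathbf{C}(\mathbf{L}(x))$, then conservation of $\mathcal{H}$ under $\mathbf{C}$, then the lifting identity $\mathcal{H}(\mathbf{L}(x))=f(x)$. No gaps; the extra remarks about iteration and the \textsf{HF-opt} specialization are consistent with how the paper uses the lemma.
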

\begin{proof}
By construction, $f(\bfL\bfC\bfP(x)) = f(\bfP\circ\bfC\circ\bfL(x))\leq \mathcal{H}(\bfC \circ \bfL(x))=\mathcal{H}(\bfL(x))=f(x).$
\end{proof}
\subsection{LCP scheme for optimization: Hamiltonian flow for optimization}\label{app:LCP for opt}
Before describing \textsf{HF-opt} (Algorithm~\ref{alg:HF-opt}) as a \LCP\ scheme, we first prove Lemmas~\ref{Lem:EnergyConservation} and \ref{lem:descent lemma} below.
\begin{replemma}{Lem:EnergyConservation}
    Along~\eqref{HF}, for all $t \ge 0$, $H(X_t,Y_t) = H(X_0,Y_0)$.
\end{replemma}
\begin{proof}
    Applying the chain rule and the~\eqref{HF} dynamics, we obtain
    \begin{align*}
        \frac{\rmd}{\rmd t} H(X_t,Y_t)&=\langle \nabla_xH(X_t,Y_t),\Dot{X}_t\rangle + \langle\nabla_yH(X_t,Y_t),\Dot{Y}_t\rangle\\
        &=\langle \nabla f(X_t),Y_t\rangle + \langle Y_t, -\nabla f(X_t)\rangle=0.
    \end{align*}
    Thus, $H(X_t,Y_t)$ is invariant along \eqref{HF} and $H(X_t,Y_t)=H(X_0,Y_0).$
\end{proof}
\begin{replemma}{lem:descent lemma}
    For any $k$ and $\eta_k>0$, Algorithm~\ref{alg:HF-opt} satisfies $f(x_{k+1})\leq f(x_k).$
\end{replemma}
\begin{proof}
    In one step of Algorithm~\ref{alg:HF-opt}, we run~\eqref{HF} from $(X_0,Y_0) = (x_k,0)$ to reach $(X_{\eta_k}, Y_{\eta_k}) = (x_{k+1}, y_{k+1})$ for some $y_{k+1} \in \R^d$.
    By Lemma~\ref{Lem:EnergyConservation}, $f(x_{k+1})=f(x_k)-\frac{1}{2}\|y_{k+1}\|^2\leq f(x_k)$.
\end{proof}
Since \eqref{HF} is a conservative flow by Lemma \ref{Lem:EnergyConservation}, we can choose $\mathcal{X}=\R^d$, $\mathcal{Z} = \R^d\times \R^d$, $\mathcal{H} = H$, which is the Hamiltonian function defined by $H(x,y)=f(x)+\frac{1}{2}\|y\|^2$, $\bfL(x) = (x, 0)$, $\bfC = \mathsf{HF}_{\eta}$ which is the Hamiltonian flow map with integration time $\eta>0$, and $\bfP = \Pi_1$, which is the projection to the first coordinate. Given $x_0\in\R^d$, the \textsf{LCP} scheme is described as
\begin{enumerate}
    \item \textbf{Lift:} Lift the point \( x_0 \in \mathbb{R}^d \) to the augmented phase space \( \mathbb{R}^d \times \mathbb{R}^d \) via
    \[
        \mathbf{L}(x_0) = (x_0, 0).
    \]
    Note here $y_0=0$ is the minimizer of the kinetic energy $\frac{1}{2} \|y\|^2$.

    \item \textbf{Conserve:} Evolve the Hamiltonian flow~\eqref{HF} starting from \( (x_0, 0) \) for a duration \(\eta\) to obtain
    \[
        (x_{\eta}, y_{\eta}) = \mathsf{HF}_{\eta}(x_0, 0).
    \]

    \item \textbf{Project:} Project \( (x_{\eta}, y_{\eta}) \) back to the original space by discarding the velocity:
    \[
        \Pi_1(x_{\eta}, y_{\eta}) = x_{\eta}.
    \]
\end{enumerate}
By Lemma~\ref{LCP-descent}, we have $f(x_{\eta})\leq f(x_0)$.
This is exactly the \textsf{HF-opt}  (Algorithm~\ref{alg:HF-opt}).

\subsection{LCP scheme for sampling: Hamiltonian Monte Carlo}\label{sec:LCPforsampling-HMC}
In sampling, the goal is to generate a random variable \( X \in \mathbb{R}^d \) from a target distribution \( \nu \in \mathcal{P}(\mathbb{R}^d) \). This is a fundamental problem that frequently arises in Bayesian statistics. Sampling algorithms typically employ random walks, where each iteration applies a stochastic update to the current point. When the target distribution has the form \( \nu\, \propto\, \exp(-f) \), the sampling problem can be equivalently formulated as minimizing the relative entropy (Kullback--Leibler (KL) divergence) with respect to \( \nu \) over the space of probability distributions \( \mathcal{P}(\mathbb{R}^d) \):
\begin{equation}
\label{sampling as opt}
    \text{Sample } X \sim \nu\,\propto\,\exp(-f) \qquad \Longleftrightarrow \qquad \min_{\rho \in \mathcal{P}(\mathbb{R}^d)} \mathsf{KL}(\rho \,\|\, \nu),
\end{equation}
where $\mathsf{KL}(\rho \,\|\, \nu)=\int_{\R^d} \rho(x) \log\frac{\rho(x)}{\nu(x)} \, \rmd{x}$ is the KL divergence between $\rho$ and $\nu$. Many sampling algorithms can be interpreted as implementing optimization methods to solve the optimization problem~\eqref{sampling as opt} in the space of distributions; for example, the Langevin dynamics is the gradient flow for minimizing KL divergence under the Wasserstein geometry~\citep{jordan1998variational,wibisono2018sampling}. 
In contrast, another classical and widely used sampling algorithm is the Hamiltonian Monte Carlo (\HMC, Algorithm~\ref{alg:HMC})~\citep{duane1987hybrid,neal2011mcmc}, which often achieves better performance in practice compared to Langevin-based methods.
\begin{algorithm}[H]
    \caption{\textbf{Hamiltonian Monte Carlo} (\textsf{HMC})}
    \label{alg:HMC}
    \begin{algorithmic}[1]
        \State Initialize $x_0\in\R^d$. Choose integration time $\eta_k > 0$ for $k \ge 0$.
        \For{$k = 0,1,\dots, K-1$}
        \State Sample velocity $\xi\sim\mathcal{N}(0,I_d).$
            \State Set $ (x_{k+1},v_{k+1}) = \mHF_{\eta_k}(x_k,\xi).$
        \EndFor
        \State \Return $x_K$
    \end{algorithmic}
\end{algorithm}

\subsubsection{Distributional properties of Hamiltonian flow}

Define an auxiliary probability distribution  $\tilde{\nu}(x, y)\, \propto\, \exp\left( -f(x) - \frac{1}{2} \|y\|^2 \right)$ on the phase space $\R^d \times \R^d$.
The following lemma demonstrates that KL divergence in the phase space with respect to $\tilde \nu$ is conserved along \ref{HF}.
In the proof below, for simplicity we suppress the arguments in the integrals.
\begin{lemma}\label{lem:conserv of KL}
    Let $(x_t,y_t)=\textsf{\em HF}_t(x_0,y_0)$ denote the solution to \eqref{HF} at time $t$ where $(x_t,y_t)\sim\rho_t$. Then we have
    \begin{align}
        \label{eq:continuityeq}\KL(\rho_t \,\| \, \tilde{\nu})=\KL(\rho_0 \,\|\, \tilde{\nu}).
    \end{align}
\end{lemma}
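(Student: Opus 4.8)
The statement is that the phase-space KL divergence with respect to the Gaussian-augmented target $\tilde\nu(x,y)\propto\exp(-f(x)-\tfrac12\|y\|^2)$ is conserved along the Hamiltonian flow \eqref{HF}. The natural approach is to differentiate $\KL(\rho_t\,\|\,\tilde\nu)$ in time and show the derivative vanishes, using two facts about \eqref{HF}: (i) it is a divergence-free (volume-preserving) vector field on $\R^{2d}$, so $\rho_t$ satisfies the Liouville transport equation $\partial_t\rho_t = -\nabla_x\cdot(\rho_t y) - \nabla_y\cdot(-\rho_t\nabla f(x)) = -\langle y,\nabla_x\rho_t\rangle + \langle\nabla f(x),\nabla_y\rho_t\rangle$; and (ii) by Lemma~\ref{Lem:EnergyConservation} the Hamiltonian $H(x,y)=f(x)+\tfrac12\|y\|^2$ is constant along trajectories, equivalently $\langle y,\nabla_x H\rangle - \langle\nabla f(x),\nabla_y H\rangle = 0$ pointwise, and $\tilde\nu \propto e^{-H}$.

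\textbf{Key steps.} First I would write $\KL(\rho_t\,\|\,\tilde\nu) = \int \rho_t\log\rho_t - \int\rho_t\log\tilde\nu = \int\rho_t\log\rho_t + \int\rho_t H + \log Z$, where $Z$ is the normalizing constant of $\tilde\nu$. Differentiating and using the transport equation for $\partial_t\rho_t$,
\begin{align*}
\frac{\rmd}{\rmd t}\KL(\rho_t\,\|\,\tilde\nu) = \int (\partial_t\rho_t)(\log\rho_t + 1 + H) = \int \big(-\langle y,\nabla_x\rho_t\rangle + \langle\nabla f(x),\nabla_y\rho_t\rangle\big)(\log\rho_t + 1 + H).
\end{align*}
Then I would integrate by parts in $x$ and $y$ (boundary terms vanish under suitable decay of $\rho_t$), moving the divergence onto the scalar factor; since the flow field $(y,-\nabla f(x))$ is divergence-free, the $\nabla\rho_t$ terms recombine into $\rho_t$ times the directional derivative of $\log\rho_t + 1 + H$ along the flow. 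The directional derivative of $\log\rho_t+1$ along a volume-preserving field integrates to zero (it is again a total divergence, $\int \langle b,\nabla\rho_t\rangle = -\int \rho_t\,\nabla\cdot b = 0$ with $b$ the flow field — so effectively that piece contributes nothing), and the remaining term is $\int \rho_t\big(-\langle y,\nabla_x H\rangle + \langle\nabla f(x),\nabla_y H\rangle\big)$, which is $0$ by energy conservation (fact (ii)). Hence $\frac{\rmd}{\rmd t}\KL(\rho_t\,\|\,\tilde\nu)=0$, giving \eqref{eq:continuityeq}.

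\textbf{Alternative / cleaner route.} A slicker argument avoids the PDE manipulation entirely: since $\textsf{HF}_t$ is a diffeomorphism with unit Jacobian (Liouville's theorem / symplecticity), the change-of-variables formula gives $\rho_t(z) = \rho_0(\textsf{HF}_{-t}(z))$, and likewise $\tilde\nu(z) = \tilde\nu(\textsf{HF}_{-t}(z))$ because $H$ is invariant under the flow and the Jacobian is $1$. Therefore $\frac{\rho_t}{\tilde\nu}(z) = \frac{\rho_0}{\tilde\nu}(\textsf{HF}_{-t}(z))$, and
\begin{align*}
\KL(\rho_t\,\|\,\tilde\nu) = \int \rho_t(z)\log\frac{\rho_t(z)}{\tilde\nu(z)}\,\rmd z = \int \rho_0(w)\log\frac{\rho_0(w)}{\tilde\nu(w)}\,\rmd w = \KL(\rho_0\,\|\,\tilde\nu),
\end{align*}
where the middle equality substitutes $w = \textsf{HF}_{-t}(z)$ and uses that the Jacobian of this substitution is $1$. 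I would likely present this change-of-variables proof as the main argument since it is shortest and most transparent, perhaps remarking that it also follows from differentiating the KL functional.

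\textbf{Main obstacle.} The substantive content is really just two classical facts — volume preservation of Hamiltonian flow and invariance of $H$ — so there is no deep obstacle. The only thing requiring care is justifying the integration by parts / change of variables rigorously: one needs enough regularity and decay on $\rho_0$ (and on $f$, so that \eqref{HF} has global solutions and generates a flow) for the boundary terms to vanish and for $\KL$ to be finite and differentiable. In a paper at this level these are standard assumptions that I would state briefly rather than belabor; the change-of-variables route sidesteps differentiability of the functional and only needs the flow map to be a measure-preserving diffeomorphism, which is the cleanest thing to invoke.
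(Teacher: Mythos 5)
Your proposal is correct, and in fact it contains two valid arguments: your first sketch is essentially the paper's own proof, while your preferred change-of-variables argument is a genuinely different route. The paper proves the lemma by computing $\frac{\rmd}{\rmd t}\KL(\rho_t\,\|\,\tilde\nu)$ using the continuity equation with the Hamiltonian vector field $b(x,y)=(y,-\nabla f(x))$, integrating by parts twice, and exploiting exactly the two cancellations you identify: the antisymmetry $\langle \nabla\log\tilde\nu, b\rangle = -\langle \nabla H, b\rangle = 0$ pointwise, and $\nabla\cdot b=0$ so that the $\nabla\log\rho_t$ term is a total divergence. Your alternative route instead invokes Liouville's theorem (unit Jacobian of $\mHF_t$) together with invariance of $H$, so that $\rho_t = (\mHF_t)_{\#}\rho_0$ and $\tilde\nu = (\mHF_t)_{\#}\tilde\nu$, and then uses invariance of KL under pushing both measures through the same bijection. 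The change-of-variables argument is shorter, avoids differentiating the KL functional and the decay conditions needed to kill boundary terms in the integration by parts, and makes transparent that the statement is really "$\tilde\nu$ is an invariant measure of a measure-preserving flow"; its only cost is that it imports Liouville's theorem/symplecticity as an external fact. The paper's PDE route is more self-contained in context: it reuses the same continuity-equation machinery (Lemma~\ref{lemma:continuityeq} and its extension Lemma~\ref{Lem:Formula}) that drives the convergence analysis of \RHF, at the price of the regularity/decay caveats you correctly flag. Either proof would be acceptable; your sign bookkeeping in the first route is slightly loose but the cancellations you rely on are the right ones and hold regardless.
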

\begin{proof}
    Since $(x_t,y_t)$ satisfies $(\Dot{x}_t,\Dot{y}_t)=(y_t,-\nabla f(x_t))$, using the continuity equation in Lemma~\ref{lemma:continuityeq} with the Hamiltonian vector field $v_t(x,y)=(y,-\nabla f(x))^{\top}$ yields
    \begin{align*}
        \partial_t\rho_t(x,y) + \nabla\cdot\lp \rho_t(x,y)\lp\begin{array}{c}
            y \\
            -\nabla f(x)
        \end{array}\rp \rp=0.
    \end{align*}
    Computing the time derivative of $\KL(\rho_t\|\tilde{\nu})$ along \eqref{HF}, we obtain
    \begin{align*}
        \frac{\rmd}{\rmd t}\KL(\rho_t\|\tilde{\nu}) &= \frac{\rmd}{\rmd t}\int \rho_t\log\frac{\rho_t}{\tilde{\nu}_t}=\int \partial_t\rho_t\log\frac{\rho_t}{\tilde{\nu}_t}+\int\rho_t\frac{\partial_t\rho_t}{\rho_t}\\
        &\overset{\eqref{eq:continuityeq}}{=}-\int \nabla\cdot\lp \rho_t\lp\begin{array}{c}
            y \\
            -\nabla f(x)
        \end{array}\rp \rp\log\frac{\rho_t}{\tilde{\nu}_t} + \underbrace{\int \partial_t\rho_t}_{=0}\\
        &=\int \llangle \nabla\log\frac{\rho_t}{\tilde{\nu}},\lp\begin{array}{c}
            y \\
            -\nabla f(x)
        \end{array}\rp\rrangle\rho_t\\
        &=\int  \llangle \nabla\log{\rho_t},\lp\begin{array}{c}
            y \\
            -\nabla f(x)
        \end{array}\rp\rrangle\rho_t + \int \underbrace{\llangle \lp\begin{array}{c}
            \nabla f(x) \\
            y
        \end{array}\rp,\lp\begin{array}{c}
            y \\
            -\nabla f(x)
        \end{array}\rp\rrangle}_{=0}\rho_t\\
        &= \int  \llangle \nabla \rho_t,\lp\begin{array}{c}
            y \\
            -\nabla f(x)
        \end{array}\rp\rrangle 
        =-\int \nabla\cdot\lp\begin{array}{c}
            y \\
            -\nabla f(x)
        \end{array}\rp\rho_t=0,
    \end{align*}
    where $\nabla:=(\nabla_x,\nabla_y)^{\top}$, and the third and the last equations follow from integration by part. Thus $\frac{\rmd}{\rmd t}\KL(\rho_t\|\tilde{\nu})=0$, which implies $\KL(\rho_t\|\tilde{\nu})=\KL(\rho_0\|\tilde{\nu}).$
\end{proof}
The next lemma shows the chain rule decomposition of KL divergence.
For a joint distribution $\rho \in \mathcal{P}(\R^d\times\R^d)$, let $\rho^X(x) =\int \rho(x,y)\,\rmd y$ be the $X$-marginal of $\rho$, and $\rho^{Y\mid X}(y \mid x)=\frac{\rho(x,y)}{\rho^X(x)}$ the conditional distribution, so we have the factorization $\rho(x,y) = \rho^X(x) \rho^{Y \mid X}(y \mid x)$.

\begin{lemma}\label{lem:decomp of KL}
    For any joint distributions $\rho,\pi \in\mathcal{P}(\R^d\times\R^d)$, we have
    \begin{align}
        \label{eq:decomp of KL}\KL(\rho \,\|\, \pi)=\KL(\rho^X \,\|\, \pi^X)+\E_{\rho^X}\lmp \KL(\rho^{Y\mid X} \,\|\, \pi^{Y\mid X}) \rmp.
    \end{align}
\end{lemma}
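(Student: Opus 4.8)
The plan is to prove Lemma~\ref{lem:decomp of KL} by a direct computation from the definition of KL divergence, exploiting the factorizations $\rho(x,y) = \rho^X(x)\,\rho^{Y\mid X}(y\mid x)$ and $\pi(x,y) = \pi^X(x)\,\pi^{Y\mid X}(y\mid x)$. First I would dispose of the degenerate case: if $\rho$ is not absolutely continuous with respect to $\pi$, both sides equal $+\infty$ and there is nothing to prove, so we may assume $\rho\ll\pi$, which forces $\rho^X\ll\pi^X$ and $\rho^{Y\mid X}(\cdot\mid x)\ll\pi^{Y\mid X}(\cdot\mid x)$ for $\rho^X$-a.e.\ $x$. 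Under this assumption, $\frac{\rho(x,y)}{\pi(x,y)} = \frac{\rho^X(x)}{\pi^X(x)}\cdot\frac{\rho^{Y\mid X}(y\mid x)}{\pi^{Y\mid X}(y\mid x)}$, so taking logarithms inside the integral defining $\KL(\rho\,\|\,\pi)=\int\!\!\int \rho(x,y)\log\frac{\rho(x,y)}{\pi(x,y)}\,\rmd y\,\rmd x$ splits the integrand into two additive pieces.

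Next I would integrate the two pieces separately. For the first piece, $\int\!\!\int \rho^X(x)\,\rho^{Y\mid X}(y\mid x)\log\frac{\rho^X(x)}{\pi^X(x)}\,\rmd y\,\rmd x$: since $\log\frac{\rho^X(x)}{\pi^X(x)}$ does not depend on $y$ and $\int \rho^{Y\mid X}(y\mid x)\,\rmd y = 1$, the inner integral over $y$ collapses, and this equals $\int \rho^X(x)\log\frac{\rho^X(x)}{\pi^X(x)}\,\rmd x = \KL(\rho^X\,\|\,\pi^X)$. For the second piece, $\int\!\!\int \rho^X(x)\,\rho^{Y\mid X}(y\mid x)\log\frac{\rho^{Y\mid X}(y\mid x)}{\pi^{Y\mid X}(y\mid x)}\,\rmd y\,\rmd x$: recognizing the inner integral over $y$ as $\KL\big(\rho^{Y\mid X}(\cdot\mid x)\,\|\,\pi^{Y\mid X}(\cdot\mid x)\big)$, this equals $\E_{\rho^X}\!\lmp \KL(\rho^{Y\mid X}\,\|\,\pi^{Y\mid X})\rmp$. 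Adding the two pieces yields the claimed identity~\eqref{eq:decomp of KL}.

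The only technical points to verify are the applicability of Fubini--Tonelli when splitting the double integral and the measurability/well-definedness of the conditional densities; the latter is automatic since we identify distributions with their density functions throughout, and the former is handled by noting that the second piece has a nonnegative integrand (Tonelli applies directly) while the first piece is controlled since $\KL(\rho^X\,\|\,\pi^X)\le\KL(\rho\,\|\,\pi)<\infty$ under our standing assumption. There is no genuine obstacle here: this is the standard tensorization (chain rule) identity for relative entropy, and the proof is essentially bookkeeping. The reason to record it is that combining Lemma~\ref{lem:conserv of KL} (conservation of phase-space KL along \eqref{HF}) with Lemma~\ref{lem:decomp of KL} lets us extract the marginal behavior of \HMC\ from a statement purely about the lifted dynamics.
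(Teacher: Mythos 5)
Your proposal is correct and follows essentially the same route as the paper's proof: factor the joint densities, split the logarithm, and integrate the marginal and conditional pieces separately. The extra care you take with absolute continuity and Tonelli is fine but not needed beyond the density-level bookkeeping the paper uses.
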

\begin{proof}
    Using the definition of KL divergence and the conditional distribution, we obtain
    \begin{align*}
        \KL(\rho\,\|\,\pi)=\int\rho\log\frac{\rho}{\pi}&=\int \rho^X\rho^{Y\mid X}\log\frac{\rho^X\rho^{Y\mid X}}{\pi^X\pi^{Y\mid X}} \\
        &=\int \rho^X\rho^{Y\mid X}\log\frac{\rho^X}{\pi^X}+\int \rho^X\rho^{Y\mid X}\log\frac{\rho^{Y\mid X}}{\pi^{Y\mid X}}\\
        &=\int \rho^X\log\frac{\rho^X}{\pi^X}+\int \rho^X\rho^{Y\mid X}\log\frac{\rho^{Y\mid X}}{\pi^{Y\mid X}}\\
        &=\KL(\rho^X\,\|\,\pi^X)+\E_{\rho^X}\lmp \KL(\rho^{Y\mid X}\,\|\,\pi^{Y\mid X}) \rmp.
    \end{align*}
\end{proof}

\subsubsection{Hamiltonian Monte Carlo as an LCP scheme}

We now describe that \HMC\ is an instance of the LCP scheme on the space of probability distributions.
Note one step of \HMC\ from $x_k$ to $x_{k+1}$ can be described as follows:
\begin{enumerate}
    \item \textbf{Lift:} lift  $\rho_0^X \in \mathcal{P}(\mathbb{R}^d)$ to $\rho_0(x, y)\, \propto\, \rho_0^X(x) \cdot \exp\left( - \frac{1}{2} \|y\|^2 \right) \in \mathcal{P}(\mathbb{R}^d \times \mathbb{R}^d)$.

    \item \textbf{Conserve:} run Hamiltonian flow (\ref{HF}) from an initial point $(x_0, y_0) \sim \rho_0$ with an integration time $\eta$ to get $(x_\eta, y_\eta) := \textsf{HF}_\eta(x_0, y_0) \sim\rho_{\eta}.$

    \item \textbf{Project:} marginalize $\rho_\eta(x, y)$ to get $\rho^X_\eta(x) := \int \rho_\eta(x, y) \rmd y$.
\end{enumerate}
In the above, if $x_k \sim \rho_0^X$, then along \HMC, $x_{k+1} \sim \rho_\eta^X$ where $\eta = \eta_k$ is the integration time.

In the initial lifting, since $\rho_0^{Y \mid X} = \tilde \nu^{Y \mid X} = \N(0,I)$, by Lemma~\ref{lem:decomp of KL} we have $\textsf{KL}(\rho_0^X \,\|\, \nu) = \textsf{KL}(\rho_0 \,\|\, \tilde{\nu})$.
Then we have
\[
\textsf{KL}(\rho_\eta^X \,\|\, \nu) \leq \textsf{KL}(\rho_\eta \,\|\, \tilde{\nu}) = \textsf{KL}(\rho_0 \,\|\, \tilde{\nu}) = \textsf{KL}(\rho_0^X \,\|\, \nu),
\]
where the first inequality follows from Lemma~\ref{lem:decomp of KL} and the next equality follows from Lemma~\ref{lem:conserv of KL}.
This shows \HMC\ is an instance of the LCP scheme, and that it is a descent method in KL divergence.

\section{Optimization and sampling: algorithms and connections}\label{app:opt and sampling}

In this section, we review foundational algorithms in convex optimization and log-concave sampling, with a focus on their theoretical convergence behavior and structural similarities. We first review the convergence guarantees of optimization algorithms for minimizing strongly and weakly convex functions. We then review sampling algorithms commonly used for generating samples from log-concave distributions. Based on these algorithms, we draw connections between optimization and sampling, emphasizing both their algorithmic parallels and the fundamental gap in convergence guarantees---most notably, the presence of acceleration in optimization and its absence in current sampling algorithms. These insights motivate our development of randomized Hamiltonian-based methods that aim to unify and extend ideas from both domains.
\subsection{Review of optimization algorithms with convergence rates}\label{app:opt review}
In this section, we revisit some classical optimization methods (for minimizing $f$) along with their convergence rates. 
The classical greedy methods in optimization are \emph{gradient flow} (\GF) in continuous-time and \emph{gradient descent} (\GD) in discrete-time with stepsize $\eta>0$, given by
\begin{align}
    &\Dot{X}_t=-\nabla f(X_t),\tag{\GF}\label{eq:GF}\\[1ex]
    &x_{k+1}=x_k-\eta\nabla f(x_k).\tag{\GD}\label{eq:GD}
\end{align}
The classical accelerated methods in optimization are \emph{accelerated gradient flow} (\AGF) in continuous-time~\citep{su2016differential,wibisono2016variational} and \emph{accelerated gradient descent} (\AGD) in discrete-time~\citep{nesterov1983method,nesterov2013introductory} with stepsize $\eta>0$, given by
\begin{align}
    &\Ddot{X}_t + \beta(t) \Dot{X}_t + \nabla f(X_t)=0,\tag{\AGF}\label{eq:AGF}\\[1.5ex]
     &\left\{\begin{aligned}
        x_{k+1}&=y_k-\eta\nabla f(y_k),\\
        y_{k+1}&=x_{k+1} + \beta_k(x_{k+1}-x_k),
    \end{aligned}
    \right.\tag{\AGD}\label{eq:AGD}
\end{align}
where $\beta(t)>0$ is the damping parameter and $\beta_k>0$ is the momentum parameter.
\subsubsection{Convergence under strong convexity or gradient domination} \label{app:convergence under sc and pl}
Now we review the convergence guarantees of the methods mentioned above for strongly convex or gradient-dominated functions. Notably, strong convexity implies gradient domination, and thus any convergence guarantee under gradient domination also holds under strong convexity.
\paragraph{Continuous-time flows.}
\begin{theorem}\label{thm:GF-sc}
Assume $f \colon \R^d \to \R$ is $\alpha$-gradient-dominated. Let $X_t \in \R^d$ evolve following \eqref{eq:GF} from any $X_0 \in \R^d$. Then for any $t\geq 0$, we have
    \begin{align*}
        f(X_t)-f(x^*)\leq \exp(-2\alpha t)(f(X_0)-f(x^*)).
    \end{align*}
\end{theorem}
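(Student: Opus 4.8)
The plan is to use a Lyapunov (energy) function argument, which is the standard technique for convergence of gradient flow. Since the statement only assumes $\alpha$-gradient domination (weaker than strong convexity), the natural Lyapunov function is simply the function gap itself: set $\calE_t := f(X_t) - f(x^*)$. The goal is to show $\calE_t \le e^{-2\alpha t}\calE_0$, which by Gr\"onwall's inequality reduces to proving the differential inequality $\dot{\calE}_t \le -2\alpha\, \calE_t$ for all $t \ge 0$.

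First I would compute the time derivative of $\calE_t$ along \eqref{eq:GF}. By the chain rule, $\dot{\calE}_t = \langle \nabla f(X_t), \dot{X}_t\rangle = \langle \nabla f(X_t), -\nabla f(X_t)\rangle = -\|\nabla f(X_t)\|^2$. This is where the dynamics enters, and it is the only place. Next I would invoke the $\alpha$-gradient-domination hypothesis, which states $\|\nabla f(x)\|^2 \ge 2\alpha(f(x) - f(x^*))$ for every $x$. Applying this at $x = X_t$ gives $\dot{\calE}_t = -\|\nabla f(X_t)\|^2 \le -2\alpha(f(X_t) - f(x^*)) = -2\alpha\,\calE_t$.

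Finally I would integrate this differential inequality. Writing $\frac{\rmd}{\rmd t}\bigl(e^{2\alpha t}\calE_t\bigr) = e^{2\alpha t}(\dot{\calE}_t + 2\alpha\,\calE_t) \le 0$, we see $e^{2\alpha t}\calE_t$ is nonincreasing in $t$, hence $e^{2\alpha t}\calE_t \le \calE_0$, i.e. $f(X_t) - f(x^*) \le e^{-2\alpha t}(f(X_0) - f(x^*))$, which is exactly the claim.

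There is essentially no main obstacle here — this is a textbook-level argument, and every step is a one-liner. The only things to be slightly careful about are: (i) ensuring $f$ is differentiable enough that $t \mapsto f(X_t)$ is differentiable and the chain rule applies (which is fine since the paper assumes $f$ differentiable and the flow is well-posed under the smoothness assumptions in play), and (ii) noting explicitly that gradient domination is weaker than strong convexity so the theorem as stated covers the strongly convex case too, but no extra work is needed. If anything, the mild subtlety is purely expository: emphasizing that we do not need convexity at all, only the gradient-domination (Polyak--{\L}ojasiewicz-type) inequality.
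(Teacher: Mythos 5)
Your proposal is correct and is essentially identical to the paper's own proof: differentiate $f(X_t)-f(x^*)$ along the flow to get $-\|\nabla f(X_t)\|^2$, apply the $\alpha$-gradient-domination inequality, and conclude via Gr\"onwall. No gaps or meaningful differences.
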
 
\begin{proof}
    Taking the time derivative of $f(X_t)-f(x^*)$, we obtain
    \begin{align*}
        \frac{\rmd}{\rmd t}(f(X_t)-f(x^*))=\langle\nabla f(X_t),\Dot{X}_t\rangle = -\|\nabla f(X_t)\|^2\leq -2\alpha(f(X_t)-f(x^*)),
    \end{align*}
    where the inequality follows from $\alpha$-gradient domination. Applying Gr\"onwall's inequality completes the proof.
\end{proof}
\begin{theorem}\label{thm:AGF-sc}
    Assume $f \colon \R^d \to \R$ is $\alpha$-strongly convex. Let $X_t \in \R^d$ evolve following \eqref{eq:AGF} with $\beta(t)=2\sqrt{\alpha}$ from any $X_0 \in \R^d$. Then for any $t\geq 0$, we have
    \begin{align*}
        f(X_t)-f(x^*)\leq \exp(-\sqrt{\alpha} t)\lp f(X_0)-f(x^*) +\frac{\alpha}{2}\left\|X_0+\frac{1}{\sqrt{\alpha}}\Dot{X}_0-x^*\right\|^2\rp.
    \end{align*}
\end{theorem}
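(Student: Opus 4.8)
\textbf{Proof proposal for Theorem~\ref{thm:AGF-sc} (convergence of \AGF\ under strong convexity).}

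The plan is to use the standard Lyapunov-function argument for the accelerated gradient flow. First I would rewrite the second-order ODE \eqref{eq:AGF} with $\beta(t) = 2\sqrt{\alpha}$ as a first-order system by introducing a velocity variable, and more conveniently a ``momentum'' variable $V_t := X_t + \frac{1}{\sqrt{\alpha}}\dot X_t$, so that the displayed bound becomes a statement about a single energy functional. The natural candidate is
\[
  \calE_t := f(X_t) - f(x^*) + \frac{\alpha}{2}\lno V_t - x^* \rno^2
  = f(X_t) - f(x^*) + \frac{\alpha}{2}\lno X_t + \tfrac{1}{\sqrt{\alpha}}\dot X_t - x^* \rno^2 .
\]
The goal is to show $\frac{\rmd}{\rmd t}\calE_t \le -\sqrt{\alpha}\,\calE_t$, which by Gr\"onwall's inequality immediately gives $\calE_t \le e^{-\sqrt{\alpha}t}\calE_0$, and then dropping the nonnegative quadratic term on the left yields the claimed bound on $f(X_t) - f(x^*)$.

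The key steps, in order: (i) differentiate $\calE_t$, using $\frac{\rmd}{\rmd t}(f(X_t)-f(x^*)) = \langle \nabla f(X_t), \dot X_t\rangle$ for the first term; (ii) compute $\dot V_t$ — from \eqref{eq:AGF} with $\beta(t)=2\sqrt{\alpha}$ we get $\ddot X_t = -2\sqrt{\alpha}\,\dot X_t - \nabla f(X_t)$, so $\dot V_t = \dot X_t + \frac{1}{\sqrt{\alpha}}\ddot X_t = \dot X_t + \frac{1}{\sqrt{\alpha}}(-2\sqrt{\alpha}\,\dot X_t - \nabla f(X_t)) = -\dot X_t - \frac{1}{\sqrt{\alpha}}\nabla f(X_t)$; (iii) substitute to obtain $\frac{\rmd}{\rmd t}\calE_t = \langle\nabla f(X_t),\dot X_t\rangle + \alpha\langle V_t - x^*, -\dot X_t - \frac{1}{\sqrt{\alpha}}\nabla f(X_t)\rangle$; (iv) expand $V_t - x^* = (X_t - x^*) + \frac{1}{\sqrt{\alpha}}\dot X_t$ and collect terms — the $\langle\nabla f(X_t),\dot X_t\rangle$ contributions should cancel, leaving $\frac{\rmd}{\rmd t}\calE_t = -\sqrt{\alpha}\langle\nabla f(X_t), X_t - x^*\rangle - \alpha\langle X_t - x^*, \dot X_t\rangle - \sqrt{\alpha}\lno\dot X_t\rno^2$; (v) bound $-\sqrt{\alpha}\langle\nabla f(X_t), X_t - x^*\rangle \le -\sqrt{\alpha}(f(X_t)-f(x^*)) - \frac{\alpha^{3/2}}{2}\lno X_t - x^*\rno^2$ using $\alpha$-strong convexity in the form $\langle\nabla f(x), x-x^*\rangle \ge f(x)-f(x^*) + \frac{\alpha}{2}\lno x-x^*\rno^2$; (vi) match the remaining terms against $-\sqrt{\alpha}\calE_t = -\sqrt{\alpha}(f(X_t)-f(x^*)) - \frac{\alpha^{3/2}}{2}\lno X_t - x^* + \frac{1}{\sqrt{\alpha}}\dot X_t\rno^2$ and verify, by expanding the square on the right, that the inequality $\frac{\rmd}{\rmd t}\calE_t \le -\sqrt{\alpha}\calE_t$ holds — all the cross terms $-\alpha\langle X_t - x^*, \dot X_t\rangle$ and the $\lno\dot X_t\rno^2$ coefficients should line up exactly.

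I expect the only mildly delicate step to be the bookkeeping in (iv)–(vi): making sure the coefficient $2\sqrt{\alpha}$ in the damping, the $\frac{1}{\sqrt{\alpha}}$ scaling in the definition of $V_t$, and the $\frac{\alpha}{2}$ weight on the quadratic term are all chosen consistently so that the cross terms cancel and the strong-convexity slack is exactly what is needed — there is essentially a unique choice of Lyapunov weights that makes this work, and the computation confirms it. No step should present a genuine obstacle; this is the textbook continuous-time acceleration argument (cf.\ \citet{wilson2021lyapunov,wibisono2016variational}), and the final application of Gr\"onwall's inequality is routine.
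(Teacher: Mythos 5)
Your proposal is correct and follows essentially the same route as the paper: the paper defines the exponentially weighted Lyapunov function $\exp(\sqrt{\alpha}t)\bigl(f(X_t)-f(x^*)+\tfrac{\alpha}{2}\|X_t+\tfrac{1}{\sqrt{\alpha}}\dot X_t-x^*\|^2\bigr)$ and shows it is nonincreasing (deferring the computation to \citet[Proposition 3]{wilson2021lyapunov}), which is exactly your energy plus Gr\"onwall. Your bookkeeping in steps (iv)--(vi) checks out; the only minor imprecision is that the $\|\dot X_t\|^2$ coefficients do not match exactly but leave a favorable slack of $-\tfrac{\sqrt{\alpha}}{2}\|\dot X_t\|^2$, which only helps the inequality.
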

\begin{proof}
    Define the continous-time Lyapunov function: $$\mathcal{E}_t = \exp(\sqrt{\alpha} t)\lp f(X_t)-f(x^*) +\frac{\alpha}{2}\left\|X_t+\frac{1}{\sqrt{\alpha}}\Dot{X}_t-x^*\right\|^2\rp.$$
    We can show $\Dot{\mathcal{E}}_t\leq 0$ along \eqref{eq:AGF} with $\beta(t)=2\sqrt{\alpha}$ for $t\geq 0$. For a complete proof, see the proof of \citet[Proposition 3]{wilson2021lyapunov}.
\end{proof}
\begin{remark}
Comparing Theorem~\ref{thm:GF-sc} and Theorem~\ref{thm:AGF-sc}, we observe that \eqref{eq:GF} converges at a rate of $\exp(-2\alpha t)$ under $\alpha$-gradient domination, which also holds under $\alpha$-strong convexity. In contrast, \eqref{eq:AGF} achieves a faster rate of $\exp(-\sqrt{\alpha} t)$ under $\alpha$-strong convexity. Since $\sqrt{\alpha} > 2\alpha$ for small $\alpha < \frac{1}{4}$, this demonstrates the acceleration effect of~\eqref{eq:AGF} compared to~\eqref{eq:GF}.
\end{remark}

\paragraph{Discrete-time algorithms.}
\begin{theorem}\label{thm:GD-sc}
    Assume $f \colon \R^d \to \R$ is $\alpha$-gradient-dominated and $L$-smooth. Then for all $k\geq 0$, along \eqref{eq:GD} with $\eta\leq\frac{1}{L}$ from any $x_0\in\R^d$, we have
    \begin{align*}
        f(x_k)-f(x^*)\leq (1-\alpha\eta)^k(f(x_0)-f(x^*)).
    \end{align*}
\end{theorem}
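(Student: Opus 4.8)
The plan is to prove the standard linear convergence of \eqref{eq:GD} under gradient domination and smoothness by combining the descent lemma implied by $L$-smoothness with the gradient domination inequality, and then iterating. First I would write down the one-step progress guarantee: since $f$ is $L$-smooth, plugging $x_{k+1} = x_k - \eta \nabla f(x_k)$ into the smoothness upper bound gives
\begin{align*}
f(x_{k+1}) \le f(x_k) + \langle \nabla f(x_k), x_{k+1} - x_k \rangle + \frac{L}{2}\|x_{k+1}-x_k\|^2 = f(x_k) - \eta\lp 1 - \frac{L\eta}{2}\rp \|\nabla f(x_k)\|^2.
\end{align*}
With $\eta \le \frac{1}{L}$ we have $1 - \frac{L\eta}{2} \ge \frac{1}{2}$, so $f(x_{k+1}) - f(x^*) \le f(x_k) - f(x^*) - \frac{\eta}{2}\|\nabla f(x_k)\|^2$.

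Next I would invoke $\alpha$-gradient domination, $\|\nabla f(x_k)\|^2 \ge 2\alpha (f(x_k) - f(x^*))$, to replace the gradient-norm term and obtain the contraction
\begin{align*}
f(x_{k+1}) - f(x^*) \le (1 - \alpha\eta)\lp f(x_k) - f(x^*)\rp.
\end{align*}
(One should note $\alpha\eta \le \alpha/L \le 1$ under the stated assumptions $\alpha \le 1 \le L$, so the factor $1-\alpha\eta$ is in $[0,1)$ and the bound is non-vacuous.) Then I would unroll this recursion over $k$ steps, yielding $f(x_k) - f(x^*) \le (1-\alpha\eta)^k (f(x_0) - f(x^*))$, which is exactly the claim.

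There is really no serious obstacle here — this is the textbook argument, and every ingredient (the smoothness descent inequality, gradient domination) is available from the definitions given in the Preliminaries. The only point requiring mild care is the bookkeeping on the step size: checking that $\eta \le \frac{1}{L}$ is exactly what makes the coefficient $\eta(1 - L\eta/2)$ at least $\eta/2$, and confirming $1 - \alpha\eta \in [0,1)$ so the geometric decay is meaningful. Everything else is a one-line chain of inequalities followed by induction on $k$.
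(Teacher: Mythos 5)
Your proof is correct and matches the paper's approach: the paper defers to a Lyapunov argument with $E_k = (1-\alpha\eta)^{-k}(f(x_k)-f(x^*))$, and showing $E_{k+1}\le E_k$ is exactly the per-step contraction $f(x_{k+1})-f(x^*)\le(1-\alpha\eta)(f(x_k)-f(x^*))$ that you derive from the smoothness descent lemma combined with gradient domination.
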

\begin{proof}
    Define the discrete-time Lyapunov function: $E_k=(1-\alpha\eta)^{-k}(f(x_k)-f(x^*))$. We can show $E_{k+1}\leq E_k$ for all $k\geq 0$ along \eqref{eq:GD}. See \citet[Appendix B.1.1]{wilson2018lyapunov} for more details.
\end{proof}
\begin{corollary}\label{cor:GD-sc}
    Assume $f \colon \R^d \to \R$ is $\alpha$-gradient-dominated and $L$-smooth. To generate $x_K$ satisfying $f(x_K)-f(x^*)\leq \varepsilon$, it suffices to run \eqref{eq:GD} with $\eta = \frac{1}{L}$ and from any $x_0\in\R^d$ for $$K\geq \kappa\cdot\log\lp\frac{f(x_0)-f(x^*)}{\varepsilon}\rp.$$
\end{corollary}
\begin{theorem}\label{thm:AGD-sc}
    Assume $f \colon \R^d \to \R$ is $\alpha$-strongly convex and $L$-smooth. Then for all $k\geq 0$, along \eqref{eq:AGD} with $\beta_k=\frac{1-\sqrt{\alpha\eta}}{1+\sqrt{\alpha\eta}}$, $\eta\leq\frac{1}{L}$ from any $x_0=y_0\in\R^d$, we have
    \begin{align*}
        f(x_k)-f(x^*)\leq (1-\sqrt{\alpha\eta})^k\lp f(x_0)-f(x^*)+\frac{\alpha}{2}\|x_0-x^*\|^2\rp.
    \end{align*}
\end{theorem}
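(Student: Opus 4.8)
The plan is to establish the bound via a discrete-time Lyapunov argument that mirrors the continuous-time proof of Theorem~\ref{thm:AGF-sc}. Define the Lyapunov function
\[
E_k = (1-\sqrt{\alpha\eta})^{-k}\lp f(x_k)-f(x^*)+\frac{\alpha}{2}\|z_k-x^*\|^2\rp,
\]
where $z_k := \frac{1}{\sqrt{\alpha\eta}}\,y_k - \lp\frac{1}{\sqrt{\alpha\eta}}-1\rp x_k$ is the natural discrete analogue of $X_t + \frac{1}{\sqrt{\alpha}}\dot X_t$ (the exact coefficients to be pinned down so that the momentum update $y_{k+1}=x_{k+1}+\beta_k(x_{k+1}-x_k)$ with $\beta_k = \frac{1-\sqrt{\alpha\eta}}{1+\sqrt{\alpha\eta}}$ produces a clean recursion for $z_k$). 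The goal is then to show $E_{k+1}\le E_k$ for all $k\ge 0$, which immediately gives the claim since $z_0 = x_0$ when $x_0=y_0$, so $E_0 = f(x_0)-f(x^*)+\frac{\alpha}{2}\|x_0-x^*\|^2$.

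First I would expand $E_{k+1}-(1-\sqrt{\alpha\eta})E_k\cdot(1-\sqrt{\alpha\eta})^{-(k+1)}$, i.e.\ reduce $E_{k+1}\le E_k$ to the one-step inequality
\[
f(x_{k+1})-f(x^*)+\frac{\alpha}{2}\|z_{k+1}-x^*\|^2 \;\le\; (1-\sqrt{\alpha\eta})\lp f(x_k)-f(x^*)+\frac{\alpha}{2}\|z_k-x^*\|^2\rp.
\]
To bound the left side I would use three ingredients: (i) the $L$-smoothness descent inequality applied to the gradient step $x_{k+1}=y_k-\eta\nabla f(y_k)$, namely $f(x_{k+1})\le f(y_k)-\frac{\eta}{2}\|\nabla f(y_k)\|^2$ when $\eta\le 1/L$; (ii) $\alpha$-strong convexity evaluated at the pair $(y_k, x^*)$ and at $(y_k, x_k)$ to convert $f(y_k)$ into a combination of $f(x^*)$, $f(x_k)$, and quadratic terms in $y_k-x^*$, $y_k-x_k$; and (iii) algebraic expansion of $\|z_{k+1}-x^*\|^2$, using the recursion for $z_{k+1}$ in terms of $z_k$, $y_k$, and $\nabla f(y_k)$, to produce a cross term $-\langle \nabla f(y_k), \cdot\rangle$ matching what strong convexity supplies, plus a $-\text{const}\cdot\eta\|\nabla f(y_k)\|^2$ term that gets absorbed by the descent inequality in (i).

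The main obstacle will be the bookkeeping in step (iii): choosing the auxiliary sequence $z_k$ and the weights so that, after combining everything, every leftover term is either matched exactly on the right-hand side or is manifestly nonpositive. In the continuous-time proof this is the step where $\beta(t)=2\sqrt{\alpha}$ is forced; here the discrete counterpart forces $\beta_k=\frac{1-\sqrt{\alpha\eta}}{1+\sqrt{\alpha\eta}}$, and the $\eta\le 1/L$ restriction is exactly what makes the $\|\nabla f(y_k)\|^2$ coefficient come out with the right sign. I would first do the quadratic expansions symbolically to identify the correct definition of $z_k$ (equivalently, verify it is consistent with the standard estimate-sequence or three-point-descent derivations of Nesterov's method), then verify the sign of the residual gradient-norm term, and finally chain the one-step bound to telescope $E_k\le E_0$. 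Since this is a well-known result (it is the standard convergence theorem for constant-step \AGD\ on strongly convex functions), I expect the argument to go through cleanly once $z_k$ is correctly specified; I would cite \citet{wilson2021lyapunov} or \citet{nesterov2013introductory} for the detailed computation rather than reproduce every line.
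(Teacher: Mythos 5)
Your proposal is correct and follows essentially the same route as the paper, which simply defers to the standard potential-function proof (\citet[Corollary 4.1.5]{d2021acceleration}): your Lyapunov function with the auxiliary sequence $z_k = x_k + \frac{1}{\sqrt{\alpha\eta}}(y_k-x_k)$, the smoothness descent step, and the strong-convexity bounds at $(y_k,x^*)$ and $(y_k,x_k)$ are exactly the ingredients of that argument, and the contraction factor $(1-\sqrt{\alpha\eta})$ with $E_0$ reducing to $f(x_0)-f(x^*)+\frac{\alpha}{2}\|x_0-x^*\|^2$ when $x_0=y_0$ matches the stated bound. The only unfinished piece is the algebraic verification of the one-step inequality, which you correctly identify and appropriately outsource to the same standard references the paper cites.
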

\begin{proof}
    A complete proof can be found in the proof of \citet[Corollary 4.1.5]{d2021acceleration}.
\end{proof}
\begin{corollary}\label{cor:AGD-sc}
    Assume $f \colon \R^d \to \R$ is $\alpha$-strongly convex and $L$-smooth. To generate $x_K$ satisfying $f(x_K)-f(x^*)\leq \varepsilon$, it suffices to run \eqref{eq:AGD} with $\beta_k=\frac{\sqrt{\kappa}-1}{\sqrt{\kappa}+1}$, $\eta = \frac{1}{L}$ and from any $x_0,\,y_0\in\R^d$ for $$K\geq \sqrt{\kappa}\cdot\log\lp\frac{f(x_0)-f(x^*)+\frac{\alpha}{2}\|x_0-x^*\|^2}{\varepsilon}\rp.$$
\end{corollary}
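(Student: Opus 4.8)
The plan is to derive the iteration complexity directly from the convergence rate in Theorem~\ref{thm:AGD-sc} by choosing the stepsize $\eta=\frac1L$ and solving for the number of iterations $K$ that guarantees $f(x_K)-f(x^*)\le\varepsilon$. This is a routine "take logarithms and isolate $K$" argument, so the main work is just bookkeeping with the constants.

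First I would substitute $\eta=\frac1L$ into Theorem~\ref{thm:AGD-sc}. Then $\sqrt{\alpha\eta}=\sqrt{\alpha/L}=1/\sqrt\kappa$, so the momentum parameter becomes $\beta_k=\frac{1-1/\sqrt\kappa}{1+1/\sqrt\kappa}=\frac{\sqrt\kappa-1}{\sqrt\kappa+1}$, matching the statement, and the rate becomes
\[
f(x_K)-f(x^*)\le\Bigl(1-\tfrac{1}{\sqrt\kappa}\Bigr)^K\Bigl(f(x_0)-f(x^*)+\tfrac{\alpha}{2}\|x_0-x^*\|^2\Bigr).
\]
Write $D_0:=f(x_0)-f(x^*)+\frac{\alpha}{2}\|x_0-x^*\|^2$ for the initial Lyapunov value. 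It then suffices to choose $K$ so that $\bigl(1-1/\sqrt\kappa\bigr)^K D_0\le\varepsilon$, i.e.\ $K\log\frac{1}{1-1/\sqrt\kappa}\ge\log(D_0/\varepsilon)$.

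Next I would use the elementary inequality $\log\frac{1}{1-u}\ge u$ valid for $u\in(0,1)$ (equivalently $1-u\le e^{-u}$), applied with $u=1/\sqrt\kappa\in(0,1)$ since $\kappa=L/\alpha\ge1$; if $\kappa=1$ the bound is trivial. Hence any $K$ with $K/\sqrt\kappa\ge\log(D_0/\varepsilon)$ already forces $K\log\frac{1}{1-1/\sqrt\kappa}\ge\log(D_0/\varepsilon)$, and therefore $\bigl(1-1/\sqrt\kappa\bigr)^KD_0\le\varepsilon$. Rearranging gives the stated condition
\[
K\ge\sqrt\kappa\cdot\log\!\Bigl(\tfrac{f(x_0)-f(x^*)+\frac{\alpha}{2}\|x_0-x^*\|^2}{\varepsilon}\Bigr),
\]
which completes the proof. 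There is no genuine obstacle here; the only point requiring a moment's care is invoking $1-u\le e^{-u}$ (so that no spurious constant appears and the bound is exactly $\sqrt\kappa\log(D_0/\varepsilon)$ rather than something like $\frac{\sqrt\kappa}{2}$ or $2\sqrt\kappa$), together with noting that one may assume $\varepsilon\le D_0$ so that the logarithm is nonnegative and the bound is nonvacuous.
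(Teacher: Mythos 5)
Your proposal is correct and follows exactly the derivation the paper intends (and uses explicitly for the analogous Corollary~\ref{coro:itercomp-sc}): substitute $\eta=1/L$ into Theorem~\ref{thm:AGD-sc}, note $\sqrt{\alpha\eta}=1/\sqrt{\kappa}$ gives the stated $\beta_k$, and bound $(1-1/\sqrt{\kappa})^K\le\exp(-K/\sqrt{\kappa})$ to solve for $K$. No issues; the only cosmetic remark is that Theorem~\ref{thm:AGD-sc} assumes $x_0=y_0$, a detail inherited from the paper's statement rather than a gap in your argument.
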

\begin{remark}
Comparing Corollary~\ref{cor:GD-sc} and Corollary~\ref{cor:AGD-sc}, we observe that \eqref{eq:AGD} improves upon \eqref{eq:GD} by reducing the iteration complexity from $O(\kappa \log(1/\varepsilon))$ to $O(\sqrt{\kappa} \log(1/\varepsilon))$, where $\kappa = L/\alpha$ is the condition number. This demonstrates the acceleration effect of \eqref{eq:AGD} over \eqref{eq:GD} for strongly convex functions.
\end{remark}
\subsubsection{Convergence under weak convexity}\label{app:convergence under c}
Now we review convergence guarantees of the aforementioned methods for weakly convex functions.
\paragraph{Continuous-time flows.}
\begin{theorem}\label{thm:GF-c}
    Assume $f \colon \R^d \to \R$ is weakly convex. Let $X_t \in \R^d$ evolve following \eqref{eq:GF} from any $X_0 \in \R^d$. Then for any $t\geq 0$, we have
    \begin{align*}
        f(X_t)-f(x^*)\leq \frac{\|X_0-x^*\|^2}{2t}.
    \end{align*}
\end{theorem}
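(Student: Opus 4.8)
The plan is to run a standard Lyapunov argument with a time-weighted energy functional. Define
\[
\mathcal{E}_t := t\bigl(f(X_t) - f(x^*)\bigr) + \tfrac{1}{2}\|X_t - x^*\|^2 .
\]
First I would differentiate each piece along \eqref{eq:GF}. For the first term, the product rule and the identity $\dot X_t = -\nabla f(X_t)$ give $\frac{\rmd}{\rmd t}\bigl[t(f(X_t)-f(x^*))\bigr] = f(X_t)-f(x^*) - t\|\nabla f(X_t)\|^2$. For the second term, $\frac{\rmd}{\rmd t}\bigl[\tfrac12\|X_t-x^*\|^2\bigr] = \langle X_t-x^*, \dot X_t\rangle = -\langle \nabla f(X_t), X_t - x^*\rangle$, and here I would invoke weak convexity of $f$ (the case $\alpha = 0$ of the strong convexity inequality), which yields $\langle \nabla f(X_t), X_t - x^*\rangle \ge f(X_t) - f(x^*)$, hence this derivative is at most $-(f(X_t)-f(x^*))$.

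Adding the two bounds, the $\pm(f(X_t)-f(x^*))$ terms cancel and I obtain $\dot{\mathcal{E}}_t \le -t\|\nabla f(X_t)\|^2 \le 0$, so $\mathcal{E}_t$ is nonincreasing. Therefore $\mathcal{E}_t \le \mathcal{E}_0 = \tfrac12\|X_0-x^*\|^2$, which rearranges to $t(f(X_t)-f(x^*)) \le \tfrac12\|X_0-x^*\|^2$. Dividing by $t > 0$ gives the claimed bound $f(X_t)-f(x^*) \le \frac{\|X_0-x^*\|^2}{2t}$ (and for $t=0$ the statement is vacuous).

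There is no real obstacle here: the only nontrivial ingredient is guessing the right Lyapunov function, namely putting the explicit weight $t$ on the suboptimality gap $f(X_t)-f(x^*)$ so that it cancels the term produced by the convexity inequality on $\tfrac12\|X_t-x^*\|^2$; the rest is a one-line computation plus Grönwall-type monotonicity. I would note in passing that one could alternatively reference the discrete-time analogue in \citet[Appendix B]{wilson2018lyapunov}, but the continuous-time proof above is short enough to include in full.
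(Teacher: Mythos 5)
Your proposal is correct and follows essentially the same route as the paper: the paper defines the identical Lyapunov function $\mathcal{E}_t=\frac{1}{2}\|X_t-x^*\|^2 + t(f(X_t)-f(x^*))$, shows $\dot{\mathcal{E}}_t\leq 0$ along the gradient flow using convexity, and cites \citet[Appendix B.2.2]{wilson2018lyapunov} for the details you spelled out. Your computation of the time derivative and the cancellation of the $\pm(f(X_t)-f(x^*))$ terms is exactly the intended argument.
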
 
\begin{proof}
    Define the continuous-time Lyapunov function: $\mathcal{E}_t=\frac{1}{2}\|X_t-x^*\|^2 + t(f(X_t)-f(x^*))$. We can show $\Dot{\mathcal{E}}_t\leq 0$ along \eqref{eq:GF} for $t\geq 0$. See \citet[Appendix B.2.2]{wilson2018lyapunov} for more details.
\end{proof}
\begin{theorem}\label{thm:AGF-c}
    Assume $f \colon \R^d \to \R$ is weakly convex. Let $X_t \in \R^d$ evolve following \eqref{eq:AGF} with $\beta(t)=\frac{3}{t}$ from any $X_0 \in \R^d$ and $\dot X_0 \in \R^d$. Then for any $t\geq 0$, we have
    \begin{align*}
        f(X_t)-f(x^*)\leq \frac{2\|X_0-x^*\|^2}{t^2}.
    \end{align*}
\end{theorem}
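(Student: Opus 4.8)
The plan is to follow the standard Lyapunov-function argument for accelerated gradient flow, mirroring the proof of Theorem~\ref{thm:GF-c} but with the appropriate second-order energy. The statement is the weakly convex analogue of Theorem~\ref{thm:AGF-sc}, so I would define the continuous-time Lyapunov function
\[
\mathcal{E}_t = t^2\bigl(f(X_t)-f(x^*)\bigr) + 2\left\|X_t + \tfrac{t}{2}\dot{X}_t - x^*\right\|^2,
\]
which is the classical Su--Boyd--Candès energy for the $\beta(t)=3/t$ dynamics. The goal is to show $\dot{\mathcal{E}}_t \le 0$ for all $t>0$, which immediately gives $t^2(f(X_t)-f(x^*)) \le \mathcal{E}_t \le \mathcal{E}_{0^+} = 2\|X_0-x^*\|^2$ (the velocity term vanishes in the limit $t\to 0$ because it is multiplied by $t$), yielding the claimed $O(1/t^2)$ rate with constant $2$.

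First I would compute $\dot{\mathcal{E}}_t$ using the chain rule. The derivative of the first term is $2t(f(X_t)-f(x^*)) + t^2\langle \nabla f(X_t), \dot{X}_t\rangle$. For the second term, let $Z_t := X_t + \tfrac{t}{2}\dot{X}_t - x^*$; then $\dot{Z}_t = \dot{X}_t + \tfrac{1}{2}\dot{X}_t + \tfrac{t}{2}\ddot{X}_t = \tfrac{3}{2}\dot{X}_t + \tfrac{t}{2}\ddot{X}_t$. Using the \eqref{eq:AGF} equation with $\beta(t)=3/t$, namely $\ddot{X}_t = -\tfrac{3}{t}\dot{X}_t - \nabla f(X_t)$, I get $\tfrac{t}{2}\ddot{X}_t = -\tfrac{3}{2}\dot{X}_t - \tfrac{t}{2}\nabla f(X_t)$, so $\dot{Z}_t = -\tfrac{t}{2}\nabla f(X_t)$. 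Hence the derivative of $2\|Z_t\|^2$ is $4\langle Z_t, \dot{Z}_t\rangle = -2t\langle X_t + \tfrac{t}{2}\dot{X}_t - x^*, \nabla f(X_t)\rangle = -2t\langle X_t - x^*, \nabla f(X_t)\rangle - t^2\langle \dot{X}_t, \nabla f(X_t)\rangle$.

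Adding the two pieces, the $\pm t^2\langle\nabla f(X_t),\dot X_t\rangle$ terms cancel, leaving
\[
\dot{\mathcal{E}}_t = 2t\bigl(f(X_t)-f(x^*)\bigr) - 2t\langle X_t - x^*, \nabla f(X_t)\rangle = -2t\bigl(\langle \nabla f(X_t), X_t - x^*\rangle - (f(X_t)-f(x^*))\bigr).
\]
By (weak) convexity of $f$, $f(x^*) \ge f(X_t) + \langle\nabla f(X_t), x^*-X_t\rangle$, equivalently $\langle\nabla f(X_t), X_t-x^*\rangle \ge f(X_t)-f(x^*)$, so the bracket is nonnegative and $\dot{\mathcal{E}}_t \le 0$ for $t \ge 0$. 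Integrating from $0$ to $t$ and using $\mathcal{E}_0 = 2\|X_0-x^*\|^2$ gives $f(X_t)-f(x^*) \le \mathcal{E}_t/t^2 \le 2\|X_0-x^*\|^2/t^2$.

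I do not anticipate a serious obstacle here — this is a textbook computation. The only points requiring minor care are: (i) verifying the coefficient in the velocity-correction term ($\tfrac{t}{2}$, not $\tfrac{1}{\sqrt\alpha}$ as in the strongly convex case) is the one that makes the cross terms cancel, which the computation above confirms; and (ii) handling the $t\to 0$ limit for the initial value of the Lyapunov function, where one should note the statement allows arbitrary $\dot X_0$, but since $\mathcal{E}_0$ only involves $X_0$ (the $\tfrac{t}{2}\dot X_t$ term vanishes at $t=0$), the bound $\mathcal{E}_0 = 2\|X_0-x^*\|^2$ holds regardless. Strictly, one should confirm $\mathcal{E}_t$ is continuous at $t=0$ and differentiable on $(0,\infty)$, which follows from standard well-posedness of \eqref{eq:AGF} away from the singularity at $t=0$; alternatively, one argues on $[\epsilon, t]$ and lets $\epsilon\to 0$.
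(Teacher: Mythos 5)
Your proposal is correct and follows essentially the same route as the paper: the paper defines exactly this Lyapunov function $\mathcal{E}_t = t^2(f(X_t)-f(x^*)) + 2\|X_t + \tfrac{t}{2}\dot{X}_t - x^*\|^2$ and shows $\dot{\mathcal{E}}_t \le 0$, deferring the computation to \citet{su2016differential}, which is precisely the calculation you carried out. Your explicit treatment of the cancellation and of the $t\to 0$ singularity is a fine (slightly more complete) write-up of the same argument.
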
 
\begin{proof}
    Define the continuous-time Lyapunov function: $$\mathcal{E}_t=t^2(f(X_t)-f(x^*))+2\left\|X_t + \frac{t}{2}\Dot{X}_t - x^*\right\|^2.$$ We can show $\Dot{\mathcal{E}}_t\leq 0$ along \eqref{eq:AGF} with $\beta(t)=3/t$ for $t\geq 0$. A complete proof can be found in the proof of \citet[Theorem 3]{su2016differential}.
\end{proof}
\begin{remark}
Comparing Theorem~\ref{thm:GF-c} and Theorem~\ref{thm:AGF-c}, we observe that \eqref{eq:GF} achieves a convergence rate of $O(1/t)$, while \eqref{eq:AGF} improves this to $O(1/t^2)$. This confirms the acceleration effect of \eqref{eq:AGF} over \eqref{eq:GF} for weakly convex functions.
\end{remark}

\paragraph{Discrete-time algorithms.}
\begin{theorem}\label{thm:GD-c}
    Assume $f \colon \R^d \to \R$ is weakly convex and $L$-smooth. Then for all $k\geq 0$, along \eqref{eq:GD} with $\eta\leq\frac{1}{L}$ from any $x_0\in\R^d$, we have
    \begin{align*}
        f(x_k)-f(x^*)\leq \frac{\|x_0-x^*\|^2}{2\eta k}.
    \end{align*}
\end{theorem}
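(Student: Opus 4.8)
The plan is to establish the standard gradient-descent descent inequality and then telescope a potential function, exactly as in the continuous-time proof of Theorem~\ref{thm:GF-c} but with the extra error term that discretization introduces. First I would record the one-step descent estimate: using $L$-smoothness at $x_{k+1} = x_k - \eta\nabla f(x_k)$ together with $\eta \le \tfrac1L$, one gets
\[
f(x_{k+1}) \le f(x_k) - \eta\Bigl(1 - \tfrac{L\eta}{2}\Bigr)\|\nabla f(x_k)\|^2 \le f(x_k) - \tfrac{\eta}{2}\|\nabla f(x_k)\|^2 .
\]
Next I would bound the distance to $x^*$: expand $\|x_{k+1}-x^*\|^2 = \|x_k-x^*\|^2 - 2\eta\langle \nabla f(x_k), x_k - x^*\rangle + \eta^2\|\nabla f(x_k)\|^2$, and use convexity $\langle \nabla f(x_k), x_k - x^*\rangle \ge f(x_k) - f(x^*)$ together with the descent estimate to control the $\eta^2$-term.

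The key step is then to introduce the discrete Lyapunov function
\[
E_k \;=\; \tfrac12\|x_k - x^*\|^2 \;+\; \eta k\,(f(x_k) - f(x^*)),
\]
which is the discrete analogue of $\mathcal{E}_t$ from Theorem~\ref{thm:GF-c}, and show $E_{k+1} \le E_k$ for all $k \ge 0$. Computing $E_{k+1} - E_k$, the $\tfrac12\|x_k-x^*\|^2$ difference contributes $-\eta(f(x_k)-f(x^*)) + \tfrac{\eta^2}{2}\|\nabla f(x_k)\|^2$ (after using convexity), while the potential-energy difference contributes $\eta(k+1)(f(x_{k+1})-f(x^*)) - \eta k(f(x_k)-f(x^*))$. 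Writing $f(x_{k+1}) - f(x^*) = (f(x_k)-f(x^*)) + (f(x_{k+1}) - f(x_k))$ and inserting the descent bound $f(x_{k+1}) - f(x_k) \le -\tfrac{\eta}{2}\|\nabla f(x_k)\|^2$, the $\eta(f(x_k)-f(x^*))$ terms cancel and one is left with $E_{k+1} - E_k \le \tfrac{\eta^2}{2}\|\nabla f(x_k)\|^2 - \tfrac{\eta^2(k+1)}{2}\|\nabla f(x_k)\|^2 \le 0$ since $k \ge 0$. Telescoping gives $\eta k (f(x_k) - f(x^*)) \le E_k \le E_0 = \tfrac12\|x_0 - x^*\|^2$, which is the claimed bound.

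The main obstacle — though it is mild — is bookkeeping the interaction between the descent inequality and the distance-expansion term so that the residual gradient-norm terms combine with the right sign; the factor $(k+1)$ multiplying $-\tfrac{\eta^2}{2}\|\nabla f(x_k)\|^2$ is exactly what absorbs the positive $\tfrac{\eta^2}{2}\|\nabla f(x_k)\|^2$ coming from the quadratic expansion of $\|x_{k+1}-x^*\|^2$. One should also double-check the $k=0$ base case, where the bound $f(x_0) - f(x^*) \le \|x_0-x^*\|^2/(2\eta\cdot 0)$ is vacuous, so the statement is really only asserting something for $k \ge 1$; the telescoping argument handles this automatically. No smoothness is needed beyond guaranteeing the one-step descent, and the choice $\eta \le 1/L$ enters only there.
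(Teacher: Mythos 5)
Your proof is correct and follows essentially the same route as the paper: the paper defines exactly the Lyapunov function $E_k=\tfrac12\|x_k-x^*\|^2+\eta k\,(f(x_k)-f(x^*))$ and shows $E_{k+1}\le E_k$ (deferring details to Wilson's notes), which is precisely the descent-plus-convexity telescoping argument you carry out. Your bookkeeping of the cancellation and the observation that the $k=0$ case is vacuous are both accurate.
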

\begin{proof}
    Define the discrete-time Lyapunov function: $E_k=\frac{1}{2}\|x_k-x^*\|^2 + \eta k(f(x_k)-f(x^*)).$ We can show $E_{k+1}\leq E_k$ along \eqref{eq:GD} for all $k\geq 0$. A complete proof can be found in \citet[Section 2.1.2]{wilson2018lyapunov}.
\end{proof}
\begin{corollary}\label{cor:GD-c}
    Assume $f \colon \R^d \to \R$ is weakly convex and $L$-smooth. To generate $x_K$ satisfying $f(x_K)-f(x^*)\leq \varepsilon$, it suffices to run \eqref{eq:GD} with $\eta = \frac{1}{L}$ from any $x_0\in\R^d$ for $$K\geq \frac{L\|x_0-x^*\|^2}{\varepsilon}.$$
\end{corollary}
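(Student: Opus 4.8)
The plan is to invoke Theorem~\ref{thm:GD-c} directly and then solve the resulting rate inequality for the iteration count $K$. Concretely, I would first instantiate Theorem~\ref{thm:GD-c} with the specific stepsize $\eta = \tfrac{1}{L}$ (which is admissible since it satisfies $\eta \le \tfrac1L$), obtaining for every $k \ge 1$ the bound
\begin{align*}
f(x_k) - f(x^*) \;\le\; \frac{L\,\|x_0 - x^*\|^2}{2k}.
\end{align*}

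Next I would impose the target accuracy: it suffices that the right-hand side be at most $\varepsilon$, i.e.\ $\tfrac{L\|x_0-x^*\|^2}{2k} \le \varepsilon$, which rearranges to $k \ge \tfrac{L\|x_0-x^*\|^2}{2\varepsilon}$. Taking $K$ to be any integer with $K \ge \tfrac{L\|x_0-x^*\|^2}{\varepsilon}$ certainly implies $K \ge \tfrac{L\|x_0-x^*\|^2}{2\varepsilon}$, hence $f(x_K) - f(x^*) \le \varepsilon$, which is the claim. (The factor-of-two slack in the stated threshold is deliberate, to present a clean sufficient condition; one could equally state the sharper $K \ge \tfrac{L\|x_0-x^*\|^2}{2\varepsilon}$.)

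There is essentially no obstacle here: the corollary is a one-line consequence of the already-established convergence rate, and the only thing to be careful about is bookkeeping the constant and noting that $\eta = 1/L$ is a legal choice in Theorem~\ref{thm:GD-c}. If I wanted the proof to be self-contained I could alternatively re-derive the rate from the Lyapunov function $E_k = \tfrac12\|x_k - x^*\|^2 + \eta k\,(f(x_k) - f(x^*))$ and its monotonicity $E_{k+1} \le E_k$ referenced in the proof of Theorem~\ref{thm:GD-c}, but invoking the theorem as a black box is the intended and cleanest route.
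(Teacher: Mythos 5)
Your proposal is correct and matches the paper's intended (and unstated) argument: the corollary is a direct consequence of Theorem~\ref{thm:GD-c} with $\eta = \tfrac{1}{L}$, and the stated threshold $K \ge \tfrac{L\|x_0-x^*\|^2}{\varepsilon}$ is simply a looser sufficient condition than the sharp $K \ge \tfrac{L\|x_0-x^*\|^2}{2\varepsilon}$ your calculation yields. Your bookkeeping of the factor of two and the admissibility of $\eta = 1/L$ are both accurate.
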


\begin{theorem}\label{thm:AGD-c}
    Assume $f \colon \R^d \to \R$ is weakly convex and $L$-smooth. Then for all $k\geq 0$, along \eqref{eq:AGD} with $\beta_k=\frac{k-1}{k+2}$, $\eta\leq\frac{1}{L}$ from any $x_0,\,y_0\in\R^d$, we have
    \begin{align*}
        f(x_k)-f(x^*)\leq \frac{2\|x_0-x^*\|^2}{\eta k^2}.
    \end{align*}
\end{theorem}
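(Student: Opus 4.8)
The plan is to prove Theorem~\ref{thm:AGD-c} by the standard Lyapunov (potential function) argument, chosen as the discrete analogue of the continuous-time proof of Theorem~\ref{thm:AGF-c}, where the potential was $\mathcal{E}_t=t^2(f(X_t)-f(x^*))+2\|X_t+\tfrac{t}{2}\dot X_t-x^*\|^2$. Concretely, I would introduce an auxiliary ``mirror'' sequence $z_k$, defined so that (a) $y_k$ is a convex combination of $x_k$ and $z_k$, and (b) the gradient step $x_{k+1}=y_k-\eta\nabla f(y_k)$ induces a mirror-descent update $z_{k+1}=z_k-\sigma_k\nabla f(y_k)$ for a suitable scalar $\sigma_k\asymp\eta k$; this $z_k$ is the discrete counterpart of $X_t+\tfrac{t}{2}\dot X_t$ (with the time correspondence $t\leftrightarrow\sqrt{\eta}\,k$) and can be written explicitly in terms of $x_k$ and $x_k-x_{k-1}$. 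The candidate potential is $E_k=A_k\,(f(x_k)-f(x^*))+2\|z_k-x^*\|^2$ with $A_k\asymp\eta k^2$, and the goal is to show $E_{k+1}\le E_k$ for all $k\ge 0$.

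The key steps, in order, would be: (i) apply $L$-smoothness together with $\eta\le 1/L$ to the gradient step to get the descent inequality $f(x_{k+1})\le f(y_k)-\tfrac{\eta}{2}\|\nabla f(y_k)\|^2$; (ii) apply convexity of $f$ at the point $y_k$ against both $x_k$ and $x^*$, namely $f(y_k)\le f(x_k)+\langle\nabla f(y_k),y_k-x_k\rangle$ and $f(y_k)\le f(x^*)+\langle\nabla f(y_k),y_k-x^*\rangle$, and take the weighted combination with weights $A_k$ and $A_{k+1}-A_k$; (iii) expand $\|z_{k+1}-x^*\|^2=\|z_k-x^*\|^2-2\sigma_k\langle\nabla f(y_k),z_k-x^*\rangle+\sigma_k^2\|\nabla f(y_k)\|^2$ and use the defining relation between $z_k$, $x_k$, $y_k$ to rewrite $\langle\nabla f(y_k),z_k-x^*\rangle$ in terms of $\langle\nabla f(y_k),y_k-x^*\rangle$ and $\langle\nabla f(y_k),y_k-x_k\rangle$; (iv) add the three pieces to bound $E_{k+1}-E_k$, and check that with the right choices of $A_k$ (forced recursion $\sqrt{A_{k+1}}-\sqrt{A_k}=\sqrt{\eta}$, so $A_k\ge\eta k^2$) and of the momentum-compatible $z_k$, all the first-order inner-product terms cancel and the residual coefficient of $\|\nabla f(y_k)\|^2$, namely $2\sigma_k^2-\tfrac{\eta}{2}A_{k+1}$, is $\le 0$ — this is precisely where $\eta\le 1/L$ and the scalings $A_k\asymp\eta k^2$, $\sigma_k\asymp\eta k$ enter; (v) telescope $E_K\le E_{K-1}\le\cdots\le E_0$, noting that since $x_0=y_0$ the initial momentum vanishes so $z_0=x_0$ and $E_0=2\|x_0-x^*\|^2$, which gives $\eta K^2(f(x_K)-f(x^*))\le A_K(f(x_K)-f(x^*))\le E_K\le 2\|x_0-x^*\|^2$, i.e.\ $f(x_K)-f(x^*)\le\tfrac{2\|x_0-x^*\|^2}{\eta K^2}$.

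Equivalently, one could run the ``estimate sequence'' or ``linear coupling'' argument directly on the three sequences $(x_k,y_k,z_k)$ and verify afterward that eliminating $z_k$ reproduces exactly the two-sequence scheme \eqref{eq:AGD} with $\beta_k=\tfrac{k-1}{k+2}$; the analytic content is identical. I expect the main obstacle to be the coefficient bookkeeping in step (iv): one must pin down the exact recursion for $A_k$ and the exact form of $z_k$ so that the momentum update $y_{k+1}=x_{k+1}+\beta_k(x_{k+1}-x_k)$ with $\beta_k=\tfrac{k-1}{k+2}$ is precisely what makes the cross terms vanish and the leftover gradient term nonpositive; the remaining estimates are routine uses of convexity, $L$-smoothness, and the squared-norm expansion. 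Since this is the classical Nesterov rate for weakly convex functions, the statement can alternatively simply be attributed to \citet{su2016differential} or \citet{wilson2018lyapunov}.
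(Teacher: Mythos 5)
Your outline is the standard Lyapunov/estimate-sequence argument with the mirror sequence $z_k$ and potential $A_k(f(x_k)-f(x^*))+2\|z_k-x^*\|^2$, which is exactly the argument behind the proof the paper invokes (it simply cites the proof of Theorem~6 in \citet{su2016differential}), and your closing remark that the statement can be attributed to that reference matches the paper's treatment. The plan is correct, modulo routine coefficient bookkeeping and the implicit initialization $x_0=y_0$ (so that $z_0=x_0$), which is also what the cited proof uses.
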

\begin{proof}
    A complete proof can be found in the proof of \citet[Theorem 6]{su2016differential}.
\end{proof}
\begin{corollary}\label{cor:AGD-c}
    Assume $f \colon \R^d \to \R$ is weakly convex and $L$-smooth. To generate $x_K$ satisfying $f(x_K)-f(x^*)\leq \varepsilon$, it suffices to run \eqref{eq:AGD} with $\beta_k=\frac{k-1}{k+2}$, $\eta = \frac{1}{L}$ and any $x_0,\,y_0\in\R^d$ for $$K\geq \sqrt{\frac{2L\|x_0-x^*\|^2}{\varepsilon}}.$$
\end{corollary}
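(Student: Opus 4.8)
The plan is to specialize Theorem~\ref{thm:AGD-c} and then invert the resulting bound in $K$. First I would take the convergence guarantee of Theorem~\ref{thm:AGD-c}, which holds for any stepsize $\eta \le \frac{1}{L}$, and substitute the specific choice $\eta = \frac{1}{L}$ together with the prescribed momentum schedule $\beta_k = \frac{k-1}{k+2}$. This gives, for every $k \ge 0$,
\[
f(x_k) - f(x^*) \;\le\; \frac{2\|x_0 - x^*\|^2}{\eta k^2} \;=\; \frac{2L\,\|x_0 - x^*\|^2}{k^2}.
\]

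Next I would impose the accuracy requirement: it suffices that the right-hand side at $k = K$ be at most $\varepsilon$, i.e.\ $\frac{2L\|x_0-x^*\|^2}{K^2} \le \varepsilon$. Rearranging this inequality for $K$ (which is legitimate since all quantities involved are nonnegative and $\varepsilon > 0$) yields $K^2 \ge \frac{2L\|x_0-x^*\|^2}{\varepsilon}$, hence $K \ge \sqrt{\frac{2L\|x_0-x^*\|^2}{\varepsilon}}$, which is exactly the claimed iteration count. For a fully rigorous statement one takes $K = \big\lceil \sqrt{2L\|x_0-x^*\|^2/\varepsilon}\,\big\rceil$, but the displayed bound is the standard informal form.

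There is no real obstacle here: the corollary is an immediate consequence of Theorem~\ref{thm:AGD-c} obtained by plugging in $\eta = 1/L$ and solving a single algebraic inequality for $K$. The only thing to be careful about is that the monotone rearrangement is valid (it is, since $t \mapsto t^{-2}$ is decreasing on $(0,\infty)$), so the proof is a one-line computation.
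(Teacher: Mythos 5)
Your proposal is correct and is exactly the intended argument: the paper treats this corollary as an immediate consequence of Theorem~\ref{thm:AGD-c}, obtained by setting $\eta = \frac{1}{L}$ in the bound $f(x_K)-f(x^*) \le \frac{2\|x_0-x^*\|^2}{\eta K^2}$ and solving $\frac{2L\|x_0-x^*\|^2}{K^2} \le \varepsilon$ for $K$. Your remark about taking the ceiling for integrality is a fine (standard) refinement but changes nothing substantive.
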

\begin{remark}
Comparing Corollary~\ref{cor:GD-c} and Corollary~\ref{cor:AGD-c}, we observe that to reach an $\varepsilon$-accurate solution, \eqref{eq:GD} requires $O(L/\varepsilon)$ iterations, while \eqref{eq:AGD} only needs $O(\sqrt{L/\varepsilon})$ iterations. This confirms the accelerated convergence of \eqref{eq:AGD} for smooth and weakly convex functions.
\end{remark}

\subsection{Review of sampling algorithms}\label{app:sampling review}

A natural greedy dynamics for sampling $\nu\, \propto\, e^{-f}$ is the \emph{Langevin dynamics} (\LD) in continuous-time:
\begin{align}
    \rmd X_t = -\nabla f(X_t)\rmd t + \sqrt{2}\rmd \mathrm{B}_t,\tag{\LD}\label{eq:LD}
\end{align}
where $\mathrm{B}_t$ is the standard Brownian motion.
A simple discretization of \eqref{eq:LD} is called the \emph{unadjusted Langevin algorithm}~\eqref{eq:ULA}:
\begin{align}
\label{eq:ULA}
    x_{k+1} = x_k - \eta\nabla f(x_k) + \sqrt{2\eta}\xi_k,
    \tag{\textsf{ULA}}
\end{align}
where $\xi_k \sim \mathcal{N}(0,I)$ is an independent Gaussian noise, and $\eta > 0$ is stepsize. 
However, \eqref{eq:ULA} is a biased algorithm, which means for each fixed step size $\eta$, it converges to a biased limiting distribution~\citep{roberts1996exponential}.
There have been many results on the biased convergence guarantee of ULA~\citep{dalalyan2017further,dalalyan2017theoretical,durmus2017nonasymptotic,cheng2018convergence,durmus2019analysis,vempala2019rapid,chewi2024analysis}, but due to the bias, it does not have a matching convergence rate with the continuous-time Langevin dynamics.

An alternative, \textit{unbiased} discretization of \eqref{eq:LD} is the \emph{Proximal Sampler}~\eqref{eq:Prox-S}~\citep{lee2021structured}. Given the stepsize $\eta>0$, the update is given by:
\begin{align}
    \left\{\begin{aligned}
        y_k\mid x_k &\sim \tilde{\nu}^{Y \mid X}(\cdot\mid x_k)=\mathcal{N}(x_k,\eta I),\\
        x_{k+1}\mid y_k &\sim \tilde{\nu}^{X\mid Y}(\cdot\mid y_k),
    \end{aligned}
    \right.\tag{\Prox}\label{eq:Prox-S}
\end{align}
where $\tilde{\nu}(x,y) \, \propto\, \exp\lp-f(x)-\frac{1}{2\eta}\|x-y\|^2\rp.$ 
The Proximal Sampler has been shown to have convergence guarantees that match the Langevin dynamics convergence rates from continuous time; see e.g.~\cite{lee2021structured,chen2022improved, mitra2025fast,wibisono2025mixing}.
Note that \hyperref[alg:HMC]{\HMC} (Algorithm~\ref{alg:HMC}) with $\eta_k=h$ is also a greedy sampling method in discrete-time assuming we can simulate \eqref{HF} exactly.

Accelerated methods for sampling remain an active area of research. A natural candidate is the \emph{underdamped Langevin dynamics}~\eqref{eq:ULD}:
\begin{equation}
\label{eq:ULD}
\left\{\begin{aligned}
\rmd X_t &= Y_t\,\rmd t, \\
\rmd Y_t &= -\beta Y_t\,\rmd t - \nabla f(X_t)\,\rmd t + \sqrt{2\beta}\,\rmd \mathrm{B}_t,
\end{aligned}
\right.\tag{\textsf{ULD}} 
\end{equation}
where $\beta>0$ is the damping parameter. \citet{lu2022explicit} show that \eqref{eq:ULD} has accelerated convergence rate in $\chi^2$-divergence in continuous time. However, establishing similar acceleration in KL divergence remains challenging. 
Furthermore, standard discretizations of \eqref{eq:ULD} suffer from being biased~\citep{ma2021there,zhang2023improved} and thus do not exhibit acceleration.

Another candidate to achieve the same acceleration as \eqref{eq:AGF} in KL divergence and $2$-Wasserstein distance is the \emph{accelerated information gradient flow}~\eqref{eq:AIG}~\citep{wang2022accelerated,chen2025accelerating}:
\begin{align}
    \left\{\begin{aligned}
        \Dot{X}_t&=Y_t,\\
        \Dot{Y}_t&=-\beta Y_t - \nabla f(X_t) - \nabla\log\mu_t(X_t),
    \end{aligned}
    \right.\tag{\AIG}\label{eq:AIG}
\end{align}
where $\mu_t$ is the law of $X_t$. However, discretizing the \AIG\ flow as a concrete algorithm poses significant challenges: the implementation is hindered by the unknown score function $\nabla \log \mu_t$, and the analysis is further complicated by the non-smoothness property of the entropy functional under the Wasserstein metric. Thus, both the development of accelerated sampling methods and the improved analysis of existing methods to show acceleration are still largely open.
\subsection{Connection and comparison between optimization and sampling}
Langevin dynamics~\eqref{eq:LD} can be interpreted as the Wasserstein gradient flow of the Kullback--Leibler (KL) divergence. More precisely, consider the target distribution \(\nu(x) \, \propto\, \exp(-f(x))\). Then the Fokker-Planck equation of \eqref{eq:LD}, which corresponds to the evolution of the law of \eqref{eq:LD} is
\begin{align*}
    \partial_t\mu_t = \nabla\cdot\lp \mu_t\nabla\log\frac{\mu_t}{\nu} \rp,
\end{align*}
where $\mu_t=\mathsf{Law}(X_t)$ and $\nabla_{\mathcal{W}_2}\KL(\mu\,\|\,\nu)=-\nabla\cdot\lp \mu\nabla\log\frac{\mu}{\nu} \rp$ is the Wasserstein gradient of the functional \(\mu \mapsto \mathsf{KL}(\mu \,\|\, \nu)\) with respect to the 2-Wasserstein metric in the space of distributions \(\mathcal{P}_2(\mathbb{R}^d)\). This interpretation, originally developed by \citet{jordan1998variational},
and later formalized in the language of optimal transport by \citet{otto2001geometry},
provides a variational characterization of Langevin dynamics and connects sampling algorithms with the theory of gradient flows in metric spaces; see~\citep{wibisono2018sampling} for a discussion on the algorithmic guarantees.

We summarize the convergence and mixing rates of the aforementioned optimization and sampling methods in Table~\ref{table:comparison} under the following assumptions:
\begin{itemize}
    \item[\textbf{(SC)}] $f$ is $\alpha$-strongly convex $\Longleftrightarrow$ $\nu$ is $\alpha$-strongly log-concave $\Longleftrightarrow$ $\KL(\cdot \,\|\,\nu)$ is $\alpha$-strongly convex.
    \item[\textbf{(WC)}] $f$ is weakly convex $\Longleftrightarrow$ $\nu$ is weakly log-concave $\Longleftrightarrow$ $\KL(\cdot \,\|\,\nu)$ is weakly convex.
    \item[\textbf{(Sm)}] $f$ is $L$-smooth $\Longleftrightarrow$ $\nu$ is $L$-log-smooth.
\end{itemize}

\begin{table}[h!t!]
\centering
\setlength{\tabcolsep}{10pt}
\renewcommand{\arraystretch}{1.5}
\begin{tabular}{@{}c c >{\raggedleft\arraybackslash}p{1.8cm}@{: }p{2.0cm} >{\raggedleft\arraybackslash}p{1.8cm}@{: }p{2.0cm}@{}}
\toprule
\textbf{Method} & \textbf{Assumption} & \multicolumn{2}{c}{\textbf{Optimization}} & \multicolumn{2}{c}{\textbf{Sampling}} \\
\midrule
\multirow{2}{*}{\centering Greedy 
(cts~time)} 
& \textbf{(WC)} & \ref{eq:GF} & $1/t$ & \ref{eq:LD} & $1/t$ \\
& \textbf{(SC)} & \ref{eq:GF} & $\exp(-2\alpha t)$ & \ref{eq:LD} & $\exp(-2\alpha t)$ \\
\midrule
\multirow{2}{*}{\centering Greedy (disc~time)}
& \textbf{(WC)}+\textbf{(Sm)} & \ref{eq:GD} & $1/(\eta k)$ & \ref{eq:Prox-S} & $1/(\eta k)$ \\
& \textbf{(SC)}+\textbf{(Sm)} & \ref{eq:GD} & $(1 - \alpha\eta)^k$ & \hyperref[alg:HMC]{\textsf{HMC}} & $\left( 1 - {\alpha h^2}/{4} \right)^k$ \\
\midrule
\multirow{2}{*}{\centering Accelerated (cts~time)}
& \textbf{(WC)} & \ref{eq:AGF} & $1/t^2$ & \ref{eq:AIG} & $1/t^2$ \\
& \textbf{(SC)} & \ref{eq:AGF} & $\exp(-\sqrt{\alpha} t)$ & \ref{eq:AIG} & $\exp(-\sqrt{\alpha} t)$ \\
\midrule
\multirow{2}{*}{\centering Accelerated (disc~time)}
& \textbf{(WC)}+\textbf{(Sm)} & \ref{eq:AGD} & $1/(\eta k^2)$ & \textemdash\quad & \textcolor{gray}{\emph{missing}} \\
& \textbf{(SC)}+\textbf{(Sm)} & \ref{eq:AGD} & $(1 - \sqrt{\alpha\eta})^k$ & \textemdash\quad & \textcolor{gray}{\emph{missing}} \\
\bottomrule
\end{tabular}
\caption{Comparison of convergence rates for optimization and sampling methods under different assumptions omitting constants, where $t$ denotes the continuous time; $k$ denotes the iteration count; $\eta$ denotes the stepsize and $h$ denotes integration time of \HMC.}
\label{table:comparison}
\end{table}
Under Assumption \textbf{(SC)}, the convergence rates  are given in squared distance for optimization (i.e., $\|x_k-x^*\|^2$) and $2$-Wasserstein distance for sampling (i.e., $\mathcal{W}^2_2(\mu_k,\nu)$). Under Assumption \textbf{(WC)}, the convergence rates  are given in sub-optimality gap for optimization (i.e., $f(x_k)-f(x^*)$) and KL divergence for sampling (i.e., $\KL(\mu_k \,\|\, \nu)$). 

Table~\ref{table:comparison} highlights the structural parallels between optimization and sampling methods under various convexity and smoothness assumptions. In both continuous and discrete-time settings, classical greedy methods exhibit matching convergence behavior: gradient flow (\GF) corresponds to Langevin dynamics (\LD), and gradient descent (\GD) aligns with the proximal sampler (\Prox) or Hamiltonian Monte Carlo (\HMC). Similarly, accelerated gradient flow (\AGF) and the accelerated information flow (\AIG) achieve analogous accelerated rates in continuous time. However, an essential gap emerges in the discrete-time setting: while accelerated optimization methods such as accelerated gradient descent (\AGD) are well established and enjoy fast convergence rates, their counterparts in sampling are notably absent.

\section{Hamiltonian flow with short-time integration}\label{app:HF-short-time}
In this section, we show that \textsf{HF-opt} (Algorithm~\ref{alg:HF-opt}) achieves the non-accelerated convergence rate with short integration time $\eta_k$ for gradient-dominated and weakly convex functions. These rates matches the convergence rate of gradient descent (\GD) (with $\eta=h^2$) under the same assumptions. We first show a descent lemma for \textsf{HF-opt} similar to that of \GD.
\begin{lemma}\label{lem:short-time HF-opt}
    Assume $f$ is $L$-smooth.
    Then Algorithm~\ref{alg:HF-opt} (\textsf{HF-opt}) with $\eta_k=\step \le \frac{1}{\sqrt{L}}$ satisfies
    \begin{align*}
        f(x_{k+1})\leq f(x_k)-\frac{h^2}{4}\|\nabla f(x_k)\|^2.
    \end{align*}
\end{lemma}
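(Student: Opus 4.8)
\textbf{Proof plan for Lemma~\ref{lem:short-time HF-opt}.}
The plan is to track the energy along a single step of \eqref{HF} started from $(x_k, 0)$ and use energy conservation together with $L$-smoothness to convert the loss in potential energy into a quantity controlled from below by $\|\nabla f(x_k)\|^2$. Write $(X_t, Y_t) = \mHF_t(x_k, 0)$ for $t \in [0, h]$, so that $X_0 = x_k$, $Y_0 = 0$, $x_{k+1} = X_h$, and $y_{k+1} = Y_h$. By Lemma~\ref{Lem:EnergyConservation}, $f(X_h) + \tfrac12\|Y_h\|^2 = f(x_k)$, hence
\begin{align*}
    f(x_{k+1}) = f(x_k) - \tfrac12 \|Y_h\|^2.
\end{align*}
So it suffices to show $\tfrac12 \|Y_h\|^2 \ge \tfrac{h^2}{4}\|\nabla f(x_k)\|^2$, i.e.\ $\|Y_h\| \ge \tfrac{h}{\sqrt{2}}\|\nabla f(x_k)\|$.

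The key step is a Taylor/Grönwall-type estimate for $Y_h$. From $\dot Y_t = -\nabla f(X_t)$ and $Y_0 = 0$ we have $Y_h = -\int_0^h \nabla f(X_t)\, \rmd t$. Split this as $Y_h = -h\nabla f(x_k) - \int_0^h \bigl(\nabla f(X_t) - \nabla f(x_k)\bigr)\,\rmd t$. To bound the error term I would first get an a priori bound on how far $X_t$ moves: since $\dot X_t = Y_t$ and $\|\dot Y_t\| = \|\nabla f(X_t)\|$, a standard argument (bounding $\|Y_t\|$ and $\|X_t - x_k\|$ on $[0,h]$ via Grönwall, using $L$-smoothness so that $\|\nabla f(X_t)\| \le \|\nabla f(x_k)\| + L\|X_t - x_k\|$) yields, for $h \le 1/\sqrt{L}$, bounds of the form $\|X_t - x_k\| \le C_1 t^2 \|\nabla f(x_k)\|$ and $\|Y_t\| \le C_2 t\|\nabla f(x_k)\|$ with absolute constants. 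Then $\|\nabla f(X_t) - \nabla f(x_k)\| \le L\|X_t - x_k\| \le C_1 L t^2 \|\nabla f(x_k)\|$, so the error term has norm at most $C_1 L \tfrac{h^3}{3}\|\nabla f(x_k)\| \le \tfrac{C_1}{3} h \|\nabla f(x_k)\|$ using $Lh^2 \le 1$. Choosing the constants carefully (the factor $1/\sqrt L$ rather than the larger $1/2\sqrt L$ of Example~\ref{ex:HF-quad} gives slack), the error is a small fraction of $h\|\nabla f(x_k)\|$, so by the reverse triangle inequality $\|Y_h\| \ge \bigl(1 - \tfrac{C_1}{3}\bigr) h\|\nabla f(x_k)\| \ge \tfrac{h}{\sqrt 2}\|\nabla f(x_k)\|$, which is exactly what is needed. (One can also verify the claimed constant is not tight against the quadratic case $f(x) = \tfrac12 L x^2$, where $Y_h = -\tfrac{1}{\sqrt L}\sin(\sqrt L h)\,\nabla f(x_k)/\sqrt L \cdot \dots$, giving a concrete check.)

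The main obstacle I anticipate is making the Grönwall constants explicit and tight enough: one needs the coefficient multiplying $h\|\nabla f(x_k)\|$ in the estimate of $\|Y_h\|$ to be provably at least $1/\sqrt 2$, which forces a somewhat careful bootstrap — bound $\|Y_t\|$ crudely first, feed it into the bound for $\|X_t - x_k\|$, then refine $\|Y_t\|$. Everything else (energy conservation, the final algebra converting $\|Y_h\|^2$ to the $\tfrac{h^2}{4}\|\nabla f(x_k)\|^2$ claim) is routine. An alternative that avoids the integral equation is to expand $g(t) := \tfrac12\|Y_t\|^2$ and note $\dot g(t) = \langle Y_t, -\nabla f(X_t)\rangle$, $\ddot g$ at $t = 0$ equals $\|\nabla f(x_k)\|^2$ (since $Y_0 = 0$), and then control the remainder of the second-order Taylor expansion of $g$ at $0$ using the same smoothness-plus-Grönwall bounds on $\|X_t - x_k\|$ and $\|Y_t\|$; this may give cleaner constants and is the version I would ultimately write up.
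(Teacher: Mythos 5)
Your proposal is correct and follows essentially the same route as the paper: energy conservation reduces the claim to showing $\|Y_h\| \ge \tfrac{h}{\sqrt{2}}\|\nabla f(x_k)\|$, which the paper also proves by Taylor-expanding $Y_t$ around $t=0$ and controlling the remainder through a bootstrapped Gr\"onwall-type bound ($\|Y_t\| \le t\|\nabla f(x_k)\|\cosh(\sqrt{L}t)$, then $\|X_t-x_k\|$, then the refined lower bound), exactly the two-stage argument you sketch. The constants do close as you hope, since the remainder is at most $\tfrac{1}{6}h^3 L\cosh(\sqrt{L}h)\|\nabla f(x_k)\|$ and $\cosh(1)\le 6\bigl(1-\tfrac{1}{\sqrt{2}}\bigr)$ for $h\le 1/\sqrt{L}$.
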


Before proving Lemma~\ref{lem:short-time HF-opt}, we introduce the following lemmas that are useful to the proof.
\begin{lemma}[\cite{lee2018algorithmic} Lemma~A.5]\label{Lem:Cosh}
Given a continuous function  $y(t)$ and positive scalars $m,\,L$ such that $0 \le y(t) \le m + L \int_0^t (t-s) y(s)\, \rmd s$, we have $y(t) \le m \cosh(\sqrt{L}t)$.
\end{lemma}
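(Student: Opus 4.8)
The plan is to convert the integral inequality into a differential inequality by introducing the ``envelope'' function
\[
    z(t) := m + L\int_0^t (t-s)\, y(s)\, \rmd s,
\]
so that by hypothesis $0 \le y(t) \le z(t)$ for all $t \ge 0$. The reason for keeping the kernel $(t-s)$ rather than folding it away is that $z$ then solves a clean second-order initial value problem: differentiating under the integral sign (the boundary contribution $(t-t)y(t)$ vanishes) gives $z'(t) = L\int_0^t y(s)\,\rmd s$, and hence $z''(t) = L\, y(t)$, with $z(0) = m$ and $z'(0) = 0$. Since $y$ is continuous, $z$ is $C^2$, and combining $z'' = Ly$ with $y \le z$ yields the one-sided estimate $z''(t) \le L\, z(t)$.

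Next I would compare $z$ against the explicit solution $w(t) := m\cosh(\sqrt{L}\,t)$ of the extremal equation $w'' = Lw$, $w(0) = m$, $w'(0) = 0$. Setting $\phi := w - z$, we get $\phi(0) = \phi'(0) = 0$ and
\[
    \phi''(t) = L\,w(t) - L\, y(t) \ge L\,w(t) - L\,z(t) = L\,\phi(t),
\]
where the inequality again uses $y \le z$. So it remains to turn the differential inequality $\phi'' - L\phi \ge 0$ with vanishing initial data into the pointwise bound $\phi \ge 0$.

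For that final step I would factor $\phi'' - L\phi = \bigl(\tfrac{\rmd}{\rmd t} - \sqrt{L}\bigr)\bigl(\tfrac{\rmd}{\rmd t} + \sqrt{L}\bigr)\phi$ and apply an integrating factor twice. Writing $g := \phi' + \sqrt{L}\,\phi$, the inequality $\phi'' \ge L\phi$ rearranges to $g' \ge \sqrt{L}\, g$, i.e. $\bigl(e^{-\sqrt{L}t} g\bigr)' \ge 0$; since $g(0) = 0$ this forces $g(t) \ge 0$ for all $t \ge 0$. Then $\bigl(e^{\sqrt{L}t}\phi\bigr)' = e^{\sqrt{L}t} g \ge 0$ together with $\phi(0) = 0$ gives $\phi(t) \ge 0$, i.e. $z(t) \le w(t)$. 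Chaining the inequalities, $y(t) \le z(t) \le w(t) = m\cosh(\sqrt{L}\,t)$, which is the claim.

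There is no deep obstacle here — this is a standard Gr\"onwall/comparison argument — but the step to handle with care is the differentiation under the integral sign and the regularity it produces: continuity of $y$ is precisely what makes $z$ twice continuously differentiable with $z'' = Ly$, and it is the one-sided (inequality, not equality) nature of the hypothesis that forces us to route everything through the auxiliary equality-defined function $z$ instead of working with $y$ directly. I also note that the lower bound $y \ge 0$ is not actually needed for this argument; only $y \le z$ is used.
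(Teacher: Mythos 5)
Your argument is correct: the envelope function $z$ is $C^2$ with $z''=Ly$, the comparison $\phi=w-z$ satisfies $\phi''\ge L\phi$ with $\phi(0)=\phi'(0)=0$, and the double integrating-factor step legitimately upgrades this to $\phi\ge 0$, giving $y\le z\le m\cosh(\sqrt{L}\,t)$; your side remark that the lower bound $y\ge 0$ is never used is also accurate, since only $y\le z$ enters. Note, however, that the paper does not prove this lemma at all --- it imports it by citation from \citet{lee2018algorithmic} (their Lemma~A.5) --- so there is no in-paper proof to compare against. The standard route in that reference is to iterate the integral inequality (Picard/Gr\"onwall-type iteration), substituting the bound back into itself to build up the partial sums $m\sum_{k}\frac{(Lt^2)^k}{(2k)!}$ and recognizing the series for $m\cosh(\sqrt{L}\,t)$; your ODE comparison argument reaches the same conclusion without any series manipulation, at the mild cost of the differentiation-under-the-integral step, which you correctly justify using continuity of $y$. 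Either proof would serve; yours is self-contained and fine as written.
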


We can show the following bound on the velocity along Hamiltonian flow.
\begin{lemma}\label{Lem:BoundV1}
    Assume $f$ is $L$-smooth.
    Then along \eqref{HF}
    from any $X_0\in\R^d$ with $Y_0=0$, we have
    \begin{align*}
        \|Y_t\| \le  t \|\nabla  f(X_0)\| \cosh(\sqrt{L} t).
    \end{align*}
\end{lemma}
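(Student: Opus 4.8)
\textbf{Proof plan for Lemma~\ref{Lem:BoundV1}.}

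The plan is to set up an integral inequality for $\|Y_t\|$ and then invoke the Grönwall-type bound in Lemma~\ref{Lem:Cosh}. First I would integrate the velocity equation in~\eqref{HF}: since $Y_0 = 0$, we have $Y_t = -\int_0^t \nabla f(X_s)\,\rmd s$, hence $\|Y_t\| \le \int_0^t \|\nabla f(X_s)\|\,\rmd s$. The goal is to control $\|\nabla f(X_s)\|$ in terms of $\|\nabla f(X_0)\|$ and the accumulated displacement. By $L$-smoothness, $\|\nabla f(X_s)\| \le \|\nabla f(X_0)\| + L\|X_s - X_0\|$, and since $X_s - X_0 = \int_0^s Y_r\,\rmd r$, we get $\|X_s - X_0\| \le \int_0^s \|Y_r\|\,\rmd r$.

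Next I would chain these estimates. Substituting back,
\begin{align*}
    \|Y_t\| \le \int_0^t \left( \|\nabla f(X_0)\| + L\int_0^s \|Y_r\|\,\rmd r \right)\rmd s
    = t\|\nabla f(X_0)\| + L\int_0^t \int_0^s \|Y_r\|\,\rmd r\,\rmd s.
\end{align*}
Swapping the order of integration in the double integral gives $\int_0^t\int_0^s \|Y_r\|\,\rmd r\,\rmd s = \int_0^t (t-r)\|Y_r\|\,\rmd r$, so
\begin{align*}
    \|Y_t\| \le t\|\nabla f(X_0)\| + L\int_0^t (t-r)\|Y_r\|\,\rmd r.
\end{align*}
This is not quite in the form required by Lemma~\ref{Lem:Cosh}, because the leading term $t\|\nabla f(X_0)\|$ depends on $t$ rather than being a constant $m$. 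To fix this, I would instead bound $\|Y_t\|/t$, or more cleanly, observe that $t\|\nabla f(X_0)\| \le t\|\nabla f(X_0)\|$ and factor: define $y(t) := \|Y_t\|/t$ for $t>0$ (with the natural limit at $0$), or alternatively apply Lemma~\ref{Lem:Cosh} after dividing through. The cleanest route: since $t \mapsto t\|\nabla f(X_0)\|$ is increasing, we have for the auxiliary function — actually, the simplest fix is to note $\|Y_r\| \le r \cdot (\text{something})$ is what we want to prove, so set $y(t) = \|Y_t\|/t$; then the inequality becomes $t\, y(t) \le t\|\nabla f(X_0)\| + L\int_0^t (t-r) r\, y(r)\,\rmd r \le t\|\nabla f(X_0)\| + Lt\int_0^t (t-r) y(r)\,\rmd r$ using $r \le t$, and dividing by $t$ yields $y(t) \le \|\nabla f(X_0)\| + L\int_0^t (t-r) y(r)\,\rmd r$, which is exactly the hypothesis of Lemma~\ref{Lem:Cosh} with $m = \|\nabla f(X_0)\|$. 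Applying it gives $y(t) \le \|\nabla f(X_0)\|\cosh(\sqrt{L}t)$, i.e. $\|Y_t\| \le t\|\nabla f(X_0)\|\cosh(\sqrt{L}t)$.

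The main obstacle is the mismatch between the $t$-dependent leading term and the constant $m$ required by Lemma~\ref{Lem:Cosh}; the resolution is the change of variable to $y(t) = \|Y_t\|/t$ together with the crude bound $r \le t$ inside the integral, which is harmless since we only lose a constant-free factor. One should also verify continuity of $y$ at $t = 0$ (it extends continuously with $y(0) = \|\nabla f(X_0)\|$ since $Y_t \sim -t\nabla f(X_0)$ as $t \to 0$), so that Lemma~\ref{Lem:Cosh} applies on all of $[0,\infty)$. Everything else is routine: integrating the ODE, applying the triangle inequality with smoothness, and Fubini for the double integral.
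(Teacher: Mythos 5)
Your proposal is correct, and it reaches the paper's key integral inequality $\|Y_t\| \le t\|\nabla f(X_0)\| + L\int_0^t (t-s)\|Y_s\|\,\rmd s$ by a slightly different route before applying the same Lemma~\ref{Lem:Cosh}. The paper derives this inequality by differentiating $\|Y_t\|$ and $\|\nabla f(X_t)\|$ along the flow (Cauchy--Schwarz on $\frac{\rmd}{\rmd t}\|Y_t\|$ and $\frac{\rmd}{\rmd t}\|\nabla f(X_t)\|$) and then integrating twice; you instead integrate the ODE directly and use the Lipschitz bound $\|\nabla f(X_s)\|\le\|\nabla f(X_0)\|+L\|X_s-X_0\|$ together with Fubini, which avoids differentiating norms and is, if anything, a bit cleaner. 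The more substantive divergence is in how the $t$-dependent leading term is reconciled with the constant $m$ in Lemma~\ref{Lem:Cosh}: the paper fixes an arbitrary horizon $h$, bounds $t\|\nabla f(X_0)\|\le h\|\nabla f(X_0)\|$ on $[0,h]$, applies the lemma with $m=h\|\nabla f(X_0)\|$, and evaluates at the endpoint $t=h$; you rescale via $y(t)=\|Y_t\|/t$ and use $r\le t$ inside the integral to get the constant-$m$ form directly, yielding $\|Y_t\|\le t\|\nabla f(X_0)\|\cosh(\sqrt{L}t)$ uniformly in $t$ in one pass. Both are valid; your variant dispenses with the endpoint-freezing step and you correctly note the needed continuity of $y$ at $t=0$ (with $y(0)=\|\nabla f(X_0)\|$). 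The only residual technicality, shared by both arguments, is the degenerate case $\nabla f(X_0)=0$ (so $m=0$ is not a ``positive scalar'' as in Lemma~\ref{Lem:Cosh}); there $(X_0,0)$ is a fixed point of~\eqref{HF}, so $Y_t\equiv 0$ and the bound holds trivially.
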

\begin{proof}
Note that along \eqref{HF}, we have
\begin{align}\label{re:bounddY}
    \frac{\rmd}{\rmd t} \|Y_t\|  = \frac{-\langle Y_t, \nabla  f(X_t) \rangle}{\|Y_t\|} \le \frac{\|Y_t\|\|\nabla f(X_t)\|}{\|Y_t\|}= \|\nabla f(X_t)\|
\end{align}
where the inequality follows from Cauchy-Schwarz.
Furthermore, we have
\begin{align}\label{re:bounddgrad}
    \frac{\rmd}{\rmd t} \|\nabla  f(X_t)\|  = \frac{\langle \nabla  f(X_t), \nabla^2 f(X_t) Y_t \rangle}{\|\nabla  f(X_t)\|} \le \|\nabla^2f(X_t)Y_t\|
    \le L \|Y_t\|
\end{align}
where the first inequality follows from Cauchy-Schwarz, and the second follows from the $L$-smoothness of $f$.
Integrating \eqref{re:bounddgrad} yields
\begin{align}\label{re:boundgrad}
    \|\nabla  f(X_t)\| \le \|\nabla  f(X_0)\| + L \int_0^t \|Y_s\|\, \rmd s.
\end{align}
Integrating \eqref{re:bounddY}, applying \eqref{re:boundgrad}, and recalling $Y_0 = 0$ yield
\begin{align*}
    \|Y_t\| \le \|Y_0\| + \int_0^t \|\nabla f(X_s)\|\, \rmd s \le t \|\nabla  f(X_0)\| + L \int_0^t (t-s) \|Y_s\|\, \rmd s.
\end{align*}
Suppose $t \le \step$.
Then we have $\|Y_t\| \le \step \|\nabla  f(X_0)\| + L \int_0^t (t-s) \|Y_s\|\, \rmd s.$
By Lemma~\ref{Lem:Cosh}, we conclude that for $0 \le t \le \step$:
\begin{align*}
    \|Y_t\| \le \step \|\nabla  f(X_0)\| \cosh(\sqrt{L} t).
\end{align*}
Choosing $t=h$ completes the proof.
\end{proof}

We can also show the following lower bound on velocity along Hamiltonian flow for short time.

\begin{lemma}\label{Lem:BoundV}
    Assume $f$ is $L$-smooth. Then
    along \eqref{HF} with $0 \le t \le\frac{1}{\sqrt{L}}$
    from any $X_0\in\R^d$ with $Y_0=0$,
    we have $\|Y_t\| \ge \frac{t}{\sqrt{2}} \|\nabla f(X_0)\|.$
\end{lemma}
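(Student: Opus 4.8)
The plan is to start from the integral representation of the velocity and split off the leading-order term $-t\,\nabla f(X_0)$. Since $Y_0=0$ and $\dot Y_s=-\nabla f(X_s)$ along \eqref{HF}, we have
$Y_t = -\int_0^t \nabla f(X_s)\,\rmd s = -t\,\nabla f(X_0) - \int_0^t \big(\nabla f(X_s)-\nabla f(X_0)\big)\,\rmd s$,
so the reverse triangle inequality gives $\|Y_t\| \ge t\,\|\nabla f(X_0)\| - \int_0^t \|\nabla f(X_s)-\nabla f(X_0)\|\,\rmd s$. It then remains to show that this ``drift'' term is a controlled fraction of $t\,\|\nabla f(X_0)\|$ when $t \le 1/\sqrt{L}$.

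For the drift term I would use $L$-smoothness: $\|\nabla f(X_s)-\nabla f(X_0)\| \le L\|X_s-X_0\| \le L\int_0^s \|Y_u\|\,\rmd u$, where the last inequality uses $X_s-X_0=\int_0^s Y_u\,\rmd u$. Now plug in the velocity upper bound from Lemma~\ref{Lem:BoundV1}: for $u \le 1/\sqrt{L}$ we have $\|Y_u\| \le u\,\|\nabla f(X_0)\|\cosh(\sqrt{L}u) \le \cosh(1)\,u\,\|\nabla f(X_0)\|$, using that $\cosh$ is increasing and $\sqrt{L}u \le 1$. Integrating twice yields $\|X_s-X_0\| \le \tfrac{\cosh(1)}{2}\,s^2\,\|\nabla f(X_0)\|$ and hence $\int_0^t \|\nabla f(X_s)-\nabla f(X_0)\|\,\rmd s \le \tfrac{L\cosh(1)}{6}\,t^3\,\|\nabla f(X_0)\|$.

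Combining the two bounds gives $\|Y_t\| \ge t\,\|\nabla f(X_0)\|\big(1 - \tfrac{\cosh(1)}{6}Lt^2\big)$, and for $t \le 1/\sqrt{L}$ we have $Lt^2 \le 1$, so $\|Y_t\| \ge \big(1 - \tfrac{\cosh(1)}{6}\big)\,t\,\|\nabla f(X_0)\|$. The proof is then finished by the numerical inequality $1-\cosh(1)/6 \ge 1/\sqrt{2}$, equivalently $6-3\sqrt{2} \ge \cosh(1)$, i.e.\ $1.757\ldots \ge 1.543\ldots$, which holds.

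The argument is just a short chain of Grönwall-type estimates, so there is no real obstacle; the only delicate point is the constant. A crude velocity bound such as $\|Y_u\| \le 2u\,\|\nabla f(X_0)\|$ (obtained by bootstrapping the integral inequality in the proof of Lemma~\ref{Lem:BoundV1}) would only give $\|Y_t\| \ge \tfrac{2}{3}\,t\,\|\nabla f(X_0)\|$, and $2/3 < 1/\sqrt 2$, so it is important to use the sharper $\cosh(\sqrt{L}u)$ bound of Lemma~\ref{Lem:BoundV1} rather than a weaker estimate.
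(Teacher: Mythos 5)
Your proof is correct and follows essentially the same route as the paper: both decompose $Y_t$ as $-t\,\nabla f(X_0)$ plus a remainder (your drift term $\int_0^t(\nabla f(X_s)-\nabla f(X_0))\,\rmd s$ equals, after Fubini, the paper's Taylor remainder $\int_0^t(t-s)\nabla^2 f(X_s)Y_s\,\rmd s$ up to sign), control it by the $\cosh$ velocity bound of Lemma~\ref{Lem:BoundV1} to get the same $\tfrac{1}{6}Lt^3\cosh(\cdot)\|\nabla f(X_0)\|$ estimate, and close with the same numerical fact $\cosh(1)\le 6\lp 1-\tfrac{1}{\sqrt{2}}\rp$. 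The only cosmetic difference is that you use gradient Lipschitzness directly instead of the Hessian-based Taylor remainder, which changes nothing in the constants or structure.
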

\begin{proof}
Note that along \eqref{HF}, we have
\begin{align*}
    \dot X_t = Y_t, \quad \dot Y_t = -\nabla f(X_t),\quad 
    \ddot Y_t = -\nabla^2 f(X_t) \, Y_t.
\end{align*}
Therefore, 
by Taylor expansion around $t=0$, we have 
$$Y_t = Y_0 + t \dot Y_0 + R_t = - t \nabla f(X_0) + R_t,$$ where $R_t = Y_t - (Y_0 + t \dot Y_0)$ is the remainder term which can be written as
\begin{align*}
    R_t &= \int_0^t \int_0^s \ddot Y_r \, \rmd r \, \rmd s = \int_0^t (t-s) \ddot Y_s \, \rmd s = -\int_0^t (t-s) \nabla^2 f(X_s) \, Y_s \, \rmd s.
\end{align*}
Thus we have
\begin{align*}
    \|R_t\|_2 &\le \int_0^t (t-s) \left\|\nabla^2 f(X_s) \, Y_s\right\| \, \rmd s \le L \int_0^t (t-s) \|Y_s\| \, \rmd s
\end{align*}
Applying Lemma~\ref{Lem:BoundV1} with $\|Y_t\| \le  t \|\nabla  f(X_0)\| \cosh(\sqrt{L} t)$,
we obtain
\begin{align*}
    \|R_t\| &\le L \int_0^t (t-s) s  \|\nabla  f(X_0)\| \cosh(\sqrt{L} s) \, \rmd s \\
    &\le L \|\nabla  f(X_0)\| \cosh(\sqrt{L} t) \int_0^t (t-s) s  \, \rmd s \\
    &\le \frac{1}{6} t^3 L \|\nabla  f(X_0)\| \cosh(\sqrt{L} t).
\end{align*}
If $t\leq\frac{1}{\sqrt{L}}$, then we have
\begin{align*}
    t^2 L \cosh(\sqrt{L}t)\leq \cosh\lp 1\rp\leq 6\lp 1-\frac{1}{\sqrt{2}}\rp.
\end{align*}
Therefore, by triangle inequality,
\begin{align*}
    \|Y_t\| &\ge t \|\nabla f(X_0)\| - \|R_t\| \\
    &\ge t \|\nabla f(X_0)\| - \frac{1}{6} t^3 L \|\nabla  f(X_0)\| \cosh(\sqrt{L} t) \\
    &= t \left(1-\frac{1}{6} t^2 L \cosh(\sqrt{L}t)  \right) \|\nabla f(X_0)\| \\
    &\ge \frac{t}{\sqrt{2}} \|\nabla f(X_0)\|.
\end{align*}
\end{proof}

\paragraph{Proof of Lemma~\ref{lem:short-time HF-opt}.}
We are now ready to prove Lemma~\ref{lem:short-time HF-opt}.
\begin{proof}
Let $(x_{k+1},y_{k+1})=\HF_h(x_k,0)$ be the solution of \eqref{HF} at time $h$ starting from $(x_k,0)$. By Lemma~\ref{Lem:EnergyConservation}, we have $f(x_{k+1})=f(x_k)-\frac{1}{2}\|y_{k+1}\|^2.$
Applying Lemma~\ref{Lem:BoundV} with $h\leq\frac{1}{\sqrt{L}}$, we obtain
    \begin{align*}
        f(x_{k+1})\leq f(x_k)-\frac{h^2}{4}\|\nabla f(x_k)\|^2.
    \end{align*}
\end{proof}

\subsection{Proof of Theorem~\ref{thm:short-time}}\label{app:proof of HF-short}
\begin{reptheorem}{thm:short-time}
   Assume $f$ is $L$-smooth.
Along Algorithm~\ref{alg:HF-opt} with $\eta_k = h \leq \frac{1}{\sqrt{L}}$, from any $x_0 \in \mathbb{R}^d$:
\begin{enumerate}
    \item If $f$ is $\alpha$-gradient-dominated, then
    $\displaystyle f(x_k) - f(x^*) \leq \left(1 - \tfrac{1}{2} \alpha h^2\right)^k (f(x_0) - f(x^*))$.
    \item If $f$ is weakly convex, then
    $\displaystyle f(x_k) - f(x^*) \leq \frac{34 \|x_0 - x^*\|^2}{h^2 k}$.
\end{enumerate}
\end{reptheorem}
\begin{proof}
If $f$ is $\alpha$-gradient-dominated, using Lemma~\ref{lem:short-time HF-opt} yields
    \begin{align*}
        f(x_{k+1})\leq f(x_k)-\frac{\alpha h^2}{2}(f(x_k)-f(x^*))\ \Longleftrightarrow \  f(x_{k+1})-f(x^*)\leq \lp 1 - \frac{\alpha h^2}{2}\rp(f(x_k)-f(x^*)),
    \end{align*}
    which implies
    \begin{align*}
        f(x_k)-f(x^*)\leq \lp 1 - \frac{\alpha h^2}{2}\rp^k(f(x_0)-f(x^*)).
    \end{align*}
Theorem 2 in \cite{wilson2019accelerating} establish the convergence rates for weakly convex functions based on a descent lemma similar to Lemma~\ref{lem:short-time HF-opt}. Thus we directly invoke that theorem to prove our Theorem~\ref{thm:short-time}.
    If $f$ is weakly convex and $h\leq \frac{1}{\sqrt{L}}$,  then Theorem 2 in \cite{wilson2019accelerating} implies 
\begin{align*}
    f(x_k)-f(x^*)&\leq \frac{32\|x_0-x^*\|^2 + 4h^2(f(x_0)-f(x^*))}{h^2k}
    \leq \frac{(32+2Lh^2)\|x_0-x^*\|^2 }{h^2k}
    \leq \frac{34\|x_0-x^*\|^2 }{h^2k}.
\end{align*}
\end{proof}
\subsection{Implementation of \textsf{HF-opt} via leapfrog integrator as GD}\label{app:LF recovers GD}
Let $\mathsf{Leapfrog}_h:\R^d\times \R^d\rightarrow \R^d\times \R^d$ denote the leapfrog integrator map with stepsize $h\geq 0$, which is a numerical approximation of $\HF_h$. More specifically, if we denote $(x_{k+1},y_{k+1}) = \mathsf{Leapfrog}_h(x_k,y_k)$, then it satisfies
\begin{subequations}
\label{eq:LF1}
\begin{align}
    y_{k+\frac{1}{2}} &= y_k - \frac{h}{2}\nabla f(x_k),\label{eq:LF1updatey1}\\
    x_{k+1} &= x_k + hy_{k+\frac{1}{2}},\label{eq:LF1updatex}\\
    y_{k+1} &= y_{k+\frac{1}{2}} - \frac{h}{2}\nabla f(x_{k+1}).\label{eq:LF1updatey2}
\end{align}
\end{subequations}
Replacing $y_{k+\frac{1}{2}}$ in \eqref{eq:LF1updatex} and \eqref{eq:LF1updatey2} with with \eqref{eq:LF1updatey1}, we can rewrite \eqref{eq:LF1} as
\begin{subequations}
    \begin{align}
        x_{k+1} &= x_k + hy_k - \frac{h^2}{2}\nabla f(x_k),\label{eq:LF2updatex}\\
        y_{k+1} &= y_k - \frac{h}{2}\lp \nabla f(x_k) + \nabla f(x_{k+1}) \rp.
    \end{align}
\end{subequations}
Thus if we replace $\HF_{\eta_k}(x_k,0)$ in Algorithm~\ref{alg:HF-opt} (\textsf{HF-opt}) with $\mathsf{Leapfrog}_{\eta_k}(x_k,0)$, we obtain
\begin{align*}
    x_{k+1} = x_k - \frac{\eta_k^2}{2}\nabla f(x_k),
\end{align*}
which is equivalent to gradient descent with stepsize $\eta_k^2/2$. Note that we refresh $y_{k+1}$ to be zero before the next iteration, and thus we only focus on the update of $x$.
This shows that we can view gradient descent not only as a discretization of gradient flow, but also as a discretization of \textsf{HF-opt} with a one-step leapfrog integrator.

\section{Proofs of examples on quadratic functions}\label{app:pf of examples}

Before proving the conclusions in Examples~\ref{ex:HF-quad} and \ref{ex:quad}, we first recall two lemmas from~\citet{wang2024hamiltonian}:
\begin{lemma}[\cite{wang2024hamiltonian} Lemma~2.2]\label{lem:recur-quad}
    For quadratic function $f(x)=\frac{1}{2}x^{\top}Ax$, \textsf{\em HF-opt} (Algorithm~\ref{alg:HF-opt}) satisfies
    \begin{align*}
        x_K-x^*=\lp \prod_{k=0}^{K-1}\cos(\eta_k\sqrt{A}) \rp(x_0-x^*),
    \end{align*}
    where $\sqrt{A}$ is the matrix square root of $A$, i.e., $\sqrt{A}\sqrt{A}=A$.
\end{lemma}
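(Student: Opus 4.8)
The plan is to solve the Hamiltonian flow \eqref{HF} in closed form for a quadratic objective and then compose the resulting linear maps across the $K$ velocity refreshments. Since $\nabla f(x) = Ax$, along a single segment of \eqref{HF} started from $(x_k, 0)$ the position obeys the linear second-order ODE $\ddot X_t = -\nabla f(X_t) = -A X_t$, with initial data $X_0 = x_k$ and $\dot X_0 = Y_0 = 0$. Letting $x^*$ be any minimizer of $f$, we have $A x^* = \nabla f(x^*) = 0$ (in particular $x^* = 0$ when $A \succ 0$), so $Z_t := X_t - x^*$ satisfies the same equation $\ddot Z_t = -A Z_t$ with $Z_0 = x_k - x^*$ and $\dot Z_0 = 0$.

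First I would make the matrix functional calculus precise: diagonalizing the symmetric PSD matrix $A = Q\Lambda Q^\top$, define $\cos(t\sqrt A) := Q \cos(t\sqrt\Lambda) Q^\top$ and $\sqrt A := Q\sqrt\Lambda Q^\top$, applying the scalar functions entrywise to the nonnegative eigenvalues. Then $t \mapsto \cos(t\sqrt A)$ is smooth, $\frac{\rmd}{\rmd t}\cos(t\sqrt A) = -\sqrt A\,\sin(t\sqrt A)$, and $\frac{\rmd^2}{\rmd t^2}\cos(t\sqrt A) = -A\cos(t\sqrt A)$, all of which follow by checking the identity eigenvalue-by-eigenvalue. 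Consequently $t \mapsto \cos(t\sqrt A) Z_0$ has value $Z_0$ and derivative $0$ at $t = 0$ and solves $\ddot Z_t = -A Z_t$; by uniqueness of solutions of linear ODEs, $X_t - x^* = \cos(t\sqrt A)(x_k - x^*)$ for all $t \ge 0$.

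Evaluating at $t = \eta_k$ and projecting onto the first coordinate gives the one-step recursion $x_{k+1} - x^* = \cos(\eta_k\sqrt A)(x_k - x^*)$, where $x_{k+1} = \Pi_1 \circ \mHF_{\eta_k}(x_k, 0)$. Because each matrix $\cos(\eta_k\sqrt A)$ is a function of $A$, they pairwise commute, so unrolling the recursion from $k = 0$ to $K-1$ yields $x_K - x^* = \big(\prod_{k=0}^{K-1}\cos(\eta_k\sqrt A)\big)(x_0 - x^*)$, which is exactly Lemma~\ref{lem:recur-quad}. The entire argument is a routine ODE computation; the only point deserving care is justifying that $\cos(t\sqrt A)$ is well defined and smooth with the stated derivative identities, which is disposed of cleanly by simultaneous diagonalization of $A$, so I anticipate no real obstacle.
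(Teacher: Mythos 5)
Your proof is correct: solving the linear ODE $\ddot Z_t = -A Z_t$ with $\dot Z_0 = 0$ via the matrix cosine (well-defined by diagonalizing the PSD matrix $A$, which also handles zero eigenvalues and the shift by $x^*$ since $Ax^*=0$), then composing the commuting one-step maps, is exactly the standard argument. The paper does not reprove this lemma but simply cites \citet{wang2024hamiltonian}, and your derivation matches that reference's approach, so there is nothing further to add.
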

\begin{lemma}[\cite{wang2024hamiltonian} Lemma~A.4]
    The eigenvalues of the matrix $\prod_{k=0}^{K-1} \cos\left(\eta_k \sqrt{A}\right)$ are 
$\lambda_j := \prod_{k=0}^{K-1} \cos\left(\eta_k \sqrt{\sigma_j}\right)$, 
$j \in [d]$, where $\sigma_1, \sigma_2, \ldots, \sigma_d$ are the eigenvalues of $A$.
\end{lemma}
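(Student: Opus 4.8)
The plan is elementary and purely linear-algebraic: diagonalize $A$ by one orthogonal change of basis, observe that in that same basis \emph{every} factor $\cos(\eta_k\sqrt{A})$ is diagonal, and conclude that the product is diagonal with entries equal to the claimed products of cosines.

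First I would invoke the spectral theorem. Since $A$ is symmetric (indeed positive definite under the standing hypothesis $\alpha I \preceq A \preceq LI$), write $A = Q\Lambda Q^\top$ with $Q\in\R^{d\times d}$ orthogonal and $\Lambda = \mathrm{diag}(\sigma_1,\dots,\sigma_d)$; then $\sqrt{A} = Q\,\mathrm{diag}(\sqrt{\sigma_1},\dots,\sqrt{\sigma_d})\,Q^\top$ is the positive semidefinite square root. For each $k$, apply the functional calculus to the entire map $z\mapsto\cos(\eta_k\sqrt{z})$ (the odd powers of $\sqrt{z}$ cancel since $\cos$ is even, so $\cos(\eta_k\sqrt{A})$ is an absolutely convergent power series in $A$); substituting $A^m = Q\Lambda^m Q^\top$ term by term gives
\begin{align*}
\cos(\eta_k\sqrt{A}) = Q\,\cos(\eta_k\sqrt{\Lambda})\,Q^\top = Q\,\mathrm{diag}\!\left(\cos(\eta_k\sqrt{\sigma_1}),\dots,\cos(\eta_k\sqrt{\sigma_d})\right)Q^\top .
\end{align*}
The key structural point is that this holds with the \emph{same} orthogonal matrix $Q$ for every $k$, so the factors $\cos(\eta_0\sqrt{A}),\dots,\cos(\eta_{K-1}\sqrt{A})$ are simultaneously diagonalized.

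Second I would simply multiply the factors. Writing $D_k := \mathrm{diag}(\cos(\eta_k\sqrt{\sigma_1}),\dots,\cos(\eta_k\sqrt{\sigma_d}))$, the intermediate products $Q^\top Q = I$ telescope and diagonal matrices multiply entrywise, so
\begin{align*}
\prod_{k=0}^{K-1}\cos(\eta_k\sqrt{A}) = Q\left(\prod_{k=0}^{K-1}D_k\right)Q^\top = Q\,\mathrm{diag}\!\left(\textstyle\prod_{k=0}^{K-1}\cos(\eta_k\sqrt{\sigma_j})\right)_{j\in[d]}Q^\top .
\end{align*}
Reading off the diagonal, the eigenvalues of $\prod_{k=0}^{K-1}\cos(\eta_k\sqrt{A})$ are exactly $\lambda_j = \prod_{k=0}^{K-1}\cos(\eta_k\sqrt{\sigma_j})$ for $j\in[d]$, with eigenvectors the columns of $Q$, which is the claim.

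There is no genuine obstacle here: once one has the spectral decomposition the rest is bookkeeping. The only points worth stating carefully are that symmetry of $A$ is what makes $Q$ orthogonal (and guarantees $\sqrt{A}$, hence $\cos(\eta_k\sqrt{A})$, well defined; the sign ambiguity in $\sqrt{\sigma_j}$ is irrelevant since $\cos$ is even), and that one should record the \emph{shared} eigenbasis $Q$ across all $k$ — it is simultaneous diagonalizability, not mere diagonalizability of each factor, that allows the eigenvalues of the product to be read off as the products of the eigenvalues.
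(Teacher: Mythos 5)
Your proof is correct: the spectral decomposition $A=Q\Lambda Q^\top$, the observation that $\cos(\eta_k\sqrt{z})$ is an entire function of $z$ (so functional calculus applies and every factor is diagonalized by the \emph{same} $Q$), and the telescoping product give exactly the claimed eigenvalues. Note that this paper does not prove the lemma itself --- it is imported verbatim from \citet{wang2024hamiltonian} (their Lemma~A.4) --- but your argument is the standard simultaneous-diagonalization proof one would expect there, so there is no substantive divergence to report; your remark that evenness of $\cos$ removes the sign ambiguity in $\sqrt{\sigma_j}$ (and makes the statement sensible even when $\alpha=0$, since the series is in integer powers of $A$) is a correct and worthwhile point of care.
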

We also recall the following property of exponential distribution.

\begin{lemma}\label{lem:exp of expo}
    If $\tau\sim\mathrm{Exp}(\gamma)$, then we have $\E[\cos^2(\sqrt{\sigma}\tau)]=1-\frac{2\sigma}{\gamma^2 + 4\sigma}$.
\end{lemma}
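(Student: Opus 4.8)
\textbf{Proof proposal for Lemma~\ref{lem:exp of expo}.} The plan is to reduce the computation to the characteristic function (or Laplace transform) of the exponential distribution. First I would apply the double-angle identity $\cos^2(\sqrt{\sigma}\tau) = \tfrac12 + \tfrac12\cos(2\sqrt{\sigma}\tau)$, so that $\E[\cos^2(\sqrt{\sigma}\tau)] = \tfrac12 + \tfrac12\,\E[\cos(2\sqrt{\sigma}\tau)]$ and it remains to evaluate $\E[\cos(2\sqrt{\sigma}\tau)]$.

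Next, I would write $\E[\cos(2\sqrt{\sigma}\tau)] = \mathrm{Re}\,\E[e^{2i\sqrt{\sigma}\tau}]$ and use the known characteristic function of $\tau\sim\mathrm{Exp}(\gamma)$, namely $\E[e^{is\tau}] = \int_0^\infty \gamma e^{-\gamma t} e^{ist}\,\rmd t = \frac{\gamma}{\gamma - is}$, which is valid for all real $s$ since the integrand is absolutely integrable. Substituting $s = 2\sqrt{\sigma}$ and rationalizing gives $\E[e^{2i\sqrt{\sigma}\tau}] = \frac{\gamma(\gamma + 2i\sqrt{\sigma})}{\gamma^2 + 4\sigma}$, whose real part is $\frac{\gamma^2}{\gamma^2 + 4\sigma}$. (As an alternative that avoids complex numbers, one can integrate $\int_0^\infty \gamma e^{-\gamma t}\cos(2\sqrt{\sigma}t)\,\rmd t$ by parts twice and solve the resulting linear equation for the integral; this yields the same value.)

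Finally, I would combine the pieces: $\E[\cos^2(\sqrt{\sigma}\tau)] = \tfrac12 + \tfrac12\cdot\frac{\gamma^2}{\gamma^2+4\sigma} = \frac{\gamma^2 + 2\sigma}{\gamma^2 + 4\sigma} = 1 - \frac{2\sigma}{\gamma^2 + 4\sigma}$, which is the claimed identity.

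There is no real obstacle here; the only point requiring a word of care is the justification for evaluating the characteristic function at a nonzero frequency (absolute integrability of $t\mapsto \gamma e^{-\gamma t}e^{ist}$), and, if the integration-by-parts route is taken instead, correctly handling the boundary terms at $t=\infty$ (which vanish because of the exponential decay) and isolating the target integral from the linear relation it satisfies.
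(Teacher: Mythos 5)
Your proof is correct. Note that the paper itself does not give an argument for this lemma at all: it simply cites Proposition 2 of \citet{jiang2023dissipation}, so your computation supplies the details that the paper outsources. Your route — the double-angle identity $\cos^2(\sqrt{\sigma}\tau)=\tfrac12+\tfrac12\cos(2\sqrt{\sigma}\tau)$ followed by the characteristic function $\E[e^{is\tau}]=\gamma/(\gamma-is)$ of $\mathrm{Exp}(\gamma)$ evaluated at $s=2\sqrt{\sigma}$, taking the real part $\gamma^2/(\gamma^2+4\sigma)$ and recombining — is the standard way to obtain this identity, and the final algebra $\tfrac12+\tfrac12\cdot\frac{\gamma^2}{\gamma^2+4\sigma}=1-\frac{2\sigma}{\gamma^2+4\sigma}$ checks out. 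Your side remarks (absolute integrability justifying the evaluation of the characteristic function, or alternatively the vanishing boundary terms in the twice-integrated-by-parts real computation) cover the only technical points that need mention, so the argument is complete as written and could stand in place of the citation.
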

\begin{proof}
    See the proof of Proposition 2 in \cite{jiang2023dissipation}.
\end{proof}
\paragraph{Proof of Example~\ref{ex:HF-quad}}
\begin{proof}
    Using Lemmas~\ref{lem:recur-quad} and~\ref{lem:boundcosfunc}, if $\eta_k=h\leq \frac{1}{2\sqrt{\sigma_d}}=\frac{1}{2\sqrt{L}}$. we obtain
    \begin{align*}
        \|x_K-x^*\|^2 &= \lno \lp \prod_{k=0}^{K-1}\cos(h\sqrt{A}) \rp(x_0-x^*) \rno^2\\
        &\leq \lp \max_{j\in[d]}\prod_{k=0}^{K-1}\cos^2(h\sqrt{\sigma_j})\rp\|x_0-x^*\|^2\\
        &\leq \lp 1 - \frac{\alpha h^2}{2}\rp^K\|x_0-x^*\|^2.
    \end{align*}
    Thus the total integration time to generate a solution satisfying $\|x_K-x^*\|^2\leq \varepsilon$ is
    \begin{align*}
        h\cdot K = h\cdot \frac{2}{\alpha h^2}\cdot\log\frac{1}{\varepsilon} = \frac{2}{\alpha h}\cdot\log\frac{1}{\varepsilon}\geq \frac{4\sqrt{L}}{\alpha}\cdot\log\frac{1}{\varepsilon},
    \end{align*}
    where the equality holds when $h = \frac{1}{2\sqrt{L}}$.
\end{proof}

\paragraph{Proof of Example~\ref{ex:quad}}
\begin{proof}
    Using Lemma~\ref{lem:recur-quad} and taking the expectation, we obtain
    \begin{align*}
        \E\|x_K-x^*\|^2 &= \E\lno \lp \prod_{k=0}^{K-1}\cos(\tau_k\sqrt{A}) \rp(x_0-x^*) \rno^2\\
        &\leq \lp \max_{j\in[d]}\prod_{k=0}^{K-1}\E[\cos^2(\tau_k\sqrt{\sigma_j})]\rp\E\|x_0-x^*\|^2\\
        &=\lp 1-\min_{j\in[d]}\frac{2\sigma_j}{\gamma^2+4\sigma_j}\rp^K\E\|x_0-x^*\|^2\\
        &\leq \exp\lp -\min_{j\in[d]}\frac{2\sigma_j K}{\gamma^2+4\sigma_j}\rp \E\|x_0-x^*\|^2 \\
        &= \exp\lp -\frac{2\alpha K}{\gamma^2+4\alpha}\rp \E\|x_0-x^*\|^2,
    \end{align*}
    where the second equality follows from Lemma~\ref{lem:exp of expo}, and the last equality follows from $A \succeq \alpha I$. Thus, the expected total integration time to reach an $\varepsilon$-accurate solution is
\[
\E[\tau_k] \cdot \underbrace{\left(\frac{\gamma^2 + 4\alpha}{2\alpha}\right) \cdot \log \frac{\E\|x_0-x^*\|^2}{\varepsilon}}_{K}
= \left( \frac{\gamma}{2\alpha} + \frac{2}{\gamma} \right) \cdot \log \frac{\E\|x_0-x^*\|^2}{\varepsilon}
\geq \frac{2}{\sqrt{\alpha}} \cdot \log \frac{\E\|x_0-x^*\|^2}{\varepsilon},
\]
where equality holds when $\gamma = 2\sqrt{\alpha}$.
\end{proof}

\section{Convergence analysis of the randomized Hamiltonian flow}\label{app:proof of c-t}
Let $\Pi_Y$ denote the operator that maps a function $\phi \colon \R^{d}\times\R^{d}  \to \R$ to $\Pi_Y \phi \colon \R^{d}\times\R^{d} \to \R$ given by $(\Pi_Y \phi)(x,y) = \phi(x,0).$ We provide the following continuity equation for any smooth test functions along the randomized Hamiltonian flow (\textsf{RHF}) defined in Section~\ref{RHF}, which is the essential tool for establishing its convergence. 
\begin{lemma}\label{Lem:Formula}
    Let $Z_t = (X_t, Y_t) \sim \rho_t^Z$ evolve following \textsf{\em RHF}.
    For any smooth $\phi_t \colon \R^{2d} \to \R$,
    \begin{align}\label{Eq:Formula}
        \frac{\rmd}{\rmd t} \E[\phi_t(Z_t)] = \E\left[\part{\phi_t}{t}(Z_t) + \langle \nabla \phi_t(Z_t), b(Z_t) \rangle + \gamma(t) ((\Pi_Y \phi_t)(Z_t) - \phi_t(Z_t))  \right],
    \end{align}
    where $b(Z_t)=(Y_t,-\nabla f(X_t))$ is the Hamiltonian vector field.
\end{lemma}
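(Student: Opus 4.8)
The statement to prove is Lemma~\ref{Lem:Formula}, the continuity (generator) equation for the piecewise-deterministic Markov process \eqref{eq:RHF}.

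\textbf{Plan.} The process $Z_t = (X_t, Y_t)$ is a piecewise deterministic Markov process (PDMP): between jumps it flows along the deterministic Hamiltonian vector field $b(z) = (y, -\nabla f(x))$, and at rate $\gamma(t)$ it jumps via the reset map $z = (x,y) \mapsto (x,0)$. The claimed formula \eqref{Eq:Formula} is exactly the action of the (time-inhomogeneous) extended generator of this PDMP on the test function $\phi_t$, together with the explicit time-dependence term $\partial_t \phi_t$. So the plan is to establish this via a standard infinitesimal (Dynkin-type) argument: condition on the state $Z_t = z$ at time $t$ and expand $\E[\phi_{t+s}(Z_{t+s}) \mid Z_t = z]$ to first order in $s$.

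\textbf{Key steps.} First, over a short interval $[t, t+s]$, decompose according to the number of refreshment events: by the defining property of the inhomogeneous Poisson process with rate $\gamma(\cdot)$, the probability of exactly zero events is $1 - \gamma(t)s + o(s)$, the probability of exactly one event is $\gamma(t)s + o(s)$, and the probability of two or more events is $o(s)$. Second, on the zero-event event, $Z_{t+s}$ is obtained by flowing $z$ along $b$ for time $s$, so $\phi_{t+s}(Z_{t+s}) = \phi_t(z) + s\,\partial_t\phi_t(z) + s\,\langle \nabla \phi_t(z), b(z)\rangle + o(s)$ by a first-order Taylor expansion in both the time slot and the space slot (using smoothness of $\phi_t$ and local boundedness of $b$ along the flow). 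Third, on the one-event event, to leading order the velocity is reset to $0$ near time $t$ and then the deterministic flow moves it an $O(s)$ amount, so $\phi_{t+s}(Z_{t+s}) = (\Pi_Y \phi_t)(z) + o(1) = \phi_t(z,0) + o(1)$; multiplied by the $O(s)$ probability this contributes $\gamma(t)s\,(\Pi_Y\phi_t)(z) + o(s)$. Combining, and subtracting the contribution $\gamma(t)s\,\phi_t(z)$ that the flow term ``double counts'' on the one-event branch (equivalently, writing the no-jump branch with weight $1 - \gamma(t)s$), we get
\begin{align*}
\E[\phi_{t+s}(Z_{t+s}) \mid Z_t = z] - \phi_t(z) = s\Big( \partial_t\phi_t(z) + \langle \nabla\phi_t(z), b(z)\rangle + \gamma(t)\big((\Pi_Y\phi_t)(z) - \phi_t(z)\big)\Big) + o(s).
\end{align*}
Dividing by $s$, letting $s \to 0$, and taking expectation over $Z_t \sim \rho_t^Z$ (invoking dominated convergence / a suitable integrability condition so the limit passes through the expectation) yields \eqref{Eq:Formula}.

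\textbf{Main obstacle.} The conceptual content is routine PDMP generator theory; the technical care is in the justification of the $o(s)$ remainders and the interchange of limit and expectation. Specifically, one must control the Taylor remainder uniformly enough — this needs that along \eqref{eq:RHF} the velocity $Y_t$ (and hence $\nabla f(X_t)$, using $L$-smoothness when available, or at least local Lipschitz control of $\nabla f$) does not blow up on the relevant time horizon, so that $b(Z_t)$ has finite expectation and the remainder is genuinely lower order in expectation. In the strongly and weakly convex settings considered later, energy conservation between jumps (Lemma~\ref{Lem:EnergyConservation}) combined with the resets gives a uniform bound on $f(X_t)$ and $\tfrac12\|Y_t\|^2$, which supplies exactly this control; I would either cite that a priori bound or state the integrability hypothesis under which the formula holds. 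An alternative, cleaner route is to invoke the general PDMP generator formula (e.g., the Poisson point measure calculus reviewed in \citet[Appendix C]{even2021continuized}) and simply identify the drift part $\langle \nabla\phi_t, b\rangle$ and the jump part $\gamma(t)((\Pi_Y\phi_t) - \phi_t)$ with the Hamiltonian field and the velocity-reset kernel, respectively; this reduces the proof to a direct pattern-match and relegates the analytic subtleties to the cited framework.
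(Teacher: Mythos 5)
Your proposal is correct and follows essentially the same route as the paper: the paper likewise approximates \RHF\ over a short interval by a deterministic \eqref{HF} step followed by an accept-refresh step with probability $\gamma(t)h$, Taylor-expands in time and along the Hamiltonian field, identifies the jump contribution $\gamma(t)h\,((\Pi_Y\phi_t)-\phi_t)$, and passes to the limit in the difference quotient. Your additional remarks on controlling the $o(s)$ remainders and interchanging limit and expectation are more careful than the paper's treatment but do not change the argument.
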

\begin{proof}
The time derivative can be expressed as taking the limit:
\begin{align*}
    \frac{\rmd}{\rmd t}\E[\phi_t(Z_t)] = \lim_{h\rightarrow 0}\frac{\E[\phi_{t+h}(Z_{t+h})-\phi_t(Z_t)]}{h}.
\end{align*}
Thus we first study the short time behavior of \textsf{RHF}.
Using the interpretation at the beginning of Section~\ref{sec:Dis-analysis}, the probability of a refreshment occurring in a small interval with width $h$ is close to $\gamma(t)h$. Thus the \textsf{RHF} from $t$ to $t+h$ can be approximated by the following procedure for small $h$:
\begin{itemize}
    \item Run \eqref{HF} with deterministic integration time $h$: $\Tilde{Z}_{t+h}=(X_{t+h},\Tilde{Y}_{t+h})=\mathsf{HF}_h(Z_t)$.
    \item Accept-refresh: $Z_{t+h} = \left\{\begin{aligned}
        &\Tilde{Z}_{t+h} \quad &\text{with probability } 1-\gamma(t) h,\\
        &(X_{t+h},0)\quad &\text{with probability } \gamma(t) h.
    \end{aligned}
    \right.$
\end{itemize}
We can decompose $\E[\phi_{t+h}(Z_{t+h})-\phi_t(Z_t)]$ as
\begin{align*}
    \E[\phi_{t+h}(Z_{t+h}) - \phi_t(Z_t)] = \E[\phi_{t+h}(Z_{t+h}) - \phi_t(Z_{t+h})]+ \E[\phi_t(\Tilde{Z}_{t+h}) - \phi_t(Z_t)] + \E[\phi_t(Z_{t+h}) - \phi_t(\Tilde{Z}_{t+h})] .
\end{align*}
For the first two terms on the right hand side, we perform Taylor expansion and obtain
\begin{align*}
    \E[\phi_{t+h}(Z_{t+h}) - \phi_t(Z_{t+h})] &= \E\lmp h\frac{\partial\phi_t}{\partial t}(Z_{t+h}) + O(h^2)\rmp,\\
    \E[\phi_t(\Tilde{Z}_{t+h}) - \phi_t(Z_t)] &= \E[h\langle \nabla\phi_t(Z_t), b(Z_t)\rangle + O(h^2)].
\end{align*}
For the last term, we obtain by the probabilistic accept-refresh step
\begin{align*}
    \E[\phi_t(Z_{t+h}) - \phi_t(\Tilde{Z}_{t+h})] &= \E\lmp(1-\gamma(t) h)\cdot\phi_t(\Tilde{Z}_{t+h}) + \gamma(t) h\cdot(\Pi_Y\phi_t)(\Tilde{Z}_{t+h}) - \phi_t(\Tilde{Z}_{t+h})\rmp\\
    &= \gamma(t) h\cdot \E\lmp (\Pi_Y\phi_t)(\Tilde{Z}_{t+h}) - \phi_t(\Tilde{Z}_{t+h}) \rmp.
\end{align*}
Thus we obtain
\begin{align*}
    \frac{\rmd}{\rmd t} \E_{\rho_t^Z}[\phi_t(Z_t)] &= \lim_{h\rightarrow 0}\frac{\E_{\rho_t^Z}[\phi_{t+h}(Z_{t+h}) - \phi_t(Z_t)]}{h}\\
    &= \lim_{h\rightarrow 0}\frac{\E_{\rho_t^Z}\lmp h\frac{\partial\phi_t}{\partial t}(Z_{t+h}) +h\langle \nabla\phi_t(Z_t), b(Z_t)\rangle+  \gamma(t) h \lp (\Pi_Y\phi_t)(\Tilde{Z}_{t+h}) - \phi_t(\Tilde{Z}_{t+h}) \rp  + O(h^2)\rmp}{h}\\
    &= \E_{\rho_t^Z}\left[\part{\phi_t}{t}(Z_t)+ \langle \nabla \phi_t(Z_t), b(Z_t) \rangle + \gamma(t) ((\Pi_Y \phi_t)(Z_t) - \phi_t(Z_t))  \right].
\end{align*}
\end{proof}

\subsection{Proof of Theorem~\ref{Thm:AccRateCtsTime} (convergence of \RHF\ under strong convexity)}\label{app:proof of RHF-opt-sc}
\begin{reptheorem}{Thm:AccRateCtsTime}
Assume $f \colon \R^d \to \R$ is $\alpha$-strongly convex.
    Let $(X_t, Y_t)$ evolve following \eqref{eq:RHF} with the choice $\gamma(t) =\sqrt{\frac{16\alpha}{5}}$, from any $X_0\in\R^d$ with $Y_0 = 0$.
    Then for any $t\geq 0$, we have
    \begin{align*}
        \E\left[ f(X_t)-f(x^*) \right]
        \le\exp\lp-\sqrt{\frac{\alpha}{5}}t \rp \E\left[ f(X_0)-f(x^*) + \frac{\alpha}{10} \left\|X_0-x^* \right\|^2 \right].
    \end{align*}
\end{reptheorem}
\begin{proof}
We construct the following Lyapunov function:
\begin{equation}
    \mathcal{E}_t = \exp\lp \sqrt{\frac{\alpha}{5}} t\rp\E\lmp f(X_t)-f(x^*) + \frac{\alpha}{10}\lno X_t-x^*+\sqrt{\frac{5}{\alpha}}Y_t\rno^2\rmp.
\end{equation}
Let $\widetilde{\mathcal{E}}_t = \E\left[ f(X_t)-f(x^*) + \frac{\alpha}{10} \left\|X_t-x^* + \sqrt{\frac{5}{\alpha}} Y_t \right\|^2 \right]$. Applying the chain rule, we have
\begin{align*}
    \Dot{\cE}_t = \sqrt{\frac{\alpha}{5}}\exp\lp \sqrt{\frac{\alpha}{5}}t\rp \widetilde{\cE}_t + \exp\lp \sqrt{\frac{\alpha}{5}}t\rp\Dot{\widetilde{\cE}}_t.
\end{align*}
Thus, in order to show $\Dot{\cE}_t\leq 0$, it suffices to show $\Dot{\widetilde{\cE}}_t\leq -\sqrt{\frac{\alpha}{5}}\widetilde{\cE}_t$. Applying Lemma~\ref{Lem:Formula} with $\phi_t = \widetilde{\cE}_t$, we obtain
\begin{align*}
    \Dot{\widetilde{\cE}}_t &= \E\lmp\langle\nabla f(X_t),Y_t\rangle + \left\langle \lp\begin{array}{c}
        \frac{\alpha}{5}\lp X_t-x^* + \sqrt{\frac{5}{\alpha}} Y_t \rp \\
         \sqrt{\frac{\alpha}{5}}\lp X_t-x^* + \sqrt{\frac{5}{\alpha}} Y_t \rp
    \end{array}\rp,\lp \begin{array}{c}
        Y_t \\
        -\nabla f(X_t)
    \end{array} \rp \right\rangle\rmp\\
    &\qquad+\frac{\gamma(t)\alpha}{10}\E\lmp \left\|X_t-x^*  \right\|^2-\left\|X_t-x^* + \sqrt{\frac{5}{\alpha}} Y_t \right\|^2\rmp\\
    &=\E\lmp\sqrt{\frac{\alpha}{5}}\langle x^*-X_t,\nabla f(X_t)\rangle + \lp \frac{\alpha}{5}-\gamma(t)\sqrt{\frac{\alpha}{5}}\rp\langle X_t-x^*,Y_t\rangle + \lp \sqrt{\frac{\alpha}{5}} - \frac{\gamma(t)}{2}\rp\|Y_t\|^2\rmp\\
    &\leq \E\lmp-\sqrt{\frac{\alpha}{5}}\lp f(X_t)-f(x^*)\rp -\frac{\alpha^{\nicefrac{3}{2}}}{2\sqrt{5}}\|X_t - x^*\|^2-\frac{3\alpha}{5}\langle X_t-x^*,Y_t\rangle - \sqrt{\frac{\alpha}{5}}\|Y_t\|^2\rmp\\
    &=-\sqrt{\frac{\alpha}{5}}\widetilde{\cE}_t - \E\lmp\frac{2\alpha^{\nicefrac{3}{2}}}{5\sqrt{5}}\|X_t - x^*\|^2 + \frac{2\alpha}{5}\langle X_t-x^*,Y_t\rangle +\frac{1}{2}\sqrt{\frac{\alpha}{5}}\|Y_t\|^2\rmp\\
    &= -\sqrt{\frac{\alpha}{5}}\widetilde{\cE}_t - 2\sqrt{\frac{\alpha}{5}}\E\lmp \frac{\alpha}{5}\|X_t - x^*\|^2 + \sqrt{\frac{\alpha}{5}}\langle X_t-x^*,Y_t\rangle + \frac{1}{4}\|Y_t\|^2 \rmp\\
    & = -\sqrt{\frac{\alpha}{5}}\widetilde{\cE}_t - 2\sqrt{\frac{\alpha}{5}}\E\lmp\lno \sqrt{\frac{\alpha}{5}}\lp X_t - x^*\rp + \frac{1}{2}Y_t\rno^2\rmp\\
    & \leq -\sqrt{\frac{\alpha}{5}}\widetilde{\cE}_t,
\end{align*}
where the first inequality follows from $\alpha$-strong convexity: $$\langle x^*-X_t,Y_t\rangle\leq f(x^*)-f(X_t)-\frac{\alpha}{2}\|X_t - x^*\|^2.$$
Thus we have $\mathcal{E}_t\leq \mathcal{E}_0$, which implies \begin{align*}
        \E\left[ f(X_t)-f(x^*) \right]
        \le\exp\lp-\sqrt{\frac{\alpha}{5}}t \rp \E\left[ f(X_0)-f(x^*) + \frac{\alpha}{10} \left\|X_0-x^* \right\|^2 \right].
    \end{align*}
\end{proof}
\subsection{Proof of Theorem~\ref{Thm:AccRateCtsTime-wc} (convergence of \RHF\ under weak convexity)}\label{app:proof of RHF-opt-c}
\begin{reptheorem}{Thm:AccRateCtsTime-wc}
    Assume $f \colon \R^d \to \R$ is weakly convex.
    Let $(X_t, Y_t)$ evolve following \eqref{eq:RHF} with the choice $\gamma(t) =\frac{6}{t+1}$, from any $X_0\in\R^d$ with $Y_0 = 0$.
    Then for any $t\geq 0$, we have
    \begin{align*}
        \E\left[ f(X_t)-f(x^*) \right]
        \le \frac{5\cdot\E\left[f(X_0)-f(x^*)+  \left\|X_0-x^* \right\|^2 \right]}{(t+1)^2}.
    \end{align*}
\end{reptheorem}
\begin{proof}
We construct the following Lyapunov function:
\begin{equation}
    \cE_t = \E\lmp \frac{(t+1)^2}{4}(f(X_t)-f(x^*)) + \frac{1}{2}\lno X_t - x^* + \frac{t+1}{2}Y_t\rno^2 + \frac{3}{4}\|X_t-x^*\|^2 \rmp.
\end{equation}
    We will show $\dot{\cE}_t \le 0$.
    Applying Lemma~\ref{Lem:Formula} with $\phi_t = \cE_t$, we obtain
    \begin{align*}
        \Dot{\cE}_t &= \E\left[\frac{(t+1)^2}{4}\langle\nabla f(X_t),Y_t\rangle+\frac{t+1}{2}(f(X_t)-f(x^*))\right]\\
        &\quad+\E\left[\left\langle\left(\begin{array}{c}
          X_t-x^*+\frac{t+1}{2}Y_t  \\
          \frac{t+1}{2}(X_t-x^*+\frac{t+1}{2}Y_t) 
     \end{array}\right),\left(\begin{array}{c}
          Y_t  \\
          -\nabla f(X_t) 
     \end{array}\right)\right\rangle+\frac{\gamma(t)}{2}\left(\|X_t-x^*\|^2-\left\|X_t-x^*+\frac{t+1}{2}Y_t\right\|^2\right)\right]\\
     &\quad+\E\left[\left\langle X_t-x^*+\frac{t+1}{2}Y_t, \, \frac{1}{2}Y_t\right\rangle\right]+\E\lmp\frac{3}{2}\llangle X_t-x^*, Y_t\rrangle\rmp\\
     &= \E\left[\frac{(t+1)^2}{4}\langle\nabla f(X_t),Y_t\rangle+\frac{t+1}{2}(f(X_t)-f(x^*))\right] + \E\lmp \llangle X_t-x^*+\frac{t+1}{2}Y_t, \, \frac{3}{2}Y_t - \frac{t+1}{2}\nabla f(X_t)\rrangle \rmp\\
     &\quad -\E\lmp\frac{\gamma(t)\cdot(t+1)}{2}\langle X_t-x^*,Y_t\rangle-\frac{\gamma(t)\cdot(t+1)^2}{8}\|Y_t\|^2\rmp+\E\lmp\frac{3}{2}\llangle X_t-x^*, Y_t\rrangle\rmp\\
     &= \E\lmp\lp 3-\frac{\gamma(t)\cdot(t+1)}{2}\rp \langle X_t-x^*,Y_t\rangle+\lp \frac{3(t+1)}{4}-\frac{\gamma(t)\cdot(t+1)^2}{8}\rp\|Y_t\|^2\rmp\\
     &\quad+\frac{t+1}{2}\E\lmp f(X_t)-f(x^*)+\langle x^*-X_t,\nabla f(X_t)\rangle\rmp\leq 0,
    \end{align*}
    where the inequality above follows from the choice of $\gamma(t)$ and weak convexity:
    \begin{align*}
        \langle x^*-X_t,\nabla f(X_t)\rangle\leq f(x^*)-f(X_t).
    \end{align*}
    Thus, we have $\mathcal{E}_t\leq \mathcal{E}_0$, which implies
    \begin{align*}
        \E\left[ f(X_t)-f(x^*) \right]
        \le \frac{5\E\left[f(X_0)-f(x^*)+  \left\|X_0-x^* \right\|^2 \right]}{(t+1)^2}.
    \end{align*}
\end{proof}
\section{Discretization analysis}
\subsection{Randomized proximal Hamiltonian descent}\label{app:RPHD}
Based on the implicit integrator formulation \eqref{eq:update-xy-re}, we propose the \textbf{randomized proximal Hamiltonian descent} (\textsf{RPHD}) algorithm:
\begin{algorithm}[H]
    \caption{\textbf{Randomized Hamiltonian Proximal Descent} ({\textsf{RPHD}})}
    \label{alg:RPHD}
    \begin{algorithmic}[1]
        \State Initialize $x_0\in\R^d$ and $y_0=0$. Choose refreshment parameter $\gamma_k>0$, step size $h>0$.
        \For{$k = 0,1,\dots, K-1$}
        \State $x_{k+\frac{1}{2}} = x_k+h y_k$
        \State $x_{k+1} = \mathrm{Prox}_{h^2f}(x_{k+\frac{1}{2}})$
        \State $\tly_{k+1} = y_k - h\nabla f(x_{k+1})$ 
            \State $y_{k+1}=\begin{cases}
        \Tilde{y}_{k+1}\quad &\text{with probability } 1-\min(\gamma_k h,1)\\
        0\quad &\text{with probability } \min(\gamma_k h,1)
    \end{cases}$
        \EndFor
        \State \Return $x_K$
    \end{algorithmic}
\end{algorithm}

\subsubsection{For strongly convex functions}\label{app:proof of RPHD-sc}
We now state the convergence rate of \textsf{RPHD} for minimizing strongly convex functions.

\begin{theorem}\label{Thm:AccRateDsctime-sc}
    Assume $f \colon \R^d \to \R$ is $\alpha$-strongly convex with $0 < \alpha \le 1$. Then for all $k\geq 0$, along \textsf{\em RPHD} (Algorithm~\ref{alg:RPHD}) with $0<h<\frac{1}{\sqrt{\alpha}}$, $\gamma_k = \sqrt{\alpha}$, and from any $x_0\in\R^d$ with $y_0 = 0$, we have
    \begin{align*}
    \E[f(x_k)-f(x^*)]\leq \lp 1+\frac{\sqrt{\alpha}h}{6}\rp^{-k}\E\lmp f(x_0)-f(x^*)+\frac{\alpha}{72}\|x_0-x^*\|^2\rmp.
\end{align*}
\end{theorem}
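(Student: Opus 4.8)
The plan is to prove a discrete-time analogue of Theorem~\ref{Thm:AccRateCtsTime} via a Lyapunov argument. I would take the discrete Lyapunov functional
\[
\mathcal{E}_k := f(x_k) - f(x^*) + \frac{\alpha}{72}\left\| x_k - x^* + \frac{6}{\sqrt{\alpha}}\, y_k \right\|^2 ,
\]
which satisfies $\mathcal{E}_k \ge f(x_k)-f(x^*)$ and, since $y_0=0$, $\mathcal{E}_0 = f(x_0)-f(x^*)+\frac{\alpha}{72}\|x_0-x^*\|^2$. Writing $\beta := 6/\sqrt{\alpha}$, note $\gamma_k\beta=6$, so $\beta$ plays the role of the scaling $\sqrt{5/\alpha}$ in Theorem~\ref{Thm:AccRateCtsTime} and the target per-step factor $(1+\sqrt{\alpha}h/6)^{-1}=(1+h/\beta)^{-1}$ is the natural discretization of $e^{-t/\beta}$. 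The proof then reduces to the one-step inequality
\[
\Big(1 + \tfrac{\sqrt{\alpha}h}{6}\Big)\,\E\!\left[\mathcal{E}_{k+1}\mid x_k,y_k\right] \le \mathcal{E}_k ,
\]
since taking total expectation and iterating gives $\E[\mathcal{E}_k]\le (1+\sqrt{\alpha}h/6)^{-k}\mathcal{E}_0$, and dropping the nonnegative square yields the claim; no smoothness of $f$ is needed because the proximal step is computed exactly.

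To establish the one-step inequality I would first average over the accept--refresh coin: since $\gamma_k h = \sqrt{\alpha}h<1$, and since $x_{k+1}=\mathrm{Prox}_{h^2 f}(x_k+hy_k)$ and $\tilde y_{k+1}=y_k-h\nabla f(x_{k+1})$ are deterministic given $(x_k,y_k)$,
\[
\E\!\left[\mathcal{E}_{k+1}\mid x_k,y_k\right] = f(x_{k+1})-f(x^*) + \frac{\alpha}{72}\Big[(1-\sqrt{\alpha}h)\|x_{k+1}-x^*+\beta\tilde y_{k+1}\|^2 + \sqrt{\alpha}h\,\|x_{k+1}-x^*\|^2\Big] .
\]
The extra contribution $\sqrt{\alpha}h\big(\|x_{k+1}-x^*\|^2-\|x_{k+1}-x^*+\beta\tilde y_{k+1}\|^2\big)$ is the discrete image of the jump term in Lemma~\ref{Lem:Formula}. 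I would then eliminate the velocities through the implicit-update identities $x_{k+1}-x_k=h\tilde y_{k+1}$ and $\tilde y_{k+1}-y_k=-h\nabla f(x_{k+1})$ (equivalently $x_{k+1}=x_k+hy_k-h^2\nabla f(x_{k+1})$), which let one write $x_{k+1}-x^*+\beta\tilde y_{k+1}=(x_k-x^*+\beta y_k)+hy_k-(h+\beta)h\nabla f(x_{k+1})$ and likewise re-express $x_k-x^*+\beta y_k$ in terms of $x_{k+1}-x^*$, $\tilde y_{k+1}$, and $\nabla f(x_{k+1})$. Expanding the squares and invoking $\alpha$-strong convexity of $f$ at $x_{k+1}$ — against $x^*$ to replace $\langle\nabla f(x_{k+1}),x_{k+1}-x^*\rangle$ by $f(x_{k+1})-f(x^*)+\tfrac{\alpha}{2}\|x_{k+1}-x^*\|^2$, and against $x_k$ to control $f(x_{k+1})-f(x_k)$ — should reduce $\E[\mathcal{E}_{k+1}\mid x_k,y_k]-\mathcal{E}_k$ to a negative $\Theta(\sqrt{\alpha}h)(f(x_{k+1})-f(x^*))$ term, a negative $\Theta(\sqrt{\alpha}h)\cdot\tfrac{\alpha}{72}\|x_{k+1}-x^*\|^2$ term, a negative $\|\tilde y_{k+1}\|^2$ term from the jump part (the discrete analogue of the perfect square $\|\sqrt{\alpha/5}(X_t-x^*)+\tfrac12 Y_t\|^2$ from the continuous-time proof), plus $O(h^2)$ remainders.

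The main obstacle is the bookkeeping of this expansion: one must choose the constants so that the harvested negative terms — the function-value term, the $\|x_{k+1}-x^*\|^2$ term from strong convexity, and the $\|\tilde y_{k+1}\|^2$ term from the jump part — outweigh $\tfrac{\sqrt{\alpha}h}{6}\,\E[\mathcal{E}_{k+1}\mid x_k,y_k]$ after the $O(h^2)$ remainder $\tfrac{\alpha}{72}\|hy_k-(h+\beta)h\nabla f(x_{k+1})\|^2$ coming from squaring the backward-Euler update is reabsorbed; substituting $\nabla f(x_{k+1})=(y_k-\tilde y_{k+1})/h$ turns this remainder into a quadratic in $y_k$ and $\tilde y_{k+1}$, and the hypothesis $h<1/\sqrt{\alpha}$ (i.e.\ $\gamma_k h<1$) is used to make the resulting quadratic form negative semidefinite. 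If the plain functional above does not close, the natural fix is to add a small $\|x_k-x^*\|^2$ term (as in the weakly convex Lyapunov function of Theorem~\ref{Thm:AccRateCtsTime-wc}) and re-tune $\beta$ and the weight $\alpha/72$; the rate constant $1/6$ is then whatever margin survives the tuning. Notably, and in contrast with standard \AGD-style analyses, the argument never needs a separate estimate on the intermediate iterate $x_{k+1/2}$ — only the two implicit-update identities and strong convexity of $f$ enter.
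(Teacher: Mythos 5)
Your proposal follows essentially the same route as the paper's proof: the identical Lyapunov functional $f(x_k)-f(x^*)+\frac{\alpha}{72}\|x_k-x^*+\frac{6}{\sqrt{\alpha}}y_k\|^2$, the same one-step contraction $(1+\frac{\sqrt{\alpha}h}{6})\,\E[\mathcal{E}_{k+1}]\le \E[\mathcal{E}_k]$ obtained by averaging over the accept--refresh coin (valid since $\gamma_k h=\sqrt{\alpha}h<1$), and the same ingredients of the two implicit-update identities plus strong convexity of $f$ at $x_{k+1}$ against both $x_k$ and $x^*$, with the function-value gap re-expressed through the Lyapunov quantity and the leftover cross terms absorbed via Young's inequality (using $\alpha\le 1$) into a perfect square. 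The paper's bookkeeping closes with exactly this functional, so your fallback of adding a $\|x_k-x^*\|^2$ term is not needed.
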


\begin{proof}
    We construct the Lyapunov function:
    \begin{align*}
        E_k=\lp 1+\frac{\sqrt{\alpha}h}{6}\rp^{k}\E\lmp f(x_k)-f(x^*)+\frac{\alpha}{72}\left\|x_k-x^*+\frac{6}{\sqrt{\alpha}}y_k\right\|^2\rmp.
    \end{align*}
    We also define
\begin{equation*}
    L_k = \E\lmp f(x_k)-f(x^*)+\frac{\alpha}{72}\left\|x_k-x^*+\frac{6}{\sqrt{\alpha}}y_k\right\|^2\rmp.
\end{equation*}
In order to show $E_{k+1}-E_k\leq 0$, it suffices to show $L_{k+1}-L_k\leq -\frac{\sqrt{\alpha}h}{6}L_{k+1}.$ By the accept-refresh step, we have
\begin{align*}
    L_{k+1}&=(1-\gamma_k h)\cdot\E\lmp f(x_{k+1})-f(x^*)+\frac{\alpha}{72}\left\|x_{k+1}-x^*+\frac{6}{\sqrt{\alpha}}\tilde{y}_{k+1}\right\|^2\rmp \\
    &\quad+ \gamma_k h\cdot\E\lmp f(x_{k+1})-f(x^*)+\frac{\alpha}{72}\left\|x_{k+1}-x^*\right\|^2\rmp\\
    &=\E\lmp f(x_{k+1})-f(x^*)+\frac{\alpha}{72}\left\|x_{k+1}-x^*+\frac{6}{\sqrt{\alpha}}\tilde{y}_{k+1}\right\|^2\rmp\\
    &\quad + \frac{\alpha^{\nicefrac{3}{2}} h}{72}\E\lmp \left\|x_{k+1}-x^*\right\|^2-\left\|x_{k+1}-x^*+\frac{6}{\sqrt{\alpha}}\tilde{y}_{k+1}\right\|^2\rmp.
\end{align*}
\textbf{Note that we hide $\E$ in the calculation below for simplicity.} Taking difference between $L_{k+1}$ and $L_k$ and applying Lemma~\ref{Three-point identity}, we obtain
\begin{align*}
    L_{k+1}-L_k& =  f(x_{k+1})-f(x_k)+\frac{\alpha}{72}\lp \left\|x_{k+1}-x^*+\frac{6}{\sqrt{\alpha}}\tilde{y}_{k+1}\right\|^2 - \left\|x_k-x^*+\frac{6}{\sqrt{\alpha}}y_k\right\|^2\rp\\
    &\quad + \frac{\alpha^{\nicefrac{3}{2}} h}{72}\lp \left\|x_{k+1}-x^*\right\|^2-\left\|x_{k+1}-x^*+\frac{6}{\sqrt{\alpha}}\tilde{y}_{k+1}\right\|^2\rp\\
    & =  \underbrace{f(x_{k+1})-f(x_k)}_{\mathsf{I}} + \underbrace{\frac{\alpha}{36}\llangle x_{k+1}-x_k + \frac{6}{\sqrt{\alpha}}(\tilde{y}_{k+1}-y_k), x_{k+1}-x^*+\frac{6}{\sqrt{\alpha}}\tilde{y}_{k+1}\rrangle}_{\mathsf{II}}\\
    &\quad \underbrace{-\frac{\alpha}{72}\lno x_{k+1}-x_k + \frac{6}{\sqrt{\alpha}}(\tilde{y}_{k+1}-y_k) \rno^2}_{\mathsf{III}} {-\frac{{\alpha} h}{6}\llangle x_{k+1}-x^*,\tilde{y}_{k+1}\rrangle}{-\frac{\sqrt{\alpha} h}{2}\|\tilde{y}_{k+1}\|^2}.
\end{align*}
For $\I$, we apply $\alpha$-strong convexity and update \eqref{updatex} to obtain
\begin{align*}
    \I &\leq \langle\nabla f(x_{k+1}), x_{k+1}-x_k\rangle-\frac{\alpha}{2}\|x_{k+1}-x_k\|^2= h\langle\nabla f(x_{k+1}),\tilde{y}_{k+1}\rangle-\frac{\alpha h^2}{2}\|\tilde{y}_{k+1}\|^2.
\end{align*}
For $\II$, we apply updates~\eqref{updatex} and \eqref{updatey} to obtain
\begin{align*}
    \II &= \frac{\alpha h}{36}\llangle \tilde{y}_{k+1}-\frac{6}{\sqrt{\alpha}}\nabla f(x_{k+1}), x_{k+1}-x^*+\frac{6}{\sqrt{\alpha}}\tilde{y}_{k+1}\rrangle\\
    &= \frac{\alpha h}{36}\langle \tilde{y}_{k+1},x_{k+1}-x^*\rangle + \frac{\sqrt{\alpha}h}{6}\|\tilde{y}_{k+1}\|^2+\frac{\sqrt{\alpha}h}{6}\langle\nabla f(x_{k+1}),x^*-x_{k+1}\rangle-{h}\langle\nabla f(x_{k+1}),\tilde{y}_{k+1}\rangle.
\end{align*}
For $\III$, we apply updates~\eqref{updatex} and \eqref{updatey} and expand to obtain
\begin{align*}
    \III &= -\frac{\alpha h^2}{72}\left\|\tilde{y}_{k+1}-\frac{6}{\sqrt{\alpha}}\nabla f(x_{k+1})\right\|^2\\
    &= -\frac{\alpha h^2}{72}\|\tilde{y}_{k+1}\|^2+\frac{\sqrt{\alpha}h^2}{6}\langle \nabla f(x_{k+1}),\tilde{y}_{k+1}\rangle-\frac{h^2}{2}\|\nabla f(x_{k+1})\|^2.
\end{align*}
Combining the calculation above, we obtain
\begin{align}
    L_{k+1}-L_k &\leq  \frac{\sqrt{\alpha}h^2}{6}\langle \nabla f(x_{k+1}),\tilde{y}_{k+1}\rangle - \lp\frac{\sqrt{\alpha}h}{3}+\frac{37\alpha h^2}{72}\rp \|\tilde{y}_{k+1}\|^2-\frac{5\alpha h}{36} \langle x_{k+1}-x^*,\tilde{y}_{k+1}\rangle\notag\\
    &\quad  + \frac{\sqrt{\alpha}h}{6}\langle \nabla f(x_{k+1}),x^*-x_{k+1}\rangle-\frac{h^2}{2}\|\nabla f(x_{k+1})\|^2\notag\\
    &\leq \frac{\sqrt{\alpha}h^2}{6}\langle \nabla f(x_{k+1}),\tilde{y}_{k+1}\rangle - \lp\frac{\sqrt{\alpha}h}{3}+\frac{37\alpha h^2}{72}\rp \|\tilde{y}_{k+1}\|^2-\frac{5\alpha h}{36} \langle x_{k+1}-x^*,\tilde{y}_{k+1}\rangle\notag\\
    &\quad  + \frac{\sqrt{\alpha}h}{6} (f(x^*)-f(x_{k+1})) - \frac{\alpha^{\nicefrac{3}{2}}h}{12}\|x^* - x_{k+1}\|^2-\frac{h^2}{2}\|\nabla f(x_{k+1})\|^2.\label{func-diff-to-lyap}
\end{align}
Since $L_{k+1}$ consists of $f(x^*)-f(x_{k+1})$, we can write $f(x^*)-f(x_{k+1})$ in terms of $L_{k+1}$:
\begin{align*}
    f(x^*)-f(x_{k+1}) &= -L_{k+1} + \frac{\alpha}{72}\left\|x_{k+1}-x^*+\frac{6}{\sqrt{\alpha}}\tilde{y}_{k+1}\right\|^2 + \frac{\alpha^{\nicefrac{3}{2}} h}{72}\lp\left\|x_{k+1}-x^*\right\|^2-\left\|x_{k+1}-x^*+\frac{6}{\sqrt{\alpha}}\tilde{y}_{k+1}\right\|^2\rp\\
    &=-L_{k+1} + \frac{\alpha}{72}\|x^*-x_{k+1}\|^2 + \lp \frac{\sqrt{\alpha}}{6}-\frac{\alpha h}{6}\rp\langle x_{k+1}-x^*,\tilde{y}_{k+1}\rangle + \lp \frac{1}{2}-\frac{\sqrt{\alpha}h}{2}\rp\|\tilde{y}_{k+1}\|^2
\end{align*}
Substituting $f(x^*)-f(x_{k+1})$ in \eqref{func-diff-to-lyap} with the relation above, we obtain
\begin{align}
    L_{k+1}-L_k &\leq -\frac{\sqrt{\alpha}h}{6}L_{k+1} -\frac{35\alpha^{\nicefrac{3}{2}}h}{432}\|x^*-x_{k+1}\|^2-\frac{\alpha h}{9}\langle x_{k+1}-x^*,\tilde{y}_{k+1}\rangle-\lp\frac{\sqrt{\alpha}h}{4}+\frac{43\alpha h^2}{72}\rp\|\tilde{y}_{k+1}\|^2\notag\\
    &\quad-\frac{h^2}{2}\|\nabla f(x_{k+1})\|^2+\frac{\sqrt{\alpha}h^2}{6}\langle\nabla f(x_{k+1}),\tilde{y}_{k+1}\rangle-\frac{\alpha^{\nicefrac{3}{2}}h^2}{36}\langle x_{k+1}-x^*,\tilde{y}_{k+1}\rangle.\label{innerprod-rphd-sc}
\end{align}
Now we control the last two terms in~\eqref{innerprod-rphd-sc}.
Applying Lemma~\ref{fenchelyoung} and assuming $\alpha\leq 1$, we have
\begin{align*}
    \frac{\sqrt{\alpha}h^2}{6}&\langle\nabla f(x_{k+1}),\tilde{y}_{k+1}\rangle-\frac{\alpha^{\nicefrac{3}{2}}h^2}{36}\langle x_{k+1}-x^*,\tilde{y}_{k+1}\rangle \\
    &\leq \frac{h^2}{12}\|\nabla f(x_{k+1})\|^2 + \frac{{\alpha}h^2}{12}\|\tilde{y}_{k+1}\|^2+ \frac{\alpha^{\nicefrac{3}{2}}h^2}{72}\|x^*-x_{k+1}\|^2 + \frac{\alpha^{\nicefrac{3}{2}}h^2}{72}\|\tilde{y}_{k+1}\|^2\\
    &\leq \frac{h^2}{12}\|\nabla f(x_{k+1})\|^2 + \frac{7{\alpha}h^2}{72}\|\tilde{y}_{k+1}\|^2+ \frac{\alpha^{\nicefrac{3}{2}}h^2}{72}\|x^*-x_{k+1}\|^2.
\end{align*}
Plugging this into \eqref{innerprod-rphd-sc}, we obtain
\begin{align*}
    L_{k+1}-L_k &\leq -\frac{\sqrt{\alpha}h}{6}L_{k+1}-\frac{29\alpha^{\nicefrac{3}{2}}h}{432}\|x^*-x_{k+1}\|^2-\frac{\alpha h}{9}\langle x_{k+1}-x^*,\tilde{y}_{k+1}\rangle-\lp \frac{\sqrt{\alpha}h}{4}+\frac{\alpha h^2}{2}\rp\|\tilde{y}_{k+1}\|^2\\
    &\quad - \frac{5h^2}{12}\|\nabla f(x_{k+1})\|^2\\
    &\leq -\frac{\sqrt{\alpha}h}{6}L_{k+1}-\frac{\alpha^{\nicefrac{3}{2}}h^2}{27}\|x^*-x_{k+1}\|^2-\frac{\alpha h}{9}\langle x_{k+1}-x^*,\tilde{y}_{k+1}\rangle-\frac{\sqrt{\alpha}h}{12}\|\ty\|^2\\
    &=-\frac{\sqrt{\alpha}h}{6}L_{k+1} - \frac{\alpha^{\nicefrac{3}{2}}h^2}{27}\lno x_{k+1}-x^* + \frac{3}{2\sqrt{\alpha}}\tilde{y}_{k+1}\rno^2\\
    &\leq -\frac{\sqrt{\alpha}h}{6}L_{k+1}.
\end{align*}
Thus we have $E_k\leq E_0$, which implies
\begin{align*}
    \E[f(x_k)-f(x^*)]\leq \lp 1+\frac{\sqrt{\alpha}h}{6}\rp^{-k}\E\lmp f(x_0)-f(x^*)+\frac{\alpha}{72}\|x_0-x^*\|^2\rmp.
\end{align*}
\end{proof}
\subsubsection{For weakly convex functions}\label{app:proof of RPHD-c}
We now state the convergence rate of \textsf{RPHD} for minimizing weakly convex functions.

\begin{theorem}\label{Thm:AccRateDsctime-c}
    Assume $f \colon \R^d \to \R$ is weakly convex.
    Then for all $k\geq 0$, along \textsf{\em RPHD} (Algorithm~\ref{alg:RPHD}) with $h>0$, $\gamma_k = \frac{17}{2(k+9)h}$ and from any $x_0\in\R^d$ with $y_0 = 0$, we have
    \begin{align*}
    \E[f(x_k)-f(x^*)]\leq \frac{12\cdot\E\lmp \|x_0-x^*\|^2\rmp}{h^2k^2}.
\end{align*}
\end{theorem}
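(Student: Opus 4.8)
The plan is to combine the continuous-time Lyapunov argument for \RHF\ under weak convexity (Theorem~\ref{Thm:AccRateCtsTime-wc}) with the discrete bookkeeping already developed for \RPHD\ under strong convexity (Theorem~\ref{Thm:AccRateDsctime-sc}). Concretely, I would take the discrete Lyapunov function obtained from the continuous one by replacing $t+1$ with $(k+c)h$ for a constant $c>0$ to be fixed (the choice $\gamma_k=\frac{17}{2(k+9)h}$ suggests $c$ is close to $8$ or $9$), and weighting the objective gap by a coefficient that vanishes at $k=0$ so that no smoothness is needed to control the initial value:
\begin{equation*}
    E_k = \E\lmp \frac{(k+c)^2-c^2}{4}\,h^2\bigl(f(x_k)-f(x^*)\bigr) + \frac12\lno x_k-x^*+\tfrac{(k+c)h}{2}y_k\rno^2 + \tfrac{c'}{2}\|x_k-x^*\|^2 \rmp,
\end{equation*}
with a constant $c'$ somewhat larger than the continuous-time value $3/2$ to leave slack for discretization error. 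The goal is the one-step inequality $E_{k+1}\le E_k$ for all $k\ge0$, after which telescoping gives $E_k\le E_0$.

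Next I would expand $E_{k+1}-E_k$ in three pieces, exactly as in the proof of Theorem~\ref{Thm:AccRateDsctime-sc}. First, the accept--refresh step: conditioning on $\tilde y_{k+1}$, we have $y_{k+1}=\tilde y_{k+1}$ with probability $1-\gamma_kh$ and $y_{k+1}=0$ with probability $\gamma_kh$, so $E_{k+1}$ is a convex combination whose refreshed branch replaces $x_{k+1}-x^*+\tfrac{(k+1+c)h}{2}y_{k+1}$ by $x_{k+1}-x^*$; this produces the discrete analogue of the term $\gamma(t)\bigl(\|X_t-x^*\|^2-\|X_t-x^*+\tfrac{t+1}{2}Y_t\|^2\bigr)$ from Lemma~\ref{Lem:Formula}. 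Second, I would expand the squared-norm differences with a three-point identity (as in Theorem~\ref{Thm:AccRateDsctime-sc}) and substitute the \emph{exact} implicit updates $x_{k+1}-x_k=h\tilde y_{k+1}$ from \eqref{updatex} and $\tilde y_{k+1}-y_k=-h\nabla f(x_{k+1})$ from \eqref{updatey}, which hold verbatim for \RPHD\ because the proximal step is solved exactly. Third, I would use weak convexity, $\langle x^*-x_{k+1},\nabla f(x_{k+1})\rangle\le f(x^*)-f(x_{k+1})$, to turn the $f$-inner-product into a telescoping $f$-difference matching the coefficient $\frac{(k+1+c)^2-(k+c)^2}{4}h^2=\frac{2(k+c)+1}{4}h^2$ generated by the first step.

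Collecting terms, the crucial cancellation mirrors the continuous case: with $\gamma(t)=6/(t+1)$ the coefficients of both $\langle X_t-x^*,Y_t\rangle$ and $\|Y_t\|^2$ vanish identically, and the role of the inflated refreshment rate $\frac{17}{2(k+9)h}$ (versus the naive $\frac{6}{(k+1)h}$), the shift $c$, and the enlarged constant $c'$ is to supply enough negative slack in the residual $\|\tilde y_{k+1}\|^2$, $\|\nabla f(x_{k+1})\|^2$, and $\langle x_{k+1}-x^*,\tilde y_{k+1}\rangle$ terms so that, after one Young/Fenchel--Young splitting of the cross terms, what remains is a negative quadratic form in $(x_{k+1}-x^*,\tilde y_{k+1},\nabla f(x_{k+1}))$ --- ideally a single perfect square plus $-c''h^2\|\nabla f(x_{k+1})\|^2\le0$, analogous to the closing ``$-\frac{\alpha^{3/2}h^2}{27}\lno x_{k+1}-x^*+\frac{3}{2\sqrt\alpha}\tilde y_{k+1}\rno^2$'' step of Theorem~\ref{Thm:AccRateDsctime-sc}. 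Note that, unlike \RHGD, no smoothness and no upper bound on $h$ are needed here, since the proximal map is exact for all $h>0$.

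Finally, since $y_0=0$ the coefficient $\frac{(k+c)^2-c^2}{4}h^2$ vanishes at $k=0$, so $E_0=\frac{1+c'}{2}\|x_0-x^*\|^2$; combined with $E_k\le E_0$ and $E_k\ge\frac{(k+c)^2-c^2}{4}h^2\,\E[f(x_k)-f(x^*)]\ge\frac{k^2h^2}{4}\E[f(x_k)-f(x^*)]$, this yields the claimed bound, with the constant $12$ pinned down by the admissible values of $c$ and $c'$. I expect the main obstacle to be this last collection step: choosing $c,c'$ (and the auxiliary Young-inequality weights) so that the leftover quadratic, including all $O(h)$ and $O(h^2)$ discretization residuals, is genuinely nonpositive for every $k\ge0$ --- essentially the same bookkeeping that made the strongly convex case (Theorem~\ref{Thm:AccRateDsctime-sc}) lengthy, now without the stabilizing $\alpha\|x_{k+1}-x^*\|^2$ term to soak up errors.
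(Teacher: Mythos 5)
Your skeleton is the same as the paper's (accept--refresh expectation, three-point identity, the exact implicit updates \eqref{updatex}--\eqref{updatey}, weak convexity, then Young's inequality), but the specific Lyapunov weights you commit to break the key cancellation, and no choice of $c$, $c'$, or Young weights repairs it. With the gap weighted by $A_k=\frac{(k+c)^2-c^2}{4}h^2$ and the velocity weighted by $B_k=\frac{(k+c)h}{2}$ inside the norm, the \emph{only} source of a negative multiple of $f(x_{k+1})-f(x^*)$ is the term $-B_k h\,\langle\nabla f(x_{k+1}),x_{k+1}-x^*\rangle\le -\frac{(k+c)h^2}{2}\,(f(x_{k+1})-f(x^*))$, while the increment of the gap coefficient is $A_{k+1}-A_k=\frac{2(k+c)+1}{4}h^2$. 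These do not match: every step leaves $+\frac{h^2}{4}\,\E[f(x_{k+1})-f(x^*)]$, and since refreshment only alters the velocity inside the norm and the $c'$-term produces no negative $\|x_{k+1}-x^*\|^2$, there is nothing (absent smoothness or strong convexity) that can absorb a positive function-gap term; hence $E_{k+1}\le E_k$ fails. Your statement that the telescoping $f$-difference ``matches'' the generated coefficient is exactly where this is hidden. This is a discretization artifact of keeping the continuous-time $\tfrac12$--$\tfrac14$ scalings: the paper instead uses $\frac{(k+8)h}{3}$ inside the norm and $\frac{h^2(k+8)^2}{9}$ on the gap, so the needed inequality becomes $\frac{2k+17}{9}\le\frac{k+8}{3}$ (Lemma~\ref{lem:agb in RPHDc}), which holds with slack, and the prescribed rate $\gamma_k=\frac{17}{2(k+9)h}$ is tuned to that $\tfrac13$ scaling together with the $\frac34\|x_k-x^*\|^2$ term (it makes the $\langle x_{k+1}-x^*,\tilde y_{k+1}\rangle$ coefficient $\frac{4h}{3}+\frac{3h}{2}-\frac{17h}{6}=0$), not to yours.

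Two further remarks. First, if you insist on your weights, cancelling the position--velocity inner product with the given $\gamma_k$ forces $c=8$ and $c'=\frac{11}{4}$, and the leftover can then only be handled by relaxing to $E_{k+1}\le\bigl(1+O(k^{-2})\bigr)E_k$ and telescoping a convergent product (as the paper does for \RHGD\ in the weakly convex case); that is a genuinely different and more delicate argument than the monotone $E_{k+1}\le E_k$ you propose, and you would need to carry it out. Second, your idea of making the gap coefficient vanish at $k=0$ so that $E_0$ contains no $f(x_0)-f(x^*)$ is a reasonable design goal, but it is not the paper's route: its Lyapunov has coefficient $\frac{64h^2}{9}$ at $k=0$ and it concludes by comparing $E_k\ge\frac{h^2(k+8)^2}{9}\E[f(x_k)-f(x^*)]$ with $E_0$; the vanishing-coefficient choice is precisely what creates the fatal $\frac{h^2}{4}$ mismatch above, so it cannot be kept in its present form.
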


\begin{proof}
We construct the following Lyapunov function:
\begin{align*}
    E_k = \E\left[\frac{h^2(k+8)^2}{9}(f(x_k)-f(x^*)) +\frac{1}{2}\lno x_k - x^* + \frac{(k+8)h}{3}y_k\rno^2 + \frac{3}{4}\|x_k-x^*\|^2\right],
\end{align*}
we derive $E_{k+1}$ by accept-refresh step: 
\begin{align*}
    E_{k+1} &= (1-\gamma_k h)\cdot\E\lmp \frac{h^2(k+9)^2}{9}(f(x_{k+1})-f(x^*))
+\frac{1}{2}\lno x_{k+1} - x^* + \frac{(k+9)h}{3}\tilde{y}_{k+1}\rno^2 + \frac{3}{4}\|x_{k+1}-x^*\|^2\rmp \\
&\quad + \gamma_k h \cdot\E \lmp \frac{h^2(k+9)^2}{9}(f(x_{k+1})-f(x^*))
+\frac{1}{2}\lno x_{k+1} - x^*\rno^2 + \frac{3}{4}\|x_{k+1}-x^*\|^2 \rmp\\
&= \E \lmp \frac{h^2(k+9)^2}{9}(f(x_{k+1})-f(x^*))
+\frac{1}{2}\lno x_{k+1} - x^* + \frac{(k+9)h}{3}\tilde{y}_{k+1}\rno^2 + \frac{3}{4}\|x_{k+1}-x^*\|^2 \rmp \\
&\quad + \frac{17}{4(k+9)}\E\lmp \|x_{k+1}-x^*\|^2 - \lno x_{k+1} - x^* + \frac{(k+9)h}{3}\tilde{y}_{k+1}\rno^2\rmp.
\end{align*}
\textbf{Note that we hide $\E$ in the calculation below for simplicity.} Taking difference between $E_{k+1}$ and $E_k$ and applying Lemma~\ref{Three-point identity}, we obtain
\begin{align*}
    E_{k+1}-E_k &=\frac{h^2(k+9)^2}{9}(f(x_{k+1})-f(x^*))-\frac{h^2(k+8)^2}{9}(f(x_{k})-f(x^*))\\
    &\quad +  \frac{1}{2}\lno x_{k+1} - x^* + \frac{(k+9)h}{3}\tilde{y}_{k+1}\rno^2-\frac{1}{2}\lno x_{k} - x^* + \frac{(k+8)h}{3}\tilde{y}_{k}\rno^2\\
    &\quad +\frac{3}{4}\|x_{k+1}-x^*\|^2 - \frac{3}{4}\|x_{k}-x^*\|^2+ \frac{17}{4(k+9)}\lp \|x_{k+1}-x^*\|^2 - \lno x_{k+1} - x^* + \frac{(k+9)h}{3}\tilde{y}_{k+1}\rno^2\rp\\
    &= \underbrace{\lp \frac{(k+9)^2h^2}{9} - \frac{(k+8)^2h^2}{9} \rp \lp f(x_{k+1})-f(x^*)\rp}_{\I} + \underbrace{\frac{(k+8)^2h^2}{9}\lp f(x_{k+1})-f(x_k)\rp}_{\II} \\
    &\quad+ \underbrace{\llangle x_{k+1}-x_k+\frac{(k+9)h}{3}\tilde{y}_{k+1}-\frac{(k+8)h}{3}y_k,x_{k+1}-x^*+\frac{(k+9)h}{3}\tilde{y}_{k+1} \rrangle}_{\III}\\
    &\quad \underbrace{-\frac{1}{2}\lno x_{k+1}-x_k + \frac{(k+9)h}{3}\tilde{y}_{k+1}-\frac{(k+8)h}{3}y_k\rno^2}_{\IV} + \underbrace{\frac{3}{2}\langle x_{k+1}-x_k, x_{k+1}-x^*\rangle - \frac{3}{4}\|x_{k+1}-x_k\|^2}_{\mathsf{V}}\\
    &\quad {-\frac{17h}{6}\langle x_{k+1}-x^*,\tilde{y}_{k+1}\rangle -\frac{17(k+9)h^2}{36}\|\tilde{y}_{k+1}\|^2}.
\end{align*}
For $\I$, it simplifies to
\begin{align*}
    \I = \frac{(2k+17)h^2}{9}\lp f(x_{k+1})-f(x^*)\rp.
\end{align*}
For $\II$, we apply weak convexity and update~\eqref{updatex} to obtain
\begin{align*}
    \II &\leq \frac{(k+8)^2h^2}{9}\langle \nabla f(x_{k+1}),x_{k+1}-x_k\rangle=\frac{(k+8)^2h^3}{9}\langle \nabla f(x_{k+1}),\tilde{y}_{k+1}\rangle.
\end{align*}
For $\III$, we apply updates~\eqref{updatex} and \eqref{updatey} to obtain
\begin{align*}
    \III &= \llangle h \ty + \frac{h}{3}\ty -\frac{(k+8)h^2}{3}\nabla f(x_{k+1}),x_{k+1}-x^*+\frac{(k+9)h}{3}\ty\rrangle\\
    &= \frac{4h}{3}\langle \ty, x_{k+1}-x^*\rangle+\frac{4(k+9)h^2}{9}\|\ty\|^2 + \frac{(k+8)h^2}{3}\langle \nabla f(x_{k+1}),x^*-x_{k+1}\rangle\\
    &\quad-\frac{(k+8)(k+9)h^3}{9}\langle \nabla f(x_{k+1}),\ty\rangle.
\end{align*}
For $\IV$, we apply updates~\eqref{updatex} and \eqref{updatey} and expand to obtain
\begin{align*}
    \IV &= -\frac{8h^2}{9}\|\ty\|^2 + \frac{4(k+8)h^3}{9}\langle \nabla f(x_{k+1}),\ty\rangle-\frac{(k+8)^2h^4}{18}\|\nabla f(x_{k+1})\|^2.
\end{align*}
For $\V$, we apply update~\eqref{updatex} to obtain
\begin{align*}
    \V &= \frac{3h}{2}\langle \ty , x_{k+1}-x^*\rangle-\frac{3h^2}{4}\|\ty\|^2.
\end{align*}
Combining the calculation above, we obtain
\begin{align*}
    E_{k+1}-E_k &\leq \frac{(2k+17)h^2}{9}\lp f(x_{k+1})-f(x^*)\rp +\frac{(k+8)h^2}{3}\langle \nabla f(x_{k+1}),x^*-x_{k+1}\rangle  \\
    &\quad+\frac{(k+8)h^3}{3} \langle \nabla f(x_{k+1}),\ty\rangle-\lp \frac{(k+9)h^2}{36}+\frac{59h^2}{36} \rp \|\ty\|^2-\frac{(k+8)^2h^4}{18}\|\nabla f(x_{k+1})\|^2\\
    &\leq \frac{(k+8)h^2}{3}\lp f(x_{k+1})-f(x^*)\rp+\frac{(k+8)h^2}{3}\langle \nabla f(x_{k+1}),x^*-x_{k+1}\rangle \\
    &\quad +\frac{(k+8)h^3}{3} \langle \nabla f(x_{k+1}),\ty\rangle- \frac{59h^2}{36}\|\ty\|^2-\frac{(k+8)^2h^4}{18}\|\nabla f(x_{k+1})\|^2\\
    &\leq \frac{(k+8)h^3}{3} \langle \nabla f(x_{k+1}),\ty\rangle- \frac{59h^2}{36}\|\ty\|^2-\frac{(k+8)^2h^4}{18}\|\nabla f(x_{k+1})\|^2\\
    &=\frac{h^2}{3} \left\langle (k+8)h\nabla f(x_{k+1}),\ty\right\rangle- \frac{59h^2}{36}\|\ty\|^2-\frac{(k+8)^2h^4}{18}\|\nabla f(x_{k+1})\|^2\\
    &\leq  \frac{(k+8)^2h^4}{36}\|\nabla f(x_{k+1})\|^2- \frac{(k+8)^2h^4}{18}\|\nabla f(x_{k+1})\|^2+h^2\|\ty\|^2- \frac{59h^2}{36}\|\ty\|^2 \\
    &=- \frac{(k+8)^2h^4}{36}\|\nabla f(x_{k+1})\|^2 - \frac{23h^2}{36}\|\ty\|^2\leq 0.
\end{align*}
where the second inequality follows from Lemma~\ref{lem:agb in RPHDc}; the third inequality follows from weak convexity, and the last inequality follows from Lemma~\ref{fenchelyoung}. Thus we have $E_k\leq E_0$, which implies
\begin{align*}
    \E[f(x_k)-f(x^*)]\leq \frac{45\E\lmp \|x_0-x^*\|^2\rmp}{4h^2(k+8)^2} \leq \frac{12\E\lmp \|x_0-x^*\|^2\rmp}{h^2k^2}.
\end{align*}
\end{proof}
\subsection{Approximation error}\label{app:error bound-GD}
Starting from any $x_{k+\frac{1}{2}}$, suppose we run proximal point update with stepsize $h^2$ to obtain $\hat{x}_{k+1}$, and we also run gradient descent with stepsize $h^2$ to obtain $x_{k+1}$. The following proposition bounds the error between the results under smoothness.
\begin{proposition}
\label{prop:error bound-GD}
    Let $\hx_{k+1}=\mathrm{Prox}_{h^2f}(x_{k+\frac{1}{2}})$ and $x_{k+1}=x_{k+\frac{1}{2}}-h^2\nabla f(x_{k+\frac{1}{2}})$. If $f$ is $L$-smooth and $h< \frac{1}{2^{\nicefrac{1}{8}}\cdot\sqrt{L}}$, then 
    \begin{align*}
        \|x_{k+1}-\hx_{k+1}\|^2 \leq \frac{2L^2h^8}{1-2L^4h^8}\|\nabla f(x_{k+1})\|^2.
    \end{align*}
\end{proposition}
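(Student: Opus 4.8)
The plan is to exploit the implicit characterization of the proximal point. Recall that $\hx_{k+1} = \mathrm{Prox}_{h^2 f}(x_{k+\frac12})$ satisfies the optimality condition $\hx_{k+1} = x_{k+\frac12} - h^2 \nabla f(\hx_{k+1})$, whereas $x_{k+1} = x_{k+\frac12} - h^2 \nabla f(x_{k+\frac12})$ by definition. Subtracting these two identities eliminates $x_{k+\frac12}$ and gives the clean relation
\begin{align*}
    x_{k+1} - \hx_{k+1} = h^2\bigl(\nabla f(\hx_{k+1}) - \nabla f(x_{k+\frac12})\bigr).
\end{align*}
Taking norms and applying $L$-smoothness yields $\|x_{k+1} - \hx_{k+1}\| \le L h^2 \|\hx_{k+1} - x_{k+\frac12}\|$, and since $\hx_{k+1} - x_{k+\frac12} = -h^2 \nabla f(\hx_{k+1})$ we get the intermediate bound
\begin{align*}
    \|x_{k+1} - \hx_{k+1}\| \le L h^4 \|\nabla f(\hx_{k+1})\|.
\end{align*}

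The remaining step is to convert the right-hand side from $\nabla f$ evaluated at $\hx_{k+1}$ to $\nabla f$ evaluated at $x_{k+1}$, which is what appears in the statement. Using $L$-smoothness once more together with the bound just derived,
\begin{align*}
    \|\nabla f(\hx_{k+1}) - \nabla f(x_{k+1})\| \le L \|\hx_{k+1} - x_{k+1}\| \le L^2 h^4 \|\nabla f(\hx_{k+1})\|,
\end{align*}
so by the triangle inequality $\|\nabla f(x_{k+1})\| \ge (1 - L^2 h^4)\|\nabla f(\hx_{k+1})\|$. Here is where the stepsize restriction enters: from $h \le \frac{1}{4^{1/8}\sqrt{L}}$ we have $h^4 \le \frac{1}{2L^2}$, hence $L^2 h^4 \le \frac12$ and $\|\nabla f(\hx_{k+1})\| \le 2\|\nabla f(x_{k+1})\|$. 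Substituting this into the intermediate bound gives $\|x_{k+1} - \hx_{k+1}\| \le 2L h^4 \|\nabla f(x_{k+1})\|$, and squaring yields the claimed inequality $\|x_{k+1} - \hx_{k+1}\|^2 \le 4 L^2 h^8 \|\nabla f(x_{k+1})\|^2$.

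I do not anticipate a serious obstacle here; the argument is a short chain of smoothness estimates. The only point requiring a little care is the direction of the error transfer: the natural bound comes out in terms of $\|\nabla f(\hx_{k+1})\|$, and one must run the smoothness inequality a second time and invoke the smallness of $h$ to trade it for $\|\nabla f(x_{k+1})\|$ with only a harmless constant factor of $2$. An alternative would be to bound the error in terms of $\|\nabla f(x_{k+\frac12})\|$ directly, but since the downstream Lyapunov analysis (see the proof sketch in Section~\ref{sec:discussion}) wants the error controlled by the gradient at the \emph{new} iterate $x_{k+1}$, phrasing it this way is the convenient choice.
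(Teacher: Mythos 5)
Your proposal is correct and follows essentially the same route as the paper: subtract the implicit characterization $\hx_{k+1}=x_{k+\frac{1}{2}}-h^2\nabla f(\hx_{k+1})$ from the explicit step, apply $L$-smoothness to get a bound in terms of $\|\nabla f(\hx_{k+1})\|$, and then use smoothness once more together with the stepsize condition $L^2h^4\le \tfrac{1}{2}$ to trade it for $\|\nabla f(x_{k+1})\|$. The only cosmetic difference is that you carry out the second step at the level of norms via the triangle inequality, whereas the paper works with squared norms (via Lemma~\ref{lem:squareineq}) and rearranges; both yield the same constant under the same restriction on $h$.
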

\begin{proof}
    We use the following updates to prove this proposition:
    \begin{align}
    &\hx_{k+1} = x_{k+\frac{1}{2}} - h^2\nabla f(\hx_{k+1}),\label{true-prox-gd}\\
    &x_{k+1} = x_{k+\frac{1}{2}}- h^2\nabla f(x_{k+\frac{1}{2}}),\label{gd}\\
    &\|\nabla f(x)-\nabla f(y)\|^2\leq L^2\|x-y\|^2\label{smoothness-gd},
\end{align}
where \eqref{gd} follows from \RHGD\ (Algorithm~\ref{alg:RHGD}) and the last relation follows from $L$-smoothness of $f$ for any $x,y\in\R^d$. Subtracting \eqref{true-prox-gd} from \eqref{gd}, we obtain
\begin{align*}
    \|x_{k+1}-\hx_{k+1}\|^2 = h^4\|\nabla f(\hx_{k+1})-\nabla f(x_{k+\frac{1}{2}})\|^2\overset{\eqref{smoothness-gd}}{\leq}L^2h^4\|\hx_{k+1}-x_{k+\frac{1}{2}}\|^2 \overset{\eqref{true-prox-gd}}{=}L^2h^8\|\nabla f(\hx_{k+1})\|^2.
\end{align*}
Using the relation \eqref{squareineq} from Lemma~\ref{lem:squareineq}, we have
\begin{align*}
    \|\nabla f(\hx_{k+1})\|^2&\leq 2\|\nabla f(\hx_{k+1}) - \nabla f(x_{k+1})\|^2 + 2\|\nabla f(x_{k+1})\|^2\\
    &\overset{\eqref{smoothness-gd}}{\leq}2L^2\|\hx_{k+1} -  x_{k+1}\|^2 + 2\|\nabla f(x_{k+1})\|^2.
\end{align*}
Thus we have 
\begin{align*}
    \|x_{k+1}-\hx_{k+1}\|^2\leq 2L^4h^8\|\hx_{k+1} -  x_{k+1}\|^2 + 2L^2h^8\|\nabla f(x_{k+1})\|^2.
\end{align*}
If $h< \frac{1}{2^{\nicefrac{1}{8}}\sqrt{L}}$, then rearranging the relation above yields
\begin{align*}
    \|x_{k+1}-\hx_{k+1}\|^2\leq\frac{2L^2h^8}{1-2L^4h^8}\|\nabla f(x_{k+1})\|^2.
\end{align*}
\end{proof}

\subsection{Convergence analysis of \RHGD}\label{pf:RHGD}

    Since we lose update \eqref{updatex} for \RHGD, we construct two dummy iterates $\hx_{k+1}$ and $\hy_{k+1}$ satisfying
    \begin{align}
        \hx_{k+1} - x_k &= h\hy_{k+1},\label{dummy1}\\
        \hy_{k+1} - y_k &= -h\nabla f(\hx_{k+1}). \label{dummy2}
    \end{align}
    Substituting $\hy_{k+1}$ in \eqref{dummy1} with $\hy_{k+1} = y_k - h\nabla f(\hx_{k+1})$ from \eqref{dummy2} yields $\hx_{k+1} = \mathrm{Prox}_{h^2f}(x_{k+\frac{1}{2}}).$ We define the error term $\mathcal{G}^h_{k+1}$ as $$\mathcal{G}^h_{k+1}:=x_{k+1}-\hx_{k+1} + h(\hy_{k+1}-\ty).$$ Then we can write
    \begin{align}
        x_{k+1}-x_k &= \hx_{k+1} - x_k + x_{k+1} - \hx_{k+1}\notag\\
        &\hspace{-0.3em}\overset{\eqref{dummy1}}{=}h\hy_{k+1} + x_{k+1} - \hx_{k+1}\notag\\
        &= h\ty + x_{k+1} - \hx_{k+1} + h(\hy_{k+1} - \ty)\notag\\
        &= h\ty + \mathcal{G}^h_{k+1}.\label{x-error}
    \end{align}
    The next lemma controls the error term $\mathcal{G}^h_{k+1}$.
    \begin{proposition}
        \label{lem:control error}
        If $f$ is $L$-smooth, then we have $\|\mathcal{G}^h_{k+1}\|^2\leq 2(1+L^2h^4)\|x_{k+1}-\hx_{k+1}\|^2.$
    \end{proposition}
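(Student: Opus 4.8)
The plan is to unwind the definition $\mathcal{G}^h_{k+1} = x_{k+1}-\hx_{k+1} + h(\hy_{k+1}-\ty)$ and rewrite the velocity difference $\hy_{k+1}-\ty$ purely in terms of a gradient difference. By the \RHGD\ update, $\ty = y_k - h\nabla f(x_{k+1})$, while the dummy iterate satisfies $\hy_{k+1} = y_k - h\nabla f(\hx_{k+1})$ by \eqref{dummy2}. Subtracting, the $y_k$ terms cancel and we get $\hy_{k+1}-\ty = h\bigl(\nabla f(x_{k+1}) - \nabla f(\hx_{k+1})\bigr)$, hence
\begin{align*}
\mathcal{G}^h_{k+1} = (x_{k+1}-\hx_{k+1}) + h^2\bigl(\nabla f(x_{k+1}) - \nabla f(\hx_{k+1})\bigr).
\end{align*}

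Next I would apply the elementary bound $\|a+b\|^2 \le 2\|a\|^2 + 2\|b\|^2$ (Lemma~\ref{lem:squareineq}, relation~\eqref{squareineq}) with $a = x_{k+1}-\hx_{k+1}$ and $b = h^2(\nabla f(x_{k+1}) - \nabla f(\hx_{k+1}))$, giving
\begin{align*}
\|\mathcal{G}^h_{k+1}\|^2 \le 2\|x_{k+1}-\hx_{k+1}\|^2 + 2h^4\|\nabla f(x_{k+1}) - \nabla f(\hx_{k+1})\|^2.
\end{align*}
Finally, invoking $L$-smoothness of $f$ in the form $\|\nabla f(x_{k+1}) - \nabla f(\hx_{k+1})\|^2 \le L^2\|x_{k+1}-\hx_{k+1}\|^2$ and collecting terms yields $\|\mathcal{G}^h_{k+1}\|^2 \le 2(1 + L^2 h^4)\|x_{k+1}-\hx_{k+1}\|^2$, which is the claim.

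There is essentially no hard step here; the only point requiring care is the bookkeeping that identifies $\hy_{k+1}-\ty$ correctly—recognizing that $\ty$ uses $\nabla f(x_{k+1})$ (the \RHGD\ gradient-descent surrogate) whereas $\hy_{k+1}$ uses $\nabla f(\hx_{k+1})$ (the implicit/proximal iterate), so that the velocity mismatch is itself exactly a gradient difference of size governed by $\|x_{k+1}-\hx_{k+1}\|$. Once that identity is in hand, the result follows from the triangle/Young inequality and smoothness with no further work. This proposition will then feed into the discretization analysis of \RHGD\ together with Proposition~\ref{prop:error bound-GD}, which bounds $\|x_{k+1}-\hx_{k+1}\|^2$ by $\|\nabla f(x_{k+1})\|^2$, so that $\|\mathcal{G}^h_{k+1}\|^2$ can ultimately be absorbed into the negative gradient-norm terms of the Lyapunov difference.
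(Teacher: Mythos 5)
Your proposal is correct and follows essentially the same argument as the paper: the paper likewise writes $\mathcal{G}^h_{k+1}=x_{k+1}-\hx_{k+1}+h(\hy_{k+1}-\ty)$, applies $\|a+b\|^2\le 2\|a\|^2+2\|b\|^2$, identifies $\hy_{k+1}-\ty=h\bigl(\nabla f(x_{k+1})-\nabla f(\hx_{k+1})\bigr)$ from \eqref{updatey} and \eqref{dummy2}, and finishes with $L$-smoothness. The only cosmetic difference is the order in which the substitution and the square inequality are applied.
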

    \begin{proof}
        Using updates~\eqref{updatey}, \eqref{dummy2} and $L$-smoothness of $f$, we have
        \begin{align*}
            \|\err\|^2 &= \|x_{k+1}-\hx_{k+1} + h(\hy_{k+1}-\ty)\|^2\\
            &\leq 2\|x_{k+1}-\hx_{k+1}\|^2 + 2h^2\|\hy_{k+1}-\ty\|^2\\
            &=2\|x_{k+1}-\hx_{k+1}\|^2+ 2h^4\|\nabla f(\hx_{k+1})-\nabla f(x_{k+1})\|^2\\
            &\leq 2\|x_{k+1}-\hx_{k+1}\|^2 + 2L^2h^4\|x_{k+1}-\hx_{k+1}\|^2\\
            &=2(1+L^2h^4)\|x_{k+1}-\hx_{k+1}\|^2.
        \end{align*}
    \end{proof}
\subsubsection{Proof of Theorem~\ref{thm:RHGD-sc} (convergence of \RHGD\ under strong convexity)}\label{app:RHGD-sc}
\begin{reptheorem}{thm:RHGD-sc}
     Assume $f$ is $\alpha$-strongly convex and $L$-smooth. Then for all $k\geq 0$, \textsf{\em RHGD} (Algorithm~\ref{alg:RHGD}) with $h \leq \frac{1}{4\sqrt{L}}$, $\gamma_k = \sqrt{\alpha}$ and any $x_0\in\R^d$ satisfies
    \begin{align*}
    \E[f(x_k)-f(x^*)]\leq \lp 1+\frac{\sqrt{\alpha}h}{6}\rp^{-k}\E\lmp f(x_0)-f(x^*) + \frac{\alpha}{72}\|x_0-x^*\|^2 \rmp.
\end{align*}
\end{reptheorem}
\begin{proof}
Consider the following function
    \begin{equation}
    \mathcal{L}_k = \E\lmp f(x_k)-f(x^*)+\frac{\alpha}{72}\left\|x_k-x^*+\frac{6}{\sqrt{\alpha}}y_k\right\|^2\rmp.
\end{equation}
We will bound $\calL_{k+1}-\calL_{k}$ in terms of $\calL_{k+1}$, similar to the proof of Theorem~\ref{Thm:AccRateDsctime-sc} in Section~\ref{app:proof of RPHD-sc}.
We follow the calculation in Section~\ref{app:proof of RPHD-sc} and \textbf{hide $\E$ below for simplicity}:
\begin{align*}
    \calL_{k+1}-\calL_k& =  f(x_{k+1})-f(x_k)+\frac{\alpha}{72}\lp \left\|x_{k+1}-x^*+\frac{6}{\sqrt{\alpha}}\tilde{y}_{k+1}\right\|^2 - \left\|x_k-x^*+\frac{6}{\sqrt{\alpha}}y_k\right\|^2\rp\\
    &\quad + \frac{\alpha^{\nicefrac{3}{2}} h}{72}\lp \left\|x_{k+1}-x^*\right\|^2-\left\|x_{k+1}-x^*+\frac{6}{\sqrt{\alpha}}\tilde{y}_{k+1}\right\|^2\rp\\
    & =  \underbrace{f(x_{k+1})-f(x_k)}_{\mathsf{I}} + \underbrace{\frac{\alpha}{36}\llangle x_{k+1}-x_k + \frac{6}{\sqrt{\alpha}}(\tilde{y}_{k+1}-y_k), x_{k+1}-x^*+\frac{6}{\sqrt{\alpha}}\tilde{y}_{k+1}\rrangle}_{\mathsf{II}}\\
    &\quad \underbrace{-\frac{\alpha}{72}\lno x_{k+1}-x_k + \frac{6}{\sqrt{\alpha}}(\tilde{y}_{k+1}-y_k) \rno^2}_{\mathsf{III}} {-\frac{{\alpha} h}{6}\llangle x_{k+1}-x^*,\tilde{y}_{k+1}\rrangle}{-\frac{\sqrt{\alpha} h}{2}\|\tilde{y}_{k+1}\|^2}.
\end{align*}
For $\I$, we apply $\alpha$-strong convexity and \eqref{x-error} to obtain
\begin{align}
    \I &\leq \langle\nabla f(x_{k+1}), x_{k+1}-x_k\rangle-\frac{\alpha}{2}\|x_{k+1}-x_k\|^2\notag\\
    &=\langle\nabla f(x_{k+1}), h\ty +\err\rangle-\frac{\alpha}{2}\|h\ty +\err\|^2\notag\\
    &= h\langle\nabla f(x_{k+1}),\tilde{y}_{k+1}\rangle-\frac{\alpha h^2}{2}\|\tilde{y}_{k+1}\|^2\label{true1-sc}\\
    &\qquad+ \langle\nabla f(x_{k+1}), \err\rangle - \alpha h\langle \ty,\err\rangle - \frac{\alpha}{2}\|\err\|^2.\label{error1-sc}
\end{align}
For $\II$, we apply \eqref{x-error} and \eqref{updatey} to obtain
\begin{align}
    \II &= \frac{\alpha h}{36}\llangle \tilde{y}_{k+1}+\frac{1}{h}\err-\frac{6}{\sqrt{\alpha}}\nabla f(x_{k+1}), x_{k+1}-x^*+\frac{6}{\sqrt{\alpha}}\tilde{y}_{k+1}\rrangle\notag\\
    &= \frac{\alpha h}{36}\langle \tilde{y}_{k+1},x_{k+1}-x^*\rangle + \frac{\sqrt{\alpha}h}{6}\|\tilde{y}_{k+1}\|^2+\frac{\sqrt{\alpha}h}{6}\langle\nabla f(x_{k+1}),x^*-x_{k+1}\rangle-{h}\langle\nabla f(x_{k+1}),\tilde{y}_{k+1}\rangle\label{true2-sc}\\
    &\qquad + \frac{\alpha }{36}\langle\err,x_{k+1}-x^*\rangle + \frac{\sqrt{\alpha}}{6}\langle \err, \ty\rangle.\label{error2-sc}
\end{align}
For $\III$, we apply updates~\eqref{x-error} and \eqref{updatey} and expand to obtain
\begin{align}
    \III &= -\frac{\alpha h^2}{72}\|\tilde{y}_{k+1}+\frac{1}{h}\err-\frac{6}{\sqrt{\alpha}}\nabla f(x_{k+1})\|^2\notag\\
    &= -\frac{\alpha h^2}{72}\|\tilde{y}_{k+1}\|^2+\frac{\sqrt{\alpha}h^2}{6}\langle \nabla f(x_{k+1}),\tilde{y}_{k+1}\rangle-\frac{h^2}{2}\|\nabla f(x_{k+1})\|^2\notag\\
    &\qquad-\frac{\alpha h^2}{36}\llangle \ty - \frac{6}{\sqrt{\alpha}}\nabla f(x_{k+1}),\frac{1}{h}\err\rrangle - \frac{\alpha }{72}\|\err\|^2\notag\\
    &=-\frac{\alpha h^2}{72}\|\tilde{y}_{k+1}\|^2+\frac{\sqrt{\alpha}h^2}{6}\langle \nabla f(x_{k+1}),\tilde{y}_{k+1}\rangle-\frac{h^2}{2}\|\nabla f(x_{k+1})\|^2\label{true3-sc}\\
    &\qquad -\frac{\alpha h}{36}\langle \ty,\err\rangle + \frac{\sqrt{\alpha}h}{6}\langle \nabla f(x_{k+1}),\err\rangle - \frac{\alpha}{72}\|\err\|^2.\label{error3-sc}
\end{align}
Note that \eqref{true1-sc}, \eqref{true2-sc} and \eqref{true3-sc} keep the same as upper bounds of $\I$, $\II$ and $\III$ in the proof of Theorem~\ref{Thm:AccRateDsctime-sc} (see Section~\ref{app:proof of RPHD-sc}), and thus we have
\begin{align*}
    \eqref{true1-sc} + \eqref{true2-sc} + \eqref{true3-sc}\leq -\frac{\sqrt{\alpha}h}{6}\calL_{k+1} - \frac{13\alpha^{\nicefrac{3}{2}}h^2}{432}\|x^*-x_{k+1}\|^2 - \lp \frac{\sqrt{\alpha}h}{6}+\frac{\alpha h^2}{2}\rp\|\ty\|^2 - \frac{5h^2}{12}\|\nabla f(x_{k+1})\|^2.
\end{align*}
\eqref{error1-sc}, \eqref{error2-sc} and \eqref{error3-sc} can be viewed as the error terms. Collecting the error terms, we obtain
\begin{align}
    \eqref{error1-sc} + \eqref{error2-sc} + \eqref{error3-sc}&=\langle \nabla f(x_{k+1}),\err\rangle + \frac{\sqrt{\alpha}h}{6}\langle \nabla f(x_{k+1}),\err\rangle\label{err-sc1} \\
    &\qquad+  \frac{\sqrt{\alpha}}{6}\langle \ty,\err\rangle-\alpha h\langle \ty,\err\rangle - \frac{\alpha h}{36}\langle \ty,\err\rangle\label{err-sc2} \\
    &\qquad+ \frac{\alpha}{36}\langle\err,x_{k+1}-x^*\rangle - \frac{37\alpha }{72}\|\err\|^2.\label{err-sc3}
\end{align}
Applying Lemma~\ref{fenchelyoung}, we can upper bound \eqref{err-sc1}, \eqref{err-sc2} and \eqref{err-sc3} respectively:
\begin{align*}
    \eqref{err-sc1}&=\langle \nabla f(x_{k+1}),\err\rangle + \frac{1}{6}\langle h\nabla f(x_{k+1}),\sqrt{\alpha}\err\rangle\\
    &\leq \frac{h^2}{8}\|\nabla f(x_{k+1})\|^2 + \frac{2}{h^2}\|\err\|^2 + \frac{h^2}{12}\|\nabla f(x_{k+1})\|^2 + \frac{\alpha}{12}\|\err\|^2\\
    &= \frac{5h^2}{24}\|\nabla f(x_{k+1})\|^2 + \lp \frac{2}{h^2}+\frac{\alpha}{12}\rp \|\err\|^2.
\end{align*}
\begin{align*}
    \eqref{err-sc2}&=\frac{\sqrt{\alpha}}{6}\langle \ty,\err\rangle+\alpha \llangle(-h) \ty,\err\rrangle  +\frac{\alpha}{36}\langle (-h)\ty,\err\rangle\\
    &\leq \frac{\sqrt{\alpha}h}{6}\|\ty\|^2 + \frac{\sqrt{\alpha}}{24h}\|\err\|^2 + \frac{\alpha h^2}{4}\|\ty\|^2 + \alpha \|\err\|^2 + \frac{\alpha h^2}{72}\|\ty\|^2 + \frac{\alpha}{72}\|\err\|^2\\
    &= \lp \frac{\sqrt{\alpha}h}{6} + \frac{19\alpha h^2}{72}\rp\|\ty\|^2 + \lp \frac{\sqrt{\alpha}}{24h} + \alpha + \frac{\alpha}{72}\rp\|\err\|^2.
\end{align*}
\begin{align*}
    \eqref{err-sc3}&= \frac{1}{36}\llangle\alpha^{\nicefrac{1}{4}}h^{-1}\err,\alpha^{\nicefrac{3}{4}}h(x_{k+1}-x^*)\rrangle - \frac{37\alpha }{72}\|\err\|^2\\
    &\leq \frac{\sqrt{\alpha}}{72h^2}\|\err\|^2 + \frac{\alpha^{\nicefrac{3}{2}}h^2}{72}\|x^*-x_{k+1}\|^2- \frac{37\alpha }{72}\|\err\|^2.
\end{align*}
Since $\gamma_k\cdot h = \sqrt{\alpha}h\leq 1$ and $\alpha\leq 1$, we have
\begin{align*}
     \eqref{error1-sc} + \eqref{error2-sc} + \eqref{error3-sc}&\leq \frac{5h^2}{24}\|\nabla f(x_{k+1})\|^2 +  \lp \frac{\sqrt{\alpha}h}{6} + \frac{19\alpha h^2}{72}\rp\|\ty\|^2 + \frac{\alpha^{\nicefrac{3}{2}}h^2}{72}\|x^*-x_{k+1}\|^2\\
     &\qquad + \lp \frac{2}{h^2}+\frac{7\alpha}{12}+\frac{\sqrt{\alpha}}{24 h}+\frac{\sqrt{\alpha}}{72h^2} \rp\|\err\|^2\\
     &= \frac{5h^2}{24}\|\nabla f(x_{k+1})\|^2 +  \lp \frac{\sqrt{\alpha}h}{6} + \frac{19\alpha h^2}{72}\rp\|\ty\|^2 + \frac{\alpha^{\nicefrac{3}{2}}h^2}{72}\|x^*-x_{k+1}\|^2\\
     &\qquad + \frac{144 + 42\alpha h^2 + 3\sqrt{\alpha}h + \sqrt{\alpha}}{72h^2}\|\err\|^2\\
     &\leq \frac{5h^2}{24}\|\nabla f(x_{k+1})\|^2 +  \lp \frac{\sqrt{\alpha}h}{6} + \frac{19\alpha h^2}{72}\rp\|\ty\|^2 + \frac{\alpha^{\nicefrac{3}{2}}h^2}{72}\|x^*-x_{k+1}\|^2+ \frac{95}{36h^2}\|\err\|^2.
\end{align*}
Combining the upper bounds of $\eqref{true1-sc}+\eqref{true2-sc}+\eqref{true3-sc}$ and $\eqref{error1-sc}+\eqref{error2-sc}+\eqref{error3-sc}$, we obtain
\begin{align}
    \calL_{k+1}-\calL_k&\leq -\frac{\sqrt{\alpha}h}{6}\calL_{k+1} - \frac{7\alpha^{\nicefrac{3}{2}}h^2}{432}\|x^*-x_{k+1}\|^2 - \frac{17\alpha h^2}{72}\|\ty\|^2 - \frac{5h^2}{24}\|\nabla f(x_{k+1})\|^2+\frac{95}{36h^2}\|\err\|^2\notag\\
    &\leq  -\frac{\sqrt{\alpha}h}{6}\calL_{k+1}+ \frac{95}{36h^2}\|\err\|^2- \frac{5h^2}{24}\|\nabla f(x_{k+1})\|^2\label{boundofcalL}.
\end{align}

For \RHGD, we first choose $h\leq\frac{1}{4^{\nicefrac{1}{8}}\sqrt{L}}$. Then applying Proposition~\ref{prop:error bound-GD} and Proposition~\ref{lem:control error} yields
\begin{align*}
    \|\err\|^2 \leq 2 \cdot \lp 1 + \frac{1}{2}\rp\|x_{k+1}-\hx_{k+1}\|^2\leq 3 \|x_{k+1}-\hx_{k+1}\|^2\leq \frac{6L^2h^8}{1-2L^4h^8}\|\nabla f(x_{k+1})\|^2.
\end{align*}
If we choose $h\leq\frac{1}{4\sqrt{L}}$, we have $ \frac{95L^2h^4}{6(1-2L^4h^8)}\leq \frac{5}{24}.$ Thus we obtain from \eqref{boundofcalL}:
\begin{align*}
     \calL_{k+1}-\calL_k&\leq -\frac{\sqrt{\alpha}h}{6}\calL_{k+1} - h^2\lp \frac{5}{24} - \frac{95L^2h^4}{6(1-2L^4h^8)}\rp \|\nabla f(x_{k+1})\|^2\leq -\frac{\sqrt{\alpha}h}{6}\calL_{k+1},
\end{align*}
which implies
\begin{align*}
    \E[f(x_k)-f(x^*)]\leq \calL_{k}\leq \lp 1+\frac{\sqrt{\alpha}h}{6}\rp^{-k}\calL_0.
\end{align*}
\end{proof}
\subsubsection{Proof of Corollary~\ref{coro:itercomp-sc}}
\begin{proof}
    If we choose $h=\frac{1}{4\sqrt{L}}$, then we obtain from Theorem~\ref{thm:RHGD-sc}
\begin{align*}
     \E[f(x_K)-f(x^*)]\leq \lp 1+\frac{1}{24}\sqrt{\frac{\alpha}{L}}\rp^{-K}\calL_0 = \lp 1+\frac{1}{24\sqrt{\kappa}}\rp^{-K}\calL_0,
\end{align*}
where $\calL_0=\E\lmp f(x_0)-f(x^*) + \frac{\alpha}{72}\|x_0-x^*\|^2 \rmp.$ Let
\begin{align*}
    \E[f(x_K)-f(x^*)]\leq   \lp 1+\frac{1}{24\sqrt{\kappa}}\rp^{-K}\calL_0=\lp 1-\frac{1}{24\sqrt{\kappa}+1} \rp^K\calL_0\leq \exp\lp -\frac{K}{24\sqrt{\kappa}+1} \rp\calL_0\leq \varepsilon,
\end{align*}
and the last inequality is satisfied whenever
\begin{align*}
    K\geq (24\sqrt{\kappa}+1)\log\frac{\calL_0}{\varepsilon}.
\end{align*}
\end{proof}
\subsubsection{Proof of Theorem~\ref{thm:RHGD-c} (convergence of \RHGD\ under weak convexity)}\label{app:RHGD-c}
\begin{reptheorem}{thm:RHGD-c}
    Assume $f$ is weakly convex and $L$-smooth. Then for all $k\geq 0$, \textsf{RHGD} (Algorithm~\ref{alg:RHGD}) with $h \leq \frac{1}{8\sqrt{L}}$, $\gamma_k = \frac{17}{2(k+9)h}$ and any $x_0\in\R^d$ satisfies
    \begin{align*}
    \E[f(x_{k})-f(x^*)]\leq\frac{14\cdot\E\lmp \|x_0-x^*\|^2\rmp}{h^2(k+8)^2}.
\end{align*}
\end{reptheorem}
\begin{proof}
Consider the following Lyapunov function
\begin{align*}
    \tilde{E}_k = \E\left[\frac{h^2(k+8)^2}{9}(f(x_k)-f(x^*)) +\frac{1}{2}\lno x_k - x^* + \frac{(k+8)h}{3}y_k\rno^2 + \frac{3}{4}\|x_k-x^*\|^2\right].
\end{align*}
We will bound $\tilde{E}_{k+1}-\tilde{E}_{k}$ in terms of $\tilde{E}_{k+1}$, similar to the proof of Theorem~\ref{Thm:AccRateDsctime-c} in Section~\ref{app:proof of RPHD-c}. 
We follow the calculation in the proof of Theorem~\ref{Thm:AccRateDsctime-c} and \textbf{hide $\E$ below for simplicity:}
\begin{align*}
    \tE_{k+1}-\tE_k &=\frac{h^2(k+9)^2}{9}(f(x_{k+1})-f(x^*))-\frac{h^2(k+8)^2}{9}(f(x_{k})-f(x^*))\\
    &\quad +  \frac{1}{2}\lno x_{k+1} - x^* + \frac{(k+9)h}{3}\tilde{y}_{k+1}\rno^2-\frac{1}{2}\lno x_{k} - x^* + \frac{(k+8)h}{3}\tilde{y}_{k}\rno^2\\
    &\quad +\frac{3}{4}\|x_{k+1}-x^*\|^2 - \frac{3}{4}\|x_{k}-x^*\|^2+ \frac{17}{4(k+9)}\lp \|x_{k+1}-x^*\|^2 - \lno x_{k+1} - x^* + \frac{(k+9)h}{3}\tilde{y}_{k+1}\rno^2\rp\\
    &= \underbrace{\lp \frac{(k+9)^2h^2}{9} - \frac{(k+8)^2h^2}{9} \rp \lp f(x_{k+1})-f(x^*)\rp}_{\I} + \underbrace{\frac{(k+8)^2h^2}{9}\lp f(x_{k+1})-f(x_k)\rp}_{\II} \\
    &\quad+ \underbrace{\llangle x_{k+1}-x_k+\frac{(k+9)h}{3}\tilde{y}_{k+1}-\frac{(k+8)h}{3}y_k,x_{k+1}-x^*+\frac{(k+9)h}{3}\tilde{y}_{k+1} \rrangle}_{\III}\\
    &\quad \underbrace{-\frac{1}{2}\lno x_{k+1}-x_k + \frac{(k+9)h}{3}\tilde{y}_{k+1}-\frac{(k+8)h}{3}y_k\rno^2}_{\IV} + \underbrace{\frac{3}{2}\langle x_{k+1}-x_k, x_{k+1}-x^*\rangle - \frac{3}{4}\|x_{k+1}-x_k\|^2}_{\mathsf{V}}\\
    &\quad {-\frac{17h}{6}\langle x_{k+1}-x^*,\tilde{y}_{k+1}\rangle -\frac{17(k+9)h^2}{36}\|\tilde{y}_{k+1}\|^2}.
\end{align*}
For $\I$, it simplifies to
\begin{align*}
    \I=\frac{(2k+17)h^2}{9}(f(x_{k+1})-f(x^*)).
\end{align*}
For $\II$, we apply weak convexity and~\eqref{x-error} to obtain
\begin{align*}
    \II &\leq \frac{(k+8)^2h^2}{9}\langle \nabla f(x_{k+1}),x_{k+1}-x_k\rangle=\frac{(k+8)^2h^3}{9}\langle \nabla f(x_{k+1}),\tilde{y}_{k+1}\rangle+\frac{(k+8)^2h^2}{9}\langle\nabla f(x_{k+1}),\err\rangle.
\end{align*}
For $\III$, we apply \eqref{x-error} and \eqref{updatey} to obtain
\begin{align*}
    \III &= \llangle h \ty+\err + \frac{h}{3}\ty -\frac{(k+8)h^2}{3}\nabla f(x_{k+1}),x_{k+1}-x^*+\frac{(k+9)h}{3}\ty\rrangle\\
    &= \frac{4h}{3}\langle \ty, x_{k+1}-x^*\rangle+\frac{4(k+9)h^2}{9}\|\ty\|^2 + \frac{(k+8)h^2}{3}\langle \nabla f(x_{k+1}),x^*-x_{k+1}\rangle\\
    &\quad-\frac{(k+8)(k+9)h^3}{9}\langle \nabla f(x_{k+1}),\ty\rangle + \langle \err,x_{k+1}-x^*\rangle + \frac{(k+9)h}{3}\langle \err,\ty\rangle.
\end{align*}
For $\IV$, we apply updates~\eqref{updatex} and \eqref{updatey} and expand to obtain
\begin{align*}
    \IV &= -\frac{8h^2}{9}\|\ty\|^2 + \frac{4(k+8)h^3}{9}\langle \nabla f(x_{k+1}),\ty\rangle-\frac{(k+8)^2h^4}{18}\|\nabla f(x_{k+1})\|^2\\
    &\qquad - \frac{4h}{3}\langle\err,\ty\rangle + \frac{(k+8)h^2}{3}\langle\err,\nabla f(x_{k+1})\rangle - \frac{1}{2}\|\err\|^2.
\end{align*}
For $\V$, we apply update~\eqref{updatex} to obtain
\begin{align*}
    \V &= \frac{3h}{2}\langle \ty , x_{k+1}-x^*\rangle-\frac{3h^2}{4}\|\ty\|^2 + \frac{3}{2}\langle\err,x_{k+1}-x^*\rangle-\frac{3h}{2}\langle\err,\ty\rangle - \frac{3}{4}\|\err\|^2.
\end{align*}
Note that we obtain the same terms as in the proof of Theorem~\ref{Thm:AccRateDsctime-c} (see Appendix~\ref{app:RPHD}) and the error terms involving $\err$. For the error terms, we apply Lemma~\ref{fenchelyoung} to obtain
\begin{align*}
    \mathsf{err} &=  \frac{(k+8)^2h^2}{9}\langle\err, \nabla f(x_{k+1})\rangle + \frac{(k+8)h^2}{3}\langle\err, \nabla f(x_{k+1})\rangle + \frac{(k+9)h}{3}\langle\err,\ty\rangle \\
    &\qquad- \frac{17h}{6}\langle\err,\ty\rangle + \frac{5}{2}\langle\err,x_{k+1}-x^*\rangle-\frac{5}{4}\|\err\|^2\\
    &=\frac{(k+8)^2}{9}\langle\err, h^2\nabla f(x_{k+1})\rangle+ \frac{1}{3}\langle(k+8)\err, h^2\nabla f(x_{k+1})\rangle + \frac{1}{3}\langle (k+9)\err,h\ty\rangle\\
    &\qquad + \frac{17}{6}\langle\err,(-h)\ty\rangle+ \frac{5}{2}\llangle(k+9)\err,\frac{1}{k+9}(x_{k+1}-x^*)\rrangle-\frac{5}{4}\|\err\|^2\\
    &\leq \frac{4(k+8)^2}{9}\|\err\|^2 + \frac{(k+8)^2h^4}{144}\|\nabla f(x_{k+1})\|^2+ 4\|\err\|^2 + \frac{(k+8)^2h^4}{144}\|\nabla f(x_{k+1})\|^2\\
    &\qquad + \frac{(k+9)^2}{6}\|\err\|^2 + \frac{h^2}{6}\|\ty\|^2 + \frac{17}{4}\|\err\|^2 + \frac{17h^2}{36}\|\ty\|^2 + \frac{25(k+9)^2}{12}\|\err\|^2 \\
    &\qquad+ \frac{3}{4(k+9)^2}\|x_{k+1}-x^*\|^2-\frac{5}{4}\|\err\|^2\\
    &\leq \frac{(k+8)^2h^4}{72}\|\nabla f(x_{k+1})\|^2 + \frac{23h^2}{36}\|\ty\|^2 + \frac{1}{(k+9)^2}\tE_{k+1} + \lp \frac{4(k+8)^2}{9} + \frac{9(k+9)^2}{4} + 7 \rp\|\err\|^2   \\
    &\leq \frac{(k+8)^2h^4}{72}\|\nabla f(x_{k+1})\|^2 + \frac{23h^2}{36}\|\ty\|^2 + \frac{1}{(k+9)^2}\tE_{k+1} + \lp\frac{40(k+8)^2}{9} + 7\rp\|\err\|^2\\
    &\leq \frac{(k+8)^2h^4}{72}\|\nabla f(x_{k+1})\|^2 + \frac{23h^2}{36}\|\ty\|^2 + \frac{1}{(k+9)^2}\tE_{k+1} + \frac{41(k+8)^2}{9}\|\err\|^2.
\end{align*}
where the third inequality follows from the relation: $k+9\leq \frac{4}{3}(k+8)$ and the last inequality follows from the relation: $7\leq\frac{(k+8)^2}{9}$ for $k\geq 0$.
Thus, combining the calculation in the proof of Theorem~\ref{Thm:AccRateDsctime-c} and the calculation above, we obtain
\begin{align}
    \tE_{k+1}-\tE_k &\leq - \frac{(k+8)^2h^4}{36}\|\nabla f(x_{k+1})\|^2 - \frac{23h^2}{36}\|\ty\|^2-\frac{(k+9)h^2}{36}\|\ty\|^2+\mathsf{err}\notag\\
    &\leq -\frac{(k+8)^2h^4}{72}\|\nabla f(x_{k+1})\|^2+ \frac{41(k+8)^2}{9}\|\err\|^2+ \frac{1}{(k+9)^2}\tE_{k+1}.\label{ineq:uniboundc}
\end{align}

Applying Propositions~\ref{lem:control error} and~\ref{prop:error bound-GD} with the choice $h\leq\frac{1}{7\sqrt{L}}\leq \frac{1}{4^{\nicefrac{1}{8}}\sqrt{L}}$, we have
\begin{align*}
    \frac{41(k+8)^2}{9}\|\err\|^2&\leq \frac{164(k+8)^2}{9} \cdot(1+L^2h^4)\cdot \frac{L^2h^{8}}{1-2L^4h^8}\cdot\|\nabla f(x_{k+1})\|^2\\
    &\leq \frac{82(k+8)^2h^4}{3}\cdot \frac{L^2h^{4}}{1-2L^4h^8}\cdot\|\nabla f(x_{k+1})\|^2\\
    &\leq \frac{(k+8)^2h^4}{72}\|\nabla f(x_{k+1})\|^2,
\end{align*}
where the second inequality follows from $h\leq \frac{1}{4^{\nicefrac{1}{8}}\sqrt{L}}$ and the third inequality follows from $h\leq\frac{1}{7\sqrt{L}}$.
Combining with \eqref{ineq:uniboundc}, we have 
\begin{align*}
    \tE_{k+1}-\tE_k\leq \frac{1}{(k+9)^2}\tE_{k+1}\quad \Rightarrow\quad \frac{\tE_{k+1}}{\tE_k}\leq \frac{(k+9)^2}{(k+8)(k+10)}.
\end{align*}
Taking the product, we obtain
\begin{align*}
    \tE_k = \lp \frac{\tE_k}{\tE_{k-1}}\cdot\frac{\tE_{k-1}}{\tE_{k-2}}\cdot...\cdot\frac{\tE_1}{\tE_0}  \rp\cdot \tE_0&\leq \lp\frac{(k+8)^2}{(k+7)(k+9)}\cdot \frac{(k+7)^2}{(k+6)(k+8)}\cdots\frac{9^2}{8\cdot 10}\rp\cdot\tE_0\\
    &= \frac{9(k+8)}{8(k+9)}\tE_0\leq \frac{9}{8}\tE_0.
\end{align*}
This implies
\begin{align*}
    \E[f(x_k)-f(x^*)]&\leq \frac{9}{h^2(k+8)^2}\cdot\frac{9}{8}\cdot\tE_0\\
    &=\frac{9}{h^2(k+8)^2}\cdot\frac{9}{8}\cdot\E\lmp \frac{64h^2}{9}(f(x_0)-f(x^*)) + \frac{5}{4}\|x_0-x^*\|^2\rmp\\
    &\leq \frac{\E\lmp 72h^2(f(x_0)-f(x^*)+13\|x_0-x^*\|^2\rmp}{h^2(k+8)^2}\\
    &\leq \frac{\E\lmp 36Lh^2\|x_0-x^*\|^2+13\|x_0-x^*\|^2\rmp}{h^2(k+8)^2}\\
    &\leq \frac{14\cdot\E\lmp \|x_0-x^*\|^2\rmp}{h^2(k+8)^2},
\end{align*}
where the third inequality follows from $L$-smoothness of $f$ and the last inequality follows from $h\leq \frac{1}{7\sqrt{L}}$. 
\end{proof}
\subsubsection{Proof of Corollary~\ref{coro:itercomp-c}}
\begin{proof}
    If we choose $h=\frac{1}{7\sqrt{L}}$, we obtain from Theorem~\ref{thm:RHGD-c}
\begin{align*}
    \E[f(x_K)-f(x^*)]&\leq \frac{686L\cdot\E\lmp \|x_0-x^*\|^2\rmp}{(K+8)^2}.
\end{align*}
Let
\begin{align*}
    \E[f(x_K)-f(x^*)]\leq \frac{686L\cdot\E\lmp \|x_0-x^*\|^2\rmp}{(K+8)^2}\leq \frac{686L\cdot\E\lmp \|x_0-x^*\|^2\rmp}{K^2}\leq \varepsilon,
\end{align*}
and the last inequality is satisfied whenever
\begin{align*}
    K\geq \sqrt{\frac{686L\cdot \E\lmp \|x_0-x^*\|^2\rmp}{\varepsilon}}.
\end{align*}
\end{proof}
\section{Additional experimental details}\label{sec:experiment details}
In this section, we provide some experimental details as a supplement of Section~\ref{sec:NE}.  
All experiments were implemented in Python 3.10.12 and executed with the default CPU runtime. No GPU or specialized hardware was used. Most experiments complete within one minute, while some high-condition-number or fine-resolution runs (e.g., quadratic minimization with $\kappa=10^7$) take up to 5-6 minutes. The total compute required is modest and can be reproduced on standard CPU-based environments. The algorithms included in the experimental comparisons are summarized in this section. In Section~\ref{sec:Dis-analysis}, we present our proposed algorithm \textsf{RHGD} with fixed stepsize. For completeness, this section also includes fixed-stepsize baselines such as \textsf{GD} (Algorithm~\ref{alg:GD}), \textsf{AGD}  (Algorithm~\ref{alg:AGD}), and \textsf{CAGD}  (Algorithm~\ref{alg:CAGD}), as well as their adaptive counterparts: \textsf{Ada-GD}  (Algorithm~\ref{alg:ada-GD}), \textsf{Ada-AGD}  (Algorithm~\ref{alg:ada-AGD}), \textsf{Ada-CAGD}  (Algorithm~\ref{alg:ada-CAGD}), and \textsf{Ada-RHGD}  (Algorithm~\ref{alg:ada-RHGD}) in Section~\ref{app:exp-algdetails}.
\subsection{Quadratic minimization}\label{app:quad-mini-add}
To generate a symmetric positive semi-definite (SPSD) matrix with a prescribed condition number, we construct a matrix \( A = Q \Lambda Q^\top \), where \( Q \in \mathbb{R}^{d \times d} \) is a random orthogonal matrix obtained via QR decomposition of a standard Gaussian matrix, and \( \Lambda = \mathrm{diag}(\lambda_1, \dots, \lambda_d) \) is a diagonal matrix with eigenvalues linearly spaced between \( \alpha  \) and \( L \). This construction ensures that \( A \) is SPSD with spectrum controlled by the given maximum and minimum eigenvalues.
\subsubsection{Comparison with baseline algorithms}\label{app:comparison with baseline}
In the main text, due to space constraints, we reported only a subset of the experiments for quadratic minimization: (1) $\kappa = 10^7$ with exact $\alpha$, (2) $\kappa = 10^7$ with overestimated $\alpha = 0.01$, and (3) $\alpha = 0$ with $L = 500$. Here, we include the complete set of experiments to provide a more comprehensive evaluation. Specifically, we consider:
\begin{enumerate}
    \item[(1)] $\kappa \in \{10^3, 10^5, 10^7\},\,L=500$ with exact $\alpha=L/\kappa$;
    \item[(2)] $\kappa \in \{10^3, 10^5, 10^7\},\, L=500$ with overestimated $\hat{\alpha} \in \{0.01, 0.1, 1\}$;
    \item[(3)] $\alpha = 0$ with $L \in \{5\times 10^2, 5\times 10^3, 5\times 10^4\}$.
\end{enumerate}
These results allow us to evaluate how the performance of each algorithm scales with different condition numbers and parameter estimation errors.

Figure~\ref{fig:quadcomp-add} shows the convergence results for setting (1) with exact $\alpha$ under condition numbers $\kappa \in \{10^3, 10^5\}$. \RHGD\ consistently outperforms \GD\ while being comparable to \AGD\ and \CAGD.

\begin{figure}[h!t!]
\centering
\subfigure{
\includegraphics[width=7.0cm,height =4.6cm]{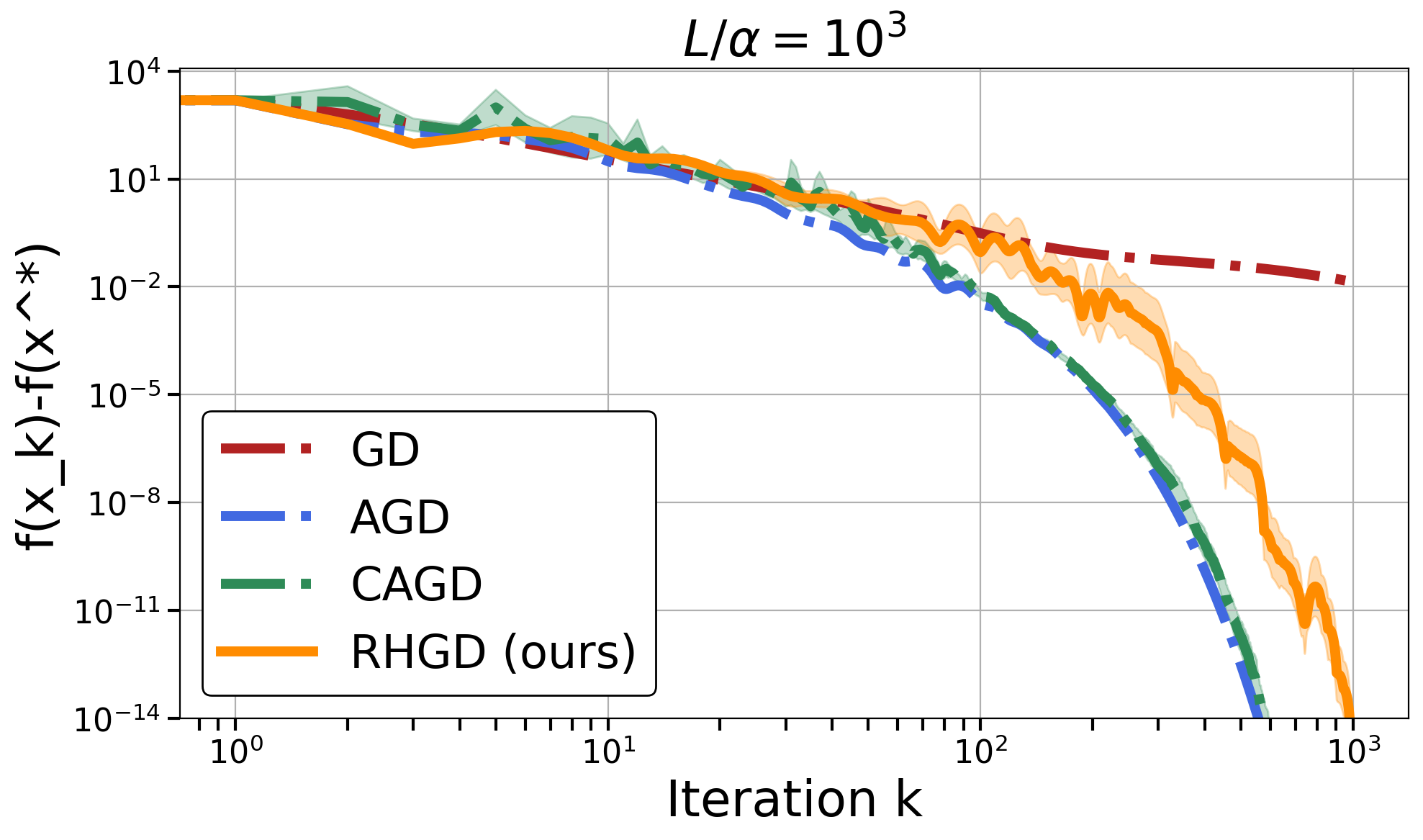}}\hspace{-0.5em}
\subfigure{
\includegraphics[width=7.0cm,height =4.6cm]{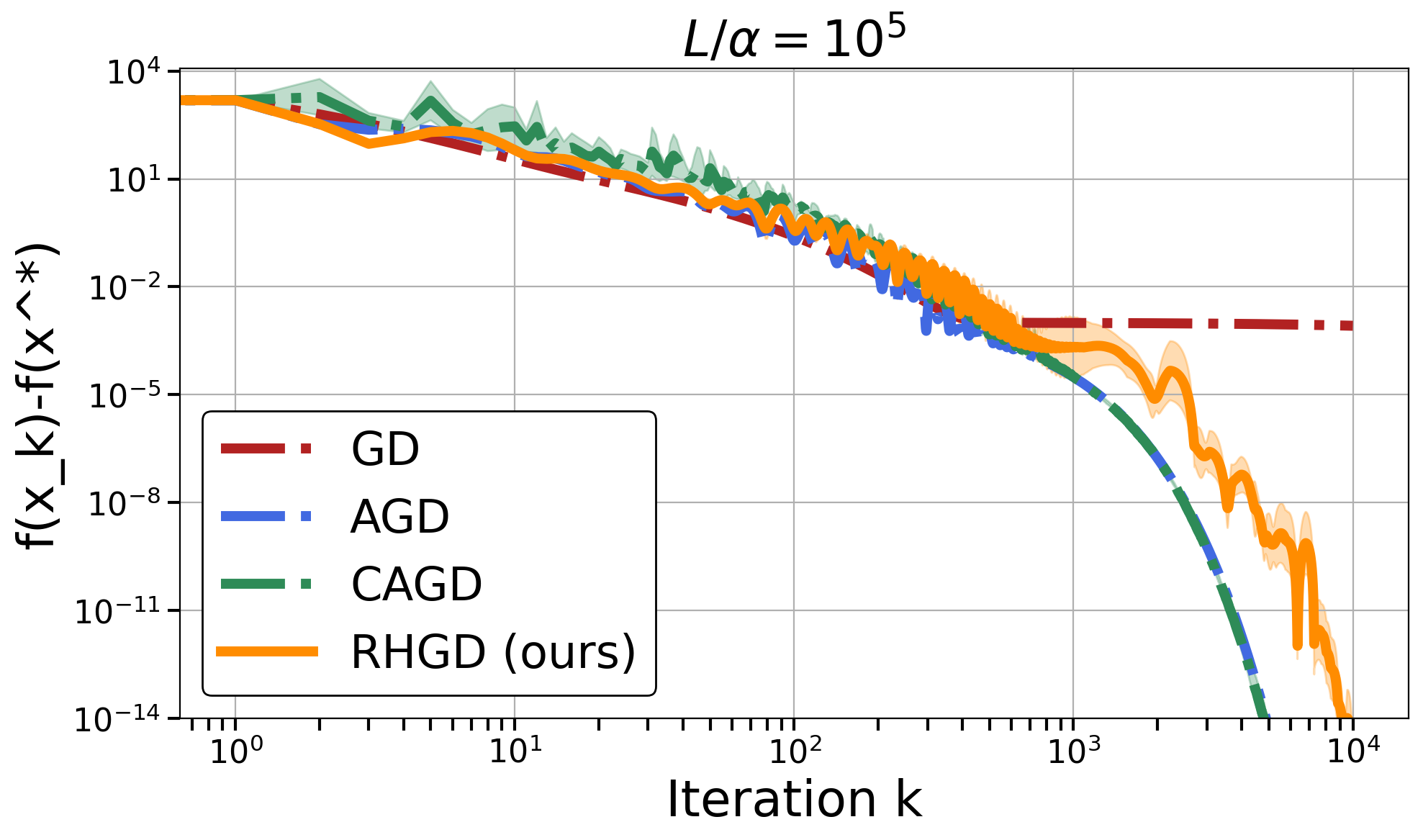}}\hspace{-0.5em}
\caption{Comparison between \GD, \NAG, \textsf{CAGD}, and \RHGD\ (ours) on minimizing quadratic functions with $\kappa\in\{10^3,\,10^5\}$ and optimal stepsizes via grid search. All algorithms use exact $\alpha$. Each plot shows results averaged over 5 runs.}
\label{fig:quadcomp-add}
\end{figure}

Figure~\ref{fig:quadcompmismu-add} presents the results for setting (2), where $\hat{\alpha}\in\{0.1,1\}$ is overestimated. We observe that while the performance of \AGD\ and \CAGD\ degrade significantly when $\alpha$ is poorly estimated, \RHGD\ maintains robustness across different values of $\alpha$, demonstrating its practical advantage under parameter uncertainty.

\begin{figure}[h!t!]
\centering
\subfigure{
\includegraphics[width=7.0cm,height =4.6cm]{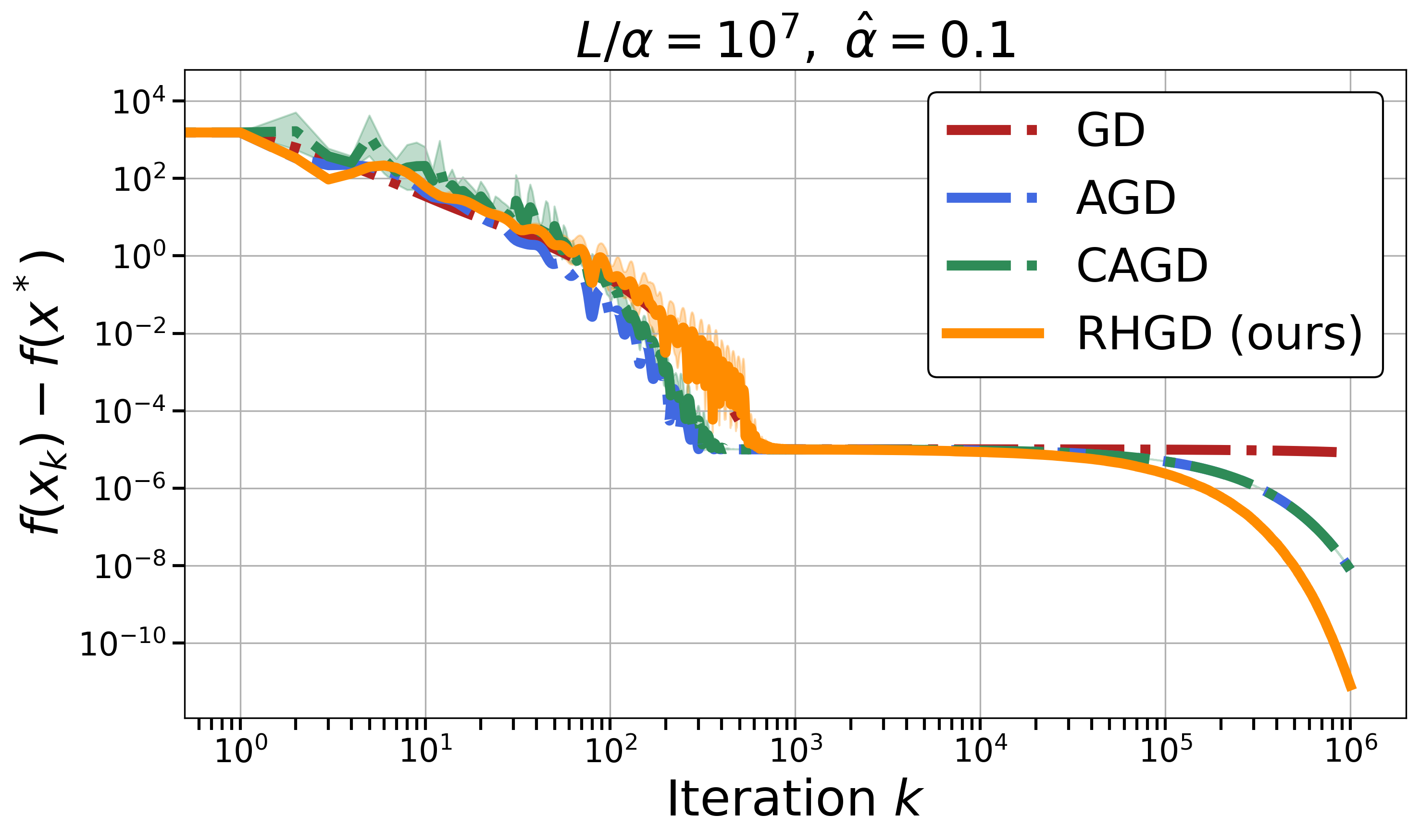}}\hspace{-0.5em}
\subfigure{
\includegraphics[width=7.0cm,height =4.6cm]{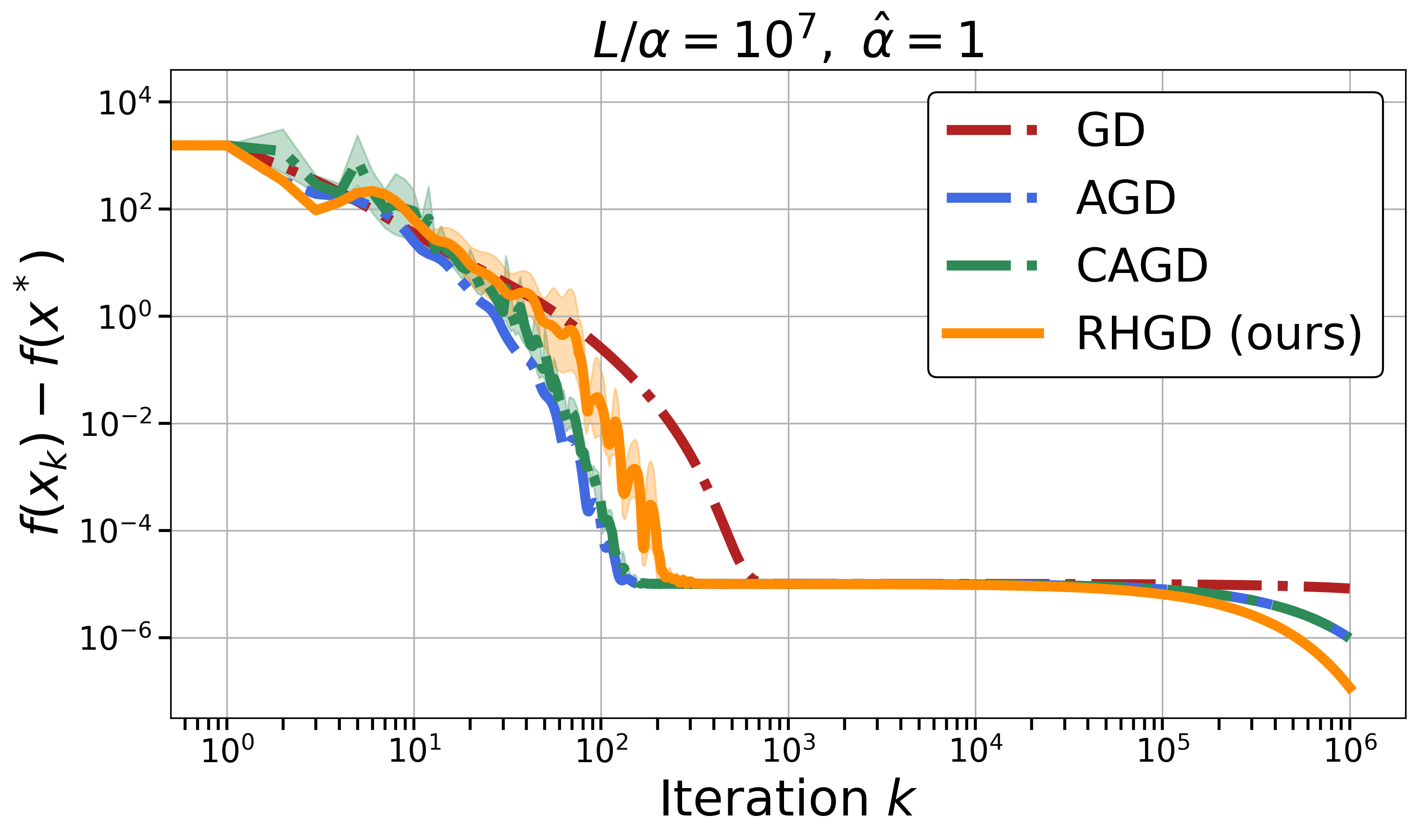}}
\caption{Comparison between \GD, \NAG\, \textsf{CAGD} and \RHGD\ (ours) on minimizing the quadratic function with with $\kappa=10^7$ and optimal stepsizes via grid search. All algorithms use misspecified $\hat{\alpha}\in\{0.1,\,1\}$. Each plot shows results averaged over 5 runs.}
\label{fig:quadcompmismu-add}
\end{figure}

Figure~\ref{fig:quadcompC-add} provides the results for setting (3), i.e., when $\alpha = 0$ (weakly convex), with smoothness constants $L \in \{5\times 10^3, 5\times 10^4\}$. \RHGD\ outperform \AGD\ and \CAGD\ in late iterations while being noticeably faster than \GD.

\begin{figure}[h!t!]
\centering
\subfigure{
\includegraphics[width=7.0cm,height =4.6cm]{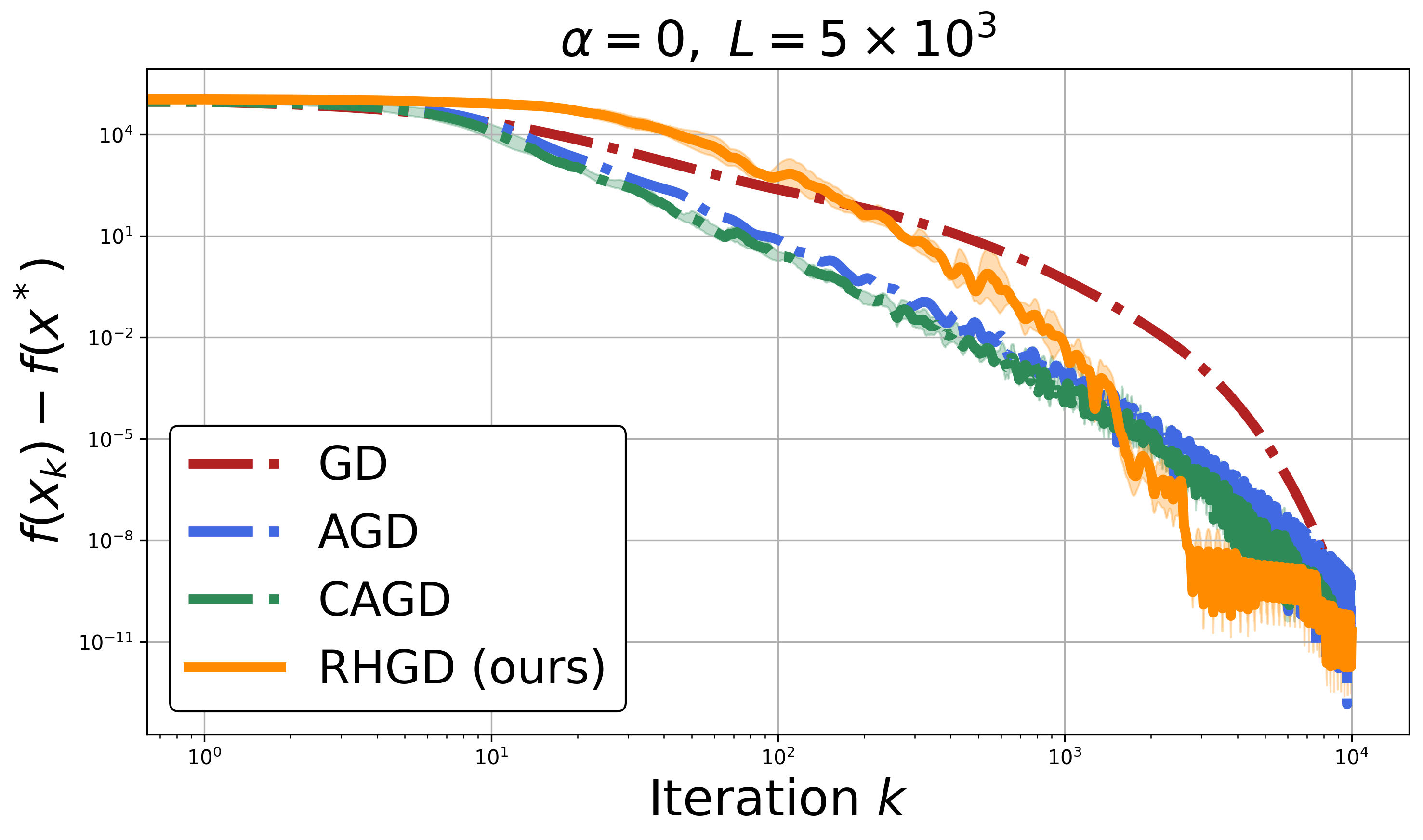}}\hspace{-1em}
\subfigure{
\includegraphics[width=7.0cm,height =4.6cm]{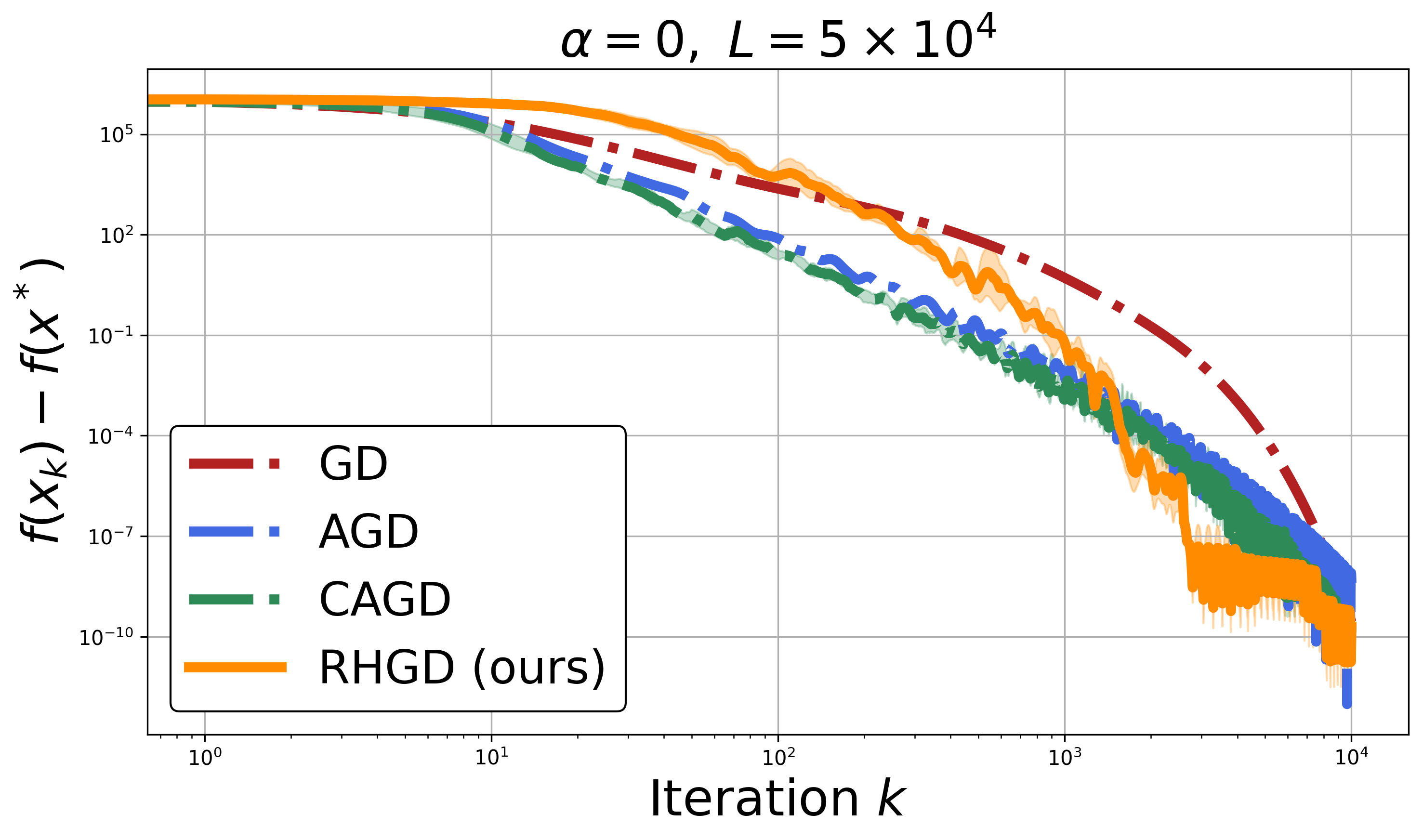}}
\caption{Comparison between \GD, \NAG\, \textsf{CAGD} and \RHGD\ with their optimal stepsizes via grid search for minimizing quadratic functions with $\alpha=0$ and $L\in\{5\times10^3,5\times 10^4\}$. Each plot shows results averaged over 5 runs.}
\label{fig:quadcompC-add}
\end{figure}

\begin{figure}[h!t!]
\centering
\subfigure{
\includegraphics[width=7.0cm,height =4.6cm]{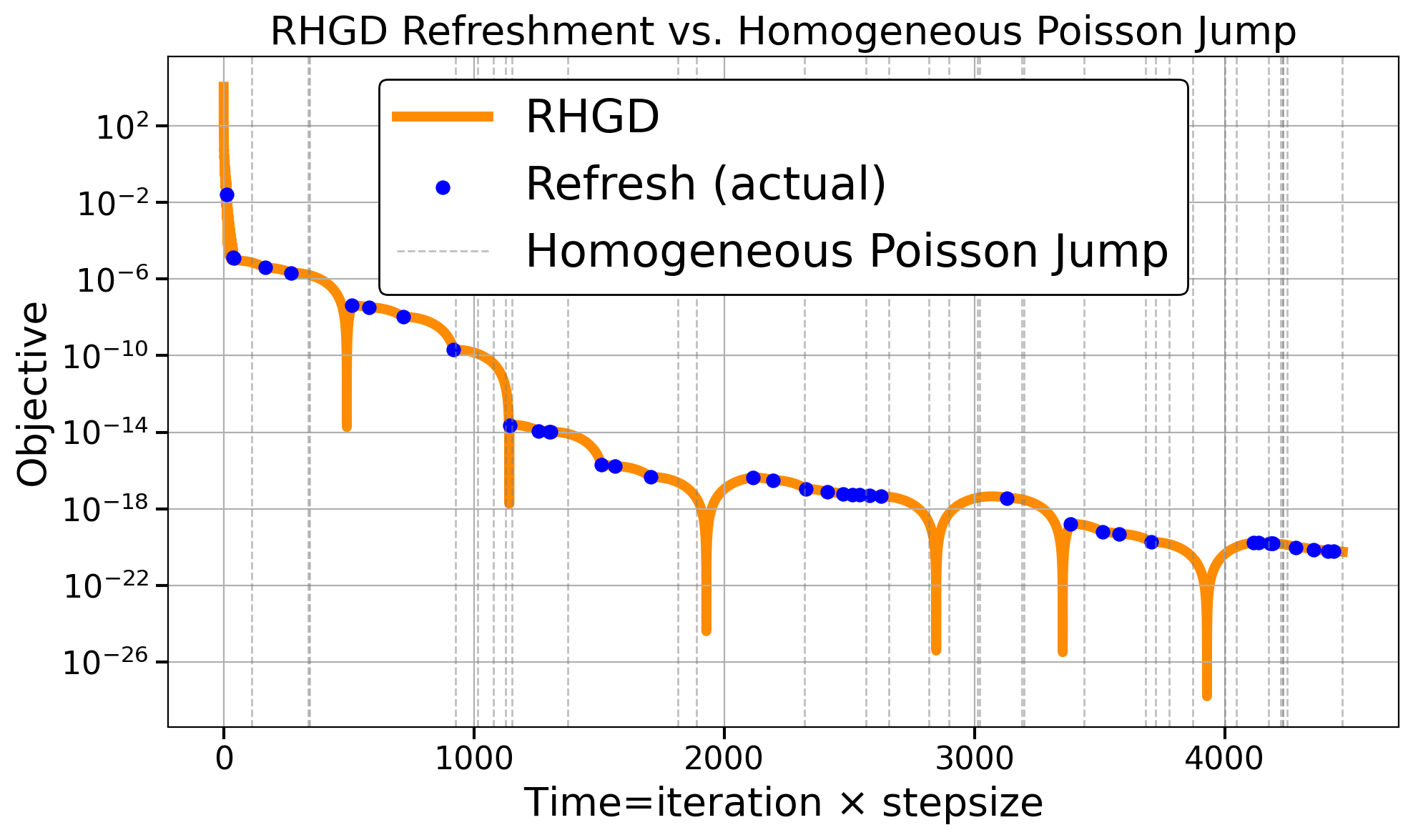}}\hspace{-1em}
\subfigure{
\includegraphics[width=7.0cm,height =4.6cm]{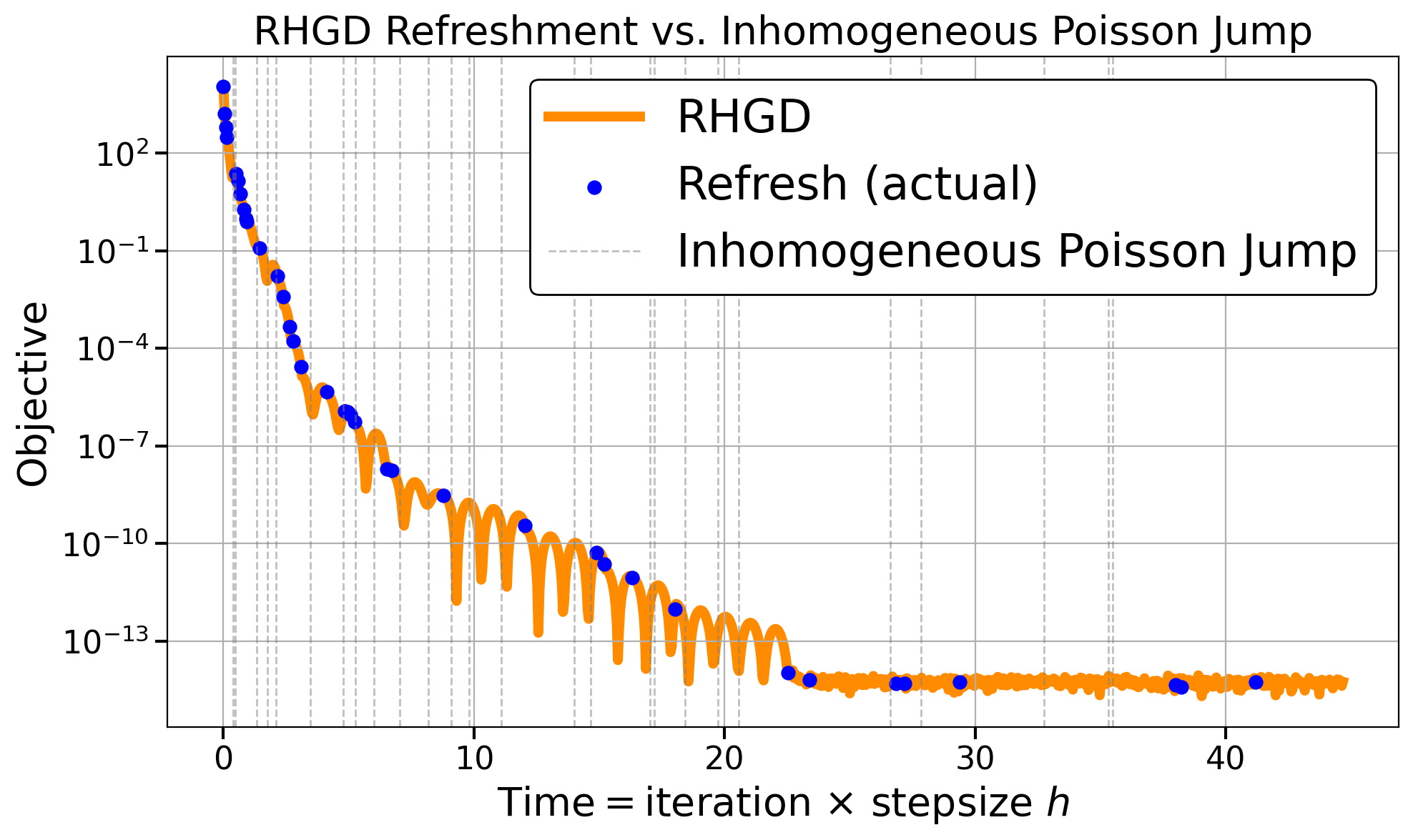}}
\caption{Comparison of actual refresh events in \textsf{RHGD} with theoretical Poisson jump times. 
Left: \(\gamma_k = \sqrt{\alpha}\) vs. homogeneous Poisson process with rate $\sqrt{\alpha}$. 
Right: \(\gamma_k = \frac{17}{2(k+9)h}\) vs. inhomogeneous Poisson process with rate \(\gamma(t) = \frac{17}{2t + 18h}\). 
The $x$-axis shows time \(t = kh\) with \(h = \sqrt{0.002}\). 
Blue dots: actual refreshes; gray dashed lines: Poisson jumps.}
\label{fig:poisson_plot}
\end{figure}

\subsubsection{Refreshment behavior}\label{app:exp-refreshment}
To better understand the structure and dynamics of our randomized algorithm, we visualize the objective values as a function of Poisson time,  reflecting the continuous-time intuition behind velocity refreshment. 

In the case of a {homogeneous Poisson process}, the refreshment probability is constant across iterations, i.e., \( \gamma_k = \sqrt{\alpha} \). This corresponds to the strongly convex setting, where refreshment occurs at a uniform rate. We simulate this process by drawing independent inter-arrival times \( \Delta T_k \sim \mathrm{Exp}(\gamma_k) \) with fixed rate \( \gamma_k = \sqrt{\alpha} \), and define the cumulative Poisson clock as $T_k = \sum_{i=1}^{k} \Delta T_i.$
We then plot the objective value against \( T_k \), rather than the iteration index. In contrast, the {inhomogeneous Poisson process} reflects the weakly convex setting, where the refreshment probability decays with iteration. Specifically, we consider a time-varying refreshment rate \( \gamma_k = \frac{17}{2(k+9)h} \), and simulate the process using non-stationary exponential samples with rate \( \gamma_k \), i.e., $\Delta T_k \sim \mathrm{Exp}(\gamma_k)$ and $T_k = \sum_{i=1}^{k} \Delta T_i.$ This mimics the decaying refreshment frequency used in the weakly convex regime.

In the strongly convex setting, we choose $\kappa=10^7$ with $L=500$, and $h=\sqrt{1/L}$. In the weakly convex setting, we choose $L=500$ and $h=\sqrt{1/L}$. Then we plot the objective value at each iteration versus the accumulated Poisson time \( T_k \), thereby aligning our discrete-time algorithms with their continuous-time interpretations. We also overlay the actual refreshment events as markers on the plot, which allows us to visually compare the algorithm’s progress with the stochastic timing of velocity refreshment. These visualizations shown in Figure~\ref{fig:poisson_plot} confirm that our refreshment mechanisms closely match the intended Poisson-driven behavior and provide an intuitive connection between the discrete-time algorithm and their continuous-time limit flow.

\subsubsection{Choice of stepsize}\label{app:stepsize}
When $\alpha>0$, we fix the smoothness constant (largest eigenvalue) to be 500, and choose the optimal stepsizes via grid search summarized in Table~\ref{tab:stepsize-sc}.
When $\alpha=0$, we evaluate different smoothness constant $L$, and choose the optimal stepsizes via grid search summarized in Table~\ref{tab:stepsize-c}.
\begin{table}[htbp]
\centering
\renewcommand{\arraystretch}{1.3}
\setlength{\tabcolsep}{10pt}
\begin{tabular}{c|cccc}
\toprule
$\kappa$ & \GD\ & \AGD\ & \CAGD\ & \RHGD\ \\
\midrule
$10^3$   & $\eta = \frac{1}{L}$ & $\eta =\frac{1}{L}$ & $\eta =\frac{1}{L}$ & $h=\frac{1}{\sqrt{L}}$ \\
$10^5$  & $\eta = \frac{1}{L}$ & $\eta =\frac{1}{L}$ & $\eta =\frac{1}{L}$ & $h=\frac{1}{\sqrt{L}}$ \\
$10^7$ & $\eta = \frac{1}{L}$ & $\eta =\frac{1}{L}$ & $\eta =\frac{1}{L}$ & $h=\frac{1}{\sqrt{L}}$ \\
\bottomrule
\end{tabular}
\caption{Stepsizes of \GD, \AGD, \CAGD\ and \RHGD\ in the strongly convex setting ($\alpha>0$) under different condition numbers $\kappa$ with fixed $L=500$.}
\label{tab:stepsize-sc}
\end{table}
\begin{table}[htbp]
\centering
\renewcommand{\arraystretch}{1.3}
\setlength{\tabcolsep}{10pt}
\begin{tabular}{c|cccc}
\toprule
$L$ & \GD\ & \AGD\ & \CAGD\ & \RHGD\ \\
\midrule
$5\times 10^2$   & $\eta = \frac{1}{L}$ & $\eta =\frac{1}{L}$ & $\eta =\frac{1}{L}$ & $h=\frac{1}{\sqrt{L}}$ \\
$5\times 10^3$  & $\eta =\frac{1}{8L}$ & $\eta =\frac{1}{16L}$ & $\eta =\frac{1}{8L}$ & $h=\frac{1}{8\sqrt{L}}$ \\
$5\times 10^4$ & $\eta =\frac{1}{8L}$ & $\eta =\frac{1}{16L}$ & $\eta =\frac{1}{8L}$ & $h=\frac{1}{8\sqrt{L}}$ \\
\bottomrule
\end{tabular}
\caption{Stepsizes of \GD, \AGD, \CAGD\ and \RHGD\ in the weakly convex setting ($\alpha=0$) under different smoothness constants $L$.}
\label{tab:stepsize-c}
\end{table}

\subsection{Logistic loss minimization}\label{app:expdetails-logi}
The feature vectors \(a_i \in \mathbb{R}^d\) are generated with i.i.d. standard normal entries, and the ground-truth parameter vector \(x^* \in \mathbb{R}^d\) is also sampled from standard Gaussian. Binary labels are assigned according to $b_i = \mathrm{sign}(a_i^\top x^* + 0.1 \cdot \xi_i),$
where \(\xi_i \sim \mathcal{N}(0,1)\) adds Gaussian noise to simulate label uncertainty. 

In the main paper, we only report the results of $\alpha\in\{10^{-4},0\}$. Here we provide the remaining results corresponding to $\{10^{-3},10^{-5}\}$. The setting $\alpha = 10^{-3}$ corresponds to a commonly used regularization level in practice, providing a well-conditioned objective. In contrast, $\alpha = 10^{-5}$ yields a more ill-conditioned problem, which serves as a stress test for the algorithms but is less typical in real-world applications. Figure~\ref{fig:logisc-add} presents the convergence behavior of \GD, \AGD, \CAGD, and \RHGD\ under both settings. We observe that \RHGD\ consistently outperforms \GD\ and remains competitive with \AGD\ and \CAGD\ across both values of $\alpha$. Notably, \RHGD\ using $\gamma_k = 2\sqrt{\alpha}$ consistently achieves faster convergence than $\gamma_k = \sqrt{\alpha}$. These results further confirm the robustness and practical efficiency of \RHGD\ under varying degrees of strong convexity.
\begin{figure}[htbp]
\centering
\subfigure{
\includegraphics[width=6.5cm,height =4.0cm]{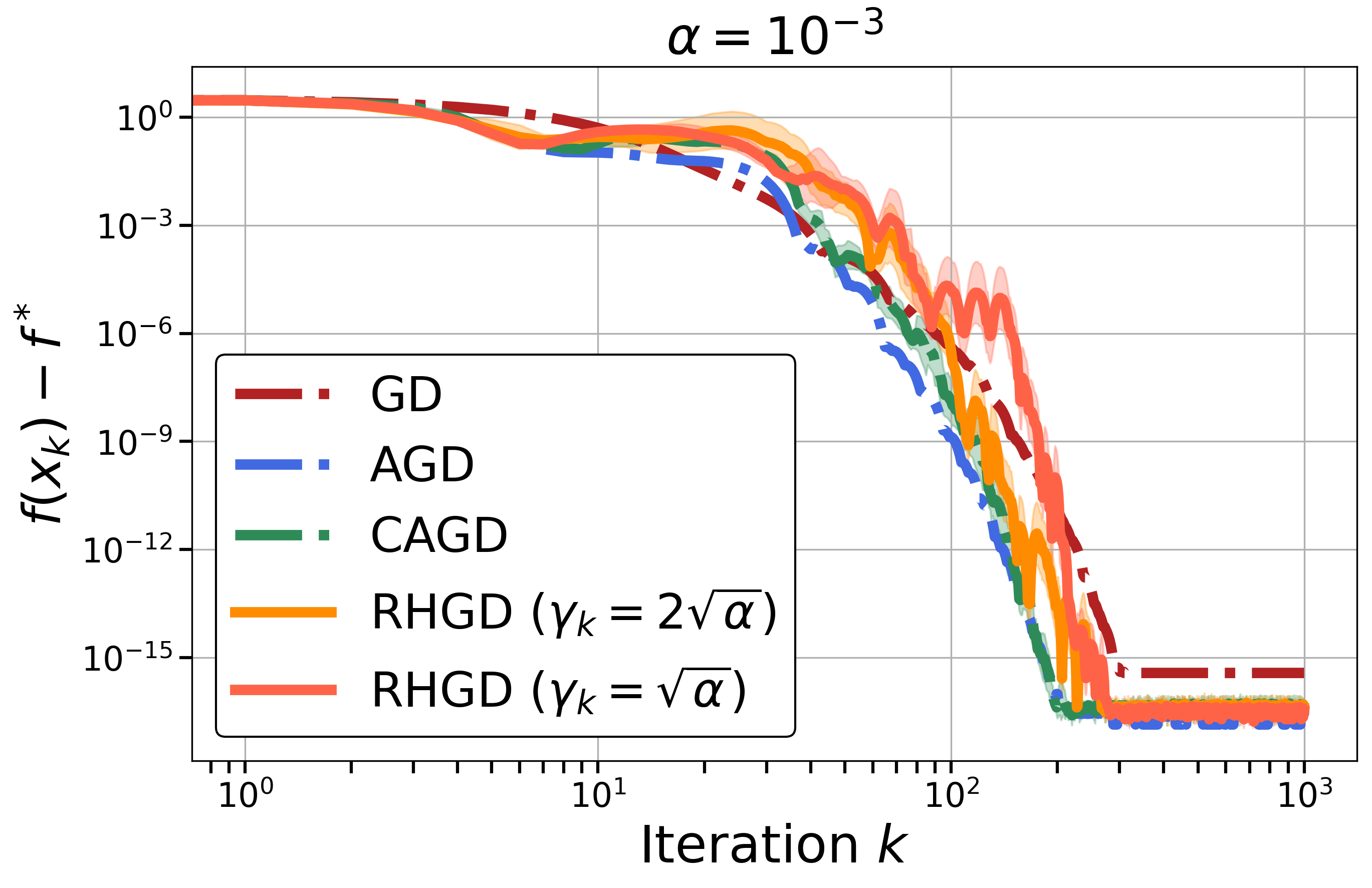}}
\subfigure{
\includegraphics[width=6.5cm,height =4.0cm]{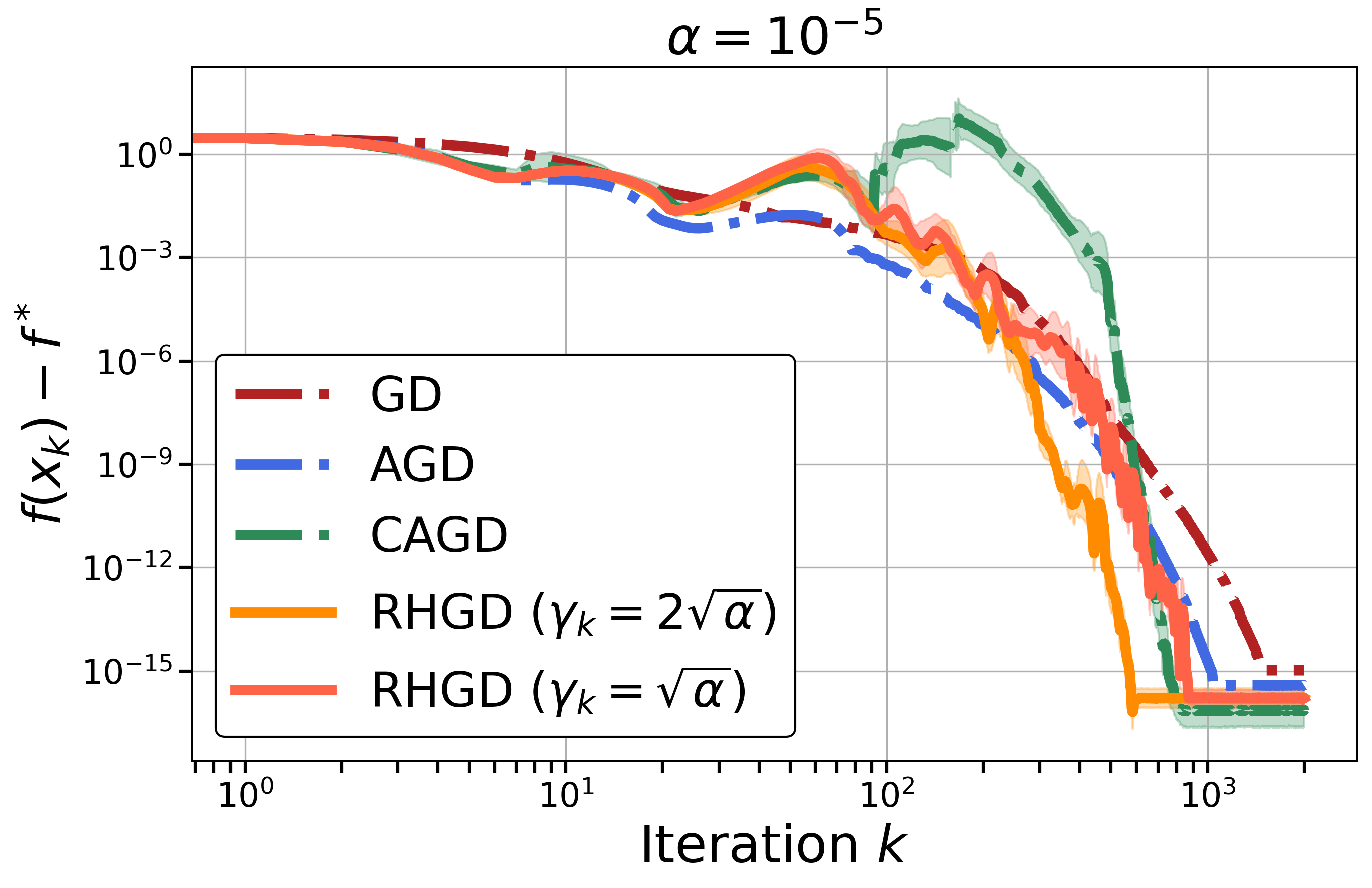}}
\caption{Comparison of \textsf{GD}, \textsf{AGD}, \textsf{CAGD} and \textsf{RHGD} on logistic regression with \(\alpha \in\{10^{-3},10^{-5}\}\). We run each algorithm using adaptive stepsizes. For \RHGD, we evaluate $\gamma_k=\sqrt{\alpha}$ and $\gamma_k=2\sqrt{\alpha}$.}
\label{fig:logisc-add}
\end{figure}

\subsection{Details of baseline algorithms}\label{app:exp-algdetails}
For each method, we assume the objective function $f$ is $L$-smooth and $\alpha$-strongly convex, where setting $\alpha=0$ corresponds to weakly convex functions.
\paragraph{Parameter formulas of Algorithm~\ref{alg:AGD}.}
The update rules in Algorithm~\ref{alg:AGD} depend on parameters $\beta_k$, which differs depending on whether the objective is strongly convex (\(\alpha > 0\)) or weakly convex (\(\alpha = 0\)). The specific formula is  
$\beta_k = 
\begin{cases}
 \displaystyle\frac{1 - \sqrt{\alpha \eta}}{1 + \sqrt{\alpha \eta}}, &  \text{if } \alpha > 0, \\
 \displaystyle\frac{k - 1}{k + 2}, & \text{if } \alpha = 0.
\end{cases}$
\paragraph{Parameter formulas of Algorithm~\ref{alg:CAGD}.}
The update rules in Algorithm~\ref{alg:CAGD} depend on parameters $\theta_k$, $\theta_k'$ and $\eta_k$ which differ depending on whether the objective is strongly convex (\(\alpha > 0\)) or weakly convex (\(\alpha = 0\)). The specific formulas are summarized in Table~\ref{tab:parameter of CAGD}.
\begin{table}
\centering
\renewcommand{\arraystretch}{1.6}
\begin{tabular}{c|c|c}
\toprule
Parameter & $\alpha > 0$ & $\alpha = 0$ \\
\midrule
$\theta_k$ & $\frac{1}{2} \left(1 - \exp(-2\sqrt{\alpha\eta}\,\tau_k)\right)$ & $1 - \left(\frac{T_k}{T_{k+1}}\right)^2$ \\
$\theta_k'$ & $\tanh\left(\sqrt{\alpha\eta}\,\tau_k\right)$ & $0$ \\
$\eta_k$ & $\sqrt{\frac{\eta}{\alpha}}$ & $\frac{T_k \eta}{2}$ \\
\bottomrule
\end{tabular}
\caption{Parameter choices in Algorithm~\ref{alg:CAGD} for different regimes of $\alpha$.}
\label{tab:parameter of CAGD}
\end{table}

\begin{algorithm}
    \caption{\textbf{Gradient Descent (\textsf{GD})}}
    \label{alg:GD}
    \begin{algorithmic}[1]
        \State Initialize $x_0$. Choose stepsize $0<\eta\leq 1/L$.
        \For{$k = 0,1,\dots, K-1$}
            \State $x_{k+1} = x_k - \eta\nabla f(x_k)$
        \EndFor
        \State \Return $x_K$
    \end{algorithmic}
\end{algorithm}
\begin{algorithm}
    \caption{\textbf{Accelerated Gradient Descent (\textsf{AGD})}}
    \label{alg:AGD}
    \begin{algorithmic}[1]
        \State Initialize $x_0$ and $y_0$. Choose stepsize $0<\eta\leq 1 / L$. 
        \For{$k = 0,1,\dots, K-1$}
            \State $x_{k+1} = y_k - \eta\nabla f(y_k)$
            \State $y_{k+1} = x_{k+1} + \beta_k(x_{k+1}-x_k)$ 
        \EndFor
        \State \Return $x_K$
    \end{algorithmic}
\end{algorithm}
\begin{algorithm}
    \caption{\textbf{Continuized Accelerated Gradient Descent (\textsf{CAGD})}~\citep{even2021continuized}}
    \label{alg:CAGD}
    \begin{algorithmic}[1]
        \State Initialize $x_0$ and $T_0=0$. Choose stepsize $0<\eta\leq 1 / L$.
        \For{$k = 0,1,\dots, K-1$}
        \State Sample $\tau_k \sim \mathrm{Exp}(1)$ and set $T_{k+1} = T_k + \tau_k$
            \State $y_k = x_k + \theta_k(z_k-x_k)$
            \State $x_{k+1}=y_k-\eta \nabla f(y_k)$
            \State $z_{k+1}=z_k + \theta_k'(y_k-z_k)-\eta_k\nabla f(y_k)$
        \EndFor
        \State \Return $x_K$
    \end{algorithmic}
\end{algorithm}

\paragraph{Adaptive variants.}\label{app:adaptive variants}
To improve robustness in practice, we also implement an adaptive stepsize scheme for GD, where the stepsize $\eta_k$ is adjusted multiplicatively based on the observed decrease in objective value:
\[
\eta_{k+1} = 
\begin{cases}
1.1 \eta_k, & \text{if } f(x_{k+1}) \leq f(x_k) - \frac{\eta_k}{2} \|\nabla f(x_k)\|^2 \\
0.6 \eta_k, & \text{otherwise}
\end{cases}
\]
The same mechanism can be applied to \AGD, \CAGD\ and \RHGD\ variants. We present the adaptive variants of \GD, \AGD, \CAGD\ and \RHGD\ below, which are called \textsf{Ada-GD}, \textsf{Ada-AGD}, \textsf{Ada-CAGD} and \textsf{Ada-RHGD}. For all logistic regression experiments, we initialize $\eta_0=1$ and $h_0=1$.

When $\alpha > 0$, the momentum parameter $\beta_k$ is computed using the updated stepsize $\eta_{k+1}$ as $\beta_k = \frac{1 - \sqrt{\alpha \eta_{k+1}}}{1 + \sqrt{\alpha \eta_{k+1}}},$
which reflects the dependence of acceleration on the current stepsize.  
When $\alpha = 0$, the momentum parameter simplifies to $\beta_k = \frac{k - 1}{k + 2},$
which is independent of the stepsize and follows from Nesterov’s acceleration for weakly convex functions.
\begin{algorithm}[t]
    \caption{\textbf{Adaptive Gradient Descent} (\textsf{Ada-GD})}
    \label{alg:ada-GD}
    \begin{algorithmic}[1]
        \State Initialize $x_0$ and stepsize $\eta_0 > 0$.
        \For{$k = 0,1,\dots, K-1$}
            \State $\tilde{x}_{k+1} = x_k - \eta_k \nabla f(x_k)$
            \If{$f(\tilde{x}_{k+1}) \leq f(x_k) - \frac{\eta_k}{2} \|\nabla f(x_k)\|^2$}
                \State $\eta_{k+1} = 1.1 \cdot \eta_k$, and $x_{k+1} =\tilde{x}_{k+1}$ \Comment{Increase stepsize and accept trial step}
            \Else
                \State $\eta_{k+1} = 0.6 \cdot \eta_k$, and $x_{k+1} = x_k$ \Comment{Decrease stepsize and reject trial step}               
            \EndIf
        \EndFor
        \State \Return $x_K$
    \end{algorithmic}
\end{algorithm}

\begin{algorithm}
    \caption{\textbf{Adaptive Accelerated Gradient Descent} (\textsf{Ada-AGD})}
    \label{alg:ada-AGD}
    \begin{algorithmic}[1]
        \State Initialize $x_0$, $y_0$, and stepsize $\eta_0 > 0$. 
        \For{$k = 0,1,\dots, K-1$}
            \State $\tilde{x}_{k+1} = y_k - \eta_k \nabla f(y_k)$
            \If{$f(\tilde{x}_{k+1}) \leq f(y_k) - \frac{\eta_k}{2} \|\nabla f(y_k)\|^2$}
                \State $\eta_{k+1} = 1.1 \cdot \eta_k$, and $x_{k+1} =\tilde{x}_{k+1}$ \Comment{Increase stepsize and accept trial step}
            \Else
                \State $\eta_{k+1} = 0.6 \cdot \eta_k$, and $x_{k+1} = x_k$ \Comment{Decrease stepsize and reject trial step}
            \EndIf
            \State $y_{k+1} = x_{k+1} + \beta_k (x_{k+1} - x_k)$
        \EndFor
        \State \Return $x_K$
    \end{algorithmic}
\end{algorithm}

\begin{algorithm}[t]
    \caption{\textbf{Adaptive Continuized Accelerated Gradient Descent} (\textsf{Ada-CAGD})}
    \label{alg:ada-CAGD}
    \begin{algorithmic}[1]
        \State Initialize $x_0$, $z_0 = x_0$, $T_0 = 0$, and stepsize $\eta_0 > 0$.
        \For{$k = 0,1,\dots, K-1$}
            \State Sample $\tau_k \sim \mathrm{Exp}(1)$ and set $T_{k+1} = T_k + \tau_k$
            \State Compute $\theta_k$ using $\eta_k$, $\tau_k$, $T_{k}$ and $T_{k+1}$ as shown in Table~\ref{tab:parameter of CAGD}
            \State $y_k = x_k + \theta_k(z_k - x_k)$
            \State $\tilde{x}_{k+1} = y_k - \eta_k \nabla f(y_k)$
            \If{$f(\tilde{x}_{k+1}) \leq f(y_k) - \frac{\eta_k}{2} \|\nabla f(y_k)\|^2$}
                \State $\eta_{k+1} = 1.1 \cdot \eta_k$, and $x_{k+1} =\tilde{x}_{k+1}$ \Comment{Increase stepsize and accept trial step}
            \Else
                \State $\eta_{k+1} = 0.6 \cdot \eta_k$, and $x_{k+1} = x_k$ \Comment{Decrease stepsize and reject trial step}
            \EndIf
            \State Compute $\theta_k'$ and $\eta_k$ using updated $\eta_{k+1}$, $\tau_k$ and $T_k$ as shown in Table~\ref{tab:parameter of CAGD}
            \State $z_{k+1} = z_k + \theta_k'(y_k - z_k) - \eta_k \nabla f(y_k)$
        \EndFor
        \State \Return $x_K$
    \end{algorithmic}
\end{algorithm}
\begin{algorithm}[H]
    \caption{\textbf{Adaptive Randomized Hamiltonian Gradient Descent} (\textsf{Ada-RHGD})}
    \label{alg:ada-RHGD}
    \begin{algorithmic}[1]
        \State Initialize $x_0$, $y_0 = 0$ and $h_0>0$. Choose refreshment parameter $\gamma_k>0$.
        \For{$k = 0,1,\dots, K-1$}
            \State $x_{k+\frac{1}{2}} = x_k+h_k y_k$
            \State $\tilde{x}_{k+1} = x_{k+\frac{1}{2}} - h_k^2\nabla f(x_{k+\frac{1}{2}})$
            \If{$f(\tilde{x}_{k+1}) \leq f(x_{k+\frac{1}{2}}) - \frac{h_k^2}{2} \|\nabla f(x_{k+\frac{1}{2}})\|^2$}
                \State $h_{k+1} = \sqrt{1.1} \cdot h_k$, and $x_{k+1}=\tilde{x}_{k+1}$ \Comment{Increase stepsize and accept trial step}
            \Else
                \State $h_{k+1} = \sqrt{0.6} \cdot h_k$, and $x_{k+1}=x_{k}$ \Comment{Decrease stepsize and reject trial step}                
            \EndIf
            \State $\tilde{y}_{k+1} = y_k - h_{k+1}\nabla f(x_{k+1})$
            \State $y_{k+1}=\begin{cases}
        \tilde{y}_{k+1}\quad &\text{with probability } 1-\min\lp {\gamma_k h_{k+1}}, 1\rp\\
        0\quad &\text{with probability } \min\lp{\gamma_k h_{k+1}},1\rp
        \end{cases}$
        \EndFor
        \State \Return $x_K$
    \end{algorithmic}
\end{algorithm}
\section{Helpful lemmas}\label{app:helpful lemma}
\begin{lemma}\label{lemma:continuityeq}
Let \( t \mapsto v_t\in\R^d \) be a family of vector fields and suppose that the random variables \( t \mapsto Z_t \) evolve according to 
\[
\dot{Z}_t = v_t(Z_t).
\]
Then, the law \( \rho_t \) of \( Z_t \) evolves according to the \textit{continuity equation}
\begin{equation}
\partial_t \rho_t + \nabla\cdot(\rho_t v_t) = 0.
\end{equation}
\end{lemma}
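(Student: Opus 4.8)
The plan is to use the standard test-function (weak formulation) argument. Fix an arbitrary smooth, compactly supported test function $\phi \colon \R^d \to \R$. The key idea is to compute $\frac{\rmd}{\rmd t}\E[\phi(Z_t)]$ in two ways: once by differentiating through the expectation and using the chain rule together with the ODE $\dot Z_t = v_t(Z_t)$, and once by differentiating under the integral sign in $\E[\phi(Z_t)] = \int_{\R^d}\phi(z)\rho_t(z)\,\rmd z$. Comparing the two expressions, and using that $\phi$ is arbitrary, forces the continuity equation.

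Concretely, I would first apply the chain rule: since $Z_t$ solves $\dot Z_t = v_t(Z_t)$,
\[
\frac{\rmd}{\rmd t}\E[\phi(Z_t)] = \E\big[\llangle \nabla\phi(Z_t), \dot Z_t\rrangle\big] = \E\big[\llangle \nabla\phi(Z_t), v_t(Z_t)\rrangle\big] = \int_{\R^d} \llangle \nabla\phi(z), v_t(z)\rrangle\,\rho_t(z)\,\rmd z .
\]
Integrating by parts and using that $\phi$ has compact support, so that the boundary term vanishes, this equals $-\int_{\R^d}\phi(z)\,\nabla\cdot\big(\rho_t(z)v_t(z)\big)\,\rmd z$. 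On the other hand, differentiating $\E[\phi(Z_t)] = \int_{\R^d}\phi(z)\rho_t(z)\,\rmd z$ under the integral sign gives $\int_{\R^d}\phi(z)\,\partial_t\rho_t(z)\,\rmd z$. Equating the two yields $\int_{\R^d}\phi(z)\big(\partial_t\rho_t(z) + \nabla\cdot(\rho_t(z)v_t(z))\big)\,\rmd z = 0$ for every test function $\phi$, hence $\partial_t\rho_t + \nabla\cdot(\rho_t v_t) = 0$.

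The main obstacle is purely technical: justifying the two interchanges of limits — moving $\frac{\rmd}{\rmd t}$ inside $\E[\cdot]$ and moving $\partial_t$ inside $\int_{\R^d}\phi\rho_t\,\rmd z$ — as well as the integration by parts. These require mild regularity (for instance, $v_t$ locally Lipschitz with suitable growth so that the flow is well-defined, and $\rho_t$ admitting a density that is smooth enough jointly in $t$ and $z$). Consistent with the paper's convention of identifying probability distributions with their densities, I would simply assume this regularity; alternatively, one can read the identity in the distributional sense, in which case the interchange steps are immediate and no boundary term appears, with a fully rigorous minimal-assumption treatment deferred to the DiPerna–Lions theory, which is beyond what is needed here.
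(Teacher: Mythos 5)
Your proposal is correct and follows essentially the same route as the paper's proof: differentiate $\E[\phi(Z_t)]$ via the chain rule and the ODE, rewrite as an integral against $\rho_t$, integrate by parts, and compare with differentiation under the integral sign, concluding since the test function $\phi$ is arbitrary. The extra remarks on compact support and regularity are reasonable technical caveats but do not change the argument.
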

\begin{proof}
    Given a smooth test function $\phi:\R^d\rightarrow\R$, we obtain by the chain rule:
    \begin{align*}
        \frac{\rmd}{\rmd t}\E[\phi(Z_t)]=\E\lmp \langle \nabla\phi(Z_t),\Dot{Z}_t\rangle \rmp=\E\lmp \langle \nabla\phi(Z_t),v_t(Z_t)\rangle \rmp.
    \end{align*}
   For the left hand side, we can write
    \begin{align*}
        \frac{\rmd}{\rmd t}\E[\phi(Z_t)] = \frac{\rmd}{\rmd t}\int \rho_t(z) \phi(z)\,\rmd z = \int \partial_t\rho_t(z) \phi(z)\,\rmd z.
    \end{align*}
    For the right hand side, applying the integration by part, we have
    \begin{align*}
        \E\lmp \langle \nabla\phi(Z_t),v_t(Z_t)\rangle \rmp = \int \langle \nabla \phi(z),v_t(z)\rangle\rho_t(z)\,\rmd z = -\int \nabla\cdot(\rho_t(z)v_t(z))\phi(z)\,\rmd z.
    \end{align*}
    Thus we have $$\int \partial_t\rho_t(z) \phi(z)\,\rmd z = -\int \nabla\cdot(\rho_t(z)v_t(z))\phi(z)\,\rmd z.$$
    Since this holds for every test function $\phi$, we obtain $\partial_t \rho_t + \nabla\cdot(\rho_t v_t) = 0.$
\end{proof}
\begin{lemma}[Three-point identity]
\label{Three-point identity}
For all $x,y,z\in\R^d$, we have
\begin{equation}
\label{TPI}
    \|x-y\|^2-\|x-z\|^2=-2\langle y-z,x-y\rangle-\|y-z\|^2.
\end{equation}
\end{lemma}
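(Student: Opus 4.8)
The plan is to prove this purely algebraically, since it is an identity in a real inner product space with no analytic content; the only mild care needed is sign bookkeeping. First I would introduce the substitution $u := x-y$ and $v := y-z$, and observe the key relation $x-z = (x-y) + (y-z) = u+v$ (a \emph{sum}, not a difference). Then the left-hand side of \eqref{TPI} becomes $\|u\|^2 - \|u+v\|^2$. Expanding $\|u+v\|^2 = \langle u+v, u+v\rangle = \|u\|^2 + 2\langle u,v\rangle + \|v\|^2$ by bilinearity and symmetry of the inner product, I get $\|u\|^2 - \|u+v\|^2 = -2\langle u,v\rangle - \|v\|^2$. Substituting back $u = x-y$, $v = y-z$ and using $\langle x-y, y-z\rangle = \langle y-z, x-y\rangle$ gives exactly $-2\langle y-z, x-y\rangle - \|y-z\|^2$, which is the right-hand side.

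An alternative route, if one prefers to avoid the substitution, is to expand both sides directly in terms of $\|x\|^2, \|y\|^2, \|z\|^2$ and the pairwise inner products $\langle x,y\rangle, \langle x,z\rangle, \langle y,z\rangle$, and check that both sides collapse to $-2\langle x,\,y-z\rangle + \|y\|^2 - \|z\|^2$; this is the same computation with more terms to track. There is no genuine obstacle here — the statement holds in any real inner product space, in particular on $\R^d$ with the Euclidean norm — and the only thing to be vigilant about is that the cross term arising from $\|x-z\|^2 = \|(x-y)+(y-z)\|^2$ enters with a $+2\langle x-y, y-z\rangle$ before it is moved to the other side of the equation, producing the $-2\langle y-z, x-y\rangle$ in the final form. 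This identity is precisely the elementary tool invoked to combine squared-norm differences in the telescoping Lyapunov arguments for \textsf{RPHD} and \textsf{RHGD}, so no more than this bare algebraic fact is required.
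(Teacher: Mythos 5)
Your proof is correct and matches the paper's argument, which simply expands the squared norm and rearranges; your substitution $u=x-y$, $v=y-z$ is just a cleaner way of organizing that same expansion. Nothing further is needed.
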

\begin{proof}
    Expanding the squared norm and rearranging yields the claimed identity.
\end{proof}
\begin{lemma}[Young's inequality]\label{fenchelyoung}
For all $x,y\in\mathbb{R}^d$ and $p> 0$, we have
\begin{equation}
    \langle x,y \rangle \leq p\|x\|^2 + \frac{1}{4p}\|y\|^2.
\end{equation}
\end{lemma}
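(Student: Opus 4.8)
The plan is to prove the inequality by completing the square, which reduces it to the nonnegativity of a squared Euclidean norm. First I would introduce the auxiliary vector
\[
 w \;=\; \sqrt{p}\,x \;-\; \frac{1}{2\sqrt{p}}\,y,
\]
which is well defined because $p>0$. Expanding $\|w\|^2$ using bilinearity and symmetry of the inner product gives
\[
 \|w\|^2 \;=\; p\,\|x\|^2 \;-\; 2\cdot\sqrt{p}\cdot\frac{1}{2\sqrt{p}}\,\langle x,y\rangle \;+\; \frac{1}{4p}\,\|y\|^2 \;=\; p\,\|x\|^2 \;-\; \langle x,y\rangle \;+\; \frac{1}{4p}\,\|y\|^2 .
\]
Since $\|w\|^2\ge 0$, rearranging this identity yields exactly $\langle x,y\rangle \le p\|x\|^2 + \frac{1}{4p}\|y\|^2$, which is the claim.

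A second, essentially equivalent route would be to combine the Cauchy--Schwarz inequality $\langle x,y\rangle\le \|x\|\,\|y\|$ with the scalar AM--GM bound $2ab\le a^2+b^2$ applied to $a=\sqrt{p}\,\|x\|$ and $b=\frac{1}{2\sqrt{p}}\,\|y\|$; this gives $\|x\|\,\|y\| = 2ab \le a^2+b^2 = p\|x\|^2+\frac{1}{4p}\|y\|^2$, and chaining the two inequalities finishes the proof. I would present the completing-the-square argument as the main one, since it is self-contained and makes transparent that equality holds precisely when $y = 2p\,x$.

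There is no real obstacle here: the only point to get right is the choice of scaling in $w$ — pairing $\sqrt{p}$ with $\tfrac{1}{2\sqrt{p}}$ — so that the cross term comes out with coefficient exactly $1$ and matches $\langle x,y\rangle$; any other split produces the inequality with different constants. Everything else is a one-line expansion, so the write-up is just a couple of lines.
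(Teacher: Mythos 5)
Your proposal is correct and its main argument (completing the square with $w=\sqrt{p}\,x-\tfrac{1}{2\sqrt{p}}\,y$ and using $\|w\|^2\ge 0$) is exactly the paper's proof of Lemma~\ref{fenchelyoung}. The alternative Cauchy--Schwarz plus AM--GM route is also fine but unnecessary.
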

\begin{proof}
    We have
    \begin{align*}
        \lno \sqrt{p}x - \frac{1}{2\sqrt{p}}y \rno^2 = p\|x\|^2 + \frac{1}{4p}\|y\|^2 - \langle x,y\rangle\geq 0,
    \end{align*}
    which implies the conclusion.
\end{proof}
\begin{lemma}\label{lem:agb in RPHDc}
    For all $k\geq 0$, we have $\frac{2k+17}{9}\leq \frac{k+8}{3}.$
\end{lemma}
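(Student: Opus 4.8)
\textbf{Proof proposal for Lemma~\ref{lem:agb in RPHDc}.}
The plan is to reduce the claimed inequality to an elementary linear inequality in $k$ by clearing denominators. First I would multiply both sides of $\frac{2k+17}{9}\leq\frac{k+8}{3}$ by the common denominator $9$ (which is positive, so the inequality direction is preserved), obtaining the equivalent statement $2k+17\leq 3(k+8)$. Expanding the right-hand side gives $2k+17\leq 3k+24$, and subtracting $2k$ from both sides yields $17\leq k+24$, i.e., $-7\leq k$.

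Since we are assuming $k\geq 0$, the final inequality $-7\leq k$ holds trivially (in fact it holds for every real $k\geq -7$), so all the equivalences above run backwards and the original inequality follows. The only bookkeeping to be careful about is that each manipulation is a genuine equivalence: multiplication by the positive constant $9$ and addition/subtraction of terms on both sides all preserve $\leq$, so no case analysis is needed.

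There is no real obstacle here — the statement is a one-line arithmetic fact, included only because it is invoked at a specific step of the proof of Theorem~\ref{Thm:AccRateDsctime-c} (the bound $\I \le \frac{(k+8)h^2}{3}(f(x_{k+1})-f(x^*))$ when combining the $\I$ and $\II$ terms). Accordingly the write-up will be two or three lines: clear denominators, simplify, and observe the resulting bound $k\geq -7$ is implied by $k\geq 0$.
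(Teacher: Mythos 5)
Your proof is correct and amounts to the same elementary algebraic verification as the paper's (the paper rewrites the claim as $\frac{2k+17}{k+8}=2+\frac{1}{k+8}\leq 3$, while you clear denominators to $2k+17\leq 3k+24$); both reduce to the trivial fact that $k\geq 0$ suffices. No issues.
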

\begin{proof}
    It suffices to show $\frac{2k+17}{k+8}\leq 3$. Since $\frac{1}{k+8}\leq 1$, we have $\frac{2k+17}{k+8}=2+\frac{1}{k+8}\leq 3.$
\end{proof}
\begin{lemma}
\label{lem:squareineq}
    For all $x,y\in\R^d$, we have
    \begin{align}
    \label{squareineq}
        \|x+y\|^2\leq 2\|x\|^2 + 2\|y\|^2.
    \end{align}
\end{lemma}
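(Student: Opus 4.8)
The plan is to prove $\|x+y\|^2 \le 2\|x\|^2 + 2\|y\|^2$ by expanding the left-hand side and then bounding the cross term. First I would use bilinearity of the Euclidean inner product to write $\|x+y\|^2 = \|x\|^2 + 2\langle x,y\rangle + \|y\|^2$. The only quantity that needs to be controlled is $\langle x,y\rangle$, and here I would invoke the bound $2\langle x,y\rangle \le \|x\|^2 + \|y\|^2$; this is exactly Young's inequality (Lemma~\ref{fenchelyoung}) with the choice $p=\tfrac12$, and it also follows directly from the nonnegativity of $\|x-y\|^2$. Substituting this bound into the expansion gives $\|x+y\|^2 \le \|x\|^2 + \bigl(\|x\|^2 + \|y\|^2\bigr) + \|y\|^2 = 2\|x\|^2 + 2\|y\|^2$, which is the claimed inequality.

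For robustness I would also note an equivalent one-line argument via convexity of the map $z \mapsto \|z\|^2$: applying the midpoint convexity inequality, $\left\|\tfrac{x+y}{2}\right\|^2 \le \tfrac12\|x\|^2 + \tfrac12\|y\|^2$, and multiplying through by $4$ yields the same conclusion. Either derivation is immediate and needs no auxiliary construction.

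There is no genuine obstacle here: the statement is a purely Hilbert-space fact with no dependence on $f$, its smoothness, or its convexity. The only real "choice" is which of the equivalent elementary justifications to record, and to verify that it is the symmetric bound on $\langle x,y\rangle$ that produces the sharp constant $2$ on the right-hand side (a cruder split would give a worse constant). The lemma is then used as a black box downstream, for instance in the gradient-norm manipulation of Proposition~\ref{prop:error bound-GD}.
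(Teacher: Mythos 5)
Your proof is correct and follows essentially the same route as the paper: both arguments come down to expanding $\|x+y\|^2$ and observing that the deficit is exactly $\|x-y\|^2 \ge 0$ (the paper writes this as a one-line identity, you phrase it as bounding the cross term $2\langle x,y\rangle$ via Young's inequality / nonnegativity of $\|x-y\|^2$). No issues.
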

\begin{proof}
    $\|x+y\|^2-(2\|x\|^2 + 2\|y\|^2) = -\lp \|x\|^2 - 2\langle x, y\rangle + \|y\|^2 \rp=-\|x-y\|^2\leq 0.$
\end{proof}
\begin{lemma}\label{lem:boundcosfunc}
    For $0\leq x\leq \frac{1}{2}$, we have $\cos^2(x)\leq 1-\frac{x^2}{2}$.
\end{lemma}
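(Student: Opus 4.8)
The plan is to reduce the claim to a single scalar inequality for $\cos$ and then dispatch that with a standard truncated Taylor bound. First I would use the double-angle identity $\cos^2(x)=\tfrac{1+\cos(2x)}{2}$, so that the desired inequality $\cos^2(x)\le 1-\tfrac{x^2}{2}$ is equivalent to
\begin{equation*}
    \cos(2x)\le 1-x^2 \qquad\text{for } 0\le x\le \tfrac12.
\end{equation*}
This is the key reformulation; everything after is elementary.

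Next I would invoke the classical bound $\cos(u)\le 1-\tfrac{u^2}{2}+\tfrac{u^4}{24}$, valid for all $u\in\mathbb{R}$, which follows from Taylor's theorem with Lagrange remainder (or from the alternating-series estimate for the cosine expansion). Substituting $u=2x$ gives $\cos(2x)\le 1-2x^2+\tfrac{2x^4}{3}$, so it suffices to check the polynomial inequality $1-2x^2+\tfrac{2x^4}{3}\le 1-x^2$, i.e. $\tfrac{2x^4}{3}\le x^2$, i.e. $x^2\le\tfrac32$. Since $0\le x\le\tfrac12$ gives $x^2\le\tfrac14\le\tfrac32$, this holds, completing the argument. (An equivalent route, if one prefers to avoid quoting the quartic cosine bound, is to set $g(x)=1-\tfrac{x^2}{2}-\cos^2(x)$, note $g(0)=0$, and observe $g'(x)=-x+\sin(2x)\ge 0$ on $[0,\tfrac12]$ using $\sin(2x)\ge 2x-\tfrac{4x^3}{3}\ge x$ for $x^2\le\tfrac34$; then $g\ge0$ by monotonicity.)

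I do not anticipate a genuine obstacle here: the statement is an elementary calculus inequality and the only real decision is which truncated expansion to use and where to truncate. The mild care needed is (i) ensuring the cosine remainder bound is applied with the correct sign so that the inequality goes the right way, and (ii) confirming the slack in the polynomial step is comfortably positive on the interval $[0,\tfrac12]$ — both are routine. This lemma is used only to control the per-step contraction factor $\cos^2(h\sqrt{\sigma_j})$ in the quadratic examples (Examples~\ref{ex:HF-quad} and~\ref{ex:quad}), so no uniformity or sharpness beyond the stated range is required.
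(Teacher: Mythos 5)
Your argument is correct, but it takes a different route from the paper. The paper's proof is a second-derivative monotonicity argument: it sets $g(x)=\cos^2(x)+\tfrac{1}{2}x^2-1$, computes $g''(x)=1-2\cos(2x)\le 0$ on $[0,\tfrac12]$ (using $2x\le 1\le \tfrac{\pi}{3}$, hence $\cos(2x)\ge\tfrac12$), concludes $g'\le g'(0)=0$ and then $g\le g(0)=0$. You instead reformulate via the double-angle identity, reducing the claim to $\cos(2x)\le 1-x^2$, and dispatch it with the global quartic Taylor upper bound $\cos(u)\le 1-\tfrac{u^2}{2}+\tfrac{u^4}{24}$, which leaves only the trivial polynomial check $\tfrac{2x^4}{3}\le x^2$, i.e.\ $x^2\le\tfrac32$. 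Both are elementary; your route avoids differentiating the target function entirely and makes the slack explicit (you only need $x^2\le\tfrac32$, versus $x^2\le\tfrac14$ available), so it would in fact yield the inequality on a somewhat larger interval, whereas the paper's argument is a compact self-contained calculus computation tied to the stated range. Your parenthetical alternative, bounding $g'(x)=\sin(2x)-x\ge 0$ via $\sin(2x)\ge 2x-\tfrac{4x^3}{3}\ge x$ for $x^2\le\tfrac34$, is essentially a first-derivative version of the paper's argument with the cubic sine bound replacing the sign analysis of $g''$, and it is also correct.
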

\begin{proof}
    Let $g(x)=\cos^2(x)+\frac{1}{2}x^2-1$. Then $g'(x)=-\sin(2x)+x$ and $g''(x)=-2\cos(2x)+1$. Since $0\leq x\leq\frac{1}{2}\leq \frac{\pi}{6}$, we have $\cos(2x)\geq \frac{1}{2}$ which implies $g''(x)\leq 0$ for $x\in[0,\frac{1}{2}].$ Thus $g'(x)$ is monotonically decreasing for $x\in[0,\frac{1}{2}],$ which implies $g'(x)\leq g'(0)=0$. Thus $g(x)$ is also monotonically decreasing for $x\in[0,\frac{1}{2}]$. Then we have $g(x)=\cos^2(x)+\frac{1}{2}x^2-1\leq g(0)=0$, which implies $\cos^2(x)\leq 1-\frac{x^2}{2}$
\end{proof}
\end{document}